\DeclareMathOperator{\Leb}{Leb}
\DeclareMathOperator{\argmax}{argmax}
\def\Var{{\sf Var}}
\def \1{\mathds{1}}
\def \A#1{{\sf A}{\small (#1)}}
\def \A{{\bf A}}
\def \B{{\sf B}}
\definecolor{amber}{rgb}{1.0, 0.75, 0.0}
\def \K{{\textbf{\textit{K}}}}
\def \L{{\sf L}}
\def \Op{{\sf Op}}
\def \P{\mathbb{P}}
\def \R{\mathbb{R}}
\def \Z{\mathbb{Z}}
\def \app#1#2#3#4#5{\begin{array}{rccl} #1:&#2&\longrightarrow&#3\\ &#4&\longmapsto&#5\end{array}}
\def \bM{\begin{bmatrix}}
\def \bQ{{\bf Q}}
\def \bS{{\bf S}}
\def \bar{\overline}
\def \ba{\begin{align}}
\def \ba{{\bf a}}
\def \bc{{\bf c}}
\def \ben{\begin{eqnarray}}
\def \beq{\begin{equation}}
\def \be{\begin{eqnarray*}}
\def \bia{\begin{itemize}\compact \setcounter{d}{0}}
\def \bir{\begin{itemize}\compact \setcounter{c}{0}}
\def \bis{\begin{itemize}\compact }
\def \bi{\begin{itemize}\compact \setcounter{b}{0}}
\def \bpar#1{\left\{\begin{array}{#1} }
\def \bs{{\bf s}}
\def \bs{{\sf bs}}
\def \bt{{\bf t}}
\def \build#1#2#3{\mathrel{\mathop{\kern 0pt#1}\limits_{#2}^{#3}}}
\def \bz{{\bf z}}
\def \captionn#1{\begin{center}\begin{minipage}{15cm}\sf\caption{\small #1}\end{minipage}\end{center}}
\def \ceil#1{\lceil#1\rceil}
\def \dd#1{\frac{\partial}{\partial #1}}
\def \dd{\xrightarrow[n]{(d)}}
\def \det{{\sf det}}
\def \dis{\displaystyle}
\def \eM{\end{bmatrix}}
\def \ea{\end{align}}
\def \een{\end{eqnarray}}
\def \ee{\end{eqnarray*}}
\def \eia{\end{itemize}\vspace{-2em}~}
\def \eir{\end{itemize}\vspace{-2em}~}
\def \eis{\end{itemize}\vspace{-2em}~}
\def \ei{\end{itemize}\vspace{-2em}~}
\def \epar { \end{array}\right.}
\def \eqd{\sur{=}{(d)}}
\def \eq{\end{equation}}
\def \eref#1{(\ref{#1})}
\def \floor#1{\lfloor#1\rfloor}
\def \imp{\Rightarrow}
\def \ita{\addtocounter{d}{1}\item[(\alph{d})]} 
\def \itr{\addtocounter{c}{1}\item[($\roman{c}$)]}
\def \l{\left}
\def \ov#1{\overline{#1}}
\def \proba{\xrightarrow[n]{(proba.)}}
\def \r{\right}
\def \sous#1#2{\mathrel{\mathop{\kern 0pt#1}\limits_{#2}}}
\def \sur#1#2{\mathrel{\mathop{\kern 0pt#1}\limits^{#2}}}
\def\bma{ \begin{bmatrix}}
\def\ema{\end{bmatrix}}
\def\cro#1{\llbracket#1\rrbracket}
\newcommand{\E}{\mathbb{E}}
\newcommand{\ie}{\textit{i.e. }}
\newcommand{\equ}{\underset{n\to\infty}{\sim}}
\newcommand{\cvg}{\underset{n\to\infty}{\longrightarrow}}
\newcommand{\compact}{ \topsep0pt   \itemsep=0pt   \partopsep=0pt   \parsep=0pt}
\newcounter{b}
\newcounter{c}
\newcounter{d}
\newtheorem{lem}{Lemma}[section]
\newtheorem{conj}[lem]{Conjecture}
\newtheorem{cor}[lem]{Corollary}
\newtheorem{defi}[lem]{Definition}
\newtheorem{pro}[lem]{Proposition}
\newtheorem{rem}[lem]{Remark}
\newtheorem{theo}[lem]{Theorem}
\renewcommand{\mod}{{~\sf mod~}}
\def \A{{\cal A}}
\def \d {\textrm{d}}
\def \Area{{\sf Area}} 
\def \Var{{\sf Var}}
\def \ov#1{{\overline{#1}}}
\def \obK{\ov{{\bf K}}}
\def \obS{\ov{{\bf S}}} 
\def \eps{{\varepsilon}}
\def \CH{{\sf CH}}
\def \Bara{B{\'a}r{\'a}ny\xspace}
\def \rl{r_\lambda}
\def \fnl{\floor{n\la}}
\def \bn{\textbf{n}}
\def \bm{\textbf{m}}
\def \la{{\lambda}}
\def \D{\Delta}
\def \bc{{\bf c}}
\def \bs{{\bf s}}
\def \bt{{\bf t}}
\def \bQ{{\bf Q}}
\def \bQt{{\bf Q}^{\triangle \bullet\bullet}}
\def \bQK{{\bf Q}^{\K}}
\def \Qt{{\sf Q}^{\triangle \bullet\bullet}}
\def \QK{{\sf Q}^{\K}}
\def \L{{\sf L}}
\def \A{{\sf A}}
\def \Int{{\sf Int}}
\def \bs{{\bf s}}
\def \bt{{\bf t}}
\def \PHI#1#2{\Phi_{#1}^{#2}}
\def \PLK{\PHI\lambda\K}
\def \CCST{{\sf CCS}_\triangle^{\bullet\bullet}}
\begin{document}

\begin{center}
	\textbf{\LARGE{Conditioning random points by the number of vertices of their convex hull: the bi-pointed case}}~\\~\\~\\
\textsf{\Large
Jean-Fran\c{c}ois Marckert \& Ludovic Morin}~\\~\\~\\
{\normalsize Univ. Bordeaux, CNRS, Bordeaux INP, LaBRI, UMR 5800, F-33400 Talence, France}
\end{center}

\begin{abstract} Pick $N$ random points $U_1,\cdots,U_{N}$ independently and uniformly in a triangle ABC with area 1, and take the convex hull of the set $\{A,B,U_1,\cdots,U_{N}\}$. The boundary of this convex hull is a convex chain    $V_0=B,V_1,\cdots,$ $V_{\bn(N)}$, $V_{\bn(N)+1}=A$ with random size $\bn(N)$. The first aim of this paper is to study the asymptotic behavior of this chain, conditional on $\bn(N)=n$, when both $n$ and $m=N-n$ go to $+\infty$. We prove a phase transition: if $m=\floor{n\lambda}$ where $\lambda>0$, this chain converges in probability for the Hausdorff topology to an (explicit) hyperbola ${\cal H}_\lambda$ as $n\to+\infty$, while, if $m=o(n)$, the limit shape is a parabola. We prove that this hyperbola is solution to an optimization problem: among all concave curves ${\cal C}$ in $ABC$ (incident with $A$ and $B$), ${\cal H}_\lambda$ is the unique curve maximizing the functional ${\cal C}\mapsto {\sf Area}({\cal C})^{\lambda} \L({\cal C})^3$ where $\L({\cal C})$ is the affine perimeter of ${\cal C}$. We also give the logarithm expansion of the probability $\bQt_{n,\floor{n\la}}$, that $\bn(N)=n$ when $N=n+\floor{n\lambda}$.\par
  Take a compact convex set $\K$ with area 1 in the plane, and denote by $\bQK_{n,m}$ the probability of the event that the convex hull of $n+m$ iid uniform points in $\K$ is a polygon with $n$ vertices. 
  We provide some results and conjectures regarding the asymptotic logarithm expansion of $\bQK_{n,m}$, as well as results and conjectures concerning limit shape theorems, conditional on this event. 
  
  These results and conjectures generalize \Bara{'s} results, who treated the case $\lambda=0$.
\end{abstract}

\def \Co#1{\color{orange} #1 \color{black}}
\section{Introduction}
\paragraph*{Notation.}
For a subset $S$ of $\R^2$, we denote by $\CH(S)$ its convex hull , $\Int(S)$ its interior, and $\partial S$ its boundary. We denote by $d_H$ the Hausdorff distance between compact sets of $\R^2$. 
For any $n\geq 1$, the notation $Z[n]$ refers to the $n$-tuple $(Z_1,\cdots,Z_n)$, and by a slight abuse of language, we will write $\CH(U[N])$ for $\CH(\{U_1,\cdots,U_N\})$.

If $(a,b,c)$ and $(a',b',c')$ are two non-flat triangles in the plane, there exists a unique affine map $\psi:\R^2\to\R^2$ such that $\psi(a)=a', \psi(b)=b', \psi(c)=\psi(c')$. We denote this affine map by
\beq \label{eq:psiabc}\psi_{abc\to a'b'c'}.\eq
Finally, ``i.i.d.'' means ``independent and identically distributed''.

  \centerline{--------------------------------------}
  
Pick $n$ uniform random points $U_1,\cdots,U_n$ in a compact convex set $\K$ of area 1 of the plane, and denote by $\bQ^\K_{n,0}$, the probability that these points are in convex position, that is, form the set of vertices of a convex polygon.
\Bara{'s} \cite{barany2} proved that 
\[n^2\left(\bQ^\K_{n,0}\right)^{1/n}\cvg \frac{e^2}{4}\L\left(C^\star)\right)^3\] where $\L$ is the functional ``affine perimeter'', and $C^\star$ the convex set included in $\K$  whose affine perimeter is maximal among all convex subsets of $\K$.
The affine length of a curve (see \Cref{sec:RW}) is an important invariant in planar geometry: it is invariant by affine maps with determinant 1 (\cite{MR2680490}). The affine perimeter of a convex set $C$ is the affine length of its boundary $\partial C$. 

B\'ar\'any also proved that if one conditions the $U_i$ to be in convex position, there is a limit shape theorem for this polygon:
  \[d_H\l(\CH(U[n]), C^\star\r)\proba 0.\]    
\begin{figure}[h!]
	\centerline{\includegraphics{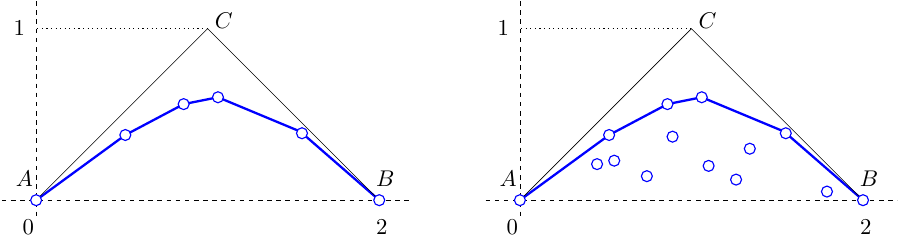}} 	
	\captionn{\label{fig:ABC} Representation of the unitary triangle $ABC$. On the first picture, 4 points $U_1,\cdots,U_4$ are represented. Together with $A$ and $B$, they form a convex chain. On the second picture, $12=4+8$ points $U_1,\cdots,U_{12}$ are taken uniformly and independently at random in $ABC$. The convex hull of $\{A,B, U_1,\cdots,U_{12}\}$ can be identified as a chain going from $A$ to $B$, passing here through four $U_i$, the other 8 being strictly below this chain.}
  \end{figure}
We provide an overview of B\'ar\'any's argument in \Cref{sec:AbC}.
These results are constructed on various works, notably a result concerning convex chains in the triangle, results that we will qualify thereafter as the \underbar{bi-pointed case}, since two vertices of the triangle are distinguished and play a special role.

\paragraph{The bi-pointed case.}
All along the paper, we will work in the unit triangle $ABC$  represented in Fig. \ref{fig:ABC}, whose vertices are 
\[A = (0,0),~~~ B= (2,0),~~~ C=(1,1),\]
and ``unit'' refers here to the area of $ABC$. As represented in Fig. \ref{fig:ABC}, when $n$ points are uniformly drawn in a triangle $abc$, we may of course wonder what the probability that these points are in convex position is, as well as question the asymptotic shape of the convex polygon under this condition. But \underbar{alternatively}, we may wonder about the probability that these points together with the two vertices $a$ and $b$ form a convex chain, as well as the limit shape of this chain under this condition. It turns out that this ``bi-pointed case'' has a tremendous importance because the general case, that is the computation of $\bQ^\K_{n,0}$ reduced somehow into ``several'' bi-pointed triangles (see more explanation in \Cref{sec:RW}).

Pick $N$ iid uniform random points in the unit triangle $ABC$, and denote by $\bn(N)$ the random variable giving the number of $U_i$ on the boundary of the polygon $\CH(\{A,B,U_1,\cdots,U_{N}\})$, and $\bm(N)$ the number of those in the interior of this convex hull, so that a.s., $\bn(N)+\bm(N)=N$.  The vertex set of the polygon $\CH(\{A,B,U_1,\cdots,U_{N}\})$ has $\bn(N)+2=n+2$ vertices, including $A$ and $B$.


For any $(n,m)$ summing to $N$, denote by
\beq\label{eq:bqt}\bQt_{n,m} = \P( (\bn(N),\bm(N))=(n,m)).\eq
 A result due to Buchta  \cite{buchta_2006} (see \Cref{theo:qdzada} to which we provide a new proof) provides an exact expansion of $\bQt_{n,m}$ as a sum indexed by $\binom{n+m-1}{m}$ terms. We will prove, and this is one of the main results of this paper, that for any $\lambda>0$,
 \[  \log\l(\bQt_{n,\fnl }\r) +  2n\log(n) \sim n\beta_\lambda, \textrm{ as } n \to +\infty\]
 for an explicit function $\beta$ (this is \Cref{theo:betala}).
 
In  \Cref{theo:lim} we provide a limit shape theorem for the convex hull of $\left\{A,B,U_1,\cdots,U_{n+\fnl}\right\}$ under $\Qt_{n,\fnl}$, where for a general $(n,m)$,  
 \ben\label{eq:Qtnm} \Qt_{n,m}:= {\cal L}\l( U[n+m]~|~ \bn(n+m)=n\r).\een
 The limit shape is an explicit hyperbola ${\cal H}_\lambda$, continuously depending on $\lambda$. \Cref{theo:mon}$(i)$ shows that under  $\Qt_{n,m(n)}$, for $m(n)=o(n)$, the limit shape is a parabola (the same one that B\'ar\'any \& al. \cite{baranybipointed} obtained for $m=0$). Hence, these results show that a phase transition occurs when we condition the set of points $U_{1},\cdots,U_{N}$ to have a linear fraction of elements below the chain. 
The internal points produce pressure that deforms the chain (we can give a formal sense to this statement, see e.g. \Cref{lem:sto}).

 The chain is pushed away from the horizontal axis, and an asymptotic stabilization occurs, when $n\to +\infty$ and the quotient $m/n$ converges.

 We also present a {\bf characterization of ${\cal H}_\la$, as the solution of an optimization problem:} among all concave curves ${\cal C}$ in $ABC$ going from $A$ to $B$,  ${\cal H}_\la$ is the curve that maximizes $\A({\cal C})^\lambda \L({\cal C})^{3}$, and $\A({\cal C})$ being the area below the curve ${\cal C}$ (this is \Cref{theo:optri}). 

  These results open the way to the characterization of the limit shape in the classical, general (not bi-pointed) case. For the moment, some of the elements in the proof remain inconclusive, but nevertheless, we present first results in this direction.

Let $\K$ be a compact convex set of $\R^2$ with non-empty interior. Fix some $\lambda>0$. Let ${\sf CCS}_\K$ be the set of compact convex sets included in $\K$, and for $C\in {\sf CCS}_\K$ set $\PLK(C)=\L(C)^{3} \A(C)^\lambda$.
We are interested in $\argmax \PLK$, the set of compact convex sets that maximize $\PLK$. A continuity argument will prove that this set is non-empty. If ${\cal C}\in\argmax \PLK$ is an optimizing curve, then each part of ${\cal C}$ between consecutive contact points ${\cal C}\cap \partial K$ is an hyperbola, and these hyperbolas (for a same optimizing curve ${\cal C}$) have same signature (see \Cref{def:signa}), meaning that up to some linear map respecting some constraints, they can be sent on ``portions'' of the same hyperbola.

We denote by $\bQK_{n,m}$ the probability that the number of vertices of $\CH(U[n+m])$ is $n$ (for the $U_i$ iid uniform in $\K)$, and denote by  $\QK_{n,m}$ the conditional law with respect to this event.
\Cref{theo:conv1} concerns the asymptotic expansion of $\bQK_{n,\fnl}$ .
We also conjecture that under $\QK_{n,\fnl}$, when $n\to+\infty$ and $\lambda$ is fixed, $\CH(U[n+\fnl])$ converges to  $\argmax \PLK$, when this set contains a unique element. Some elements supporting these conjectures will be presented in \Cref{sec:AbC}.

\subsection{Main convergence results in the bi-pointed case}
\label{sec:MCR}
The following section aims at introducing formal elements and at stating the main results in the triangular, bi-pointed case. 
Put $N$ iid uniform random points $U_1,\cdots,U_{N}$ in the unit triangle $ABC$. 
Now, take
\[{\cal C}(N):={\sf CH}\left\{A,B, U_1,\cdots,U_{N}\right\}.\] 
The boundary of ${\cal C}(N)$ can be identified with a convex chain\footnote{technically, it is rather a concave chain in such a triangle,}
\[B:=V_0,V_1,\cdots,V_{\bn},V_{{\bn}+1}=A,\]
including $A$ and $B$ and a random number ${\bf n}:={\bf n}(N)$ of  $U_i$.
For technical reasons, the vertices $V_i$, whose coordinates are $(X_i,Y_i)$ are ordered so that their abscissa are decreasing\footnote{This choice simplifies a bit a decomposition formula further in the paper, but of course, the other classical ordering is also possible. And the reader who prefer the standard order, can apply the symmetry $\bar{X}_i=2-X_i$ everywhere where $X_k$ appear.}, that is,  we have 
\[X_i > X_{i-1}.\]
As represented in Fig. \ref{fig:KV}, each triangle
\[\Delta_i:=(V_{i+1},V_{i},V_{0})\] contains \underbar{in its interior} $\Int\; \Delta_i$ a non-negative random number
\[K_i = \l|  \{U_1,\cdots,U_N\} \cap \Int\; \Delta_i \r|\]
of elements of the set $\{U_1,\cdots,U_{N}\}$. We will call the array $K[{\bf n}]=(K_1,\cdots,K_{{\bf n}})$ the ``contents sequence''. 
Almost surely,
\[\sum_{i=1}^{\bf n} K_i = N-\bn=:\bm(N),\] meaning that all the $U_i$ that are not vertices of ${\cal C}(N)$ are in the interior of one of the $\Int \Delta_k$. Almost surely too, the $(V_i)$ are vertices, i.e. extremal points, of ${\cal C}(N)$.
\begin{figure}[htbp]
	\centerline{\includegraphics{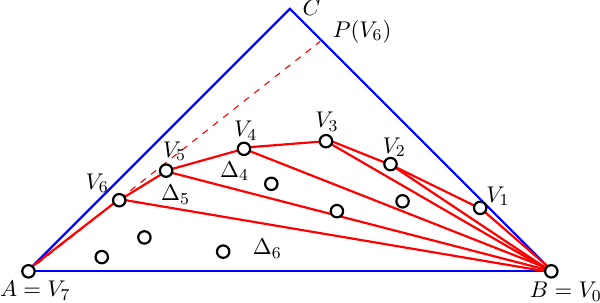}} 	
	\captionn{\label{fig:KV}On this example, $N=12$, ${\bf n}(N)=6$ and ${\bf m}(N)=6$,  $(K_6,K_5,K_4,K_3,K_2,K_1)=(3,0,2,1,0,0)$.}
  \end{figure}
 We want now to  understand the behavior of ${\cal C}(N)$  conditional on $({\bf n}(N),{\bf m}(N))$, and for this we will need to understand the law of ${\bf n}(N)$. 
 
Recall the definition of $\bQt_{n,m}$ and $\Qt_{n,m}$ given in \eref{eq:bqt} and \eref{eq:Qtnm}.
We have
\begin{theo}\label{theo:qdzada}[Buchta \cite{buchta_2006}]  For every $n,m$ such that $n\geq 1$ and $m\geq 0$,
\beq
\bQt_{n,m} = \frac{1}{(n+m)(n+m+1)}\sum_{k=0}^m 2\;\bQt_{n-1,m-k} (k+1)
\eq
(where $\bQt_{0,0}=1$, $\bQt_{0,j}=0$ if $j\geq 1$)
so that 
\beq\label{eq:qfe} \bQt_{n,m}=\sum_{k[n]\in{\sf Comp}(n,m)}  \prod_{j=1}^{n} \frac{ 2\,(1+k_j)}{ (1+j+S_{j}(k[n]))(j+{S}_j(k[n]))}\eq
where ${\sf Comp}(n,m)$ is the set of compositions of $m$ in $n$ non-negative parts (that is the set of $n$ tuples $k[n]$ summing to $m$) and \beq S_i(k[n]) = k_1+\cdots+k_i. \eq Moreover, the support of contents sequence random variable $K[n]$ is ${\sf Comp}(n,m)$, and for all $k[n]\in {\sf Comp}(n,m)$,
\beq\label{eq:QtK} \Qt_{n,m}(K[n]=k[n])=\frac{1}{\bQt_{n,m}} \prod_{i=1}^n \frac{2(1+k_i)}{\bigl(i+S_i(k[n])\bigl) \bigl(i+1+S_i(k[n])\bigl)}.\eq
\end{theo}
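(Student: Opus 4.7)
The plan is to bypass Buchta's original argument by first writing down the joint density of $(V[n],K[n])$ via an elementary count, then proving the recursion by conditioning on $V_n$ and reducing to a single explicit planar integral.

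I would start by fixing a valid concave chain $v[n]$ from $B$ to $A$ and a composition $k[n]\in{\sf Comp}(n,m)$. Elementary combinatorics then gives the joint density
\[\Pr{V[n]\in dv[n],\,K[n]=k[n]} \;=\; N!\,\prod_{i=1}^n \frac{\Area(\Delta_i)^{k_i}}{k_i!}\,dv_1\cdots dv_n,\]
as a product of four factors: $N!/(N-n)!$ for the ordered choice of which $U_j$ serves as each $V_i$, a uniform density $1^n$ from placing the $V_i$'s in $ABC$ (of area $1$), a multinomial $m!/\prod k_i!$ for distributing the $m$ remaining points among the $\Delta_i$, and $\prod \Area(\Delta_i)^{k_i}$ from those uniform placements. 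Formula (\ref{eq:QtK}) will then follow from (\ref{eq:qfe}) and this joint density by dividing and integrating out the chain positions.

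To prove the recursion I would condition on $V_n$, the chain vertex adjacent to $A$. The requirement that $V_n$ is a hull vertex adjacent to $A$ forces the remaining $N-1$ points to lie on the $B$-side of line $V_nA$, that is, inside the sub-triangle $ABQ_n$, where $Q_n$ denotes the intersection of line $V_nA$ with edge $BC$; hence the density of $V_n$ at a point $u$ is $N\cdot \Area(ABQ_u)^{N-1}$. The segment $BV_n$ then splits $ABQ_n$ into $\Delta_n = BV_nA$ and an upper sub-triangle $BV_nQ_n$. The $N-1$ remaining points split binomially: $k=K_n$ of them land in $\Delta_n$ and $N-1-k$ in $BV_nQ_n$. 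The latter, together with $B$ and $V_n$ as two distinguished vertices of $BV_nQ_n$, form a genuine bi-pointed instance of the same problem; by affine invariance the sub-chain therefore has $n-1$ intermediate vertices and $m-k$ interior points with probability exactly $\bQt_{n-1,m-k}$. Combining these three contributions yields an integral expression for $\bQt_{n,m}$.

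The last step evaluates this integral. With $V_n=(x,y)$ one has $\Area(\Delta_n)=y$ and $\Area(ABQ_n)-\Area(\Delta_n)=y(2-x-y)/(x+y)$, so the remaining integrand against $\bQt_{n-1,m-k}$ is $\binom{N-1}{k}\,y^k\,(y(2-x-y)/(x+y))^{N-1-k}$ on $ABC$. The substitution $(u,v)=(x+y,\,y)$ has unit Jacobian and sends $ABC$ onto $\{0\le v\le u/2,\,0\le u\le 2\}$, and the integrand separates into a power of $v$ times a rational function of $u$; the $v$-integration is immediate and the remaining $u$-integral is a Beta integral. This evaluates to a closed form which, put back into the formula, collapses exactly to $\frac{2(k+1)}{(n+m)(n+m+1)}\bQt_{n-1,m-k}$, giving the recursion. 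Unrolling then produces (\ref{eq:qfe}). The main obstacle is the geometric identification of the sub-triangle $BV_nQ_n$ and the verification — via concavity of the chain — that the intermediate vertices $V_1,\ldots,V_{n-1}$ lie strictly above segment $BV_n$ while the $K_n$ points in $\Delta_n$ lie strictly below, so that the sub-problem is indeed a genuine bi-pointed instance with the claimed parameters.
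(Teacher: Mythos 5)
Your proof is correct and follows essentially the same route as the paper: decompose along the leftmost chain vertex $V_n$, split $ABQ_n$ into $\Delta_n$ and the sub-triangle $V_nBQ_n$, invoke affine invariance to reduce the sub-problem to $\bQt_{n-1,m-k}$, and evaluate $\int_{ABC} y^k\,|T(x,y)|^{N-1-k}\,dx\,dy$ via the substitution $(x+y,y)$ and a Beta integral — this is precisely the paper's Lemma 2.3 combined with Lemma 2.2. The only cosmetic difference is that the paper manipulates unnormalized Lebesgue volumes $c_{n,m}^\triangle$ rather than probabilities, and your explicit joint density of $(V[n],K[n])$ is left implicit there.
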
 
We will give a new proof of this result in \Cref{sec:pqgqr}.\par
Note that if $m=0$ in \eref{eq:qfe}, we retrieve a formula due to \Bara{,} Röte, Steiger and Zhang \cite{baranybipointed}:
\begin{align}\label{eq:baranybipointed}
	\bQt_{n,0} = \frac{2^n}{n!(n+1)!}.
\end{align}

In order to provide a limit formula for $\bQt_{n,\fnl}$ and to express the limit shape theorem under $\Qt_{n,\fnl},$ we need a last \underbar{technical expedient}: consider the map
\ben\label{eq:Ps}
\app{\Psi}{[0,+\infty)}{[0,+\infty)}{r}{\Psi(r)=\dis \frac{\sinh(2r )}{2r }-1}.\een
 This map is continuous, increasing, and sends bijectively $[0,+\infty)$ onto $[0,+\infty)$.
 Hence, for any $\lambda\geq 0$, there exists a unique element $r_\lambda$ of $[0,\infty)$ such that
 \beq\label{eq:qdfs} \Psi(r_\lambda)= \lambda \iff r_\lambda=\Psi^{-1}(\lambda),
 \eq
 and $r $ is then a short notation for $\Psi^{-1}$.\par
As we will see, the implicit function $r_\lambda$ is indispensable to express our limit theorems.

One of the main results of the paper is the following asymptotic expansion of $\bQt_{n,m(n)}$ when $m(n)$ is linear:
\begin{theo}\label{theo:betala} Let $\lambda>0$, as $n\to +\infty$,
\beq n^{-1}\l(\log\l(\bQt_{n,\floor{n\lambda} }\r) +  2n\log(n)\r)   
\label{eq:jspquoi}\longrightarrow\beta_\la\eq with
\ben\label{eq:betalambda}
\beta_\la 
&=& \log\l(2e^2 \rl^2\tanh(\rl)^{-2(\lambda+1)}\r). 
\een
\end{theo}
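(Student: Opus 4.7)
The plan is to combine Buchta's formula (\Cref{theo:qdzada}) with a saddle-point analysis of the generating function $G_n(z):=\sum_{m\ge 0}\bQt_{n,m}\,z^m$. Multiplying the recursion by $z^m$, summing, and using $\sum_{k\ge 0}(k+1)z^k=(1-z)^{-2}$, the factor $(n+m)(n+m+1)$ on the left-hand side corresponds to the second-order operator $(z\partial_z+n)(z\partial_z+n+1)$, and one obtains the recursive ODE
\[
n(n+1)\,G_n(z)+2(n+1)z\,G_n'(z)+z^2 G_n''(z)=\frac{2\,G_{n-1}(z)}{(1-z)^2}.
\]

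I would then make the WKB-type ansatz $G_n(z)\sim A(z)^n n^{-2n} B_n(z)$, where the scale $n^{-2n}$ is suggested by Stirling applied to the closed form $\bQt_{n,0}=2^n/(n!(n+1)!)$. Substituting into the ODE and extracting the leading $n^2$-balance (the key cancellation $(1-1/n)^{2n-2}\to e^{-2}$ being what produces the factor $e^2$) forces the first-order algebraic relation
\[
\Bigl(1+\frac{zA'(z)}{A(z)}\Bigr)^{\!2}=\frac{2e^2}{A(z)(1-z)^2}.
\]
Setting $v(z):=\sqrt{zA(z)}$ reduces this to the separable equation $v'(z)=e/(\sqrt{2z}(1-z))$, which after the substitution $\sqrt s=w$ integrates to $v(z)=e\sqrt 2\,\arctanh(\sqrt z)$ (the integration constant vanishes because $v(0)=0$ automatically). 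Hence $A(z)=2e^2\arctanh^2(\sqrt z)/z$, and the boundary value $A(0)=2e^2$ matches the $\la=0$ expansion $\beta_0=\log(2e^2)$ derived from Stirling, a useful consistency check.

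Finally, Cauchy's formula gives
\[
\bQt_{n,\fnl}\;=\;\frac1{2\pi i}\oint G_n(z)\,z^{-\fnl-1}dz\;\sim\;\frac{1}{n^{2n}}\oint\!\Bigl(\frac{A(z)}{z^\la}\Bigr)^{\!n}\frac{B_n(z)\,dz}{2\pi i\,z},
\]
to which I apply the saddle-point method. The critical point of $\log(A(z)/z^\la)$ solves $\sqrt z/((1-z)\arctanh\sqrt z)=1+\la$. The substitution $\sqrt z=\tanh r$, combined with $1-\tanh^2 r=\cosh^{-2}r$ and $2\sinh r\cosh r=\sinh(2r)$, transforms this identity into $\sinh(2r)/(2r)=1+\la$, i.e.\ $\Psi(r)=\la$, whose unique positive solution is $r=\rl$. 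Thus $z^\ast=\tanh^2(\rl)$ and
\[
\frac{A(z^\ast)}{(z^\ast)^\la}=\frac{2e^2\rl^2}{\tanh^{2(\la+1)}(\rl)}=e^{\beta_\la},
\]
which yields $\log\bQt_{n,\fnl}+2n\log n\sim n\beta_\la$, as claimed.

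The main obstacle will be to justify the WKB ansatz and the saddle-point step rigorously, controlling the sub-exponential prefactor $B_n(z)$ and the error in the ansatz uniformly along a contour through $z^\ast$. A natural route is to use the explicit solvability of the leading-order ODE to build a matching approximant for $G_n$ and to control the remainder via the recursive ODE together with a priori bounds; alternatively, one can obtain matching upper and lower bounds directly on the Buchta sum \eqref{eq:qfe} by restricting the sum to compositions whose empirical profile $(S_j/m)_{j\le n}$ concentrates on the extremal profile dual to $z^\ast$, thereby linking the computation of $\beta_\la$ to the limit-shape statement of \Cref{theo:lim}.
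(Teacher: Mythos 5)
Your route is genuinely different from the paper's. We prove the two inequalities separately: the lower bound comes from rewriting Buchta's sum \eqref{eq:qfe} as an expectation over independent variables with laws $\mu_{d_i}$ tilted along an explicit profile $d_i^{(n)}\approx\tanh(\rl i/n)^2$, and the upper bound from slicing the sum into blocks and solving a calculus-of-variations problem (Beltrami identity) whose unique maximizer is $M^\star(t)=-t+\sinh(2\rl t)/(2\rl)$. Your generating-function computation is formally correct end to end: the ODE $n(n+1)G_n+2(n+1)zG_n'+z^2G_n''=2G_{n-1}/(1-z)^2$ does follow from \Cref{theo:qdzada}, the leading balance does give $(1+zA'/A)^2=2e^2/(A(1-z)^2)$, the substitution $v=\sqrt{zA}$ does integrate to $A(z)=2e^2\arctanh^2(\sqrt z)/z$, and the saddle equation $\sqrt z/((1-z)\arctanh\sqrt z)=1+\la$ is exactly $\Psi(r)=\la$ under $\sqrt z=\tanh r$, giving $z^\ast=\tanh^2(\rl)$ and $A(z^\ast)(z^\ast)^{-\la}=e^{\beta_\la}$. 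That the dual variable $z^\ast$ of the saddle point coincides with the limit $\tanh(\rl)^2$ of our tilting parameter $d_n^{(n)}$ is no accident, and your derivation is an attractive, compact way to \emph{discover} $\beta_\la$. It also cleanly explains where $e^2$ comes from (the ratio $(n-1)^{-2(n-1)}/n^{-2n}$), which in our proof is somewhat buried in the Stirling bookkeeping.

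However, as written this is a heuristic, not a proof, and the gap is precisely where all the work lies. Two steps are missing. First, the WKB ansatz: you need $n^{-1}\log\bigl(n^{2n}G_n(z)\bigr)\to\log A(z)$ with an $e^{o(n)}$ error controlled uniformly near $z^\ast$; the recursive ODE alone does not yield this without an induction establishing both an upper and a lower envelope for $G_n$, and nothing in the proposal supplies the lower envelope. Second, coefficient extraction: since the $\bQt_{n,m}$ are nonnegative, the bound $\bQt_{n,\fnl}\leq G_n(z^\ast)(z^\ast)^{-\fnl}$ is immediate once the upper envelope for $G_n(z^\ast)$ is known, but the matching lower bound $\bQt_{n,\fnl}\geq e^{-o(n)}G_n(z^\ast)(z^\ast)^{-\fnl}$ requires showing that the mass of $\sum_m\bQt_{n,m}(z^\ast)^m$ concentrates on a sub-exponential window of $m$ around $n\la$ (or a log-concavity/unimodality statement in $m$); you cannot lower-bound a single coefficient from the value of the generating function without such concentration. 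Your proposed fallback for closing both gaps --- restricting the Buchta sum to compositions whose profile concentrates on the extremal one --- is essentially our Lemmas \ref{lem:LB} and \ref{lem:UB}, so the analytic shortcut does not, at present, shorten the proof; it repackages the answer.
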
 Because of the relation \eref{eq:qdfs} between $r_\la$ and $\la$, there exists numerous formulas for the same quantity, using ``more or less'' the $r_\lambda$ parametrization. For example, we also have 
\ben\label{eq:betalambdap}
\beta_\la
&=&   - \frac{\sinh(2\rl)}{\rl} \log(\tanh(\rl)) +\log  \left(2e^2 \rl^2 \right)\\
&=&\log(2e^2\rl^2)+2(\la+1)\log\l(\frac{\cosh(\rl)^2}{\rl(\la+1)}\r).
\een

\begin{figure}[!]
  \centerline{\includegraphics[width=6cm]{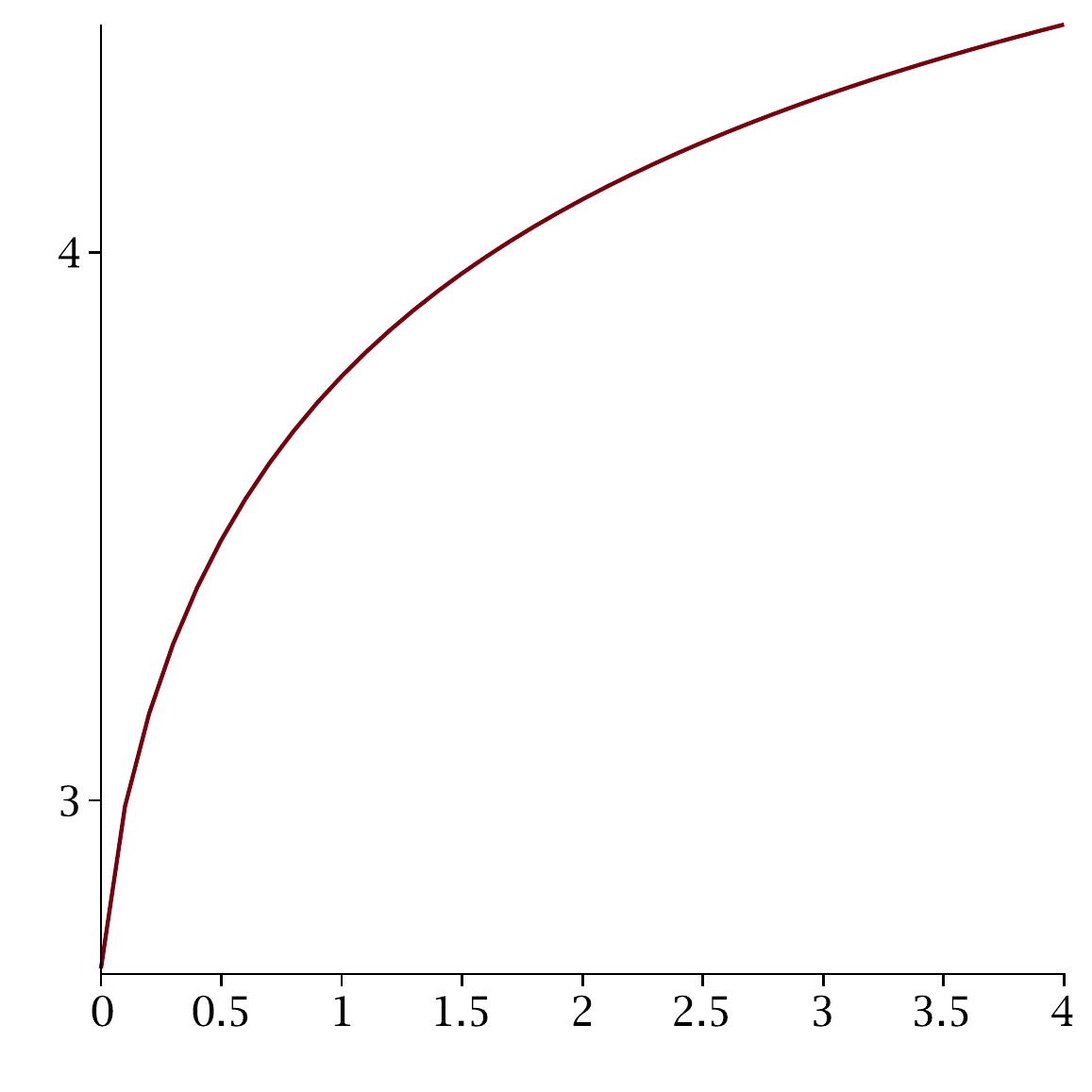}}
  \captionn{Function $\lambda\mapsto \beta_\lambda$. We have $\beta_0=2+\log(2)$.}
  \end{figure}
  To state our limit shape theorem under  $\Qt_{n,\fnl}$, we will work with the interpolated version of the sequence $(V_i,0\leq i \leq n+1)$, so that now, $(V_{x},0\leq x \leq n+1)$ is defined for all real number $x$, as well as its two coordinates $((X_x,Y_x), 0\leq x \leq n+1)$.
  We want to state a global convergence theorem, so we need a change of time to keep a compact parametrization at the limit, and we choose to encode our processes on $[0,1]$: we define
  \[X^{(n)}(x)= X_{x(n+1)}, ~~Y^{(n)}(x)= Y_{x(n+1)}, ~~\textrm{for }x\in[0,1],\]
  or in other words, the time $t$ at which the piece-wise linear map $X^{(n)}$ (resp. $Y^{(n)}$) takes the value $X_{k}$ (resp. $Y_k$) is $t= k/(n+1)$.
\begin{theo}\label{theo:lim}[Limit shape theorem] Let $\lambda>0$.  As $n\to+\infty$, under $\Qt_{n,\fnl}$,
  \[(X^{(n)},Y^{(n)}) \proba \l(X,Y\r),\]
 in $C([0,1],\R)^2$ (equipped with the uniform topology), toward the deterministic pair $\l(X,Y\r)$, where for $t\in[0,1]$,  
\ben
\label{eq:X}X(t)&=&1+\frac{\sinh(\rl(1-2t))}{\sinh(\rl)},\\
\label{eq:Y}Y(t)&=&\frac{2\cosh(\rl)}{ \sinh(\rl)^2} \sinh(\rl(1-t))\sinh(\rl t).
\een
\end{theo}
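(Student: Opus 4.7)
The plan is to combine a tilting argument with the variational characterization of ${\cal H}_\la$ given by \Cref{theo:optri}. The starting observation, obtained from exchangeability of the $U_i$'s, is that the conditional density of the chain $V=(V_1,\dots,V_n)$ under $\Qt_{n,m}$ is proportional to $\Area(V)^m$ times its density under $\Qt_{n,0}$. In other words, $\Qt_{n,m}$ is the exponential tilt of $\Qt_{n,0}$ by $\exp(m \log \Area)$. As $n\to\infty$ with $m=\fnl$, the known rate $3\log\L$ for $\Qt_{n,0}$ (derivable from the exact count \eqref{eq:baranybipointed} of \cite{baranybipointed}) plus the tilt contribution $\la\log\Area$ produce an LDP whose unique maximizer is the concave curve in $ABC$ from $A$ to $B$ maximizing
\[
  \PC({\cal C}) = \L({\cal C})^3 \, \Area({\cal C})^\la,
\]
namely ${\cal H}_\la$ by \Cref{theo:optri}.

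To obtain the parametric formulas \eqref{eq:X}--\eqref{eq:Y}, we solve the Euler-Lagrange equation for $\log\PC$. After parametrizing the curve by its affine arclength and introducing a Lagrange multiplier $\rl$ enforcing the area constraint coming from the tilt, the equation linearizes into $f''=\rl^2 f$. Hyperbolic-trigonometric solutions combined with the endpoints $V(0)=B$, $V(1)=A$ yield the formulas in the statement, with $\rl$ pinned down by $\Psi(\rl)=\la$ (see \eqref{eq:qdfs}); this is how $r_\la=\Psi^{-1}(\la)$ enters.

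The LDP gives Hausdorff convergence of the chain to ${\cal H}_\la$ as a set. To upgrade this to uniform convergence of the parametrized maps $(X^{(n)},Y^{(n)})$, one must show that the index $i/(n+1)$ asymptotically corresponds to the parametrization $t$ of \eqref{eq:X}--\eqref{eq:Y}. This follows from a second-moment computation on the partial contents $S_i(K[n])/m$ under $\Qt_{n,m}$, using the product formula \eqref{eq:QtK} of \Cref{theo:qdzada}: one identifies the concentration profile $s^*(t)$ of $S_{\lfloor tn\rfloor}/m$ and checks that its inverse is precisely the time reparametrization built into $t\mapsto(X(t),Y(t))$. \textbf{The main obstacle} is rigorously upgrading the tilting heuristic into a full LDP: the $\la=0$ results give only an LLN, so to get the exponential rate needed one has to carry out a direct Laplace-type analysis of \eqref{eq:qfe} matching the polynomial $n^{-2n}$ prefactor to the exponential $\PC$-rate, together with a tightness argument in the space of concave curves to exclude oscillatory non-smooth limits. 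Solving the Euler-Lagrange equation and matching the Lagrange multiplier to $\rl$ is then a routine hyperbolic-trigonometric calculation.
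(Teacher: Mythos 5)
Your tilting identity is correct --- the law of the boundary chain under $\Qt_{n,m}$ is indeed the $\Area^m$-tilt of the law of $n$ points in convex position with $A,B$ --- and the variational problem you extract ($\argmax \L^3\A^\la$, solved by ${\cal H}_\la$ via \Cref{theo:optri}) is the right one. But there are two genuine gaps. The first you flag yourself: the shape-LDP under $\Qt_{n,0}$ with rate $3\log\L({\cal C})$ is not available. Formula \eqref{eq:baranybipointed} and the results of \cite{baranybipointed} give the total mass and a law of large numbers, not exponential decay rates for the event that the chain stays near a prescribed concave curve; and the ``direct Laplace-type analysis of \eqref{eq:qfe}'' you defer to is not an afterthought but the bulk of the work. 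The paper in fact never proves a shape-LDP: it slices \eqref{eq:qfe} into sub-sums indexed by the profile of the partial contents $S_i=K_1+\cdots+K_i$, shows via a Beltrami/Wirtinger argument that $n^{-1}S_{nt}$ concentrates on the unique maximizer $M^\star(t)=-t+\sinh(2\rl t)/(2\rl)$ (\Cref{theo:podqgh}), and only then recovers the geometry.

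The second gap is the step you underestimate: passing from a Hausdorff limit shape to uniform convergence of the parametrized maps $(X^{(n)},Y^{(n)})$. Inverting the concentration profile of $S_{\lfloor tn\rfloor}/m$ does not tell you where the $i$-th vertex sits; knowing how the interior mass distributes is not the same as knowing the vertex positions. The missing structural input is \Cref{theo:qdzada2}$(c)$: conditionally on $K[n]$, the quantity $2-(X_j+Y_j)$ is an explicit product of independent beta variables whose parameters involve the $S_i$. It is this exact representation, combined with the concentration of $S[n]$, a conditional first/second moment computation, and a Dini-type monotonicity argument, that yields $2-(X_{\lfloor (n+1)t\rfloor}+Y_{\lfloor(n+1)t\rfloor})\proba 2\sinh^2(\rl t)/\sinh^2(\rl)$ uniformly and hence \eqref{eq:X}--\eqref{eq:Y}. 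Your Euler--Lagrange computation identifies ${\cal H}_\la$ only as a set; the specific parametrization by normalized vertex index --- equivalently, the asymptotic equidistribution of the vertices in affine arclength --- is precisely what must be proved and does not follow from the shape optimization alone.
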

Of course, $(X,Y)$ depends on $\la$, and we could have written $(X^{(\lambda)},Y^{(\lambda)})$ instead. The next theorem allows to see that a phase transition occurs under $\Qt_{n,m(n)}$ when $m(n)$ has a linear order.
\begin{theo}[Phase transition]\label{theo:mon}
  \bir
  \itr If $m(n)=o(n)$, then  $(X^{(n)},Y^{(n)})\proba (X,Y)$ in $C([0,1],\R)^2$ where  $X_t=2-2t$ and $Y_t=2t(1-t)$, that is, the limit shape is \Bara \& al.  \cite{baranybipointed}   parabola.
  \itr If $m(n)/n \to +\infty$ then $d_H(\CH(U[n+m(n)]),ABC)\proba 0$.
  \eir
\end{theo}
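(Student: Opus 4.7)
The plan is to leverage the stochastic monotonicity provided by Lemma~\ref{lem:sto}: as $m$ increases, the chain under $\Qt_{n,m}$ is pushed upward inside the triangle $ABC$. This lets one sandwich the chain associated with $\Qt_{n,m(n)}$ between two regimes whose limit shapes are already known, namely the parabolic case $m=0$ due to \Bara{} et al.~\cite{baranybipointed} and the $\lambda$-hyperbolic regime of Theorem~\ref{theo:lim}.

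For part~$(i)$, fix an arbitrary $\lambda>0$. Since $m(n)=o(n)$, eventually $0\leq m(n)\leq\floor{n\lambda}$, and by Lemma~\ref{lem:sto} one can couple the three chains so that the height profile $Y^{(n)}$ is pointwise monotone in $m$. The two extreme chains converge in probability, respectively to the parabola $Y_t=2t(1-t)$ and to the $\lambda$-hyperbola $(X^{(\lambda)},Y^{(\lambda)})$ of Theorem~\ref{theo:lim}. Because $r_\lambda=\Psi^{-1}(\lambda)\to 0$ as $\lambda\to 0$, a Taylor expansion in $r_\lambda$ yields $Y^{(\lambda)}(t)=2t(1-t)+O(r_\lambda^2)$ and $X^{(\lambda)}(t)=(2-2t)+O(r_\lambda^2)$, both uniformly in $t\in[0,1]$. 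Letting $\lambda\to 0$ collapses the sandwich and delivers $Y^{(n)}\to 2t(1-t)$ in probability, uniformly on $[0,1]$. The companion convergence $X^{(n)}\to 2-2t$ then follows from a tightness-plus-rigidity argument: each chain is a monotone concave graph inside the compact $ABC$, so the sequence $(X^{(n)},Y^{(n)})$ is tight in $C([0,1],\R)^2$, and every subsequential limit must be a monotone concave parametrization whose height profile is the parabola, which forces $X(t)=2-2t$.

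For part~$(ii)$, the opposite regime is used. Fix $\lambda$ large; since $m(n)/n\to\infty$, eventually $m(n)\geq\floor{n\lambda}$, and by Lemma~\ref{lem:sto} the chain under $\Qt_{n,m(n)}$ dominates in height the $\lambda$-hyperbola regime of Theorem~\ref{theo:lim}. As $\lambda\to\infty$ one has $r_\lambda\to\infty$, and a direct analysis of \eqref{eq:X} and \eqref{eq:Y} shows that the image of $t\mapsto (X(t),Y(t))$ converges in Hausdorff distance to the polygonal line $BC\cup CA$: indeed, rescaling at the boundary-layer scale $t=\varepsilon/r_\lambda$ with $\varepsilon\geq 0$ fixed, a short computation yields $(X(t),Y(t))\to(1+e^{-2\varepsilon},1-e^{-2\varepsilon})$, which parameterises the segment $BC$, with a symmetric statement for $t$ near $1$ covering $CA$. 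Consequently, for any $\delta>0$, taking $\lambda$ large enough forces the chain under $\Qt_{n,m(n)}$ into a $\delta$-neighbourhood of $B\cup BC\cup CA\cup A$ while also guaranteeing chain vertices within $\delta$ of each of $A$, $B$, and $C$. Then $\CH(\{V_1,\dots,V_n\})$ is $\delta$-close to $ABC$, and since $\CH(\{V_1,\dots,V_n\})\subseteq\CH(U[N])\subseteq ABC$, we conclude $d_H(\CH(U[N]),ABC)\to 0$ in probability.

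The main obstacle is to upgrade the natural height-monotonicity of Lemma~\ref{lem:sto} to convergence in $C([0,1],\R)^2$ for \emph{both} coordinates in part~$(i)$; my approach relies on an auxiliary tightness argument for $X^{(n)}$ which must exploit the uniform concavity and the compactness of $ABC$, with some care to rule out degenerate subsequential limits where parametric mass escapes toward the endpoints. A secondary technical point in part~$(ii)$ is the singular $\lambda\to\infty$ limit of the hyperbola: the parametric map degenerates pointwise to the single vertex $C$, so the identification of its image with $BC\cup CA$ has to rely on the boundary-layer rescaling $t\sim 1/r_\lambda$ outlined above, and then be coupled with a lower bound showing that the random chain inherits vertices asymptotically close to $C$.
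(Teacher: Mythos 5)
There is a genuine gap, concentrated in part $(i)$. You invoke Lemma~\ref{lem:sto} to ``couple the three chains so that the height profile $Y^{(n)}$ is pointwise monotone in $m$'', but that lemma only gives stochastic dominance of the \emph{scalar} random variable ${\sf Area}_{n,m}$; it says nothing about a monotone coupling of the chains themselves, and no such chain-level monotonicity is established (or obviously true). Even if you granted yourself such a sandwich, it would at best control the \emph{image} of the chain, whereas the statement of $(i)$ is convergence of the parametrized pair $(X^{(n)},Y^{(n)})$ in $C([0,1],\R)^2$, i.e.\ it also identifies where the $k$-th vertex sits along the curve. Your ``rigidity'' step — that any monotone concave parametrization whose image is the parabola must have $X(t)=2-2t$ — is false: any increasing reparametrization of the parabola is again a monotone concave parametrization with the same image. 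The paper avoids both problems by working with the explicit conditional law of $2-(X_{\lfloor (n+1)t\rfloor}+Y_{\lfloor (n+1)t\rfloor})$ given the contents sequence (Theorem~\ref{theo:qdzada2}$(c)$): since $\sup_t n^{-1}\bS_{nt}\leq m(n)/n\to 0$, the product of beta means converges to $Q(t)=2t^2$ and the conditional variance to $0$, and solving $2-X_t-Y_t=Q(t)$, $X_t-Y_t=Q(1-t)$ pins down the parametrization $X_t=2-2t$, $Y_t=2t(1-t)$.

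Part $(ii)$ is closer in spirit to the paper's argument but suffers from the same misreading: ``dominates in height'' is not what Lemma~\ref{lem:sto} provides. The fix is to use exactly what it does provide. The bi-pointed analogue of the lemma gives $\E({\sf Area}^{\triangle\bullet\bullet}_{n,m+1})\geq\E({\sf Area}^{\triangle\bullet\bullet}_{n,m})$, so for $m(n)/n\to\infty$ and any fixed $\lambda$ one gets $\liminf_n\E({\sf Area}_{n,m(n)})\geq \A({\cal H}_\lambda)$, and $\sup_\lambda \A({\cal H}_\lambda)=1=\A(ABC)$; since for convex subsets of $ABC$ convergence of expected area to the full area is equivalent to Hausdorff convergence in probability (the argument of Theorem~\ref{theo:compa}), this concludes. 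Your boundary-layer analysis of ${\cal H}_\lambda$ near $BC\cup CA$ as $r_\lambda\to\infty$ is a correct and pleasant computation, but it is not needed once one works with areas, and on its own it cannot replace the missing height-domination coupling.
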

In particular, $(i)$ holds when $m(n)=0$, that is $\lambda=0$, in which we recover \Bara\& al. result.
\color{black}

\begin{figure}[h!]
    \centering
    \includegraphics[height=4.2cm]{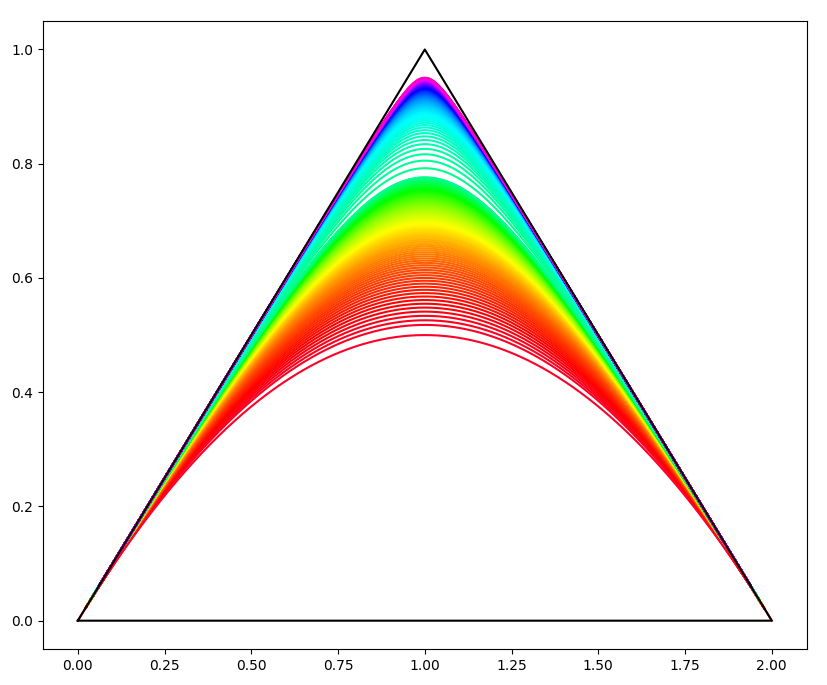}
    \includegraphics[height=4.2cm]{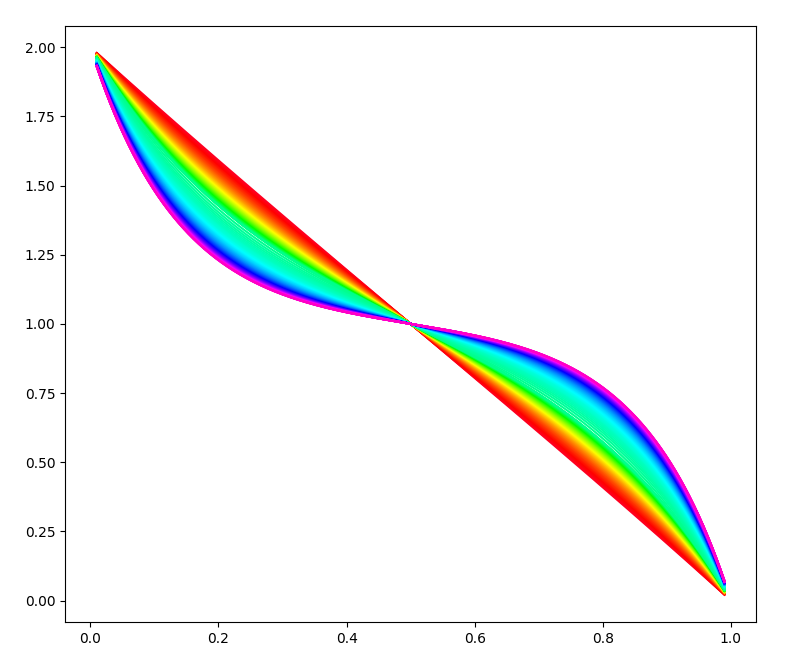}
    \includegraphics[height=4.2cm]{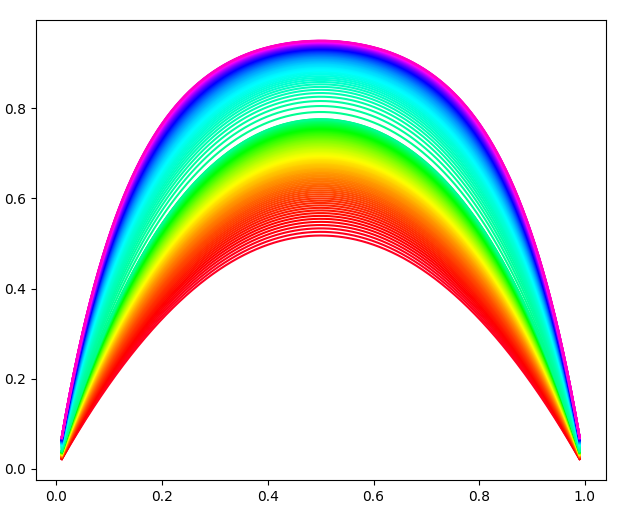}
    \captionn{Limiting curves for $\lambda$ going from $0.1$ to $5$ (by step $0.05$) and then from $5$ to $100$ (step of $1$). The curves become hot when $\lambda\to0$ and cold when $\lambda$ grows . 
 On the left the parametric curve $\{(X(t),Y(t)), t\in[0,1]\}$ (which is thus the hyperbola ${\cal H}_\la$), in the center $t\mapsto X(t)$ and at the right $t\mapsto Y(t)$. When $\lambda$ is close to zero, $X$ is essentially linear, meaning that the abscissa of the points $V_i$ are somehow uniform on $[0,1]$, while a deformation arises as $\lambda$ grows. Observe that $X(t)$ goes from 2 to 0 because of our choice of taking the $(X_i)$ decreasing.}
    \label{fig:my_label2}
  \end{figure}
Consider the curve  \[{\cal H}_\lambda:=\{ (X(t),Y(t), t \in [0,1]\}\]
and its area  $\A({\cal H}_\lambda)=\Leb\l( \CH( {\cal H}_\lambda)\r)$.

\begin{rem}
  Theorems \ref{theo:lim} and \ref{theo:mon}$(i)$ are stronger than a limit shape theorem for the Hausdorff topology, since they provide the limiting distribution of the points on the boundary of the convex hull and inside.
  Denote by $W_1,\cdots,W_{{\bf m}(N)}$ the $U_i$ that are not vertices of ${\cal C}(N)$, that is, those that lie in $\Int {\cal C}(N)$.
Under $\Qt_{n,n\lambda}$, if one wants to understand the limit of the pair of probability measures  
\[(\mu^{n}_B,\mu^n_I):=\l(\frac{1}{n} \sum_{i=1}^n \delta_{V_i}, \frac{1}{\floor{n\lambda}} \sum_{ j=1}^{\floor{n\lambda}} \delta_{W_i}\r)\]
that encodes the exact positions of the points on the boundary of the convex hull of the $U_i$, and those in the interior of the convex hull, then  \Cref{theo:lim} implies that \[(\mu^n_B,\mu_n^I)\proba (\mu_B,\mu_I)\] where $\mu_B$ is the deterministic measure on $\R^2$ which integrates (continuous and bounded) test functions $f:\R^2\to \R$ as $\int f \d\mu_B=\int_{0}^1 f(X_s,Y_s) \d s $ and 
$\int f \d\mu_I= \int_{ 0}^2 \int_0^1 f(x,y) \frac{\1_{y\leq Y(x)}}{{\Area({\cal H}_\lambda)}} \d y\d x$
(the convergence of the first marginal is a consequence of the theorem, and the convergence of the second marginal comes from the fact that the $W_i$ are, conditional on $V[n]$, independent and uniform in the interior of the convex hull of the $U_i$). The values of the deterministic processes $X$ and $Y$ show that the distribution of the abscissa of a random point taken among the $V_i$ depends (asymptotically) on $\lambda$ (see also Fig. \ref{fig:my_label2}).  
\end{rem}

\begin{theo}\label{eq:hyper} The curve ${\cal H}_\lambda$ is a hyperbolic curve that solves the equation
\[\tanh(\rl)^2(Y(t)-\tanh(\rl)^{-2})^2=\frac{1-\tanh^2(\rl)}{\tanh(\rl)^2}+(X(t)-1)^2,~~~\forall t\in[0,1].\]
The map  $Y$ can be expressed in terms of $X$. We have $X(t)=x\in[0,2]$ iff
\[  t=\frac {\rl-{\rm arcsinh} \left(\sinh \left( \rl \right) x- \sinh \left( \rl \right) \right)}{2\rl},  \]
and then
 \beq Y(x)=x-2\,{\frac {  \dis\sinh \left( \frac \rl2+\frac12\,{\rm arcsinh} \left(\sinh
 \left( \rl \right)  \left( x-1 \right) \right) \right)  ^{2}}{
\sinh \left( \rl \right)   ^{2}}},~~~ 0\leq x \leq 2.\eq
The area   and affine length of ${\cal H}_{\lambda}$ are given by
   \beq\label{eq:area}
   {\A}({\cal H}_\lambda)=  \rl {\frac {\cosh \left(\rl \right)  }{ 
\sinh \left( \rl \right)  ^{3}} \left( {\frac{\sinh \left( 2\,\rl \right)}{2\rl}}-1\right) }  =\rl \la  \frac {\cosh \left( \rl \right)  }{ 
\sinh \left( \rl \right)  ^{3}}
=\frac{\lambda}{(\lambda+1)^3}\frac{\cosh(\rl)^4}{\rl^2}    \eq
and
\beq\label{eq:Lr} 
\L_\lambda:= \L({\cal H}_\lambda)
=  2\prod_{k\geq 0} 2 \l(  \frac{\cosh(\rl/2^k)}{2(\cosh(\rl/2^k)+1)^2}\r)^{1/3} =\frac{2\rl\cosh(\rl)^{1/3}}{\sinh(\rl)} =  
\frac{2 \cosh(\rl)^{4/3}}{\lambda+1}.
\eq
For all $t\in[0,1]$,
\beq {Y''(t)X'(t)-X''(t)Y'(t)}  = \L_\lambda^3.
\eq
\end{theo}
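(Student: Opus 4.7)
My plan is to reduce every claim to the single auxiliary variable $u := r_\lambda(1-2t)$, which ranges monotonically over $[-r_\lambda, r_\lambda]$ as $t$ sweeps $[0,1]$. The product-to-sum identity
\[2\sinh\bigl(r_\lambda(1-t)\bigr)\sinh(r_\lambda t) = \cosh(r_\lambda) - \cosh(u)\]
rewrites $Y(t) = \bigl(\cosh(r_\lambda)^2 - \cosh(r_\lambda)\cosh(u)\bigr)/\sinh(r_\lambda)^2$, while $X(t)-1 = \sinh(u)/\sinh(r_\lambda)$ by definition. The hyperbola equation is then nothing but $\cosh(u)^2 - \sinh(u)^2 = 1$ re-expressed through these two formulas, using $1-\tanh^2(r_\lambda) = 1/\cosh(r_\lambda)^2$. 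The inversion $t = (r_\lambda - \arcsinh(\sinh(r_\lambda)(x-1)))/(2r_\lambda)$ is then immediate from the bijectivity of $\sinh$. For the closed form of $Y(x)$, I set $v := \tfrac12\arcsinh(\sinh(r_\lambda)(x-1))$ so that $\sinh(2v) = \sinh(u)$ and hence $2v = u$; applying the half-angle identity $2\sinh^2(a) = \cosh(2a) - 1$ at $a = r_\lambda/2 + v$, then expanding $\cosh(r_\lambda + u)$ by the addition formula, produces $x - 2\sinh(r_\lambda/2+v)^2/\sinh(r_\lambda)^2 = (\cosh(r_\lambda)^2 - \cosh(r_\lambda)\cosh(u))/\sinh(r_\lambda)^2$ after using $\cosh(r_\lambda)^2-\sinh(r_\lambda)^2 = 1$.

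For the area, I compute $\A(\mathcal{H}_\lambda) = -\int_0^1 Y(t) X'(t)\,dt$, the sign accounting for $X$ being decreasing from $2$ to $0$. With the reduction above the integrand becomes a quadratic polynomial in $\cosh(u)$, so the change of variable $w = r_\lambda(1-2t)$ reduces the computation to the two elementary integrals $\int_{-r_\lambda}^{r_\lambda}\cosh w\,dw = 2\sinh(r_\lambda)$ and $\int_{-r_\lambda}^{r_\lambda}\cosh^2 w\,dw = r_\lambda + \tfrac12\sinh(2r_\lambda)$. Combining these yields the prefactor $\sinh(2r_\lambda)/(2r_\lambda) - 1$, which equals $\lambda$ by the defining equation $\Psi(r_\lambda) = \lambda$. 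The other two equivalent forms follow from the identity $\lambda + 1 = \sinh(r_\lambda)\cosh(r_\lambda)/r_\lambda$.

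For the affine length, the key observation is that $X'(t)Y''(t) - Y'(t)X''(t)$ is constant in $t$. Direct differentiation gives $X'(t) = -2r_\lambda\cosh(u)/\sinh(r_\lambda)$, $X''(t) = 4r_\lambda^2\sinh(u)/\sinh(r_\lambda)$, and analogously $Y'(t) = 2r_\lambda\cosh(r_\lambda)\sinh(u)/\sinh(r_\lambda)^2$, $Y''(t) = -4r_\lambda^2\cosh(r_\lambda)\cosh(u)/\sinh(r_\lambda)^2$; the determinant then collapses to $(8r_\lambda^3\cosh(r_\lambda)/\sinh(r_\lambda)^3)(\cosh^2 u - \sinh^2 u) = \L_\lambda^3$. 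Integrating the cube root over $[0,1]$ yields $\L_\lambda = 2r_\lambda\cosh(r_\lambda)^{1/3}/\sinh(r_\lambda)$; the third closed form uses once again $r_\lambda/\sinh(r_\lambda) = \cosh(r_\lambda)/(\lambda+1)$. For the infinite product, the half-angle identity $\cosh(r)+1 = 2\cosh(r/2)^2$ rewrites each factor as $a_k := 2\bigl(\cosh(r_\lambda/2^k)/(2(\cosh(r_\lambda/2^k)+1)^2)\bigr)^{1/3} = \cosh(r_\lambda/2^k)^{1/3}/\cosh(r_\lambda/2^{k+1})^{4/3}$, so the partial products telescope into $\cosh(r_\lambda)^{1/3}\bigl(\prod_{k=1}^K\cosh(r_\lambda/2^k)\bigr)^{-1}\cosh(r_\lambda/2^{K+1})^{-4/3}$; sending $K\to\infty$ and invoking the classical product $\prod_{k\geq 1}\cosh(r_\lambda/2^k) = \sinh(r_\lambda)/r_\lambda$ (obtained by iterating $\sinh(2x)=2\sinh(x)\cosh(x)$) recovers $\L_\lambda$. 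No step presents a genuine obstacle: once the variable $u$ is introduced, the entire theorem is a bookkeeping exercise in hyperbolic trigonometry.
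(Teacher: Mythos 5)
Your proof is correct and follows essentially the same route as the paper: direct computation from the explicit parametrization $(X,Y)$, with the area obtained from $\int_0^1 X'(t)Y(t)\,\d t$, the affine length from the constancy of $Y''(t)X'(t)-X''(t)Y'(t)$, and the same classical identity $\prod_{k\geq 1}\cosh(\rl/2^k)=\sinh(\rl)/\rl$. The only minor divergence is that the paper obtains the infinite-product form of $\L_\lambda$ as a byproduct of the binary decomposition of hyperbolas (\Cref{lem:qhtdfd}), whereas you verify it purely algebraically by telescoping the factors; both arguments are valid.
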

We offered several formulas for $\L({\cal H}_\lambda)$ and $\A({\cal H}_\lambda)$. Passing from one formula to another is a simple exercise (in general). As we will see, playing with the different representations of these same quantities will be needed all along the paper.
\begin{rem} [Identification of the parabola ${\cal P}$ and the ``hyperbola'' ${\cal H}_0$]
  The parabola  ${\cal P}$ is the set of points $\{(x,f(x))~:~ x\in[0,2]\}$, where $f(x)=x(2-x)/2$. We have
  \[\A\bigl({\cal P}\bigl)=2/3,~~\L\bigl({\cal P}\bigl)=2.\]
  All the formulas we gave concerning ${\cal H}_\lambda$ are valid for the parabola ${\cal P}$, either by taking $\lambda=0$, or by taking the limit when $\lambda \to 0$. For example, observe \eref{eq:area}. 
Since $\lambda+1=\sinh(2\rl)/(2\rl)$, for $\lambda$ close to $0$, $r_\lambda$ is close to 0 too. And then, in the neighborhood of zero, since $\sinh(x)=x+x^3/6+o(x^4)$, we get 
\[\lambda+1-(1+2r_\lambda^2/3)=o(r_\lambda^3)\]
and this gives $- 2(1+\lambda)  \log(\tanh(\rl)) +2\,\log  \left( \rl \right)\underset{\lambda\to 0}{\to} 0$, so that  
\[n^{-1}\l(\log(\bQt_{n,\fnl }) +  2n\log(n)\r) \underset{\lambda\to 0}{\to} \frac{e^2}{4}\L({\cal  P})^3=2 e^2.\]  This is consistent with \eqref{eq:baranybipointed}. 
Moreover,
\[\A({\cal H}_\lambda)= \frac{\rl\cosh(\rl)}{\sinh(\rl)}\frac{\lambda}{\sinh(\rl)^2}\sim\frac{\rl\cosh(\rl)}{\sinh(\rl)}\frac{\lambda}{ \rl^2} \to 1\times 2/3 \]
and we recover the area $\A({\cal P})=\lim_{\lambda\to 0} \A({\cal H}_\lambda)$.\\
In the sequel, when we write ``for $\lambda\geq 0$'' in a formula involving ${\cal H}_\lambda$, we are implicitly identifying ${\cal H}_0$ with ${\cal P}$.
\end{rem}

\begin{rem}\label{rem:qd} There is a general simple relation between $\L({\cal H}_\lambda)$ and $\A({\cal H}_\lambda)$, valid for $\lambda>0$:
\[\lambda\; \L({\cal H}_\lambda)^3=8\;r_\lambda^2\; \A({\cal H}_\lambda) .\]
Moreover, each of the quantity $\L({\cal H}_\lambda)$, $\A({\cal H}_\lambda)$ and $\L({\cal H}_\lambda)^3/{\A}({\cal H}_\lambda)$ characterize $\lambda$ (it suffices to prove the monotonicity of each of these maps in terms on $\lambda$, which is a simple exercise). 
\end{rem}

\begin{rem} The map 
$x\mapsto 2x\frac{\cosh(x)^{1/3}}{\sinh(x)}$ is decreasing, so that for  $0<\lambda_1<\lambda_2,$ we have  $\L({\cal H}_{\lambda_1}) > \L({\cal H}_{\lambda_2})$ but  $\A({\cal H}_{\lambda_1}) <\A({\cal H}_{\lambda_2})$
\end{rem} 


\begin{figure}[hbtp]
    \centering
    \includegraphics[width=7cm]{./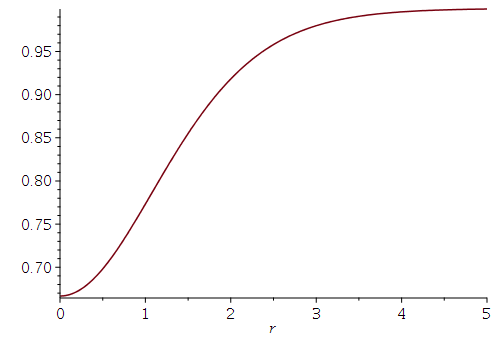}
    \captionn{$\A({\cal H}_\la)$ as a function of $\rl$}
    \label{fig:my_label}
\end{figure}

\begin{rem} The asymptotics given in \Cref{theo:betala} and  \Cref{theo:lim} concerning the exponential expansion of $\bQt_{n,m(n)}$ and the limit shape theorem when $m(n)=\fnl$, are valid if we had only assumed that $m(n)/n\to \lambda$. As we will see, the proof of  \Cref{theo:betala} and  \Cref{theo:lim} are quite long and involved, and taking $m(n)=\fnl$ simplifies a bit the exposition, but they are no obstruction to modifying each statement to take the condition $m(n)/n\to \lambda$ as long as $\lambda >0$. The case $\lambda=0$ is simpler, and is treated in \Cref{theo:mon} (but demands a specific argument).  
  \end{rem}

\subsection{Deterministic geometrical results in the triangle}
Denote by $\CCST$ the sets of Compact Convex Sets included in $ABC$ which contain $A$ and $B$. This is the bi-pointed case, and the two ``bullets'' of the notation are here for this reason.  \par
  Let ${\sf Conc}(ABC) $ be the set of  concave functions $F$ indexed by $[0,2]$ such that the curve ${\cal C}_F$ of $F$, 
  \[ {\cal C}_F=\{(x,F(x))~:~0\leq x\leq 2\}\]
  is included in the unit triangle $ABC$. This is the set of functions whose graphs are ``continuous convex chain'' from $A$ to $B$, in $ABC$.
It is easy to check that
\[ \CCST = \bigl\{ \CH({\cal C}_F) ~:~ F \in {\sf Conc}(ABC) \bigl\}.\]
Notice that in particular, the boundary of each convex set in $\CCST$ has no vertical parts.\par
We will work all along this paper with the functional
\[\app{\Phi_\lambda}{\CCST}{\R^+}{{\cal C}}{ \A({\cal C})^\lambda \L({\cal C})^{3}}.\]


When $\lambda=0$, it is known from \Bara \& al. \cite{baranybipointed} that $\argmax \Phi_0=\{{\cal P}\}=\{{\cal H}_0\}$.\par
For $\lambda>0$, the following result shows that the hyperbola too is solution to an optimization problem: 
\begin{theo}\label{theo:optri} For any $\lambda>0$,
  \[\argmax \Phi_\lambda = \{{\cal H}_\la\}.\]
\end{theo}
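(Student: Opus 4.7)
I would prove this by classical calculus of variations. Parametrize $\mathcal{C}\in\CCST$ by the concave function $F\in{\sf Conc}(ABC)$ whose graph is the upper boundary of $\mathcal{C}$; then $\A(\mathcal{C})=\int_0^2 F(x)\,dx$ and, since $AB$ is flat and contributes nothing to affine length, $\L(\mathcal{C})=\int_0^2(-F''(x))^{1/3}\,dx$ whenever $F$ is sufficiently smooth. Existence of a maximizer follows from Blaschke compactness of $\CCST$, continuity of $\A$ and upper semicontinuity of $\L$ for the Hausdorff topology, together with the fact that $\Phi_\lambda({\cal P})>0$. Let $F^*$ denote any such maximizer.

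\textbf{Regularity and contact with the sides.} I would then use standard local perturbation arguments to exclude flat portions, corners and contact with $\partial ABC$ strictly between $A$ and $B$. Replacing a chord of length $\ell$ by a convex arc of small sag $\delta$ changes $\L^3$ by $\Theta(\delta^{1/3}\ell^{1/3})$ while changing $\A$ by only $O(\delta\ell)$, so $\Phi_\lambda$ strictly increases as $\delta\to 0^+$; the same comparison excludes $F^*\equiv x$ on an initial subinterval $[0,x_0]$, hence $F^*$ is strictly below $AC$ and $BC$ on $(0,2)$. A local boundary variation at $A$ then forces tangency to $AC$, i.e.\ $F^{*\prime}(0^+)=1$, and symmetrically $F^{*\prime}(2^-)=-1$. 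Thus $F^*\in C^2(0,2)$ with $-F^{*\prime\prime}>0$, and satisfies the four boundary conditions $F^*(0)=F^*(2)=0$ together with the two tangency slopes.

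\textbf{Euler--Lagrange equation and identification.} For $\eta\in C_c^\infty(0,2)$, setting the first variation of $\log \Phi_\lambda$ to zero and integrating by parts twice yields
\[
\frac{d^2}{dx^2}\bigl((-F^{*\prime\prime}(x))^{-2/3}\bigr)=\frac{\lambda\,\L(F^*)}{\A(F^*)}=:2a,\qquad x\in(0,2),
\]
so $(-F^{*\prime\prime})^{-2/3}=ax^2+bx+c=:Q(x)$. The reflection $x\mapsto 2-x$ preserves the problem and sends solutions to solutions, forcing $b=-2a$. The explicit antiderivative $\int(a(x-1)^2+\delta)^{-3/2}\,dx=(x-1)/[\delta\sqrt{Q(x)}]$ then integrates $F^{*\prime\prime}=-Q^{-3/2}$ to a two-parameter family, which the boundary conditions cut down to a one-parameter family in $a$. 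The self-consistency constraint $2a=\lambda\,\L(F^*)/\A(F^*)$ selects $a$ uniquely by strict monotonicity. A direct comparison with \Cref{eq:hyper}, using the definition of $r_\lambda$ in \eqref{eq:qdfs} and the identity $\lambda\,\L({\cal H}_\lambda)^3=8\,r_\lambda^{2}\,\A({\cal H}_\lambda)$ of \Cref{rem:qd}, verifies that the selected curve is exactly ${\cal H}_\lambda$.

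\textbf{Main obstacle.} The two delicate points are (i) the tangency conditions $F^{*\prime}(0^+)=1$ and $F^{*\prime}(2^-)=-1$, which arise from one-sided boundary variations under the ambient constraint $F\le\min(x,2-x)$ and require a careful local analysis near $A$ and $B$; and (ii) the uniqueness in the self-consistency step, namely that the map $a\mapsto\L(F_a)/\A(F_a)$ (for $F_a$ the symmetric ODE solution with the four boundary conditions) is strictly monotone. Both should reduce to explicit $r_\lambda$-computations via the closed-form expressions for $\A({\cal H}_\lambda)$ and $\L({\cal H}_\lambda)$ in \Cref{eq:hyper} and the fact that $\Psi:[0,\infty)\to[0,\infty)$ is a strictly increasing bijection.
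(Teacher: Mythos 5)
Your route is genuinely different from the paper's. The paper never writes an Euler--Lagrange equation: it shows that any maximizer must be \emph{cupola symmetric} (invariant under the determinant $-1$ affine map $\phi_{abc\to bac}$ of every enveloping triangle $abc$ of a portion of the curve), using Minkowski averaging and Brunn--Minkowski to show that any failure of this symmetry can be repaired by a local surgery that strictly increases $\Phi_\lambda$; it then proves by an iterated binary decomposition (halving at the central height $H_0$, renormalizing by $\phi_{Ap_0M_0\to ABC}$, and tracking the recursion $H_{i+1}=(1-\sqrt{2-2H_i})/(2H_i-1)$) that cupola-symmetric curves form exactly the one-parameter family $\{{\cal H}_v\}_{v\ge 0}$; finally it maximizes $v\mapsto\Phi_\lambda({\cal H}_v)$ by a one-variable computation. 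Your variational derivation is correct where it applies: the stationarity condition $\frac{d^2}{dx^2}\bigl((-F'')^{-2/3}\bigr)=\lambda\L/\A$ is the classical ODE characterization of conics, and one can check from \Cref{eq:hyper} and \Cref{rem:qd} that ${\cal H}_\lambda$ satisfies it with $2a=8\rl^2/\L({\cal H}_\lambda)^2$ and that your tangency normalization $\delta\sqrt{a+\delta}=1$ holds. What your approach buys is a shorter identification of the candidate curve; what the paper's buys is that it never needs pointwise regularity of the maximizer.

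That last point is where your plan has a genuine gap. A maximizer is a priori only a concave function; its second derivative is a negative measure whose Lebesgue decomposition may contain atoms (corners), a singular continuous part, and flat pieces, and the affine length $\int_0^2(-F'')^{1/3}dx$ only sees the absolutely continuous part. Your chord-versus-arc comparison removes segments and (combined with the enveloping-triangle dilation trick) corners, but it does not rule out a singular continuous curvature component, nor does it yield $F^*\in C^2(0,2)$ with $-F''>0$ pointwise --- and without that, the two integrations by parts behind your Euler--Lagrange equation are not justified. The paper sidesteps this entirely because cupola symmetry is a statement about convex sets and determinant-$\pm1$ affine maps, and the binary decomposition pins the curve down at a dense set of points without ever differentiating it twice. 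The second weak point is one you flag yourself: the uniqueness of the self-consistent parameter in $2a=\lambda\L(F_a)/\A(F_a)$ must actually be proved, not asserted; the paper's final step does the analogous computation (showing $\partial_{r}\Phi_\lambda({\cal H}_{r})$ vanishes only at $\rl$, together with a Taylor expansion near $r=0$ and the limit at $r\to+\infty$ to confirm this critical point is the maximum), and it is a genuine, if elementary, piece of work rather than a formality.
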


\begin{rem}[Value of $\Phi_\lambda({\cal H}_\lambda)$ and link with $\beta_\lambda$] 
  One has, still using the relation  $\la +1=\sinh(2\rl)/(2\rl)$, 
  \[\L({\cal H}_\lambda)^3 \A({\cal H}_\lambda)^\lambda=\frac{\cosh(r)^{4(\lambda+1)}}{r^{2\lambda}}  \frac{8}{(\lambda+1)^3} \l(\frac{\lambda}{\lambda+1}\r)^\lambda.\]
and then writing  $\tanh(\rl)=\frac{\sinh(\rl)}{\cosh(\rl)}=\frac{2\rl}{2\rl}\frac{\sinh(2\rl)}{2\cosh(\rl)^2}=\frac{({\lambda+1})r}{\cosh(\rl)^2},$
\be
\exp\l(\beta_\la\r)&=&2e^2 \rl^2 (\lambda+1)^{-2(1+\lambda)}\l(\frac{\cosh(\rl)^2}{\rl}\r)^{2(1+\lambda)}=\frac{\cosh(\rl)^{4(1+\lambda)}}{\rl^{2\lambda}} \frac{2e^2 }{(\lambda+1)^{2(1+\lambda)}}\\
&=& \L({\cal H}_\lambda)^3 \A({\cal H}_\lambda)^\lambda     \frac{2e^2 }{(\lambda+1)^{2(1+\lambda)}} \frac{(\lambda+1)^3}{8} \l(\frac{\lambda+1}{\lambda}\r)^\lambda=\Phi_\lambda({\cal H}_\la)     \frac{e^2 }{4(\lambda+1)^{\lambda-1}\lambda^\lambda}.
\ee
\end{rem}

\paragraph{Enveloping triangle (see Fig. \ref{fig:ET}).}
Let $C$ be a simple closed connected curve, meaning that there exists an injective continuous map $\gamma:[0,1]\to \R^2$, such that $C=\{\gamma(t)~:~t\in[0,1]\}$. The two points $\gamma(0)$ and $\gamma(1)$ are called extremities of $C$.
\begin{defi}  We say that a triangle $T=abc$ is the \underbar{enveloping triangle} of a  simple closed connected curve $C$, if it satisfies the three following conditions:\\
  $\bullet$ $\{a,b\}$ is the set of extremities of $C$, \\
  $\bullet$ $C$ is included in $T$, \\
  $\bullet$ $T$ is the intersection of all triangles having these first two properties.
  \end{defi}
\begin{figure}
    \centering
    \includegraphics[height=3cm ]{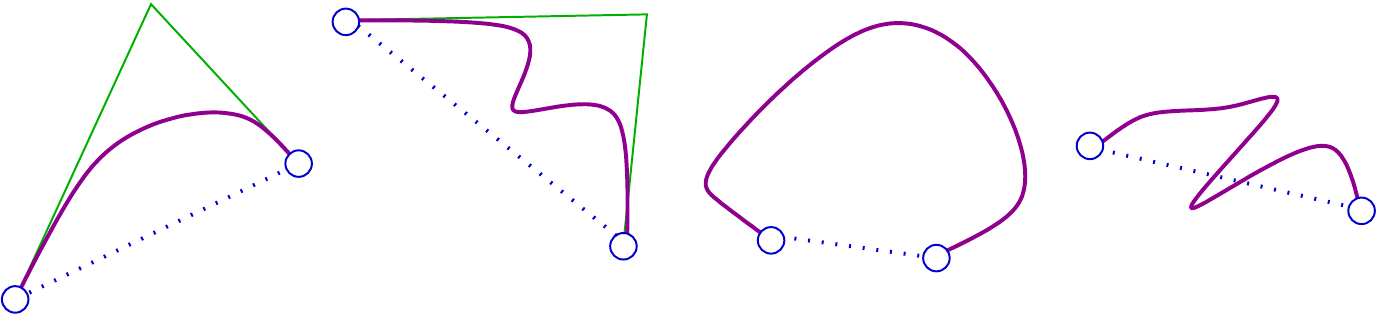}
    \captionn{Enveloping triangles of closed connected curves: the first two curves admit an enveloping triangle, while the last two do not. }
    \label{fig:ET}
\end{figure}
The following proposition, that will be at the core of the notion of ``signature'', that we  define below, relies on an important property of hyperbolas: in words, two sections of an hyperbola that have a common property, for example, enveloping triangle with same area, are in correspondence by an affine map having determinant 1:
\begin{pro}\label{pro:sdgr}  Consider an hyperbola ${\cal H}_\alpha$ (as defined in \Cref{eq:hyper}), and let $C_1$ and $C_2$ be two curves that are both closed connected subsets of ${\cal H}_\alpha$, for some $\alpha\geq 0$.  Denote by $T_1$ and $T_2$ their respective enveloping triangles.
  \bir
  \itr If $\A(T_1)=\A(T_2)$  then   $\L(C_1)=\L(C_2)$ and $\A(\CH(C_1)) = \A(\CH(C_2))$.
  \itr If $\L(C_1)=\L(C_2)$ then $\A(\CH(C_1)) = \A(\CH(C_2))$ and $\A(T_1)=\A(T_2)$
  \itr If $\A(\CH(C_1)) = \A(\CH(C_2))$ then $\L(C_1)=\L(C_2)$  and $\A(T_1)=\A(T_2)$.
  \itr Each of the following pairs:
  \bia \ita $( \Leb(T_1), \Leb(\CH( C_1)))$, \ita $(\Leb(T_1), \L(C_1))$, \ita $(\Leb(\CH(C_1)), \L(C_1))$\eia~\\
  characterizes $\alpha$.
  \eir
  \end{pro}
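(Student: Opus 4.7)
The plan is to reduce (i)--(iii) to a classification of arcs of ${\cal H}_\alpha$ under a $1$-parameter subgroup of area-preserving affine maps, and to treat (iv) via the dimensionless ratios of the three invariants. First I would pass to the hyperbolic parametrization $\theta := r_\alpha(1-2t)\in[-r_\alpha,r_\alpha]$; using the identity $\sinh(\tfrac{r+\theta}{2})\sinh(\tfrac{r-\theta}{2})=\tfrac12(\cosh r-\cosh\theta)$, Theorem~\ref{eq:hyper} rewrites as
\[
X(\theta) = 1+\frac{\sinh\theta}{\sinh r_\alpha},\qquad Y(\theta) = \frac{\cosh r_\alpha\,(\cosh r_\alpha-\cosh\theta)}{\sinh^2 r_\alpha}.
\]
Expanding $\sinh(\theta+c)$ and $\cosh(\theta+c)$ by the addition formulas, each translation $\theta\mapsto\theta+c$ is realized on $\R^2$ as an affine map $\Phi_c$ preserving the full conic extending ${\cal H}_\alpha$; using $\cosh^2c-\sinh^2c=1$, a direct computation yields $\det\Phi_c=1$. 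Since $\Phi_c$ preserves areas and affine length, any two arcs of ${\cal H}_\alpha$ with equal $\theta$-length $s$ have the same values $T_\alpha(s)$, $A_\alpha(s)$, $L_\alpha(s)$ of the three invariants.

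For (i)--(iii) it then suffices to show each of $T_\alpha,A_\alpha,L_\alpha$ is strictly increasing on $(0,2r_\alpha]$. The linearity of $L_\alpha$ is immediate: Theorem~\ref{eq:hyper} makes $X'Y''-X''Y'$ a positive constant along ${\cal H}_\alpha$, so after the change of variable the affine-length density in $\theta$ is constant, giving $L_\alpha(s)=\L({\cal H}_\alpha)\,s/(2r_\alpha)$. Strict monotonicity of $A_\alpha$ follows because enlarging the $\theta$-interval strictly enlarges the region between chord and arc. For $T_\alpha$, the vertex of the enveloping triangle at the intersection of the two endpoint tangents admits an explicit expression in $\sinh(s/2), \cosh(s/2), \sinh r_\alpha, \cosh r_\alpha$, and a routine calculus check shows $T_\alpha(s)$ is a smooth, strictly increasing function vanishing at $s=0$. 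Given this monotonicity, equality of any one invariant between $C_1,C_2$ forces $s_1=s_2$, hence equality of the other two; this yields (i), (ii), (iii) simultaneously.

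For (iv), the main idea is that the ratios $\A(\CH(C))/T$, $\L(C)/T^{1/3}$, and $\L(C)^3/\A(\CH(C))$ are invariant under \emph{arbitrary} affine maps, since each of $T$, $\A$, $\L^3$ scales as $|\det|$. Given a sub-arc $C_1\subset{\cal H}_\alpha$, apply the affine map $\psi$ sending its enveloping triangle $T_1$ to the reference $ABC$; the image $\psi(C_1)$ is a hyperbolic arc inscribed in $ABC$ and tangent to $CA, CB$ at $A, B$. The family of such curves is $1$-parameter and by Theorem~\ref{eq:hyper} coincides with $\{{\cal H}_\beta:\beta\geq 0\}$ (with ${\cal H}_0$ the parabola), so $\psi(C_1)={\cal H}_\beta$ for a unique $\beta$ --- the \emph{signature} of $C_1$. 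It follows that $\A(\CH(C_1))/T_1=\A({\cal H}_\beta)$, $\L(C_1)/T_1^{1/3}=\L({\cal H}_\beta)$, and $\L(C_1)^3/\A(\CH(C_1))=\L({\cal H}_\beta)^3/\A({\cal H}_\beta)$. By Remark~\ref{rem:qd} each of these is a strictly monotonic function of $\beta$, so each pair in (iv) recovers $T_1$ together with $\beta$; identifying $\beta$ with $\alpha$ in the case $C_1={\cal H}_\alpha$ this proves (iv). The principal obstacle is justifying the classification step (that the $1$-parameter family of hyperbolas inscribed in $ABC$ with the required tangencies is exactly $\{{\cal H}_\beta\}_{\beta\geq 0}$); the monotonicity verification for $T_\alpha(s)$ is the other computational check, but both reduce to one-variable inequalities in elementary hyperbolic functions.
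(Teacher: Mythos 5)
Your argument is correct in its main lines but organizes the proof differently from the paper. For (i)--(iii) the paper uses a single discrete symmetry: it reflects $C_1$ inside the enveloping triangle of the arc joining the leftmost endpoint of $C_1$ to the rightmost endpoint of $C_2$ (an affine map of determinant $-1$ preserving ${\cal H}_\alpha$), so that the image $C_1'$ shares its right endpoint with $C_2$ and has an enveloping triangle of the same area, whence $C_1'=C_2$. You instead exhibit the full one-parameter group of determinant-one affine maps acting as translations of the hyperbolic parameter $\theta$ (your computation of $\det\Phi_c=1$ is right), put every arc in a normal form indexed by its $\theta$-length $s$, and invoke strict monotonicity of $s\mapsto\bigl(T_\alpha(s),A_\alpha(s),L_\alpha(s)\bigr)$. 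The two routes rest on the same underlying fact (transitivity of the unimodular affine symmetries of ${\cal H}_\alpha$ on arcs of equal length), and both need the monotonicity of the enveloping-triangle area: the paper uses it implicitly in (i) and only writes the formula $\A(T_{s,v})=f(r_v)/f(r_v s)$, with $f$ as in \eref{eq:fh} strictly decreasing, in its proof of (iv); your version makes this dependence explicit, which is arguably cleaner, at the price of having to actually carry out the ``routine calculus check'' for $T_\alpha(s)$. The classification step you flag for (iv) (that the normalized image of a sub-arc in $ABC$ is some ${\cal H}_\beta$) is genuine but fillable: the conics tangent to $AC$ at $A$ and to $BC$ at $B$ form the pencil $L_{AC}L_{BC}=c\,L_{AB}^2$, the affine image of a hyperbolic arc is again hyperbolic, and $\{{\cal H}_\beta\}_{\beta\ge 0}$ sweeps continuously from the parabola to the degenerate pair of sides, hence exhausts the hyperbolic members of the pencil lying in $ABC$; this is also essentially part (5) of the proof of \Cref{theo:optri}.

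The one place where your argument genuinely stops short is the conclusion of (iv). Recovering the pair $(\Leb(T_1),\beta)$, where $\beta$ is the signature of $C_1$, is not yet recovering $\alpha$: for a proper sub-arc one has $\beta\neq\alpha$ (for instance the half of ${\cal H}_\alpha$ has $r$-parameter $r_\alpha/2$ by \Cref{lem:qhtdfd}), and your closing sentence only treats the case $C_1={\cal H}_\alpha$, whereas the statement concerns an arbitrary closed connected subset. The missing step is short: if $C_1$ is the sub-arc of ${\cal H}_\alpha$ of parameter-fraction $s$, then $r_\beta=s\,r_\alpha$ and $\Leb(T_1)=f(r_\alpha)/f(r_\beta)$, so $f(r_\alpha)=\Leb(T_1)\,f(r_\beta)$ and the strict monotonicity of $f$ returns $r_\alpha$, hence $\alpha$ (and then $s=r_\beta/r_\alpha$). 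With this line added, and the monotonicity of $T_\alpha(s)$ actually verified (it follows from $\A(T_{s,\alpha})=f(r_\alpha)/f(r_\alpha s)$ together with $f'<0$), your proof is complete.
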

  The proof is postponed to Section \ref{sec:ProofSignCons}.
  \begin{defi}\label{def:signa}
   A connected subset $P$ of an hyperbola ${\cal C}$ (not reduced to a point) is said to have signature $v$ if the following condition hold:\\
    If $T$ denotes the enveloping triangle of $P$, then $\L(P )$ coincides with the affine length $\L(T'\cap {\cal H}_v)$ where $T'$ is any enveloping triangle of any connected subset of ${\cal H}_v$, such that $\A(T)=\A(T')$.
  \end{defi}

\subsection{Deterministic geometrical results in a compact convex set $\K$}

Let $\K$ be a compact convex set of $\R^2$ with non-empty interior, and let ${\sf CCS}_\K$ be the set of compact convex subsets of $\K$. For $\lambda>0$, set \[\app{\PLK}{{\sf CCS}_\K}{\R^+}{C}{\Phi_\lambda^\K(C)=\L(C)^{3} \A(C)^\lambda}.\]
We are interested in $\argmax \Phi_\lambda^\K$.

It is easy to see that any element $C\in \argmax \Phi_\lambda^\K$ must intersect $\partial \K$ at least twice, otherwise, we could design an affine map $\phi:z\mapsto w+M.z$ such that $\det(M)>1$, such that $\phi(C)=C'\subset \K$, and then, since
$\A(\phi(C))^\lambda=\det(M)^\lambda \A(C)^\lambda$, $\L(\phi(C))^3=\det(M) \L(C)^{3}$, 
it follows easily that $\Phi_\la^\K(\phi(C))= \det(M)^{\lambda+1} \Phi_\la^\K(C)\geq \Phi_\la^\K(C)$, and then $C$ could not be in $\argmax \Phi_\la^\K$.

The set of contact points ${\sf CP}_\K(C)$ for an element $C\in{\sf CCS}_\K$ with the boundary of $\K$ is
\[ {\sf CP}_\K(C)= C \cap \partial \K.\]
The set ${\sf CP}_\K(C)$ is  equal to $\partial \K$ if and only if $C=\K$, otherwise  the  connected subsets $(C_i,i\in I)$ of $\partial C$ ``between the contact points'' are the connected components of $(\partial C) \setminus \K$. The index set $I$ is either finite, or countable.

\begin{pro}\label{pro:hss} Let $\lambda>0$.
\bia
\ita The set $\argmax  \PLK$ is not empty. 
\ita If $C\neq C'$ are both in  $\argmax \PLK$, then $\A(C)\neq \A(C')$.
\ita If $\K$ has an axis of symmetry, then any element in $\argmax  \PLK$ has the same axis of symmetry.
\ita For each element $C$ in $\argmax  \PLK$, $C$ is signature-homogeneous in the sense that the connected components of $(\partial C)\setminus \K$ (that is in between contact points), are hyperbolas with same signature.
\eia
\end{pro}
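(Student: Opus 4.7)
Plan.

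For part (a), the argument is the standard compactness/u.s.c.\ approach: Blaschke's selection theorem gives compactness of ${\sf CCS}_\K$ in the Hausdorff topology, the area $\A$ is continuous and the affine perimeter $\L$ is upper semicontinuous, so $\PLK=\L^3\A^\lambda$ is upper semicontinuous on a compact set and therefore attains its supremum. Part (c) will follow at once from (b): the reflection $\sigma$ across the symmetry axis of $\K$ satisfies $|\det\sigma|=1$ and therefore preserves both $\A$ and $\L$; hence $\sigma(C)\in\argmax\PLK$ with $\A(\sigma(C))=\A(C)$, and (b) then forces $\sigma(C)=C$.

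For part (b), the plan is a Minkowski-averaging argument. If $C,C'\in\argmax\PLK$ have $\A(C)=\A(C')$, they also share the same $\L$. The midpoint $D=\tfrac12(C+C')$ sits in $\K$ by convexity; by Brunn-Minkowski $\A(D)\geq\A(C)$, and by Lutwak's $\tfrac13$-concavity of the affine perimeter under Minkowski addition $\L(D)\geq\L(C)$, so $D$ is also a maximizer. Equality in Brunn-Minkowski together with the equal-area constraint then yields $C'=C+v$ for some translation vector $v$. To conclude $v=0$, I would consider $\bar C=\CH(C\cup C')=C+[0,v]\subset\K$. A direct inspection shows that $\A(\bar C)>\A(C)$ whenever $v\neq 0$, while $\L(\bar C)=\L(C)$: the Minkowski enlargement adds two straight sides that contribute nothing to the affine perimeter, and the two curved sides of $\bar C$ are precisely the ``left half'' of $\partial C$ and a translate of the ``right half'' of $\partial C$, which together recover $\L(\partial C)$. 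Hence $\PLK(\bar C)>\PLK(C)$, contradicting maximality unless $v=0$.

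For part (d), the plan is variational. Let $A_i$ be an arc of $(\partial C)\setminus\partial\K$ between two consecutive contact points. For a normal variation of $A_i$ with fixed endpoints, the first-order optimality $\delta\PLK(C)=0$ reduces to
\[
\frac{3\,\delta\L(A_i)}{\L(C)}+\frac{\lambda\,\delta\A_i^{\mathrm{loc}}}{\A(C)}=0,
\]
where $\A_i^{\mathrm{loc}}$ is the area enclosed between $A_i$ and its chord. Inside its enveloping triangle $T_i$, the arc $A_i$ is thus a critical point of the functional $\L+\mu\,\A^{\mathrm{loc}}$ with Lagrange multiplier $\mu=\lambda\L(C)/(3\A(C))$, a global constant independent of $i$. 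Mapping $T_i$ affinely onto the unit triangle $ABC$ and invoking \Cref{theo:optri}, the arc $A_i$ is a hyperbola. To identify a common signature across all arcs, I would complement this first variation by a second type of variation obtained by infinitesimally sliding a shared contact point along $\partial\K$ (which simultaneously perturbs two neighboring arcs at their meeting point); the corresponding first-order condition forces the affine invariants attached to neighboring arcs to agree, and by \Cref{pro:sdgr}(iv) this invariant is precisely the signature.

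The main obstacle lies in the signature-matching step of part (d): converting the ``moving contact point'' variation into an equality of signatures requires careful control of the boundary contributions, since the Lagrange multiplier coming from the pointwise first variation alone involves the enveloping triangle area and does not by itself enforce the signature to be identical across arcs sitting in enveloping triangles of different areas. The remaining parts follow comparatively cleanly from Blaschke compactness and the Brunn-Minkowski/Lutwak concavity inequalities.
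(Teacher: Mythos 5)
Your parts (a), (b) and (c) are sound and in fact tighter than the paper's own treatment: the paper's Minkowski-averaging step in (b) only yields a strict improvement when the two maximizers are not translates of one another, and your observation that a nonzero translate would force $\CH(C\cup C')=C+[0,v]\subset\K$ to have strictly larger area but unchanged affine perimeter closes exactly that remaining case; deducing (c) directly from (b) via the reflection is also cleaner than the paper's second symmetrization.

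The genuine gap is the one you flag yourself, in (d), and it is not merely a technical loose end. First, the ``sliding a shared contact point'' variation cannot work in general: two hyperbolic arcs of $(\partial C)\setminus\partial\K$ need not meet at a contact point at all --- they may be separated by a nondegenerate arc of $\partial\K$, or belong to different connected components altogether --- yet the statement requires their signatures to agree across all of them. Second, even where two arcs do share a contact point, a first variation there only couples the two adjacent arcs. The paper resolves this with a global exchange argument (the ``cat's ears'' theorem, \Cref{theo:TCET}): given any two portions $P_1,P_2$ of $(\partial C)\setminus\partial\K$ whose enveloping triangles $T_1,T_2$ lie in $\Int\;\K$ and have equal area, swap them via the determinant-one affine maps $\phi_{T_1\to T_2}$ and $\phi_{T_2\to T_1}$. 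This surgery leaves $\A$ and $\L$, hence $\PLK$, unchanged, so the new body is again a maximizer; but if the swap actually changed the curve, the modified component would no longer be a single hyperbola, contradicting the first half of (d) applied to this equally optimal configuration. Hence $\phi_{T_1\to T_2}(P_1)=P_2$, which is precisely equality of signatures. (A further, smaller, issue: your Euler--Lagrange identification of each arc as a hyperbola presupposes enough regularity to perform smooth normal variations; the paper obtains smoothness and then ``cupola symmetry'' by a symmetrization surgery that requires no a priori regularity.)
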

 
\begin{proof}
  {\it (a).} It amounts to proving that the supremum $s:=\sup_{D\in  {\sf CCS}_\K} \PLK(C)$ is reached. 
    Take  a sequence $S_n\in{\sf CCS}_\K$ with $\PLK(S_n)\cvg \sup_{S\in{\sf CCS}_\K} \PLK(S)$. Choose a convergent subsequence $S_{n_k}$, for the Hausdorff topology, and let $C$ be its limit. Since $\A:{\sf CCS}_\K\to\R_+$ and $\L:{\sf CCS}_\K\to\R_+$ are upper-semi-continuous, so is $\PLK$. This means $\PLK(C)=\max_{S\in{\sf CCS}_\K} \PLK(S)$.
	
	{\it (b).} Assume that $C_1,C_2$ are in $\argmax \PLK$, and that $\A(C_1)=\A(C_2)$. In this case, we have also, $\L(C_1)= \L(C_2)$. Now take the Minkowski sum $ C_0=\frac1{2}{(C_1+C_2)}$, which is a convex compact subset of $\K$. Since 
	\[\L(C_0)\geq \frac1{2}(\L(C_1)+\L(C_2))=\L(C_1),\]
	and by the Brünn-Minkoswki inequality,
	\[\A(C_0)\geq \frac1{2}(\A(C_1)+\A(C_2))=\A(C_1).\]
	Note that this latter inequality becomes an equality if and only if $C_1$ and $C_2$ are equal up to a translation. So if $C_1$ and $C_2$ are two distinct convex bodies then $\PLK(C_0)>\PLK(C_1)$, which cannot be.
	
	{\it (c).} Let $C_1\in\argmax \PLK$, and assume that $C_1$ do not show the same axis of symmetry as $\K$. Define $C_2$ as $C_1$'s symmetrical convex body with respect to this axis, so that $C_2\in{\sf CCS}_\K.$  Let then $C_0=\frac1{2}{(C_1+C_2)}$. Once more, $C_0\in {\sf CCS}_\K$. Since $\L(C_1)=\L(C_2),$ and $\A(C_0)>\frac1{2}(\A(C_1)+\A(C_2))=\A(C_1)$ (clearly, $C_1$ and $C_2$ are not equal up to a translation), then $\PLK(C_0)>\PLK(C_1)$ which cannot be.
	
	{\it (d).} The proof is postponed to Section \ref{sec:qsdyjsd2}.
    
  \end{proof}

\begin{cor}\label{cor:dqfqe}
  If $\K$ is a disk with positive radius, for all $\lambda \geq 0$, $\argmax \PLK=\{\K\}$.
\end{cor}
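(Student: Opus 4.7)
The approach is to leverage the full rotational symmetry of a disk together with Proposition~\ref{pro:hss}. I fix $\lambda > 0$ throughout; the boundary case $\lambda = 0$ requires a separate classical ingredient and is handled at the end. Let $\K$ be a disk of radius $R$ centered at a point $o$, and let $C\in\argmax \PLK$ (non-empty by Proposition~\ref{pro:hss}(a)).

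The first step is to observe that every line through $o$ is an axis of symmetry of $\K$. Applying Proposition~\ref{pro:hss}(c) once for each such line forces $C$ to be invariant under every reflection whose axis contains $o$. Since the composition of two distinct such reflections is a rotation about $o$ through an arbitrary angle, $C$ inherits full rotational symmetry about $o$. A compact convex subset of $\R^2$ invariant under all rotations about a fixed point is necessarily a closed disk centered at that point: any $p\in C$ yields the full orbit circle $\{q : |q-o|=|p-o|\}\subset C$, and closure plus convexity force $C$ to be the closed disk of radius $\sup_{p\in C}|p-o|$ centered at $o$.

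Hence $C$ is a closed concentric disk of some radius $r\leq R$. The second step is to invoke the opening observation of Section~1.3: any $C\in\argmax \PLK$ must meet $\partial\K$ in at least two points, for otherwise an affine image of $C$ with determinant strictly greater than $1$ would still fit inside $\K$ and strictly increase $\PLK$. A concentric closed disk of radius $r<R$ does not meet $\partial\K$ at all, so necessarily $r=R$ and $C=\K$.

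For $\lambda=0$, Proposition~\ref{pro:hss}(c) is not literally available since its proof in the paper relies on strict Brunn--Minkowski applied through the area factor. One instead appeals to the classical planar affine isoperimetric inequality $\L(C)^3\leq 8\pi^2\,\A(C)$, with equality iff $C$ is an ellipse: combined with $\A(C)\leq\A(\K)$ and the fact that the only ellipse in a disk $\K$ having the same area as $\K$ is $\K$ itself, this yields $\L(C)^3\leq\L(\K)^3$ with equality iff $C=\K$. The only mildly delicate point of the plan is the passage from reflection-symmetry in every axis through $o$ to full rotational symmetry and then to the conclusion that $C$ is a concentric disk; everything else is a direct application of results already in place.
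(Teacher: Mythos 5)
Your proof is correct and follows essentially the same route as the paper: Proposition~\ref{pro:hss}(c) applied to every axis of symmetry of the disk forces any optimizer to be a concentric disk, and the ``must meet $\partial\K$'' observation from the start of the section then forces it to equal $\K$. Your separate treatment of $\lambda=0$ via the affine isoperimetric inequality is a sensible patch, since \Cref{pro:hss} is stated only for $\lambda>0$ while the corollary claims all $\lambda\geq 0$ (the paper glosses over this).
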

Indeed, by \Cref{pro:hss}, when $\K$ is a disk, $\argmax  \PLK$ contains only disks.

\paragraph{Case of the regular $\kappa$-gone.}

For $\kappa\geq 3$, denote by ${\sf Reg}(\kappa)$ the regular $\kappa$-gone with vertices
\[w_j= \rho_\kappa \exp( i 2\pi j / n),~~ \textrm{ for }~~j\in\{0,\cdots,\kappa-1\}\]
where $\rho_\kappa= \l(\frac{2}{\kappa \sin(2\pi/\kappa)}\r)^{1/2}$, chosen so that  ${\sf Reg}(\kappa)$ has area 1. Denote by $m_i$ the middle of the segment $[w_i,w_{i+1\mod \kappa}]$. The area $a_\kappa$ of the polygon $\CH(\{m_0,\cdots,m_{\kappa-1}\})$ is
\be
a_\kappa&=& \frac{\kappa}{2} \sin(2\pi/\kappa) |m_0|^2
=\cos(\pi/\kappa)^2.
\ee
Each small triangle $m_im_{i+1}w_{i+1}$ has area
\beq\label{eq:bk} b_\kappa:=(1-a_\kappa)/\kappa=\sin(\pi/\kappa)^2/\kappa.\eq
Hence, for each $i$, $\psi_{ABC\to m_im_{i+1}w_{i+1}}$ (as defined in \eref{eq:psiabc})  has determinant $b_\kappa$.
Now, recall the definition of $\Psi:r\mapsto \sinh(2r)/(2r)-1$ (given in \eref{eq:Ps}) and define $f:(0,+\infty)\to \R^+$ by  
\beq\label{eq:fh} f(h)= {\cosh(h)}/{\sinh(h)^3}.\eq
\begin{pro}\label{pro:Reg}Let $v$ be the unique solution to \ben\label{eq:dqdu} \lambda= \frac{a_\kappa+\kappa r_v f(r_v)\Psi(r_v)b_\kappa}{\kappa r_v f(r_v) b_\kappa}\een
 If $\K={\sf Reg}(\kappa)$, then the unique element $C$ of $\argmax \PLK$ is constituted with $\kappa$ hyperbolas where the $i$ th hyperbola lies inside the triangle $m_im_{i+1}w_{i+1}$ and is the image by the linear map that sends ABC onto  $m_im_{i+1}w_{i+1}$, of ${\cal H}_v$.
  \end{pro}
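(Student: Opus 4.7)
The plan is to exploit the symmetry and signature-homogeneity of the optimum (Proposition~\ref{pro:hss}), pin down its combinatorial structure, and then identify the signature parameter $v$ through a Lagrange-type optimality condition that reduces to the bi-pointed characterization of Theorem~\ref{theo:optri}.

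First, I would apply Proposition~\ref{pro:hss}(c) to each of the $\kappa$ reflection axes of ${\sf Reg}(\kappa)$ to conclude that any $C\in\argmax\Phi_\lambda^\K$ is invariant under the dihedral group $D_\kappa$, and in particular $\kappa$-fold rotationally symmetric. Note $C\ne\K$ since the polygonal boundary $\partial\K$ has zero affine length, so $\Phi_\lambda^\K(\K)=0$ while the inscribed disk, say, gives a positive value; hence $\partial C$ has non-trivial free arcs, which by Proposition~\ref{pro:hss}(d) share a common signature $v$.

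The key structural step is to show that the contact set $C\cap\partial\K$ is exactly $\{m_0,\dots,m_{\kappa-1}\}$ with a single free arc in each triangle $m_im_{i+1}w_{i+1}$. Two distinct contact points on the same side of $\K$ cannot be joined by a free hyperbolic arc (the tangents at both endpoints would coincide with the side-line, impossible for a strictly convex curve), so contacts on a given side must be isolated and consecutive ones must be joined by a straight portion of $\partial\K$; configurations with more than one contact per side, or with contacts at a vertex $w_i$, are then ruled out by a perturbation argument showing that sliding contacts toward $m_i$ (while re-optimizing the signature) strictly increases $\Phi_\lambda^\K$. This is the main obstacle of the proof; I expect it to reduce to the elementary inequality $\coth^2(r_v)>1$ for $r_v>0$, after explicitly computing $\partial_t\log\Phi_\lambda^\K$ along a one-parameter family deforming the contact configuration. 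Once the contacts are the $m_i$, the free arc from $m_i$ to $m_{i+1}$ must be tangent to the two sides adjacent at $w_{i+1}$, so its enveloping triangle is exactly $m_im_{i+1}w_{i+1}$.

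By Definition~\ref{def:signa} and the $\kappa$-fold rotational symmetry, each arc is then the image of ${\cal H}_v$ under $\psi_{ABC\to m_im_{i+1}w_{i+1}}$ (a map of determinant $b_\kappa$), and standard scaling (affine length scales as the cube root of the determinant, area scales linearly) gives
\[
\L(C)=\kappa\, b_\kappa^{1/3}\,\L({\cal H}_v),\qquad \A(C)=a_\kappa+\kappa\, b_\kappa\, r_v f(r_v)\,\Psi(r_v),
\]
using $\A({\cal H}_v)=r_v f(r_v)\Psi(r_v)$ from Theorem~\ref{eq:hyper}. To identify $v$, I would argue that a variation of the arc in a single triangle $m_im_{i+1}w_{i+1}$ produces the same increments $(\delta L_i,\delta A_i)$ in both $(L_i,A_i)$ and $(\L(C),\A(C))$, so the first-order criticality condition $3\delta L_i/\L(C)+\lambda\delta A_i/\A(C)=0$ says exactly that the local arc maximizes the bi-pointed functional $\L(\cdot)^3\A(\cdot)^\mu$ in $m_im_{i+1}w_{i+1}$, with effective exponent $\mu=\lambda\kappa A_i/(a_\kappa+\kappa A_i)$. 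Theorem~\ref{theo:optri} (after affine reduction to $ABC$) then identifies the local optimum as ${\cal H}_\mu$, forcing $v=\mu$; substituting $A_i=b_\kappa r_v f(r_v)\Psi(r_v)$ and solving for $\lambda$ yields precisely~\eqref{eq:dqdu}. Existence and uniqueness of $v$ follow from the monotonicity of the right-hand side of~\eqref{eq:dqdu}, using that $\Psi$ is strictly increasing and that $r\mapsto rf(r)$ is strictly decreasing on $(0,+\infty)$ (the latter reducing to $\sinh(s)<s\cosh(s)+2s$ for $s>0$, which is easy).
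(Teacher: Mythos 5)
Your overall strategy coincides with the paper's: dihedral symmetry from \Cref{pro:hss}(c), signature-homogeneity from \Cref{pro:hss}(d), reduction to one hyperbolic arc per triangle $m_im_{i+1}w_{i+1}$, and then a one-dimensional optimization in the signature parameter. Your identification of $v$ is also correct: the stationarity condition $3\,\delta L_i/\L(C)+\lambda\,\delta A_i/\A(C)=0$ does give $v=\lambda\kappa A_i/(a_\kappa+\kappa A_i)$, which is exactly \eref{eq:dqdu} after substituting $A_i=b_\kappa r_vf(r_v)\Psi(r_v)$, and your monotonicity argument for uniqueness of the solution is sound. (One small caveat there: \Cref{theo:optri} characterizes the \emph{global} maximizer of the bi-pointed functional, whereas your variation only yields a first-order condition; but since \Cref{pro:hss}(d) already confines the arc to the one-parameter family $({\cal H}_{v'})_{v'}$, what is really needed is that the one-dimensional map $v'\mapsto \L({\cal H}_{v'})^{3/\lambda}(x+\A({\cal H}_{v'}))$ has a unique critical point which is its maximum — this is the content of the paper's \Cref{lem:max}, and it is this lemma, not \Cref{theo:optri}, that closes the identification.)

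The genuine gap is the structural step you yourself flag as "the main obstacle'': showing that the contact set is exactly $\{m_0,\dots,m_{\kappa-1}\}$, so that each triangle $m_im_{i+1}w_{i+1}$ is the enveloping triangle of a single free arc. You defer this to a sliding-contacts perturbation that you only "expect'' to reduce to an elementary inequality; as written, nothing is proved, and this is precisely where the geometric content of the proposition lies. The paper closes this step with machinery you already have in hand: by the dihedral symmetry the contact points on a side are symmetric about its midpoint, so if the free arc met the side $[m_{i-1},w_i]$ at some $z_i\in(m_{i-1},w_i)$ rather than at $m_{i-1}$, the portion of $\partial C$ inside the enveloping triangle $m_{i-1}z_ip_i$ (with $p_i$ the apex of the arc) would consist of a straight segment of $\partial\K$ followed by a piece of hyperbola; this is not cupola-symmetric, the triangle $m_{i-1}z_ip_i$ is contained in $\K$, and the cupola-symmetrization surgery of \Cref{lem:sct}/\Cref{theo:TCET} then strictly increases $\PLK$, a contradiction. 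I would replace your unexecuted perturbation by this argument (or carry your perturbation out explicitly); without one or the other, the proof is incomplete.
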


\subsection{Convergence in a compact convex set under $\QK_{n,\fnl}$}
Recall that  $\bQK_{n,m}$ is the probability that the convex hull of $n+m$ iid uniform points in $\K$ has $n$ vertices, and  $\QK_{n,m}$ is the law of these $n+m$ points conditional on this event.

A first theorem to state here is the following one: we can deduce the limiting behavior under  $\QK_{n,\floor{n\lambda'}}$ from that under  $\QK_{n,\fnl}$ in a particular case: 
\begin{theo}\label{theo:compa}   Let $\K$ be a compact convex set of area 1, and assume that for some $\lambda\geq 0$,  $\CH(U[n+\fnl])\proba\K$ under $\QK_{n,\fnl}$, for the Hausdorff topology. In this case, for any $\lambda'\geq \lambda$, the analogue result holds for $\lambda'$ instead.
  \end{theo}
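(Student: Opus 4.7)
My plan is to reduce Hausdorff convergence to convergence of the area, and then transport this from $\QK_{n,\fnl}$ to $\QK_{n,\floor{n\la'}}$ via a Radon--Nikodym tilting argument.

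First, since $\K$ has area $1$ and the Lebesgue measure is continuous on compact convex subsets of $\K$ for the Hausdorff topology, the assumed convergence $\CH(U[n+\fnl])\proba \K$ is equivalent to $\A(\CH(U[n+\fnl]))\proba 1$; in particular, writing $V$ for the vertex set of this convex hull,
\[\QK_{n,\fnl}\bigl(\A(\CH(V))\geq 1-\eta\bigr)\longrightarrow 1,\qquad \text{for every }\eta>0.\]
It therefore suffices to prove the analogous area-convergence statement with $\floor{n\la'}$ in place of $\fnl$. The case $\la'=\la$ is trivial, so I assume $\la'>\la$ and set $m:=\fnl$, $m':=\floor{n\la'}$, so that $m'-m\to+\infty$.

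Second, the key ingredient is that under $\QK_{n,m}$, the law of the vertex set $V$ has a density proportional to $\A(\CH(V))^{m}$ with respect to a base measure $\mu_{0}$ on unordered $n$-subsets of $\K$ in convex position that does not depend on $m$. Indeed, given $V$, the remaining $m$ points are i.i.d. uniform on $\CH(V)$ under $\QK_{n,m}$, producing the factor $\A(\CH(V))^{m}$. Consequently,
\[\frac{\d\QK_{n,m'}}{\d\QK_{n,m}}(V)=\frac{\A(\CH(V))^{m'-m}}{Z_n},\qquad Z_n:=\E_{\QK_{n,m}}\bigl[\A(\CH(V))^{m'-m}\bigr].\]

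Third, fix $t\in(0,1)$ and $\eta\in(0,1-t)$. Upper bounding the numerator of $\QK_{n,m'}(\A(\CH(V))\leq t)$ by $t^{m'-m}$ and lower bounding $Z_n$ by restricting to $\{\A(\CH(V))\geq 1-\eta\}$ to get $Z_n\geq (1-\eta)^{m'-m}\,\QK_{n,m}(\A(\CH(V))\geq 1-\eta)$, one obtains
\[\QK_{n,m'}\bigl(\A(\CH(V))\leq t\bigr)\leq \l(\frac{t}{1-\eta}\r)^{m'-m}\frac{1}{\QK_{n,m}(\A(\CH(V))\geq 1-\eta)}.\]
Since $t/(1-\eta)<1$, $m'-m\to+\infty$, and the denominator tends to $1$ by the first step, the right-hand side tends to $0$. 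Hence $\A(\CH(V))\proba 1$ under $\QK_{n,m'}$, and the first step converts this back into Hausdorff convergence to $\K$.

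The main (mild) obstacle is a clean identification of the base measure $\mu_0$, so that the tilting ratio between $\QK_{n,m'}$ and $\QK_{n,m}$ is exactly $\A(\CH(V))^{m'-m}$ up to normalization; once this is set up, only the elementary exponential-tilt estimate above is needed, and it is precisely the fact that $m'-m$ grows linearly in $n$ that makes the tilt overwhelm any polynomial loss coming from the conditional probability $\QK_{n,m}(\A(\CH(V))\geq 1-\eta)$.
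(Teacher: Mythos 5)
Your proof is correct, and it rests on the same structural fact as the paper's: under $\QK_{n,m}$ the law of the vertex set has density proportional to $\A(\CH(V))^{m}$ with respect to a base measure (Lebesgue measure on $n$-tuples in convex position) that does not depend on $m$ — this is exactly the identity $\P(\Area_{n,m+1}\in \d a)=\nu_{n,m}\,a\,\P(\Area_{n,m}\in \d a)$ that the paper extracts. Where you diverge is in how this identity is exploited. The paper turns it into a monotonicity statement (its Lemma \ref{lem:sto}: $\Area_{n,m}$ is nondecreasing in $m$ for the stochastic order, hence in expectation), combines it with the equivalence $d_H(\mathcal{C}_n,\K)\proba 0\iff \E(\A(\mathcal{C}_n))\to\A(\K)$, and concludes by sandwiching $\E(\Area_{n,\fnl})\leq\E(\Area_{n,\floor{n\lambda'}})\leq\A(\K)$; this works uniformly for any $m'\geq m$, with no growth condition on $m'-m$. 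You instead run a direct exponential-tilt tail estimate, bounding $\QK_{n,m'}(\A\leq t)$ by $(t/(1-\eta))^{m'-m}\,\QK_{n,m}(\A\geq 1-\eta)^{-1}$; this needs $m'-m\to+\infty$ (which holds for $\lambda'>\lambda$, the case $\lambda'=\lambda$ being trivial as you note), but in exchange it is quantitative, giving an explicit exponentially small bound on the bad event rather than a mere limit statement. Both arguments are complete; yours is a legitimate, slightly more hands-on alternative to the paper's likelihood-ratio/stochastic-order route.
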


\begin{conj}\label{theo:conv1} Let $\K$ be a compact convex set of area 1, and let $\lambda>0$ such that $\argmax \PLK$ is a set with a single element, $C^\star$.
  In this case
  \beq n^{-1}\l(\log\l(\bQK_{n,\fnl})\r)+2n\log(n)\r)\to\log\l({\frac{e^2}{4}\cdot\frac{(\lambda+1)^{\lambda+1}}{\lambda^\lambda}\cdot} \A(C^\star)^{\lambda}\L(C^\star)^3\r).\eq
\end{conj}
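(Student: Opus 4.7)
The plan is to follow Bárány's decomposition approach for $\lambda=0$ (sketched in \Cref{sec:AbC}), adapted to incorporate the area weighting carried by the $m$ interior points. The starting point is the integral identity
\[
\bQK_{n,m}=\binom{n+m}{n}\int_{\K^n}\1[V\in\text{convex position}]\,\A(\CH(V))^m\,dV,
\]
obtained by choosing which $n$ of the $n+m$ uniform points play the role of vertices of $\CH(U[n+m])$: conditional on these $n$ points being in convex position, each of the remaining $m$ points lies in $\CH(V)$ independently with probability $\A(\CH(V))/\A(\K)=\A(\CH(V))$. Stirling's formula yields
\[
\log\binom{n+\fnl}{n}=n\log\!\bigl((\lambda+1)^{\lambda+1}/\lambda^{\lambda}\bigr)+O(\log n),
\]
which is exactly the combinatorial prefactor appearing in the conjectured formula.

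It thus remains to prove $I_n:=\int\1[V\in\text{convex position}]\A(\CH(V))^m\,dV\sim\exp\!\bigl(-2n\log n+n\log(\tfrac{e^2}{4}\PLK(C^\star))\bigr)$. Heuristically, writing $\A(\CH(V))^m=\exp(n\lambda\log\A(\CH(V)))$ and combining with Bárány's exponential rate for configurations whose hull is close to a fixed target $C$,
\[
\P\bigl[V\in\text{convex position},\,\CH(V)\approx C\bigr]\sim\exp\!\bigl(-2n\log n+n\log(\tfrac{e^2}{4}\L(C)^3)\bigr),
\]
concentrates $I_n$ on convex sets $C\in{\sf CCS}_\K$ maximizing $\L(C)^3\A(C)^\lambda=\PLK(C)$; by hypothesis this maximizer is uniquely $C^\star$, yielding the asymptotic.

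To make this rigorous, I would decompose each near-optimal realization along the contact points $\CP_\K(C^\star)=\{p_1,\dots,p_k\}$. The arcs ${\cal A}_i$ of $\partial C^\star$ between consecutive $p_i$'s sit inside their enveloping triangles $T_i$, and these triangles partition the petal region $\K\setminus\CH(p_1,\dots,p_k)$. Conditioning on the multinomial allocation of $n_i$ vertices near arc $i$ (with $\sum n_i=n$) and of $m_i$ interior points inside petal $i$, $m_0$ inside the core polygon $\CH(p_1,\dots,p_k)$ (with $m_0+\sum_{i\ge1}m_i=m$), each local factor becomes, after the affine normalization $\psi_{T_i\to ABC}$ (whose determinant equals $\A(T_i)^{-1}$), a bi-pointed problem to which \Cref{theo:betala} applies. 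A Laplace-type optimization over $(n_i,m_i)$ then reduces the global exponential rate to maximizing $\PLK$ over ${\sf CCS}_\K$: \Cref{pro:hss}$(d)$ forces the optimum to be signature-homogeneous (hence equal to $C^\star$), and the resulting exponent matches the target.

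The hardest step is a \emph{uniform} upper bound matching the pointwise rate of \Cref{theo:betala}: one needs to control $\bQt_{n_i,m_i}$ at every scale with error uniform in $(n_i,m_i)$, so that realizations whose hull is Hausdorff-far from $C^\star$ contribute negligibly to $I_n$. Such an estimate does not follow from \Cref{theo:betala} alone and plausibly requires a sharpened non-asymptotic analysis of the Buchta formula in \Cref{theo:qdzada}, together with a combinatorial control on the sum over partitions $(n_i,m_i)$; this missing ingredient is precisely what keeps the result a conjecture rather than a theorem.
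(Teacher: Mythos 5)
Note first that the statement you are proving is stated in the paper as a \emph{conjecture}: the paper offers no proof, only a heuristic derivation in \Cref{sec:AbC}, and your sketch reproduces essentially that same derivation --- the identity $\bQK_{n,m}=\binom{n+m}{n}c^{\K}_{n,m}$, the Stirling expansion of the binomial producing the factor $(\lambda+1)^{\lambda+1}/\lambda^{\lambda}$, a decomposition into bi-pointed triangular sub-problems governed by $\bQt_{n_i,m_i}$, and a Laplace-type optimization whose exponent is $\log\bigl(\tfrac{e^2}{4}\PLK(C)\bigr)$. The gap you name at the end (a uniform, non-asymptotic control of $\bQt_{n_i,m_i}$ over all allocations, so that configurations far from $C^\star$ are negligible) is essentially the obstruction the paper itself identifies; the paper additionally stresses that the local ratios $m_i/s_i$ are not a priori bounded, so that rare allocations with very unbalanced petals could in principle dominate, and that one must use the expansion $\beta_\mu=2+\log 2+O(\mu\log(1/\mu))$ near $\mu=0$ to see that the local chains are asymptotically parabolic and that the correct constant $e^2/4$ emerges. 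So your proposal is an honest reconstruction of the paper's supporting argument, not a proof, and neither is the paper's.

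One structural point where your sketch deviates from the paper, and where it would actually fail as an upper bound: you decompose along the contact points of the \emph{fixed} optimizer $C^\star$ with $\partial\K$ and place enveloping triangles there. That only controls configurations whose hull is already close to $C^\star$, i.e.\ it can only produce the lower bound. The paper instead decomposes \emph{every} configuration $z[n]$ by its enveloping equiangular $d$-gone, integrates over all possible contact-point tuples $c[d]$, optimizes the resulting functional $S_d(c[d])^3\,|\CH(c[d])|^{\lambda}$ over $c[d]$, and only then lets $d\to\infty$ to recover $\sup_{C\subset\K}\L(C)^3\A(C)^{\lambda}=\PLK(C^\star)$; a compactness/upper-semicontinuity argument rules out far-away accumulation points. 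Also, the enveloping triangles do not partition $\K\setminus\CH(p_1,\dots,p_k)$ in general (they partition $R\setminus\CH(c[d])$ for the enveloping $d$-gone $R$, which is why the paper needs separate upper and lower bounds replacing $t_i\cap\K$ by $t_i$ or restricting to $t_i\subset\K$). If you intend to turn the heuristic into a proof, the decomposition must be organized this way, and the two genuinely missing ingredients are the uniform version of \Cref{theo:betala} and the control of unbounded local ratios $m_i/s_i$ in the partition sum.
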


\begin{conj}\label{theo:conv2} Under the hypothesis of \Cref{theo:conv1},
  under $\QK_{n,\fnl}$,
  \[d_H\l(\CH(U[n+\fnl]),C^\star\r) \proba 0.\]
\end{conj}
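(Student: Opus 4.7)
The plan is to adapt \Bara{'s} strategy for convex position to the hyperbolic bi-pointed setting, using \Cref{theo:betala} and \Cref{theo:lim} as building blocks glued over the ``caps'' of the limiting body. For any $C \in {\sf CCS}_\K$, the contact set ${\sf CP}_\K(C) = C \cap \partial \K$ partitions $(\partial C) \setminus \partial\K$ into arcs $(C_i)_{i \in I}$, each sitting inside a bi-pointed enveloping triangle $T_i$ whose base lies on $\partial \K$. For a realization of $U[N]$ whose convex hull has $n$ vertices, the points split analogously: each $U_j$ is either a vertex on some $C_i$, or an interior point of the $i$-th cap, or lies strictly below the caps (i.e., inside the sub-polygon $\bigcap_i (K\setminus T_i)$). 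Conditional on the numbers $(n_i, m_i)$ inside each $T_i$ and on the locations of the contact vertices, the measure factorizes across caps, and each factor is the transport of $\Qt_{n_i,m_i}$ by the affine map $\psi_{ABC \to T_i}$ from \eref{eq:psiabc}. Applying \Cref{theo:betala} cap by cap yields a logarithmic rate of order
\[
\sum_{i \in I} \l( n_i \beta_{m_i/n_i} - 2 n_i \log n_i + n_i \log \A(T_i)^{\lambda_i+1} \r),
\]
where the last term accounts for the area rescaling from $ABC$ to $T_i$ and $\lambda_i:=m_i/n_i$.

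For the lower bound matching \Cref{theo:conv1}: take $C^\star\in\argmax \PLK$, which is unique by assumption; by \Cref{pro:hss}(d), its free arcs are hyperbolas of common signature $v$ inside their enveloping triangles $T_i^\star$. Allocate $(n_i^\star, m_i^\star)$ proportionally to the $T_i^\star$-rescaled bi-pointed parameters of ${\cal H}_v$ and apply \Cref{theo:betala} and \Cref{theo:lim} cap by cap to obtain a matching lower bound of order $\log\PLK(C^\star)\cdot(1+o(1))$ for the rate. For the upper bound: cover ${\sf CCS}_\K$ by finitely many Hausdorff-$\epsilon$ balls and estimate, on each ball at Hausdorff distance $\geq \epsilon$ from $C^\star$, the supremum of the above Lagrangian rate over admissible $(n_i,m_i,T_i)$ compatible with that ball; by uniqueness of $C^\star$ in $\argmax \PLK$ together with continuity of $\L,\A$ for the Hausdorff topology, this supremum is strictly less than $\log \PLK(C^\star)$. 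A union bound over the cover then yields that the conditional probability under $\QK_{n,\fnl}$ of the ``far from $C^\star$'' event is exponentially negligible, giving the Hausdorff convergence in probability.

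The main obstacle will be showing that the cap-by-cap Lagrangian optimization is solved precisely by the hyperbolas of same signature from \Cref{pro:hss}(d). Concretely, the first-order conditions for maximizing $\sum_i \bigl(n_i \beta_{m_i/n_i} + n_i(\lambda_i+1)\log\A(T_i)\bigr)$ under the global constraints $\sum_i n_i = n$, $\sum_i m_i = \fnl$, together with the per-cap geometric constraints (the area and aspect of each $T_i$ being determined by the contact-point positions), must force the Lagrange multipliers associated with $n_i$ and $m_i$ to be constant across caps. Via the identity $\lambda\,\L({\cal H}_\lambda)^3 = 8\,r_\lambda^2\,\A({\cal H}_\lambda)$ from \Cref{rem:qd} and the characterizations in \Cref{pro:sdgr}(iv), this constancy should translate exactly into signature-equality among the caps, closing the loop with \Cref{pro:hss}(d). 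A secondary technical difficulty is controlling the number $|I|$ of cap-components of a typical $\CH(U[N])$: one must rule out, via a separate tail estimate, configurations with many spurious near-contacts, whose combinatorial entropy could otherwise overwhelm the rate function in the upper bound.
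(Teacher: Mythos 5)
You should first be aware that the statement you are addressing is presented as a \emph{conjecture} in the paper: the authors do not prove it, and they devote \Cref{sec:AbC} to explaining precisely why the strategy you propose --- the same cap-by-cap decomposition over an enveloping $d$-gon that B\'ar\'any used for $\lambda=0$, with $\bQt_{s_i,m_i}$ replacing $\bQt_{s_i,0}$ in each triangle --- does not currently close. Your sketch reproduces their heuristic derivation and inherits its genuine gaps. First, replacing each factor $\bQt_{s_i,m_i}$ by $\exp(-2s_i\log s_i+s_i\beta_{m_i/s_i})$ requires a version of \Cref{theo:betala} that is \emph{uniform} in the ratio $m_i/s_i$; but \Cref{theo:betala} is proved for a fixed $\lambda>0$ along $m=\fnl$, and the per-cap ratios $m_i/s_i$ are not bounded above a priori. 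As the authors note, configurations in which one cap absorbs a disproportionate share of the interior points are individually rare but could carry the dominant exponential weight, so one cannot simply truncate to bounded ratios; your ``separate tail estimate'' concerns the number of caps, not the interior-point allocation $(m_i)$, and so does not address this. Second, your upper bound invokes ``continuity of $\L,\A$ for the Hausdorff topology'', but $\L$ is only upper semi-continuous (this is exactly what the paper uses in the proof of \Cref{pro:hss}(a)); upper semi-continuity helps the strict-gap estimate on balls away from $C^\star$, but the matching lower bound at $C^\star$ then needs a separate approximation argument (enveloping triangles strictly inside $\K$, as in the $\lambda=0$ case), which your proposal does not carry out.

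A smaller but real issue is that your Lagrangian first-order analysis, meant to recover signature-homogeneity and thereby identify the optimizer with the $C^\star$ of \Cref{pro:hss}(d), presupposes both that the discrete allocation problem is within $\exp(o(n))$ of its maximal term and that the $o(n)$ error in \Cref{theo:betala} is uniform over the growing family of rescaled triangles as $d\to\infty$; neither is established. None of these steps is obviously false, and your outline is consistent with the route the authors themselves expect to succeed, but as written it is a programme rather than a proof --- which is exactly why \Cref{theo:conv2} is stated as a conjecture rather than a theorem.
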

We present in \Cref{sec:AbC} some elements supporting these conjectures.

\subsection{Related works}
\label{sec:RW}
 

Let $\mathsf{U}^{(n)}_\K$ be the law of $n$ i.i.d. points drawn uniformly at random in a convex domain $\K$.
	
	\paragraph{Sylvester's problem.}
One cannot write a paper about random points in convex domains without mentioning Sylvester and his {\it four-points problem} \cite{sylvester}. This problem, which initially was ill-posed, was focusing on the convex domain $\K$ optimizing the probability that 4 i.i.d. uniform points drawn in $\K$ formed the vertices of a convex quadrilateral.
The answer was given by Blaschke in 1917 \cite{blaschke1917affine} who showed that for any $\K$,
\[\bQ^{\triangle}_{4,0}\leq \bQ^{\K}_{4,0}\leq \bQ^{\bigcirc}_{4,0},\]
where $\triangle$ and $\bigcirc$ symbolize a triangle and a disk.
Recently, Marckert and Rahmani \cite{marckert:hal-02913348} proved that we have also
\[\bQ^{\triangle}_{5,0}\leq \bQ^{\K}_{5,0}\leq \bQ^{\bigcirc}_{5,0}.\]

\paragraph{Expected number of vertices.}
In 1963, Rényi and Sulanke \cite{Renyi} gave the expected number $\bn({n})$ of vertices of the boundary of the convex hull of $n$ points $\mathsf{U}^{(n)}_\K$-distributed when $\K$ is either a polygon or domain whose frontier is $\mathcal{C}^2$ (a disk for example). In the first case we have
	\[\E\left(\bn({n})\right)= \frac{2\kappa}{3}\log(n)+{\sf Cste}+o(1),\]
where $\kappa$ is the number of sides of $\K$, and in the second one, 
\[\E\left(\bn({n})\right)\sim {\sf Cste}(\K)n^{1/3}.\]
For the first case, Groeneboom \cite{groeneboom1988} later gave a central limit theorem for $\bn({n})$ :
\[\frac{\bn({n})-\frac{2\kappa}{3}\log(n)}{\sqrt{\frac{10\kappa}{27}\log(n)}}\dd \mathcal{N}(0,1).\]

\paragraph{The interlaced quantities $\bQt_{n,m}$ and $\bQ^\K_{n,m}$.}
	
The particular cases $\bQt_{n,0}$ and $\Qt_{n,0}$ were extensively studied. In addition to the exact formula \eqref{eq:baranybipointed} of $\bQt_{n,0}$ of B\'ar\'any {\it et al.}, \cite{baranybipointed} also includes results about the asymptotic behavior of $U[n]$ under the conditional law $\Qt_{n,0}$. We already mentioned the fact that under this conditional law, $\partial \CH(U[n])$ converges for the Hausdorff distance towards the parabola $\mathcal{P}$,  but in \cite{baranybipointed} is also given 
a functional central limit theorem for the fluctuation around the limit, at the scale $1/\sqrt{n}$.

These theorems are almost the only tools used to evaluate $\bQ^\K_{n,0}$ for a general $\K$, as well as to prove a limit shape under $\QK_{n,0}$ (see \Cref{sec:AbC} for an overview). 
This is also the case for both results of Valtr \cite{valtr1996probability,Valtr1995}, who gave in 1995 when $\K$ is a parallelogram
\[\mathbf{Q}^\Box_{n,0}=\frac{1}{(n!)^2}{2n-2\choose n-1}^2,\] and in 1996 when $\K$ is a triangle  \[\mathbf{Q}^\triangle_{n,0}=\frac{2^n (3n-3)!}{(2n)!((n-1)!)^3},\] though Valtr's proof relies on some slightly different arguments.

Since the late 90s, a substantial body of work has followed. Essential work tackles limit theorems established around convex polygons formed on lattices. In this model, for a given integer $n$, a convex polygon on a lattice is a convex polygon contained in the square $[-n,n]^2$ and whose vertices have integer coordinates. 
Vershik then questioned the possibility of identifying the number and typical shape of such a convex polygon. Three coherent solutions were put forward by Bárány \cite{barany3}, Vershik \cite{vershik} and Sinai \cite{sinai} in 1994, and are described below:
In the square, the convex polygon can be naturally decomposed into 4 pieces (delimited by its extremities in the North/East/South/West directions), which delimit 4 “polygonal convex lines” between them. By studying these objects, which can initially be considered as convex chains joining $(0,0)$ to $(n, n)$ in the square $[-n,n]^2$ (apart from rotations and translations), it was shown that when $n\to+\infty,$
\begin{enumerate}
	\item the number of such polygonal lines is $\exp(3(\zeta(3)/\zeta(2))^{1/3} n^{2/3} + o(n^{2/3}))$ where $\zeta$ is Riemann's zeta function,
	\item the random number of vertices in such a chain is concentrated around $\left(\zeta(3)^2/\zeta(2)\right)^{-1/3} n^{2/3}$,
	\item the limit form of such a chain, normalized in both directions by $n$, is an arc of parabola.
\end{enumerate}

In 1997, \Bara \cite{barany1} generalized most of these results to characterize the asymptotics under $\QK_{n,0}$: He showed that the convex hull of a tuple $U[n]$ of points under $\QK_{n,0}$ converges for the Hausdorff distance to the unique convex domain $C^\star\in{\sf CCS}_\K$ maximizing the affine length of the convex domains contained in $\K$. The boundary  of $C^\star$ is composed of arcs of parabola and pieces of the boundary of $\K$. As we will be working quite a lot on the affine length, let us precise that the proofs in \cite{barany1} use properties of the affine length of curves, and relies on another fundamental work of Blaschke and Reidemeister about differential geometry \cite{blaschkedifferential}.

Based on this latter results, B\'ar\'any \cite{barany2} gave in 1999 a logarithmic equivalent of the probability $\bQ^\K_{n,0},$ valid for any convex domain $\K$ of non-empty interior of the plane : 
\begin{align}\label{eq:eqloga}
	n^2\left(\bQ^\K_{n,0}\right)^{1/n}\cvg \frac{e^2}{4}\L\left(C^\star\right)^3.
\end{align}
This formula is of course  quite reminiscent of  \eqref{eq:jspquoi} for the case $\lambda=0$.  

B\'ar\'any's logarithmic equivalent for $\bQ^\K_{n,0}$ was refined by Morin \cite{morin2024} into an actual equivalent for the case where $\K$ is a regular polygon, and he generalized it afterwards for any convex polygon \cite{morin2024bis}. Notice that a part of the proof relies on $\bQt_{n,0}$. Furthermore, a characterization of the convex set $C^\star$ as a solution of a polynomial system is also proved, as well a central limit theorem.

Things are a bit more complex for the disk. Marckert \cite{marckert2017probability} determined in 2016 a formula which enables the explicit computations of the first terms  of $\bQ^\bigcirc_{n,m}$.

Hilhorst, Calka and Schehr \cite{hilhorst:hal-00330444} gave in 2008 the first terms of the asymptotic development of $\log{\mathbf{Q}^\bigcirc_{n,0}}$, \ie
\[\log\l(\mathbf{Q}^\bigcirc_{n,0}\r)=-2n\log(n)+n\log(2\pi^2 e^2)-c_0 n^{1/5}+. ..,\]
where $c_0$ is an explicit constant, which corroborates the logarithmic equivalent of B\'ar\'any quoted above.

Let us introduce a similar object as $\bQt_{n,0}$. For any non-negative concave function $f$ defined on $[0,2]$, let $\K_f$ be the unique convex domain such that $\partial \K_f=\l([0,2]\times\{0\}\r)\cup \{(t,f(t)),t\in[0,2]\}$ : we let $\bQ^{K_f\bullet\bullet}_{n,0}$ be the probability that $n$ points $\mathsf{U}_{\K_f}^{(n)}$-distributed form a convex chain between $A=(0,0)$ and $B=(2,0)$. The same authors  
\cite{marckertmorin2024} give a recursive formula of $\bQ^{\K_f\bullet\bullet}_{n,0}$ for all concave functions $f$. In the same paper, the authors also investigate a generalization to other dimensions of this matter : in this model, we impose a concave "shape" over a convex section of an hyperplane (a floor) and draw points under this shape to ask them to be in convex position together with this floor.  

We already mentioned the fact that Buchta \cite{buchta_2006} was the first to give the probability $\bQt_{n,m}$. In \cite{buchta2}, Buchta goes further and computes  the probability $\mathbf{Q}^{\K}_{n,m}$ when $\K$ is either a square or a triangle. In \cite{GT}, Gusakova and Th\"ale, studied the probability generating function $G_N(z)=\sum_{k=1}^N \bQt_{k,N-k}z^k$ of the random variable ${\bf n}(N)$. They proved that $G_0(z)=1$, $G_1(z)=z$, and
\[(n+1) G_n(z)=  \l(\frac{2z}{n} +2(n-1)\r)G_{n-1}(z)-\frac{(n-1)(n-2)G_{n-2}(z)}{n}.\]
From this second order recursion, they deduce that these polynomials correspond (up to a multiplicative factor) to some orthogonal polynomials (for a certain inner product), and prove again a central limit theorem for $({\bf n}(N)- \E( {\bf n}(N)))/\sqrt{\Var({\bf n}(N))}$ with Berry-Esseen bounds. 
In a very recent prepublication, Besau and Th\"ale \cite{bt} studied the bi-variate generating function $F(u,z)=\sum_{N\geq 0} G_N(z)u^N$; they proved that it is a Gaussian hypergeometric function (Theo. 2.3). This analytic representation provides a new tool that allows them to get also a Berry-Esseen theorem (with same speed as in \cite{GT}), a moderate and large deviation principle for ${\bf n}(N)/\log(N)$, and, for $k$ fixed, the asymptotics of $\bQt_{k,n-k}$ as $n\to+\infty$.

\paragraph{Affine length}

The affine length of a curve is an important affine invariant (by the elements of $SL(2)$), and it is also defined  in higher dimension under the name of affine surface area; this notion goes back to Blaschke and Reidemeister \cite{blaschkedifferential}, and is still an important domain of research (see Sch\"utt and Werner \cite{MR4654484} for a recent survey) with the developments of new concepts as the $p$-affine surface area, (see Lutwak \cite{MR1378681}, Hug \cite{MR1416712} and see also Ludwig and Reitzner \cite{MR2680490,MR3614772}, for the  fact that, up to some details, the $p$-affine surface areas are somehow, the only upper semicontinuous valuations. A valuation is a function that satisfies $\Psi(P)+\Psi(Q)=\Psi(P\cup Q)+\Psi(P\cap Q)$, on the set of compact convex subsets of $\R^n$.  

\Bara \cite[Sec.2]{barany1} recalls the main properties of the affine length that we will need in this paper: If ${\cal C}$ is a compact convex set, then $\L({\cal C})=\int_{\partial {\cal C}} \kappa^{1/3}(s)ds$ where $\kappa$ is the curvature of ${\cal C}$ at $s$. If $\phi:=z\mapsto v +M.z$ is an affine map in $\R^2$, then $\L(\phi({\cal C}))=\det(M)^{1/3} \L({\cal C})$. Moreover, the affine length of a polygon is zero, and more generally, since the affine length is obtained by integration on the boundary, a section of the boundary which is a segment, does not contributes to the affine length. By \Bara \cite[(3.3)]{barany1}, $\L( (S_1+S_2)/2)\geq \frac{1}{2}(\L(S_1)+\L(S_2))$ where the first addition is the Minkowski sum.   
\color{black}

\section{Bi-pointed case}

\subsection{Finite $(n,m)$ considerations}
Recall the notation introduced in \Cref{sec:MCR}, and in particular the unit triangle $ABC$, the variables $\bn(N), \bm(M)$ and the contents sequence $K[n]$, the vertices $B,V_1,\cdots,V_n,A$ of the convex chain surrounding the convex hull of $U[n+m]$, and the coordinates of $V_i$ being $V_i=(X_i,Y_i)$.

The fact that $\bQt_{n,m}$ possesses an explicit formula, and better than that, that the distributions of the $(V_j)$ have an intelligible form is the corner stone of this work. We present the key arguments in this section. Though some of the ideas leading to the formula of $\bQt_{n,m}$ are similar to those of Buchta \cite{buchta_2006}, new points of view arise to describe the law of the $(V_i)$. 

One of the keys of these close formulas are decomposition formulas, which, in turn, rely on four interconnected facts $(a), (b), (c), (d)$ that we explicit immediately.

\paragraph{{\it (a).} Invariance by affine maps principle.}
Let us swap triangles for a short moment, and instead of the unit triangle $ABC$ pick another non-flat triangle $abc$. Take $u_1,\cdots,u_N$ iid uniform points in $abc$, and once again,  consider the convex hull of $\{a,b,u_1,\cdots,u_N\}$. Denote by $(\bn'(N),\bm'(N))$ the two variables equivalent to $(\bn(N),\bm(N))$ in this setting, and $K'_n,\cdots,K'_1$ the analogue to $K_n,\cdots,K_1$.

Since (invertible) affine maps preserve convexity and the uniform distribution, it is easy to see that the tuple of variables $(\bm(N),\bn(N),K[\bn(N)])\eqd(\bm'(N),\bn'(N),K'[\bn'(N)])$ and better than that, if one considers the affine map
$\psi_{ABC\to abc}$ that sends $(A,B,C)$ onto $(a,b,c)$ (in this order), the map $\psi_{ABC\to abc}$ sends $(U_1,\cdots,U_N)$ conditioned by $(\bn(N),\bm(N))=(n,m)$, onto some random variables $(u_1,\cdots,u_N)$, that are distributed as $N$ iid uniform random variables $(u'_1,\cdots,u'_N)$ in $abc$,  conditioned by $(\bn'(N),\bm'(N))=(n,m)$.

\paragraph{{\it (b).} The  ``rewinding'' affine map.} Fix $N\geq 0$ and a pair of integers $(n,m)$ with $n+m=N$, and $n\geq 1$. 

The \textbf{leftmost point of the convex chain} $V_n$, together with the leftmost ``content'' variable $K_n$, will play an important role: we will decompose the complete chain $V_n,\cdots,V_1$ by removing the leftmost point $V_n$ together with the triangle $\Delta_n=(V_{n+1},V_{n},V_{0})$ (whose interior thus contains $K_n$ points), and working with a smaller set of points reduced to a subtriangle $T(V_n)$ that we discuss now. \par

We will need to understand the joint law of $(V_n,K_n)$ for this, but assume for a moment that we know it, and let us discuss the decomposition.\\
Observe in Fig. \ref{fig:KV} a special case in which $n=6$ and $K_n=3$. The line $AV_n$ intersects $BC$ in $P(V_n)$ ($P(V_6)$ on the figure). The knowledge of $V_n$ is sufficient to see that apart from the $K_n$ points inside $\Delta_n$, all the $n-1+m-K_n$ other $U_i$ (their number is $6-1+6-3$ on the figure) must be in the triangle $T(V_n)=(V_n,V_0,P(V_n))$.
The set of these points has same distribution as $U'_1,\cdots,U'_{n-1-m-K_n}$ uniform independent points in $T(V_n)$ , conditioned as follows:  the convex hull of $\{V_n,V_0,U'_1,\cdots,U'_{n-1-K_n}\}$ has $(n-1)$ points on its boundary  among the $U'_i$ (besides $V_n$ and $V_0$), and $m-K_n$ points in its interior.

In words, if we take $U[n+m]$ according to $\Qt_{n,m}$,  conditional on $(V_n,K_n)$, the points $(U'_i, 1\leq i \leq n-1+m-K_n)$ that are obtained by keeping only the elements of $(U_1,\cdots,U_{n+m})$, in their initial order, that are in  $T(V_n)$, forms a $n-1+m-K_n$ tuple of iid uniform random variables in  $T(V_n)$ under the analogous distribution  $\Qt_{n-1,m-K_n}$ in  $T(V_n)$, instead of $ABC$.

The affine map
  \[\Theta^{V_n}:=\psi_{ABC\to V_nV_0P(V_n)}\] which maps our favorite unit triangle to $T(V_n)$ can be computed:
Conditional on $V_n=(x,y)$, this affine map is
\beq\label{eq:Theta}\Theta^{(x,y)}:\bma \alpha\\ \beta\ema \mapsto \bma x\\ y\ema+\Gamma^{(x,y)}.\bma \alpha\\ \beta\ema\eq with
\beq\label{eq:Ga}\Gamma^{(x,y)}=\frac{1}2\bma 2-{x} &    -2+ x+2x|T(x,y)|/y\\-y & y+2|T(x,y)|\ema\eq
where $T(x,y)$ is the triangle of vertices $(x,y)$, $B$ and $P(x,y)$ whose area is given by  \[|T(x,y)|=\frac{y(2-x-y)}{x+y}.\]

\begin{figure}[h!]
  \centerline{\includegraphics{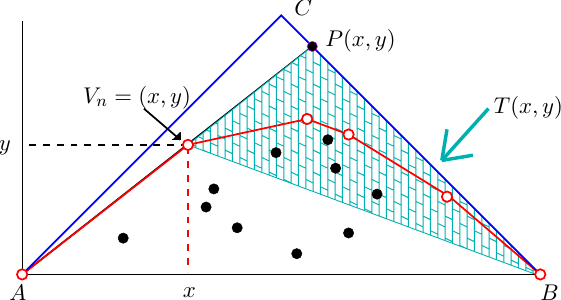} }
  \captionn{A list of 14 points, 4 of which being on the boundary of the convex hull of $\{A,B,U_1,\cdots,U_{14}\}$. In this case $(V_n,K_n)=(V_4,K_4)=((x,y),6)$. Conditional on $(V_4,K_4)$, the $U_i$ that are not in $ABV_4$ are in $V_4PB$. To get a total boundary containing 4 $U_i$, the convex hull of $V_n$, $B$, and the $U_i$ that are in the triangle $V_4PB$ must form, together with $V_n$ and $B$, a convex chain of size 5, using 3 $U_i$ in the interior of $V_4PB$, and the 4 remaining $U_i$ must be in $V_4PB$ below this partial convex chain. The 7 points in the interior of $V_4PB$, are distributed under the analogue distribution of $\Qt_{3,4}$ inside  $V_4PB$ instead of $ABC$.}
\end{figure}

	\paragraph{{\it (c).} Decomposition along the leftmost point.}

    Again, by the affine maps invariance principle, a representation of the $U'_1,\cdots,U'_{n-1+m-K_n}$ that will prove to be very rich in the sequel, consists in noticing that
    \begin{lem}\label{eq:qez}
      \beq {\cal L}_{n,m}((U'_1,\cdots,U'_{n-1+m-K_n})~|~(V_n,K_n))= {\cal L}_{n-1,m-K_n}\l(\Theta^{V_n}(U_1),\cdots,\Theta^{V_n}(U_{n-1+m-K_n})\r)\eq
      \end{lem}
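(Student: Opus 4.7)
The plan is to derive the lemma as a direct consequence of two elementary invariance principles for iid uniform point clouds. First, if $N$ iid uniform points are drawn in $ABC$ and one conditions on exactly which indexed points fall in a measurable subregion $R\subset ABC$, the corresponding points are iid uniform in $R$, independent of the others. Second, any affine bijection $\psi$ of the plane pushes the uniform law on a set $S$ forward to the uniform law on $\psi(S)$, and preserves convex hulls, their vertex sets, and the left-to-right orderings used to index the chain. These are precisely the properties highlighted in paragraph (a).

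Under $\Qt_{n,m}$, I would first condition on $V_n=v$ and $K_n=k$. Since $V_n$ is the leftmost chain vertex, the line $AV_n$ cuts $ABC$ into the triangle $\Delta_n = V_{n+1} V_n V_0$, whose interior contains exactly the $k$ ``content'' points counted by $K_n$, and the triangle $T(v)= V_n V_0 P(v)$, which must contain all remaining $n-1+m-k$ points $U'_1,\ldots,U'_{n-1+m-k}$. By the first invariance principle, conditionally on this split and on $V_n=v$, the $U'_i$ are iid uniform in $T(v)$, subject only to the residual chain-combinatorial condition that the event $\{(V_n,K_n)=(v,k)\}$ still imposes on them.

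That residual condition is exactly that the $U'_i$ form, together with the endpoints $V_n$ and $V_0=B$, a convex chain in $T(v)$ using $n-1$ of the $U'_i$ as chain vertices (namely $V_{n-1},\ldots,V_1$) and leaving the other $m-k$ strictly inside the resulting sub-hull. This is verbatim the $T(v)$-analogue of the event defining $\Qt_{n-1,m-k}$, under the role identification in which $V_n$ plays the role of $A$ and $V_0=B$ still plays the role of $B$.

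Finally, the affine bijection $\Theta^{V_n}=\psi_{ABC\to V_nV_0P(V_n)}$ is by design the one that realizes this role identification: $(A,B,C)\mapsto (V_n,V_0,P(V_n))$. By the second invariance principle, $\Theta^{V_n}$ pushes the law of $n-1+m-k$ iid uniform points in $ABC$ sampled under $\Qt_{n-1,m-k}$ onto the law of the same number of iid uniform points in $T(v)$ under the conditional law identified in the previous paragraph, which is exactly the claimed distributional identity. The only subtle point to check is that the role pairing $A\leftrightarrow V_n$, $B\leftrightarrow V_0$ makes the combinatorial condition ``convex chain of size $n-1$, with $m-k$ interior points'' transport correctly between $ABC$ and $T(v)$; this is guaranteed by the very definition of $\psi_{ABC\to V_nV_0P(V_n)}$ and the affine-invariance of vertex sets of convex hulls.
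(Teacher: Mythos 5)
Your proposal is correct and follows essentially the same route as the paper: the authors justify this lemma precisely by the affine-invariance principle of paragraph \emph{(a)} together with the leftmost-point discussion of paragraph \emph{(b)} (the remaining points are iid uniform in $T(V_n)$ subject only to the residual chain condition, which is the $T(V_n)$-analogue of $\Qt_{n-1,m-K_n}$, transported from $ABC$ by $\Theta^{V_n}=\psi_{ABC\to V_nV_0P(V_n)}$). Your write-up makes explicit the same two invariance facts and the same role identification $A\leftrightarrow V_n$, $B\leftrightarrow V_0$, so there is nothing to add.
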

      In words, in order to place $N$ points according to $\Qt_{n,m}$, it suffices, somehow, to know $(V_n,K_n)$. Once the point $V_n$ is set, we construct the triangle $(V_n,V_0,A)$ in which we draw $K_n$ points uniformly and independently. Then, we construct new points $(U_1'',\cdots,U_{N-1-K_n}'')$ \underbar{in our unit triangle $ABC$} under $\Qt_{n-1,m-K_n}$, and map these points with $\Theta^{V_n}$ into $(V_n,V_0,P(V_n))$.

With this approach, a good understanding of the generic left point random variables $(V_n,K_n)$  under $\Qt_{n,m}$ in $ABC$ is sufficient to understand all the chain $(V_1,\cdots,V_n)$ since this later will be encoded by successive affine maps. Hence, the construction of the segment $[(0,0),V_n]$ is the first step in the construction, the rest being done in $(V_n,V_0,P(V_n))$, or more exactly, is done under $\Qt_{n-1,m-K_n}$ in $ABC$ and sent in $(V_n,V_0,P(V_n))$ by $\Theta^{V_n}$.\\

Denote by $\mu^\star_{n,m,k}$ the distribution of $V_n$, the left most point, under $\Qt_{n,m}(~.~| K_n=k)$.

\paragraph{{\it (d).} Decomposition and rewinding formula.}

In order to  represent (or simulate) the distribution of $(V_n,\cdots,V_1)$ under $\Qt_{n,m}$, it suffices to first sample the random variables $(K_n,\cdots,K_1)$ (using the explicit formulae  given in \Cref{theo:qdzada}, or in \Cref{theo:rewind}, below).

\paragraph{The leftmost points in sub-problems.}
Then, take the successive \underbar{independent} leftmost points $(V_j^\star,1\leq j \leq n)$, where
\beq\label{eq:LL}{\cal L}(V_{n-i}^\star) = \mu^\star_{n-i, m-k_n-\cdots-k_{n-i+1}, k_{n-i}}\eq
which is the law of  the leftmost point $V_{n-i}$ under  $\Qt_{n-i,m-k_n-\cdots-k_{n-i+1}}(~.~|~ K_{n-i}=k_{n-i})$ (that is, in the model where there is a total of $n-i$ points on the chain and $m-k_n-\cdots-k_{n-i+1}$ below, and $k$ under the left most triangle).
Let us write $\Qt_{n,m,k}$ for the distribution of the set of points $\{U_1,\cdots,U_{n+m}\}$ conditioned by $\bn(M)=n$ and $K_n=k$.  

The affine map $\Theta^{V_a^\star}$ writes
\beq\label{eq:theta}\Theta^{V_a^\star}: \bma \alpha\\ \beta \ema \mapsto V_a^\star+ \Gamma^{V_a^\star}\bma \alpha\\ \beta\ema,\eq
for  $\Gamma^{V_a^\star}$ given in \Cref{eq:Theta}.
\begin{theo}\label{theo:rewind}[The rewinding/forwarding formula] Conditional on $(K_1,\cdots,K_n)=(k_1,\cdots,k_n)$, for the $(V^\star_j,1\leq j \leq n)$ given in \eref{eq:LL}
  \[ \l( V_j, 1\leq j \leq n\r) \eqd\l( \Phi^{V^{\star}_n}\big(\cdots \big(\Phi^{V^{\star}_j} \big(A\big)\big)\cdots\big),1\leq j \leq n\r)\]
  and then (by \eref{eq:theta})
  \[ \l(V_j, 1\leq j \leq n\r)\eqd \l( \sum_{ \ell = j}^n \Gamma^{V^{\star}_n}\big(\cdots \big(\Gamma^{V^\star_{\ell+1}}\big(V^{\star}_\ell\big)\big)\cdots\big),1\leq j \leq n\r).\]
\end{theo}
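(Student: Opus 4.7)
I would prove the rewinding formula by induction on $n$, the backbone being the decomposition (a)--(c) stated just before the theorem, together with Lemma~\ref{eq:qez}. The base case $n=1$ is immediate: the formula asserts $V_1 \eqd \Theta^{V_1^\star}(A)$, and since $A=(0,0)$ and $\Theta^{(x,y)}(0,0)=(x,y)$ by \eqref{eq:Theta}, this reduces to $V_1\eqd V_1^\star$, which is just the definition of $\mu^\star_{1,m,k_1}$.

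For the inductive step, suppose the formula holds at level $n-1$. Working under $\Qt_{n,m}$ conditional on $(K_1,\dots,K_n)=(k_1,\dots,k_n)$, I would first factorize the conditional joint distribution:
\[
\Qt_{n,m}\bigl(V_n\in dv,\; (K_1,\dots,K_n)=(k_1,\dots,k_n)\bigr)
=\Qt_{n,m}(K_n=k_n)\,\mu^\star_{n,m,k_n}(dv)\,p(v),
\]
where, by Lemma~\ref{eq:qez} and the affine invariance (a), the last factor
\[
p(v)=\Qt_{n,m}\bigl((K_1,\dots,K_{n-1})=(k_1,\dots,k_{n-1})\,\big|\,V_n=v,K_n=k_n\bigr)
\]
equals $\Qt_{n-1,m-k_n}\bigl((K_1',\dots,K_{n-1}')=(k_1,\dots,k_{n-1})\bigr)$, and is therefore \emph{independent of $v$}. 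Consequently, after further conditioning on $(K_1,\dots,K_{n-1})$, the marginal law of $V_n$ is still $\mu^\star_{n,m,k_n}$; in particular $V_n\eqd V_n^\star$.

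Next, I would transfer the tail chain $(V_{n-1},\dots,V_1)$ back to $ABC$: by Lemma~\ref{eq:qez}, conditional on $(V_n,K_n)$, the $n-1+m-k_n$ points of $U[N]$ lying inside $T(V_n)$ are the image under $\Theta^{V_n}$ of a sample in $ABC$ with law $\Qt_{n-1,m-k_n}$; and the above factorization shows that after conditioning on $(K_1,\dots,K_{n-1})=(k_1,\dots,k_{n-1})$, this image-sample has law $\Qt_{n-1,m-k_n}$ further conditioned on its own content sequence being $(k_1,\dots,k_{n-1})$. Hence there exists a chain $(V_{n-1}',\dots,V_1')$ under that conditional law such that $V_j=\Theta^{V_n}(V_j')$ for $1\le j\le n-1$. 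Applying the induction hypothesis to $(V_j')$ yields
\[
(V_j',1\le j\le n-1)\eqd \bigl(\Theta^{V_{n-1}^\star}\!\circ\cdots\circ\Theta^{V_j^\star}(A),\,1\le j\le n-1\bigr),
\]
where $V_{n-1}^\star,\dots,V_1^\star$ are independent with the distributions prescribed in~\eqref{eq:LL}. Composing with $\Theta^{V_n}\eqd \Theta^{V_n^\star}$ (and using that the new $V_n^\star$ is independent of the inner variables, which holds because, conditionally on $(K_1,\dots,K_n)$, $V_n$ is independent of the subsequent subproblem by the factorization above) produces the announced identity in distribution; the additive form then follows immediately from the affine expression~\eqref{eq:theta}.

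The main obstacle is the bookkeeping around the conditional independence: one must verify that conditioning on the \emph{entire} content sequence $(K_1,\dots,K_n)$, rather than merely on $K_n$, still leaves $V_n$ distributed as $\mu^\star_{n,m,k_n}$ and keeps it independent of the rescaled sub-chain. The factorization displayed above is exactly the device that handles this; once it is in hand, the induction essentially writes itself, since each composition $\Theta^{V_{j+1}^\star}\circ\cdots$ simply records the sequence of sub-triangle embeddings built at step (b)--(c).
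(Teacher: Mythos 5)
Your proof is correct and follows essentially the same route as the paper's: both rest on the leftmost-point decomposition and Lemma~\ref{eq:qez}, placing $V_n=\Theta^{V_n^\star}(A)$ and then transporting the sub-chain from $ABC$ into $T(V_n)$ via $\Theta^{V_n}$, iterated down to $j$. The only difference is that you make the iteration a formal induction and explicitly verify (via the factorization showing $p(v)$ does not depend on $v$) the conditional-independence bookkeeping that the paper's short proof leaves implicit; this is a welcome addition but not a different argument.
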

\begin{proof} This is a consequence of what has already been said, about the first point decomposition. First, to place $V_n$ one can indeed use the formula $V_n=\Phi^{V^{\star}_n}\big(  A\big)$, since $\Phi^{V^\star_n}$ sends $A$ onto $V_n$, which is indeed distributed as $V_n$. This same map sends $ABC$ inside $(V_n,V_0,P(V_n))$, and in particular, it sends $V_{n-1}^\star$ onto $V_{n-1}$ (that is more exactly, knowing $V_n$, $V_{n-1}$ is distributed as $\Phi^{V^\star_n}\big(V_{n-1}^\star$). Using that $V_{n-1}^\star=\Phi^{V_{n-1}^\star}\big( A\big)$,  knowing $V_n$, $V_{n-1}$ is distributed as $\Phi^{V^\star_n}\big(\Phi^{V_{n-1}^\star}\big( A\big)\big)$. A simple iteration allows to conclude.  
  \end{proof}
\color{black}

The nice part of this construction relies on the fact that the law of $(V_n,K_n)$ can be totally explicited:
the following theorem strengthens Buchta's result \Cref{theo:qdzada}:
\begin{theo} For $n\geq 1$, $m\geq 0$, and $(x,y)\in ABC$,
\label{theo:qdzada2}
\bia\ita For $k\in\{0,\cdots,m\}$,
\beq \label{eq:qe2}\Qt_{n,m}\l(V_n \in \d x\times \d y, K_n=k\r) = \frac{(n+m) \binom{n+m-1}{k} y^k |T(x,y)|^{n-1+m-k} \;\bQt_{n-1,m-k}   \; \d x\d y }{\bQt_{n,m}}.
\eq
and
\[\Qt_{n,m}(K_n=k)=    \frac{2(k+1)\;\bQt_{n-1,m-k} }{(n+m+1)(n+m)\;\bQt_{n,m}}. \]
\ita Conditional on $(\bn(N),\bm(N),K_n)=(n,m,k)$,
\beq (X_n,Y_n)\eqd   \l(2\;b_{k+2,n+m-k}\;(1-b_{n+m,1}/2) ,b_{n+m,1}\;b_{k+2,n+m-k}\r).\eq
where $b_{k+2,n+m-k}$ is a beta $\beta(k+2,n+m-k)$ r.v., and $b_{n+m,1}$ is an independent $\beta(n+m,1)$ r.v., and hence,
\beq\label{eq:qrh} 2-(X_n+Y_n) \eqd 2 \;(1-b_{k+2,n+m-k})= 2\; b_{n+m-k,k+2}\eq
where $b_{n+m-k,k+2}$ is $\beta(n+m-k,k+2)$-distributed.
\ita 
Take $k[n]\in {\sf Comp}(n,m)$. Conditional on $(\bn(N),\bm(N),K[n])=(n,m,k[n])$, 
\be
2-(X_j+Y_j)&\eqd& 2(1-b_{2+k_n,n+ s_n-k_n})\cdots(1-b_{2+k_j, j+ s_j-k_j})\\
&\eqd& 2(1-b_{2+k_n,n+ s_{n-1}})\cdots(1-b_{2+k_j,j+ s_{j-1}})\\
&\eqd & 2b_{n+ s_{n-1},2+k_n}\cdots b_{j+ s_{j-1},2+k_j}\ee
where for all $i$, $s_i=k_1+\cdots+k_i$, and the random variables $b_{s,t}$ are all independent, and are $\beta$-distributed with parameters $s$ and $t$.
\eia
\end{theo}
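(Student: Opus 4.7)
The plan is to first compute the joint law of $(V_n, K_n)$ by a direct combinatorial decomposition of the $N$ uniform points, then perform a single change of variables to recognize the Beta distributions, and finally iterate via the rewinding formula (\Cref{theo:rewind}) to obtain part (c).

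\textbf{Step 1 (joint density in (a)).} I decompose the event $\{V_n \in \d x\,\d y,\, K_n = k\}$ among $N = n+m$ i.i.d.\ uniform points: choose which of the $N$ points sits at $V_n$ (factor $N$); choose which $k$ of the remaining $N-1$ points lie in the triangle $\Delta_n = (V_n, V_0, V_{n+1})$ of area $y$ (factor $\binom{N-1}{k} y^k$); the other $N-1-k$ points must lie in $T(V_n) = (V_n, V_0, P(V_n))$, of area $|T(x,y)|$, and form a $\Qt_{n-1,m-k}$-configuration inside. By the affine invariance principle (point $(a)$ of the preceding discussion), this last event contributes $|T(x,y)|^{N-1-k}\,\bQt_{n-1,m-k}$, which produces \eqref{eq:qe2}. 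The marginal $\Qt_{n,m}(K_n = k)$ is then obtained by integrating over $ABC$, the integration being performed in Step 2.

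\textbf{Step 2 (part (b)).} I parametrise $V_n = (t(2-s),\, ts)$ with $s\in(0,1)$ (so $P(V_n) = (2-s,s)\in BC$) and $t\in(0,1)$ (position along $[A, P(V_n)]$). A direct check gives $|T(x,y)| = s(1-t)$ and Jacobian $\d x\,\d y = 2t\,\d t\,\d s$, hence
\[
y^k\,|T(x,y)|^{N-1-k}\,\d x\,\d y \;=\; 2\, t^{k+1}(1-t)^{N-1-k}\, s^{N-1}\,\d t\,\d s,
\]
which factorises as a product of $\beta(k+2, N-k)$ and $\beta(N,1)$ densities. Integrating gives the marginal of $K_n$ in part (a), and the factorisation shows that, conditional on $K_n = k$, $t$ and $s$ are independent $\beta(k+2,N-k)$ and $\beta(N,1)$ variables. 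Since $X_n = 2t(1-s/2)$ and $Y_n = ts$, part (b) follows. Formula \eqref{eq:qrh} then comes from $2-(X_n+Y_n) = 2(1-t)$ combined with the Beta symmetry $1-b_{k+2,N-k}\eqd b_{N-k,k+2}$.

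\textbf{Step 3 (part (c)).} The key geometric observation is that $2-(x+y) = 2\alpha_A(x,y)$, twice the barycentric coordinate of $(x,y)$ relative to $A$ in $ABC$. Since the affine map $\Theta^{V^\star_i}$ sends $A\to V^\star_i$, $B\to V_0=B$, $C\to P(V^\star_i)$, and both $B$ and $P(V^\star_i)$ lie on the line $x+y=2$, the function $(x,y)\mapsto 2-(x+y)$ satisfies the multiplicative identity
\[
2 - (x' + y') \;=\; \bigl(2 - (x+y)\bigr)\cdot \frac{2 - (X^\star_i + Y^\star_i)}{2}\qquad\text{when } (x',y') = \Theta^{V^\star_i}(x,y).
\]
Iterating this through the composition $V_j = \Theta^{V^\star_n}\circ\cdots\circ\Theta^{V^\star_j}(A)$ from \Cref{theo:rewind}, and using $2-(x+y)=2$ at $A$, gives
\[
2 - (X_j + Y_j) \;=\; 2\prod_{i=j}^{n} \frac{2-(X^\star_i + Y^\star_i)}{2}.
\]
The sub-problem at level $i$ has parameters $(i, s_i, k_i)$ (the residual interior mass is $m - (k_n+\cdots+k_{i+1}) = s_i$), so applying part (b) within this sub-problem yields $2-(X^\star_i+Y^\star_i) \eqd 2\,b_{i+s_{i-1},\,2+k_i}$, with independence across $i$ inherited from the independence of the $V^\star_i$ given $K[n]$. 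The three equivalent product formulas then follow by the Beta symmetry and the identity $s_i - k_i = s_{i-1}$.

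The main difficulty is Step 3: recognising the multiplicative structure of $2-(x+y)$ under $\Theta^{V^\star_i}$ via barycentric coordinates, and carefully matching the sub-problem parameters $(i, s_i, k_i)$ to the Beta parameters $i+s_{i-1}$ and $2+k_i$. Steps 1 and 2 reduce to affine invariance plus one well-chosen change of variables, and become essentially mechanical once the parametrisation $(x,y)=(t(2-s),ts)$ is in place.
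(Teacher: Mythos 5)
Your proof is correct and takes essentially the same route as the paper: the same leftmost-point decomposition for (a), the same Beta identification for (b) (your $(t,s)$-parametrisation is equivalent to the paper's change of variables $(x+y,y)$ followed by integrating out $y$), and the same telescoping of $2-(x+y)$ through the rewinding maps for (c), where your observation that $2-(x+y)$ is an affine function vanishing on the line $BC$, hence multiplied by a constant under each $\Theta^{V^\star_i}$, is a clean reformulation of the paper's Thales argument. No gaps.
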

Let the reader be aware that Formula \eref{eq:qrh} is a key stone for the limit shape theorem.
 
\subsubsection{Proofs of \Cref{theo:qdzada} and \Cref{theo:qdzada2} }\label{sec:pqgqr}
\begin{figure}[h!]
  \centerline{\includegraphics {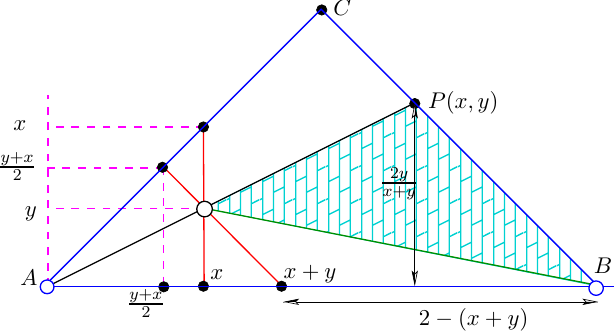}}
\caption{\label{fig:quo} Observe that for $V_n=(x,y)$, the parallel line to $CB$ passing through $V_n$ intersects the $x$-axis at distance $2-(x+y)$ of $B$. }
\end{figure}



 We first state a technical lemma that will be used several times in the sequel.
 \begin{lem}\label{lem:tech}
   For all integers $a,b,c$ such that   $a+1-c>-1$ and $b>-1$,we have
\[
\rho(a,b,c ):=\int_{0}^2 \int_{0}^{t/2} \frac{y^a (2-t)^b}{t^c} \d y \d t 
            = \frac{2^{b-c+1}\,b!\,(a-c+1)!}{( a+1)(a+b-c+2)!}.\]
            Moreover for all $a,b\geq 0$, 
            \[\int _T y^a |T(x,y)|^b  \d x \d y =\rho(a+b,b,b)=2\frac{b!(a+1)!}{(a+b+1)(a+b+2)!}=  \frac{2}{(a+b+1)(a+b+2)}\binom{a+b+1}{b}^{-1}.\]
            \end{lem}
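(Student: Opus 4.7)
The plan is a direct computation in two stages: first evaluate $\rho(a,b,c)$ by iterated integration and a Beta integral, then reduce the second integral to $\rho(a+b,b,b)$ via an affine change of variables on the triangle.

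For $\rho(a,b,c)$, I would perform the inner integration in $y$ first: $\int_0^{t/2} y^a \, \d y = t^{a+1}/(2^{a+1}(a+1))$, which reduces the double integral to
$\rho(a,b,c) = \frac{1}{2^{a+1}(a+1)} \int_0^2 t^{a+1-c}(2-t)^b \, \d t$.
The hypothesis $a+1-c > -1$ is precisely what ensures integrability at $t=0$, while $b > -1$ handles the endpoint $t=2$. The substitution $t = 2s$ then converts the remaining integral into $2^{a+b-c+2}\,B(a-c+2,\, b+1)$, and the Beta-function identity $B(p,q) = (p-1)!(q-1)!/(p+q-1)!$, valid for positive integer arguments, produces the announced closed form after collecting the powers of $2$.

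For the integral over the unit triangle $T = ABC$, I would use the explicit formula $|T(x,y)| = y(2-x-y)/(x+y)$ recalled from the excerpt, which rewrites the integrand as $y^{a+b}(2-x-y)^b/(x+y)^b$. Describing $ABC$ by $0 \leq y \leq x$ and $x+y \leq 2$ (with $A=(0,0)$, $B=(2,0)$, $C=(1,1)$), the change of variables $(x,y)\mapsto(t,y)$ with $t=x+y$ has Jacobian $1$ and sends $T$ bijectively onto $\{(t,y) : 0 \leq t \leq 2,\, 0 \leq y \leq t/2\}$. The integrand becomes $y^{a+b}(2-t)^b/t^b$, so the integral equals $\rho(a+b, b, b)$. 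Plugging in the value from the first part and simplifying with $\binom{a+b+1}{b}^{-1} = b!(a+1)!/(a+b+1)!$ together with $(a+b+2)! = (a+b+2)(a+b+1)!$ gives the three equivalent expressions. No real obstacle arises; the only thing to watch is the description of $T$, so that the substitution lines up with the domain defining $\rho$.
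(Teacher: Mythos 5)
Your proposal is correct and follows essentially the same route as the paper: integrate in $y$ first and recognize a Beta integral for the closed form of $\rho$, then use $|T(x,y)|=y(2-x-y)/(x+y)$ and the unit-Jacobian substitution $t=x+y$ to reduce the triangle integral to $\rho(a+b,b,b)$. The only difference is that you spell out the Beta-function step and the domain check, which the paper leaves as "immediate."
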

        \begin{proof}    The proof is immediate by first integrating according to $y$. For the second statement, write $\int _T y^a |T(x,y)|^b  \d x \d y = \int_T y^a(y \frac{2-x-y}{x+y})^b \d x \d y $ and then proceed to the change of variables  $(y,x+y)=(y,t)$.\end{proof}

        For a triangle $t=abc$, and $a,b$ two chosen vertices, denote by ${\sf C}^{t,a,b}(n)$ the subset of $t^n$ formed by the $z[n]$ such that $\{a,b, z_1,\cdots,z_n\}$ are in convex position (this is the bi-pointed case in $t$).

Consider the set of points 
${\sf C}_{n,m}^{t,a,b}$, subset of $t^n\times t^m$, composed of the pairs $(z[n],w[m])$ such that $z[n]\in {\sf C}^{t,a,b}(n)$ and such that $w_1,\cdots,w_m \in {\sf CH}\{a,b,z_1,\cdots,z_n\}$. 

If $\triangle$ is the unit triangle $ABC$ we set 
\beq c_{n,m}^\triangle:=\Leb\l({\sf C}_{n,m}^{ABC,A,B}\r)= \int_{ABC^n} \Leb\l({\sf CH}\{A,B,z_1,\cdots,z_n\}\r)^m \1_{z[n]\in {\sf C}^{ABC,A,B}(n)}\prod_{i=1}^n \d z_i.
\eq
Notice that   $\sum_m c_{n-m,m}^\triangle\binom{n}{m}=1$ since
\beq\label{eq:bQt}\bQt_{k,n-k}=c_{k,n-k}^\triangle \binom{n}{k}. \eq 
{The following Lemma contains the fundamental argument on the fact that the distribution of $(V_n,K_n)$ is intelligible: we may disintegrate $\Leb\l({\sf C}_{n,m}^{ABC,A,B}\r)$ according to the position $\d x \d y$ of $V_n$ and the value $k$ of $K_n$, and this produces an integral formula in which the distribution of these quantities appear:
\begin{lem}\label{lem:dec} For every pair of integers $(n,m)$ such that $n\geq 1$ and $m\geq 0$, or $(n,m)=(0,0)$, we have
  \beq\Leb\l({\sf C}_{n,m}^{ABC,A,B}\r) = \int_{ABC} n \sum_{k=0}^m \binom{m}{k} y^k \Leb\l({\sf C}_{n-1,m-k}^{T(x,y),(x,y),B}\r)   \d x \d y. \eq
  \end{lem}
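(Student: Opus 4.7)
The plan is to prove the recursion via a direct geometric disintegration along the leftmost vertex of the convex chain. The integrand defining $\Leb({\sf C}_{n,m}^{ABC,A,B})$ is symmetric under permutations of $z_1,\ldots,z_n$ (the ``convex position with fixed endpoints $A,B$'' condition is invariant under permutations of the $z_i$) and under permutations of $w_1,\ldots,w_m$ (``lies in the hull'' is invariant under permutations of the $w_j$). I would exploit these two symmetries in sequence: first identify which $z_i$ plays the role of the leftmost vertex $V_n$; then partition the $w_j$ according to which piece of the hull they fall into.

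Almost every configuration in ${\sf C}_{n,m}^{ABC,A,B}$ has a unique leftmost $z$ (smallest $X$-coordinate), so by $z$-symmetry $\Leb({\sf C}_{n,m}^{ABC,A,B}) = n\,I$ where $I$ is the Lebesgue measure of the sub-event on which $z_1$ is the leftmost of the $z_i$. I then integrate over the position $(x,y)\in ABC$ of $z_1$. Given $z_1=(x,y)$, a standard convexity argument shows that (i) the remaining $z_2,\ldots,z_n$ all lie in $T(x,y)=((x,y),B,P(x,y))$: they must lie below the line through $A$ and $(x,y)$ (so that the chain turns convexly at $V_n$, i.e.\ $(x,y)$ is on the hull boundary adjacent to $A$) and above the chord $(x,y)B$ (by concavity of the chain as a function of the horizontal coordinate); and (ii) $\{(x,y),B,z_2,\ldots,z_n\}$ forms a bi-pointed convex chain inside $T(x,y)$ with endpoints $(x,y)$ and $B$. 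Consequently the hull $\CH\{A,B,z_1,\ldots,z_n\}$ decomposes (up to a null segment) as the disjoint union of the leftmost triangle $\Delta_n=(A,(x,y),B)$, of area $\Leb(\Delta_n)=y$, and the sub-hull $\CH\{(x,y),B,z_2,\ldots,z_n\}\subset T(x,y)$.

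An interior point $w_j$ lies in the full hull iff it lies either in $\Delta_n$ or in the sub-hull. By $w$-symmetry, for each $k\in\{0,\ldots,m\}$ there are $\binom{m}{k}$ ways to select which $k$ of the $w_j$ fall in $\Delta_n$; integrating the indicator for each such $w_j$ contributes a factor $\Leb(\Delta_n)=y$; and the remaining $n-1$ points in $\{z_i\}$ and $m-k$ points in $\{w_j\}$, all confined to $T(x,y)$, contribute precisely $\Leb({\sf C}_{n-1,m-k}^{T(x,y),(x,y),B})$. Summing over $k$ and integrating over $(x,y)\in ABC$ yields the claimed identity. The base case $(n,m)=(0,0)$ is the convention $\Leb({\sf C}_{0,0}^{ABC,A,B})=1$ needed to close the recursion (as a sanity check, the formula for $(n,m)=(1,0)$ returns $\int_{ABC}1\,dx\,dy=1=\Leb({\sf C}_{1,0}^{ABC,A,B})$).

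The main technical care is twofold: first, in correctly identifying the geometric decomposition of the hull into $\Delta_n$ and the sub-hull inside $T(x,y)$ (which is the heart of the proof and depends on the convex position assumption); second, in justifying that the $n$-fold partition by ``which $z_i$ is leftmost'' and the $\binom{m}{k}$-fold partition by ``which subset of $w$'s sits in $\Delta_n$'' are genuine measurable decompositions, which follows from the a.s.\ uniqueness of the leftmost $z$ and from the a.s.\ disjointness of $\Delta_n$ and the sub-hull. No serious obstacle is expected; the argument is essentially a careful Fubini-style unfolding of the convex-position event.
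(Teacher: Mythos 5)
Your proposal is correct and follows essentially the same route as the paper: disintegrate over the position $(x,y)$ of the leftmost $z_i$ (the factor $n$ coming from the exchangeability of the $z_i$), observe that the hull splits into the triangle $A(x,y)B$ of area $y$ and the sub-hull inside $T(x,y)$, and use the exchangeability of the $w_j$ to produce the $\binom{m}{k}y^k$ term together with the recursive factor $\Leb\bigl({\sf C}_{n-1,m-k}^{T(x,y),(x,y),B}\bigr)$. Your write-up is in fact more careful than the paper's (which leaves the factor $n$ and the measurability of the partitions implicit), and the geometric justifications you give for why the remaining $z_i$ must lie in $T(x,y)$ are exactly the ones the paper relies on.
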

 Since
  \beq\label{eq:thdsff}\Leb\l({\sf C}_{n-1,m-k}^{T(x,y),(x,y),B}\r)=c_{n-1,m-k}^\triangle |T(x,y)|^{n+m-1-k},\eq Lemma \ref{lem:tech} leads to
\beq\label{eq:Cnm}
c_{n,m}^\triangle = 2n  \sum_{k=0}^m \binom{m}{k}  \frac{c_{n-1,m-k}^\triangle}{(n+m)(n+m+1)  \binom{n+m}{k+1}},\eq
with initial condition, for $n,m\geq 0$,
\[c_{n,0}^\triangle=t_n=\frac{2^n}{n!(n+1)!},~~ c_{1,m}^\triangle=\frac{2}{(m+1)(m+2)}.\]
\begin{proof}[Proof of \eref{lem:dec}]  The sum over $k$, is the sum on the number of points among the $w_i$ that will be in the triangle with vertices $A$, $B$ and $(x,y)$, $\binom{m}k$ accounts for the number of choices of the $k$ element in the triangle $AB(x,y)$ among the $w_1,\cdots,w_m$,  $y$ is the area of the triangle $AB(x,y)$ and then $y^k$ is the Lebesgue measure of a $k$-tuple of elements in this triangle, and then $\Leb\l({\sf C}_{n-1,m-k}^{T(x,y),(x,y),B}\r)$ simply comes from the fact that given $V_n$ and the $k$ points in $ABC$, the points $(z[n],w[n])$ is in ${\sf C}_{n,m}^{ABC,A,B}$ if and only if, all the other points are in $T(x,y)$, and are in ${\sf C}_{n-1,m-k}^{T(x,y),(x,y),B}$.\end{proof}

\subsubsection*{Proof of \Cref{theo:qdzada2}(a)}
Using \eref{eq:bQt}, the second statement in \Cref{theo:qdzada2}(a) is equivalent to \eref{eq:Cnm}.
Now, to prove the first statement, use the fact that   \Cref{lem:dec} together with \eref{eq:thdsff} and \eref{eq:bQt}, show that
\[\Qt_{n,m}(V_n \in \d x \d y,K_n=k)=\binom{n+m}{n} \frac{n  \binom{m}{k} y^k c_{n-1,m-k}^\triangle |T(x,y)|^{n+m-1-k}      \d x \d y}{\bQt_{n,m}}.\]
Replacing $c_{n-1,m-k}^\triangle$ by $\bQt_{n-1,m-k}/\binom{n-1+m-k}{n-1}$ (see \eref{eq:bQt}) ends the proof.
\subsubsection*{Proof of \Cref{theo:qdzada2}(b)}

The law of  $\Qt_{n,m}(V_n \in \d x \d y~|~K_n=k)$ given in \Cref{theo:qdzada2}(a) seems a bit complex, but it is an illusion: since this density is proportional to $y^k |T(x,y)|^{n-1+m-k}$ with full support on $ABC$, and since   by \eqref{eq:qe2}, we already know  that
\ben\label{eq:psops} \Psi_{n,m,k}(x,y) := \frac{y^k |T(x,y)|^{n-1+m-k}}{{\rho(n+m-1,n+m-1-k,n+m-1-k)}}\een
is a density on $T=ABC$, also proportional to $y^k |T(x,y)|^{n-1+m-k}$ then by uniqueness we get 
\ben \Qt_{n,m}( V_n\in \d x \d y~|~K_n =k) = \Psi_{n,m,k}(x,y) \d x \d y.\een
Now, with such a simple distribution, we may by simple routine compute the quantities of interest: 
 Set $T_n:=X_n+Y_n$.  We have
\ben (X_n,Y_n)\eqd (T_n-Y_n,Y_n) \eqd \l(2b_{k+2,n+m-k}(1-b_{n+m,1}/2) ,b_{n+m,1}b_{k+2,n+m-k}\r).\een
For any test function $f:\R^2\to\R$ continuous and bounded, we have
\be
\E_{n,m}(f(T_n,Y_n)~|~ K_n=k)= \int f(x+y,y) \Psi_{n,m,k}(x,y) \d x \d y=\int f(t,y) \Psi_{n,m,k}(t-y,y) \1_{0\leq t \leq 2 \atop{0\leq y \leq t/2}}\d t \d y \ee
and then, the density of $(T_n,Y_n)$ under $\Qt_{n,m}( \cdot|~K_n =k)$ is 
\[\Theta_{n,m,k}(t,y):= \frac{y^{n+m-1}}{\rho(n+m-1,n+m-1-k,n+m-1-k)}\l(\frac{2-t}t\r)^{n+m-1-k}\1_{0\leq t \leq 2 \atop{0\leq y \leq t/2}}.\]
By integrating $y$ from 0 to $t/2$, one finds that under $\Qt_{n,m}(.~|~K_n=k)$, $T_n$ is distributed as $2b_{k+2,n+m-k}$ where $b$ is a $\beta(k+2,n+m-k)$, and conditional on $T_n$, $Y_n=b_{n+m,1} T_n/2$, where $b_{n+m,1}$ is a $\beta(n+m,1)$ random variable.

\paragraph{Proof of \Cref{theo:qdzada2}(c):} This property is a consequence of $(b)$ and Thales theorem. We will explain how the formula works for $2-(X_{n-1}+Y_{n-1})$ which is sufficient, because the complete recursive argument is needed even in the $n-1$ case.
Notice that the affine map $\Phi^{V_n}$ that sends $ABC$ onto $V_{n}BP(V_n)$ also sends $BC$ onto $PC$, that is, it preserves the direction $BC$.
Observe now Fig. \ref{fig:calc2}  where $V_n$ and $V_{n-1}$ are represented in general position (recall that these points are sorted according to decreasing abscissa order).
Conditional on $(K_n,K_{n-1})=(k_n,k_{n-1})$, according to Lemma \ref{eq:qez}, it suffices:\\
\indent -- to place a point $V_n$ according to $\Qt_{n,m,k_{n}}$ in ABC,\\
\indent -- then, in a separate copy of $ABC$, place a point $V_{n-1}^\star$ according to $\Qt_{n-1,m-k_n,k_{n-1}}$. Then, apply the affine map $\Phi^{V_n}$ that sends $ABC$ onto $V_nBP(V_n)$, and take $V_{n-1}$ as the image of the point $V_{n-1}^\star$ by  $\Phi^{V_n}$, as shown in Fig. \ref{fig:calc2}.
It now suffices to check that for $q_{n-1}^\star=2-(X_{n-1}^\star+Y_{n-1}^\star)$, $q_n=2-(X_n+Y_n)$, $q_{n-1}=2-(X_{n-1}+Y_{n-1})$, we have
\[\frac{q_{n-1}^\star}{2}=\frac{q_{n-1}}{q_n}\]
which is immediate by Thales. Indeed, since proportions are preserved by affine maps, we get from Fig. \ref{fig:calc2} that
\[\frac{\|B-V'_{n-1}\|}{\|B-V_{n}\|}=\frac{q_{n-1}^\star}{2},\]
but on the right picture of Fig. \ref{fig:calc2},we deduce that
\[\frac{\|B-V'_{n-1}\|}{\|B-V_{n}\|}=\frac{\|B-p_{n-1}\|}{\|B-p_{n}\|}=\frac{q_{n-1}}{q_n}.\]
Therefore
\[2-(X_{n-1}+Y_{n-1})=2 \frac{q_n^\star}{2} \frac{q_{n-1}^\star}2\]
with $q_n=q_n^\star$ and by iterating this construction,
\[2-(X_{n-i}+Y_{n-i})=2 \frac{q_n^\star}{2} \frac{q_{n-1}^\star}2\cdots \frac{q_{n-i}^\star}2.\]

\begin{figure}[h!]
  \centerline{\includegraphics {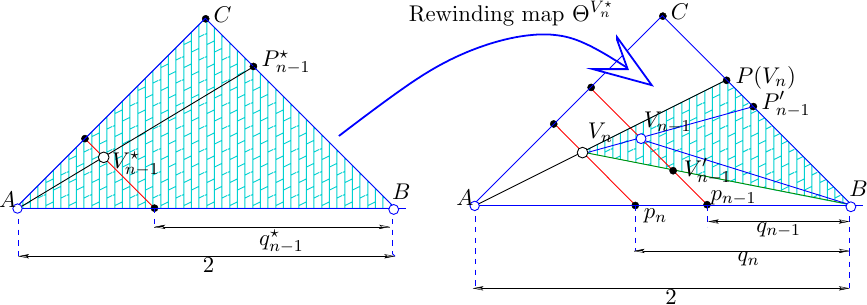}}
\caption{\label{fig:calc2} Observe that for $V_n=(x,y)$, the parallel to $CB$ passing at $V_n$ intersect the $x$-axis at distance $2-(x+y)$ of $B$. }
\end{figure}
   
\subsection{Asymptotics of $\bQ_{n,m}^{\triangle}$: proof of \Cref{theo:betala}}

To prove that $n^{-1}\l(\log\l(\bQt_{n,\floor{n\lambda} }\r) +  2n\log(n)\r)  \cvg\beta_\la$,
we will first show in \Cref{lem:LB} that the liminf of the left hand side is $\geq \beta_\lambda,$ 
and then in \Cref{lem:UB}
that   the limsup of the left hand side is $\leq \beta_\lambda$.

By the leftmost point decomposition of $V[n]$ under $\Qt_{n,m}$, it appears that a good knowledge of $K[n]$ is needed to determine the behavior of $S[n]$, where $S_i=K_1+\cdots+K_i$, this later being involved in the description of the distribution of $V[n]$. 

We will choose $m$ to be equal to $\fnl$, and we are interested in the asymptotic behavior of $S[n]$ as $n\to+\infty$.

\subsubsection{Proof of the lower bound in \Cref{theo:betala}}

The proof of the lower bound is a bit involved, even if it is much simpler than the upper bound.

The main leading idea in this section is that, in the regime $m=\fnl$, $K[n]$ under $\Qt_{n,\fnl}$ behaves ``at the first order'' as another sequence $\obK[n]$, where the $\obK_i$ are independent, but are globally conditioned to sum to $\fnl$.
The reader could be surprised by some choices taken along the proof: they may seem arbitrary, but they are not: ``lower and upper'' bounds coincide, so that these choices are arguably, optimal. 

 \paragraph{Presentation of the family of distributions  $\mu_{(v)}$.}

Since  $\sum_{k\geq 0} (1+k)v^k = 1/(1-v)^2$ for all  $v \in (0,1)$, it appears that 
\ben\label{eq:muv}\mu_{v}:=\sum_{k\geq 0} (1+k)v^k (1-v)^2\delta_k,\een
is a probability distribution on $\{0,1,2,\ldots\}$.  
If $K(v)$ is a random variable with distribution $\mu_{v}$, then
\ben \E\big(K(v)\big)= \frac{2v}{1-v}, ~~\Var\big(K(v)\big)= \frac{2v}{(1-v)^2}.\een 
\begin{lem}\label{lem:LB}[Lower bound on $\bQt_{n,\fnl}$] For any $\lambda>0$,
  \[\liminf_n n^{-1}\l(\log\l(\bQt_{n,\fnl}\r)+2n\log(n)\r) \geq \beta_\lambda.\]
\end{lem}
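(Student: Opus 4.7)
The proof starts from the explicit formula \eqref{eq:qfe} and proceeds by tilting with a position-dependent family of the distributions $\mu_v$ introduced in \eqref{eq:muv}. Using the identity $2(1+k)=2\mu_v(k)/[v^k(1-v)^2]$, one rewrites, for any smooth profile $v:[0,1]\to(0,1)$ and $v_j:=v(j/n)$,
\[
\bQt_{n,\fnl}\;=\;\frac{2^n}{\prod_{j=1}^n(1-v_j)^2}\;\E\!\left[\prod_{j=1}^n\frac{v_j^{-\tilde K_j}}{(j+\tilde S_j)(j+1+\tilde S_j)}\;\1_{\tilde S_n=\fnl}\right],
\]
where the $\tilde K_j$ are independent with $\tilde K_j\sim\mu_{v_j}$ and $\tilde S_j=\tilde K_1+\cdots+\tilde K_j$. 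The idea is to choose $v$ so that the typical profile of the tilted walk matches a prescribed curve $s:[0,1]\to[0,\lambda]$ with $s(0)=0$ and $s(1)=\lambda$, that is, $s'(t)=2v(t)/(1-v(t))$. A local CLT for the inhomogeneous triangular array $(\tilde K_j)$ then gives $\P(\tilde S_n=\fnl)\geq c/\sqrt{n}$ and concentration on the typical set $\{|\tilde S_j-n\,s(j/n)|\leq n^{2/3}\text{ for all }j\}$.

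Converting sums to integrals on this typical set (using $j+\tilde S_j\sim n(t+s(t))$ and $\tilde K_j$ close to $s'(t)$ at $t=j/n$) yields
\[
\liminf_n \frac{1}{n}\bigl(\log\bQt_{n,\fnl}+2n\log n\bigr)\;\geq\;\mathcal{I}(s):=-\log 2+\int_0^1\!\bigl[(s'+2)\log(s'+2)-s'\log s'-2\log(t+s)\bigr]dt.
\]
The task reduces to maximizing $\mathcal{I}$ over admissible $s$. With the change of variable $w(t)=t+s(t)$, the Euler--Lagrange equation becomes the autonomous second-order ODE $w\,w''=(w')^2-1$, whose general solution is $w(t)=\sinh(\sqrt K\,t)/\sqrt K$ for $K>0$. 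The boundary conditions $w(0)=0,\;w(1)=1+\lambda$ together with $\Psi(r_\lambda)=\lambda$ (equivalently $\sinh(2r_\lambda)=2r_\lambda(\lambda+1)$) force $\sqrt K=2r_\lambda$, so the maximizer is $s^*(t)=\sinh(2r_\lambda t)/(2r_\lambda)-t$, with $s'^*(t)=2\sinh^2(r_\lambda t)$. This optimizer has a transparent geometric meaning: a direct computation from \Cref{eq:hyper} shows that $2\sinh^2(r_\lambda t)$ is exactly the expected number of points in the infinitesimal cell $\Delta_j$ at $j/n=t$ under the limit shape $\mathcal{H}_\lambda$ of \Cref{theo:lim}, namely the density $\lambda/\A(\mathcal{H}_\lambda)$ of interior points times the area element $|(X-2)Y'-YX'|/(2n)$.

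Finally, $\mathcal{I}(s^*)=\beta_\lambda$ is verified by direct substitution. Plugging in $s'^*+2=2\cosh^2(r_\lambda t)$, $s'^*=2\sinh^2(r_\lambda t)$ and $t+s^*=\sinh(2r_\lambda t)/(2r_\lambda)$, and combining the identities $2\cosh^2 x-1=\cosh(2x)=2\sinh^2 x+1$ with $\sinh(2x)=2\sinh x\cosh x$, the integrand collapses to $2\log(2r_\lambda)-2\cosh(2r_\lambda t)\log\tanh(r_\lambda t)$. Integration by parts with $u=\log\tanh(r_\lambda t)$, $dv=\cosh(2r_\lambda t)\,dt$ (using $u'=2r_\lambda/\sinh(2r_\lambda t)$) evaluates the remaining integral as $(\sinh(2r_\lambda)/(2r_\lambda))\log\tanh(r_\lambda)-1=(\lambda+1)\log\tanh(r_\lambda)-1$, so that $\mathcal{I}(s^*)=\log(2e^2 r_\lambda^2)-2(\lambda+1)\log\tanh(r_\lambda)=\beta_\lambda$, matching \eqref{eq:betalambda}.

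\textbf{Main obstacle.} The principal technical difficulty lies in the local CLT for the triangular array $(\tilde K_j)$ with position-dependent variances: $\mathrm{Var}(\mu_{v(j/n)})\to 0$ at $t=0$ (since $\mu_0=\delta_0$), so a naive Berry--Esseen bound degenerates at the left endpoint. A standard workaround truncates the profile to $[\varepsilon,1]$, applies the CLT there, handles the short initial segment $j\in[0,\varepsilon n]$ by a direct combinatorial bound, and then lets $\varepsilon\to 0$. Once the analytic control is in place, the algebraic identification $\mathcal{I}(s^*)=\beta_\lambda$ is the essentially miraculous step that validates the hyperbolic tilting ansatz dictated by \Cref{theo:lim}.
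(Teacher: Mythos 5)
Your proposal is correct in outline and lands on the same tilted family $\mu_v$, the same optimal profile $s^\star(t)=\sinh(2\rl t)/(2\rl)-t$ and the same final computation of $\beta_\lambda$, but the way you control the random weight differs from the paper in one genuinely interesting respect. You propose to restrict the expectation to a path-concentration event $\{|\tilde S_j-ns(j/n)|\le n^{2/3}\ \forall j\}$ and bound the integrand deterministically there; note that this requires \emph{two} probabilistic inputs, a local CLT at the endpoint \emph{and} a uniform-in-$j$ deviation bound for the whole trajectory (a local CLT alone does not give the latter, though a Chernoff bound plus a union over $j$ does, since the $\mu_v$ have uniformly exponential tails for $v\le\tanh(\rl)^2<1$). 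The paper avoids path concentration entirely: it introduces a deterministic centering sequence $D_i$ coupled to the tilting parameters $d_i$ by the matching condition \eqref{eq:equil}, under which the fluctuating part of the log-weight becomes $\sum_i\log\bigl(1+\tfrac{\obS_i-D_i}{i+D_i}\bigr)-\tfrac{\obS_i-D_i}{i+D_i}+\cdots\le 0$ pointwise by $\log(1+x)\le x$, so the random product is bounded below by a deterministic constant on the \emph{whole} event $\{\obS_n=\fnl\}$ and only the single endpoint estimate $\P(\obS_n=\fnl)\asymp n^{-1/2}$ (\Cref{lem:clt}) is needed — this also sidesteps the degenerate-variance issue near $t=0$ that you correctly flag as the main obstacle of your route. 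A second, cosmetic difference: you frame the lower bound variationally (maximize $\mathcal{I}(s)$ over all admissible profiles via Euler–Lagrange), whereas for a lower bound it suffices to plug in the one good profile; the paper does exactly that here and reserves the Euler–Lagrange/Beltrami machinery for the upper bound, where it is genuinely needed. Your algebraic verification that the integrand collapses to $2\log(2\rl)-2\cosh(2\rl t)\log\tanh(\rl t)$ and that $\mathcal{I}(s^\star)=\beta_\lambda$ is correct and matches the paper's computation.
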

\begin{proof}

Take some parameters $(d_i,i\geq 1)\in (0,1)$. We rewrite \eref{eq:qfe}:
\ben\label{eq:gre}
\bQt_{n,m}&=&2^n\sum_{k[n]{\sf Comp}(n,m)} \prod_{i=1}^n \frac{(1+k_i)d_i^{k_i}(1-d_i)^2}{(i+S_i)(i+1+S_i)d_i^{k_i}(1-d_i)^2}\een
where we have written $S_i$ instead of $k_1+\cdots+k_i$ for short.
In the denominator the term $\prod_{i=1}^n d_i^{k_i}$ can be rewritten as $d_n^{S_n}\prod_{j=1}^{n-1} (d_j/d_{j+1})^{S_j}$. The numerator in the generic term of the sum \eref{eq:gre} (the factor $2^n$ apart) is the probability for an independent sequence of random variables $(\obK_i,i\geq 1)$ with respective laws $(\mu_{(d_i)},i\geq 0)$ to take the value $k[n]$. Hence, since each $k[n]$ sums to $n$, 
\ben
\label{eq:Fdn}
\bQt_{n,m} =\frac{ 2^n\,\P_{(d)}(\obS_n=m)}{ (n+m)(n+m+1)d_n^m\prod_{i=1}^n (1-d_i)^2} F_{(d),n,m}
\een
where
\ben\label{eq:Fdn2} F_{(d),n,m}= \E_{(d)}\l(\prod_{j=1}^{n-1}\frac{1}{(i+\obS_i)(i+1+\obS_i )  (d_j/d_{j+1})^{\obS_j}} \Big|\obS_n=m\r) 
\een
where here, $\E_{(d)}$ is the expectation when $\obK_i$ is $\mu_{d_i}$ distributed, and $\obS_i=\obK_1+\cdots+\obK_i$ (we will also write $\P_{(d)}$, that is with an index $(d)$, the probability of event relative to the variables $(\obK_i)$ under this distribution). Since we are completely free to choose the $(d_i)$ as we wish, we will pick the $(d_i)$ so that the denominator in the expectation of \eqref{eq:Fdn2} remains under control.\par
We focus on $F_{(d),n,m}$. Take some positive non-negative free parameters $(D_i,1\leq i\leq n-1),$ and introduce the two maps:
\be
L'(d,D)&:=& \sum_{i=1}^{n-1}\log\l(i+D_i\r)+\log(i+1+D_i)-\frac{D_i}{i+D_i}-\frac{D_i}{1+i+D_i} \\
L(d,D)&:=&\log\l(\prod_{j=1}^{n-1} (i+\obS_i)(i+1+\obS_i )  \l(\frac{d_j}{d_{j+1}}\r)^{\obS_j}\r) -L'(d,D).
\ee
Notice that $L(d,D)$ is random while $L'(d,D)$ is deterministic. Since 
\ben\l(\prod_{j=1}^{n-1}\frac{1}{(i+\obS_i)(i+1+\obS_i )  (d_j/d_{j+1})^{\obS_j}}\r)^{-1} = \exp(-L(d,D)-L'(d,D)),\een
we will proceed to a choice of $(d,D)$ in order to lower bound this random quantity by the constant $\exp(-L'(d,D))$.
Since 
\[i+\obS_i= (i+D_i) \l(1+ \frac{\obS_i-D_i}{i+D_i}\r)~~\textrm{and}~~ i+\obS_i+1= (i+D_i+1) \l(1+ \frac{\obS_i-D_i}{i+D_i+1}\r),\]
this gives 
\ben\label{eq:Ld} L(d,D)= \sum_{i=1}^{n-1} \log\l(1+ \frac{\obS_i-D_i}{i+D_i}\r)+\log\l(1+ \frac{\obS_i-D_i}{i+D_i+1}\r)+\obS_i\log\l(\frac{d_i}{d_{i+1}}\r)+\ \frac{D_i}{i+D_i}+\frac{D_i}{1+i+D_i}.\een
Assume for a moment that the sequences $(d_i,1\leq i \leq n)$ and $(D_i,1\leq i \leq n-1)$ are related by the following relation:
\ben\label{eq:equil}
\ell_i:=\log\l(\frac{d_i}{d_{i+1}}\r)=-\frac{1}{i+D_i}-\frac{1}{i+D_i+1},~~ 1\leq i \leq n-1.
\een
Then in this case, replacing $\log\l(\frac{d_i}{d_{i+1}}\r)$ by the rhs of \eref{eq:equil} in \eref{eq:Ld}, and then, using the inequality $\log(1+x)-x\leq 0$ (that holds for all $x$) to bound the two first logarithms in \eref{eq:Ld}, we obtain the crucial bound
\[L(d,D)\leq 0.\] 
Since $L'(d,D)$ is deterministic, as long as $d[n]$ and $D[n-1]$ satisfy \eref{eq:equil}, we have for all sequences $d[n]$: 
\ben\l(\prod_{j=1}^{n-1}\frac{1}{(i+\obS_i)(i+1+\obS_i )  (d_j/d_{j+1})^{\obS_j}}\r)^{-1} \geq  \exp(-L'(d,D))\een
and then
\[\exp(-L'(d,D))\leq F_{(d),n,m}=\E_{(d)}(\exp(-(L(d,D)+L'(d,D)))~|~\obS_n=m)\]
and then we get the following lower-bound $LB$:
\ben\label{eq:LB}
LB:=\exp(-L'(d,D))\frac{2^n\P_{(d)}(\obS_n=m)}{(n+m)^2 d_n^{m}\prod_{i=1}^n(1-d_i)^2}\leq \bQt_{n,m}.
\een
It remains to fix $(d[n])$ and $(D[n-1])$ such that \eref{eq:equil} holds and such that $LB$ is computable and as large as possible. We fix the sequence $(D[n-1])$ to be  
\[D_i = D_i^{(n)}= n \frac{\sinh(2\rl i / n )}{2\rl} -i, ~~ 1\leq i \leq n;\]
the sequence $(D_i)$ is positive, depends on $n$, as well as $(d_i,1\leq i \leq n)=(d_i^{(n)},1\leq i \leq n)$
and we fix
\[d_n^{(n)} = \tanh(\rl)^2.\] 
We rewrite \eref{eq:equil} so that $\log\l({d_i^{(n)}}\r)$ can be expressed with the $d_{j}^{(n)}$ with higher indices $j$:
\ben\label{eq:remonte}
\log\l({d_i}^{(n)}\r)=\log(d_{i+1}^{(n)})-\frac{1}{i+D_i^{(n)}}-\frac{1}{i+D_i^{(n)}+1},~~ 1\leq i \leq n-1,
\een
and then
\ben\label{eq:remonte2}
\log\l({d_i^{(n)}}\r)&=&\log(d_{n}^{(n)})-\sum_{j=i}^{n-1}\frac{1}{j+D_j^{(n)}}+\frac{1}{j+D_j^{(n)}+1},~~ 1\leq i \leq n-1,\\
d_i^{(n)}&=&\tanh(\rl)^2 \exp\l(- \int_{i}^n  \frac{1}{\floor{j}+D_{\floor{j}}^{(n)}}+\frac{1}{\floor{j}+D_{\floor{j}}^{(n)}+1}\,dj\r).
\een
For $t$ fixed in $[0,1]$ we have by a simple application of Lebesgue's dominated convergence theorem (see \Cref{lem:CVD} for the proof of an even stronger convergence):
\ben\label{eq:dnt}
d_{nt}^{(n)}&\cvg&\tanh(\rl)^2 \exp\l(-\int_{t}^1 \frac{4\rl}{\sinh(2\rl u)} du \r)
=\tanh(\rl t)^2.
\een
Let us come back to \eref{eq:LB} to handle every term of the expression.
 By a central local limit Lemma (see \Cref{lem:clt}, for a proof),  $\P_{(d^{(n)}}(\obS_n=\fnl)=C_\lambda/\sqrt{n}$. 
From this point on, 
\ben
-L'(d,D)&:=&\nonumber -\sum_{i=1}^{n-1}\log\l(i+D_i\r)+\log(i+1+D_i)-2+\frac{i}{i+D_i}+\frac{i+1}{i+1+D_i}\\
&=&\nonumber2(n-1)-\sum_{i=1}^{n-1}\log\l(i\r)+\log(i+1) + \log\l(1+\frac{D_i}i\r)+\log\l(1+\frac{D_i}{i+1}\r)+\frac{1}{1+\frac{D_i}i}+\frac{1}{1+\frac{D_i}{i+1}}\\
\nonumber&=&o(n)+2n -(2n\log(n)-2n)- \int_1^n \log\l(1+\frac{D_\floor{i}}{\floor{i}}\r)+\log\l(1+\frac{D_\floor{i}}{\floor{i}+1}\r)+\frac{1}{1+\frac{D_\floor{i}}{\floor{i}}}+\frac{1}{1+\frac{D_\floor{i}}{\floor{i}+1}}di\\
\label{eq:sgdqh}&=&o(n)+2n -(2n\log(n)-2n)-2n\int_{0}^{1} \log\l(\frac{\sinh(2 \rl t)}{2\rl t}\r)+\frac{2\rl t}{\sinh(2\rl t)}dt
\een
where we applied once more the Lebesgue dominated convergence theorem.
Now, still for $m=\fnl$, and still by Lebesgue, 
\be
\frac{2^n}{(n+m)^2 d_n^{m}\prod_{i=1}^n(1-d_i)^2}
&=&\exp\l(n\l(o(1)+\log(2)-2\lambda \log(\tanh(\rl))-2\int_{1}^n\log(1-d_{\floor{i}}) di\r)\r)\\
&=&\exp\l(n\l(o(1)+\log(2)-2\lambda \log(\tanh(\rl))-2n\int_0^1 \log(1-\tanh(\rl u)^2) du\r)\r).\ee

Gathering these three bounds all together, we can finally give the limiting behavior of our lower bound LB:
\[LB = \exp\l(-2n\log(n)+n (R_\lambda+o(1))\r)\]
where 
\beq R_\lambda := -2\int_0^1 \l(\frac{2\rl u}{\sinh(2\rl u)}+\log\l(\frac{\sinh(2\rl u)}{2\rl u}\r) + \log\l(1-\tanh(\rl u)^2\r)\r) du+\log(2)-\la\log(\tanh(\rl )^2)+4.\eq
To conclude, we need to prove that $R_\lambda=\beta_\lambda$, where $\beta_\lambda$ was introduced in \eref{eq:betalambda}.
Since for all $x>0$, $\log(\sinh(2x)/2)+\log(1-\tanh(x)^2)=\log(\tanh(x))$,
\[R_\lambda =  -2\int_0^1 \l(\frac{2\rl u}{\sinh(2\rl u)}+\log(\tanh(\rl u))-\log(\rl u)\r) \d u+\log(2)-\la\log(\tanh(\rl )^2)+4.\]
Now,  $\dis -2\int_0^1 -\log(\rl u) du= 2\log(\rl)-2$, and a primitive of $\dis u\mapsto \frac{2\rl u}{\sinh(2\rl u)}+\log(\tanh(\rl u))$ is $u\mapsto u\log(\tanh(\rl u))$.  Hence we get
\[R_\lambda= 2\log(\rl)-2 -2\log(\tanh(\rl ) +\log(2)-\la\log(\tanh(\rl )^2)+4\] 
which coincides with $\beta_\lambda$.
This concludes the proof of \Cref{lem:LB}.
\end{proof}

\begin{lem}\label{lem:UB}[Upper  bound on $\bQt_{n,\fnl}$] For any $\lambda>0$,
  \[ \limsup_n n^{-1}\l(\log\l(\bQt_{n,\fnl}\r)+2n\log(n)\r)\leq \beta_\la.\]
  \end{lem}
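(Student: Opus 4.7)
The plan is to reuse the factorization of $\bQt_{n,\fnl}$ and the specific choice of auxiliary sequences $(d_i^{(n)})$, $(D_i^{(n)})$ employed in the proof of \Cref{lem:LB}, and to bound the conditional expectation $F_{(d),n,m}$ from above. All the deterministic ingredients---$\exp(-L'(d,D))$, the local-CLT asymptotic $\P_{(d)}(\obS_n=\fnl)\sim C_\la/\sqrt{n}$ furnished by \Cref{lem:clt}, and the prefactor $2^n/((n+m)^2 d_n^{\fnl}\prod(1-d_i)^2)$---were shown during the lower-bound proof to jointly exponentiate to $\exp(-2n\log n+n\beta_\la+o(n))$, so the lemma reduces to the complementary bound
\[
\E_{(d)}\bigl[\exp(-L(d,D))\,\big|\,\obS_n=\fnl\bigr]=\exp(o(n)).\qquad(\star)
\]
Since $-L\geq 0$ pointwise (the very inequality that drove the lower bound), $(\star)$ amounts to showing that $-L$ is $o(n)$ with sufficiently high probability under the conditional law.

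The main analytic input is the two-sided inequality $0\leq x-\log(1+x)\leq x^2$, valid for $x\geq -1/2$. After the substitution \eref{eq:equil}, this yields on the ``good'' event $\mathcal{G}=\{|X_i|,|Y_i|\leq 1/2\text{ for all } i\geq\sqrt n\}$, with $X_i=(\obS_i-D_i)/(i+D_i)$ and $Y_i=(\obS_i-D_i)/(i+D_i+1)$, the bound
\[
0\leq -L\leq C(\la)\sum_{i\geq\sqrt n}\frac{(\obS_i-D_i)^2}{(i+D_i)^2}+O\bigl(\sqrt n\log n\bigr),
\]
where the error term crudely absorbs the at-most-$O(\log n)$-per-index contribution of $i<\sqrt n$, via $\log(1+X_i)\geq-\log(i+D_i)\geq-\log n$ together with the fact that $X_i,Y_i$ remain in a compact interval $[-1+c_\la,C_\la]$ (a consequence of $0\leq\obS_i\leq\fnl$ and the asymptotics $(i+D_i)/n\to\sinh(2\rl\theta)/(2\rl)$ for $i=\theta n$). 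Under $\P_{(d)}$, the choice $d_i^{(n)}\approx\tanh^2(\rl i/n)$ makes $D_i=\E_{(d)}(\obS_i)$ to leading order---the identity $dD_i/di=2\sinh^2(\rl i/n)=\E(\obK_i)$ is precisely what produced \eref{eq:sgdqh}---and the increment variances $\Var(\obK_i)$ remain uniformly bounded on $[0,1]$. Consequently, conditional on $\obS_n=\fnl$, the centered walk $(\obS_i-D_i)$ is a bridge with variance $\Theta(\min(i,n-i))$ for $i$ of order $n$, and a Riemann-sum computation yields $\E_{(d)}\bigl[\sum_i X_i^2\,\big|\,\obS_n=\fnl\bigr]=O(1)$.

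The final step is to upgrade this bound on the conditional expectation to the exponential moment estimate $(\star)$. The plan is to apply a Hanson--Wright-style concentration inequality---or, equivalently, to carry out an explicit Gaussian comparison via a functional local central limit theorem for the bridge---to the weighted quadratic form $\sum_{i\geq\sqrt n}(\obS_i-D_i)^2/(i+D_i)^2$, producing sub-exponential tail bounds and hence $\E_{(d)}[\exp(C(\la)\sum X_i^2)\mathbf{1}_\mathcal{G}\mid\obS_n=\fnl]=O(1)$. The complementary event $\mathcal{G}^c$ forces an $\Omega(n)$ deviation of some $\obS_i-D_i$ and therefore has probability at most $e^{-cn}$ by a Bernstein-type inequality for sums of independent integer-valued variables with uniformly bounded exponential moments; on $\mathcal{G}^c$ the crude pointwise bound $0\leq -L\leq Cn\log n$ is amply dominated. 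The main technical obstacle lies precisely in extracting the exponential moment from the $O(1)$ variance estimate in the presence of the non-uniform weights $(i+D_i)^{-2}$ and the bridge conditioning: this requires either invoking a log-Sobolev inequality for the conditioned walk or performing a careful tilted-measure argument calibrated on a truncation of the form $\{\sum X_i^2\leq\varepsilon n\}$. Once this is in hand, combining all pieces establishes $(\star)$ and thereby the desired upper bound $\limsup_n n^{-1}\bigl(\log\bQt_{n,\fnl}+2n\log n\bigr)\leq\beta_\la$.
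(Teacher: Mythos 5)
Your strategy --- keeping the tilted-measure factorization of the lower bound and showing that the conditional exponential moment $\E_{(d)}\bigl[e^{-L(d,D)}\mid\obS_n=\fnl\bigr]$ is $e^{o(n)}$ --- is legitimate in principle and is \emph{not} the route the paper takes (the paper instead slices the sum over ${\sf Comp}(n,\fnl)$ into $K$ blocks, bounds each block crudely, reduces to an optimization over convex slope profiles $f[K]$, lets $K\to\infty$, and solves the resulting variational problem by the Beltrami identity; this deliberately avoids any concentration estimate of the kind you need). Your reduction to $(\star)$ is algebraically correct: $-L=\sum_i\bigl(X_i-\log(1+X_i)\bigr)+\bigl(Y_i-\log(1+Y_i)\bigr)\geq 0$, and the identity $\bQt_{n,\fnl}=LB\cdot\E_{(d)}[e^{-L}\mid\obS_n=\fnl]\,(1+o(1))$ does show that $(\star)$ would finish the proof. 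However, two steps are genuinely broken or missing. First, your deterministic $O(\sqrt n\log n)$ bound for the indices $i<\sqrt n$ is false: $X_i-\log(1+X_i)\leq X_i+\log(1+D_i/i)$, and $X_i=(\obS_i-D_i)/(i+D_i)$ can be as large as $\fnl/i$, i.e.\ of order $n/i$, so the claim that $X_i$ stays in a compact interval only holds for $i=\theta n$ with $\theta$ bounded away from $0$; for $i=O(\sqrt n)$ the worst-case per-index contribution is $\Omega(\sqrt n)$, not $O(\log n)$, and controlling it requires a separate tail estimate on $\P_{(d)}(\obS_i\ \text{large})$ rather than a pointwise bound.

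Second, and more seriously, the exponential-moment step is the entire content of the upper bound and you leave it as a menu of possible tools, none of which applies off the shelf. The quadratic form $\sum_i(\obS_i-D_i)^2/(i+D_i)^2$ is a form in strongly dependent partial sums of sub-exponential (geometric-type, not sub-Gaussian) increments; standard Hanson--Wright gives concentration of $Q$ around $\E Q$ with an exponential-moment range limited by $\|A\|_{op}^{-1}$ of the weight matrix, and you have not verified that this range exceeds the constant $C(\la)$ produced by the inequality $x-\log(1+x)\leq x^2$ (nor that a sub-exponential version with the right constants exists). A log-Sobolev inequality for the conditioned walk is likewise asserted, not established. (One simplification you miss: since $e^{-L}\geq 0$, the conditioning on the atom $\{\obS_n=\fnl\}$ can be removed at the cost of a factor $1/\P_{(d)}(\obS_n=\fnl)=O(\sqrt n)$ via \Cref{lem:clt}, so it suffices to bound the unconditional moment; but this does not resolve the core difficulty.) As it stands the argument establishes $\E_{(d)}[\sum_iX_i^2\mid\obS_n=\fnl]=O(1)$ at best, which does not imply $(\star)$; the proof is incomplete.
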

  
\begin{rem}\label{rem:qhtjsf} For two positive sequences $(a_n)$ and $(b_n)$  if there exists a sub-exponential sequence $(c_n)$ (that is, such that $\log(c_n)/n\to 0$), such that  $a_n= b_n c_n$ then $\limsup_n n^{-1}\log(a_n)= \limsup_n n^{-1}\log(b_n)$. In particular,  $\limsup_n n^{-1}\log(a_n)= \limsup_n n^{-1}\log(a_n n^K)$. Hence, if in an inequality  we ``lose'' a polynomial factor in $n$ while handling $\bQt_{n,\fnl}$, it does not jeopardize our search for the best possible bound for $\limsup_n n^{-1}\l(\log\l(\bQt_{n,\fnl}\r)+2n\log(n)\r)$.
\end{rem}

\begin{proof}The proof of this lemma is quite complex. It is divided in 9 sections.
\paragraph{1. Slicing $\bQt_{n,m}$ into sub-sums. }  For any $n\geq 1,m\geq 0, s\geq 0$, define
\[\bQ_{n,m,s}:=2^n\sum_{k[n]\in {\sf Comp}(n,m)}\prod_{i=1}^n \frac{(1+k_i)}{(s+i+S_i)(s+1+i+S_i)},\]
so that, in particular $\bQ_{n,m,0}=\bQt_{n,m}$.

The new array $(\bQ_{n,m,s})$ appears when one decomposes $\bQt_{n,m}$ as in the following example:
\[\bQt_{n,m} = \sum_{m_1=0}^{m}\bQ_{n_1,m_1,0}\;\bQ_{n-n_1,m-m_1,n_1+m_1}\]
where $n_1$ is any integer taken in $[1,n-1]$, and $m_1$ is the partial sum $S_{n_1}=k_1+\cdots+k_{n_1}$ taken by the composition $(k_1,\cdots,k_n)$. More generally, for any $K$-tuple of positive integers $(\Delta n_1,\cdots,\Delta n_K)\in \{1,2,\cdots,n\}^K$ summing to $n$, letting 
$n_j = \Delta n_1+\cdots, + \Delta n_j$ we get
\[ \bQt_{n,m} = \sum \prod_{j=1}^K \bQ_{\Delta n_j,\Delta m_j,n_{j-1}+m_{j-1}}\]
where the sum is taken on all tuples $(\Delta m_1,\cdots,\Delta m_K) \in {\sf Comp}(K,m)$, the set of $K$ tuples with non-negative entries, summing to $m$.  We used again the convention $m_j = \Delta m_1+\cdots+\Delta m_j, 1\leq j \leq K$.

%

The product $\prod_{j=1}^K \bQ_{\Delta n_j,\Delta m_j,n_{j-1}+m_{j-1}}$ is the "\textbf{sub-sum}" collecting all the 
contributions to $\bQt_{n,m}$ of the compositions $k[n] \in {\sf  Comp}(n,m)$ such that $S_{n_j}=n_j+m_j$, for all $j\in\{1,\cdots,K\}$.

\paragraph{2. A sufficient condition in terms of sub-sums.}

If for some fixed $K$, some fixed $0<n_1<\cdots <n_K=n$, we are able to compute
\[Q^\star(n_1,\cdots,n_K;m):=\max\Big\{\prod_{j=1}^K \bQ_{\Delta n_j,\Delta m_j,n_{j-1}+m_{j-1}}~:~ (\Delta m_j,1\leq j \leq K)\in {\sf Comp}(K,m)\Big\} \]
then using that $|{\sf Comp}(K,m)|\leq m^{K}$, we would have an upper bound
\[\bQt_{n,m} \leq m^{K} Q^\star(n_1,\cdots,n_K;m).\]
Since we are interested in the case $m=\fnl$, the factor $m^K$ is sub-exponential, so that \textbf{ it suffices }to prove that for any fixed $\varepsilon>0$, there exists $K$ and $n_1,\cdots,n_K$ such that 
\ben\label{eq:qsfqf} \limsup_n n^{-1}\l(\log(Q^\star\l(n_1,\cdots,n_K;\fnl)\r)+2n\log(n)\r)\leq \varepsilon+\beta_\lambda\een
to conclude the proof of the lemma. Our strategy is to prove that \eref{eq:qsfqf} holds  when one takes the limit over $K$, for $n_i=\floor{ in/K}$ for $1\leq i \leq K$. We will prove that
\ben\label{eq:qsfqf2} \limsup_K\limsup_n n^{-1}\l(\log(Q^\star(n_1,\cdots,n_K;\fnl))+2n\log(n)\r)=\beta_\lambda,\een
that is, a limit on $n$ is taken first for a finite fixed $K$, and then, we take the limit on $K$.

\paragraph{3. Elements to bound  a sub-sum.}
For each $n\geq 1,m,s\geq 0$, since the increments of the list $((s+i+S_i),i\geq 1)$ are greater or equal to 1, we have for $a\geq 1$, $b\geq 0$,
\ben\label{eq:bornebQt} \bQ_{a,b,s} \leq \frac{2^a}{(s+a+b)(s+a+b+1)} V[a,b].W[a,b,s]
\een
where $\dis W[a,b,s]=  \prod_{i=1}^{a-1}\frac{1}{(s+i)(s+i+1)}=\frac{s!(s+1)!}{(s+a-1)!(s+a)!}$ and  
for any $x\in(0,1)$,
\ben\label{eq:Vab} V[a,b]&=&\sum_{k\in {\sf Comp}(a,b)} (1+k_i)=\sum_{k\in {\sf Comp}(a,b)} \frac{\prod_{i=1}^a(1+k_i)x^{k_i}(1-x)^2}{(1-x)^{2a}x^b }=\frac{\P_{(x)}(\bS_a=b)}{(1-x)^{2a}x^b}\\
\label{eq:Vab2}&\leq& \l(1-\frac{b}{b+2a}\r)^{-2a}\l(\frac{b}{b+2a}\r)^{-b}.\een
This last bound is obtained by bounding $\P_{(x)}(\bS_a=b)$ by 1, and we know that for $a$ and $b$ large, this bound is not too bad if $x$ is taken such that $\frac{2x}{1-x}=b/a \iff x=\frac{b}{b+2a}$ which is the value for which $\E_{(x)}(\bS_a)=b$. So we took this value of $x$ to pass from \eref{eq:Vab} to \eref{eq:Vab2}. By a central local limit theorem,  $\P_{(x)}(\bS_a=b)$ can be shown to have order $1/\sqrt{a}$ under minimal hypothesis, but we won't need this thinner analysis (recall \Cref{rem:qhtjsf}).



\paragraph{4. Taking regular $(\Delta n_i)$ and reparametrization in terms of the slopes $(f_i)$.}
Fix an integer $K>1$ and set for $j\in\{0,\cdots,K\}$,
\[n_j=\floor{jn/K},~~ \Delta n_j=n_j-n_{j-1}.\]
In fact, the $n_j$ are ``functions of $n$ and $K$'', and we could have written instead $n_j(n,K)$ to exhibit this dependence. Further in the proof, we will let $K$ grow, and at this time, we will add this parameter $K$ to $n_j$, but for the moment, let us drop this extra parameter. In this part, $K$ is fixed. It is useful to notice the regime at stake when $n$ becomes large:
\[\frac{ \Delta n_j}{n/K}\xrightarrow[n\to\infty]{} 1.~~~ \]

We will optimize the $(m_i)$ progressively to evaluate the maximum sub-sum  $Q^\star(n_1,\cdots,n_k;\fnl)$ associated with the $(n_i)$. However, we won't be working with the current $(m_i)$ but with some new parameters $f[K]:=(f_i,0\leq i \leq K)$ designed to control the slopes $(\Delta m_i/ \Delta n_i)$: the $(m_i)$ and $f[K]$ are related by  
\[m_i= \l\lfloor\frac{n}{K}f_i\r\rfloor,~~~~~\Delta m_i=m_i-m_{i-1}.\]
Hence $f_K =\lambda K $ so that $m_K=m=\floor{\lambda n}$ as expected.
Viewing the $m_i$ as functions of $n$ and of $f[K]$ leads then to
\[\frac{\Delta m_i}{n/K} \xrightarrow[n\to\infty]{} f_i-f_{i-1}=\Delta f_i. \] 

Instead of analyzing $Q^\star$ in terms of $(m_i,1\leq i \leq K)$ we will optimize it in term of  $f[K]$. 

We will use \eref{eq:Vab} applied to $V(\Delta n_j, \Delta m_j)$. When $a$ and $ag$ are both positive integers,
\[V(a,ag)\leq \frac{(2+g)^{a(2+g)}}{2^{2a}g^{ag}}=:\bar{V}(a,ag).\]
In our case,
\[a= \Delta n_j, ag = \Delta m_j\] so that $\Delta m_j/\Delta n_j= \Delta f_{j} + O(1/n)$
and we easily see that for this choice
\beq\label{eq:dqfjo} {V}(\Delta n_j ,\Delta m_j)\leq \bar{V}(\Delta n_j ,\Delta m_j)\leq C \bar{V}(n/K, (n/K) \Delta f_{j})\eq
with a constant $C$ depending  possibly on $K$ and $\lambda$, but valid for all $n$ large enough\footnote{there is an additional constant because $n/K$ is may not be an integer: we have dropped the integral parts}.


\paragraph{5. Finding the regime of a given sub-sum with fixed slopes $f[K]$.}

Let us fix some non-decreasing $f[K]$ with $f_0=0$, $f_K=\la K$, and let us bound the sub-sum $\prod_{j=1}^K \bQ_{\Delta n_j,\Delta m_j,n_{j-1}+m_{j-1}}$ (for $(n_i,m_i)$ given in the section 4 of this proof):
\ben
\prod_{j=1}^K \bQ_{\Delta n_j,\Delta m_j,n_{j-1}+m_{j-1}}&\leq &\nonumber 2^n\prod_{j=1}^K V[\Delta n_j, \Delta m_j].W[\Delta n_j, \Delta m_j,n_{j-1}+m_{j-1}]\\
&\leq& \nonumber 2^n
\l(\prod_{j=1}^K  \frac{(2+\D f_j)^{(\D f_j+2)n/K}}{(\D f_j)^{\D f_j n/K}2^{2n/K}}\r)
\l(\prod_{j=1}^K \frac{(n_{j-1}+m_{j-1})!(1+n_{j-1}+m_{j-1})!}{(-1+n_{j}+m_{j-1})!(n_{j}+m_{j-1})!}\r) \\
\label{eq:b2}&= & C^K 2^{-n} \l(\prod_{j=1}^K  \frac{(2+\D f_j)^{(\D f_j+2)n/K}}{(\D f_j)^{\D f_j n/K} }\r) \l(\prod_{j=1}^K \frac{\l((j-1+f_{j-1})\frac{n}K\r)!^2 }{\l((j+f_{j-1})\frac{n}K\r)!^2}\r)\\
\label{eq:b1}&=&  O(C^Kn^{P(K)})\exp\l(n (-\log (2)+A(f[K]) +B(f[K],n) ) \r)\een
where we went from  \eqref{eq:b2} to \eqref{eq:b1} using \eref{eq:dqfjo}, Stirling's approximation $\log(n!)\simeq n\log(n)-n$, and where we defined
\ben
\label{eq:AfK}A(f[K]) &=& \sum_{j=1}^K \frac{(\D f_j+2)\log(2+\D f_j)-\D f_j\log(\D f_j)}K,\\
\nonumber B(f[K],n) &=& \frac{2}{K}\sum_{j=1}^K(j-1+f_{j-1})\l(\log\l(\frac{n}K(j-1+f_{j-1})\r)-1\r)-(j+f_{j-1})\l(\log\l(\frac{n}{K}( j+f_{j-1})\r)-1\r).
\een
Here, the term $n^{P(K)}$ widely gathers the subexponential terms of \eqref{eq:b2} forgotten when we used Stirling's approximation.
Hence, $B(f(K),n)$ is the result of the approximation of the factorial terms.\\
Since $\sum_{j=1}^K ({j-1}+f_{j-1})- ({j}+f_{j-1})=-K$, by expanding the last two logarithms $\log(n\alpha)=\log(n)+\log(\alpha)$ we get
\beq\label{eq:b3} B(f[K],n) =  -2\log(n)+ C{}(f[K])\eq  (in which we notice the appearance of the $-2\log n$ term) with 
\beq\label{eq:BB}  C{}(f[K])=2+  2\sum_{j=1}^K \frac{j-1+f_{j-1}}{K}\log\l(\frac{j-1+f_{j-1}}K\r)- \frac{j+f_{j-1}}K\log\l( \frac{j+f_{j-1}}{K}\r).\eq

Notice that both $A(f[K])$ and $C(f[K])$ do not depend on $n$ anymore, but they still depend on $K$.

Again, there is a polynomial (in $n$) number of such $(m_1,\cdots,m_K)$ so that it suffices to prove that for any $`e>0$, there exists $K$ such that
\[n^{-1}\l(\log(Q_{n,\fnl})+2n\log n\r)\leq `e+n^{-1}\l(\log\l( \max_{m[K]} \prod_{j=1}^K \bQ_{\Delta n_j,\Delta m_j,n_{j-1}+m_{j-1}}\r)+2n\log n\r)\]
and then by plugging in there \eref{eq:b1}, \eref{eq:AfK},\eref{eq:b3}, \eref{eq:BB},  it suffices to prove that for any $`e>0$, for some $K$,
\beq \max_{f[K]}\left( -\log(2)+2+A(f[K])+C(f[K])\right) \leq \beta_\la+\epsilon\eq
where the maximum is taken on all non-decreasing sequence $f[K]$ such that $f(0)=0$, $f_K=\lambda K$.

\paragraph{6. Reduction to convex sequences $f[K]$.}

Notice that, though $A(f[K])$ depends on the multi-set  $\big\{\big\{\Delta f_j,j\leq K\big\}\big\}$, it does not depend on the order of the increments, while $C(f[K])$ actually does. We will need to prove the following claim: the optimizing sequence $f[K]$ is \textbf{convex}, that is, the optimizing sequence of increments $(\Delta f_j)$ is non-decreasing.
To prove the claim it suffices to observe the effect on $C(f[K])$ when we swap two consecutive increments.
If one is given two coinciding lists of positive increments $\delta[K]=(\delta_i,1\leq i \leq K)$ and  $\delta'[K]$, if we swap two elements for a given $j\leq K-1$ as $(\delta_j,\delta_{j+1})=(a,b)=(\delta_{j+1}',\delta_{j}')$, then in this case the corresponding lists $\ell[K]$ and $\ell'[K]$ with increments respectively the $(\delta_i)$ and $(\delta_{i}')$ coincide except for the single index $i=j$, so that letting $s=j+\ell_{j-1}\geq 1$ we have
\be
C(\ell[K])-C(\ell'[K])&=&\frac{2}{K}\Bigl( (s+a-1)\log(s+a-1)-(s+a)\log(s+a)\\
&&  ~~~~~~  - (s+b-1)\log(s+b-1)-(s+b)\log(s+b)\Bigl)
\ee
which is positive if $a<b$. This argument proves that if one is given a sequence $(\Delta f_i)$ in ascending order, then $C(f[K])$ is larger than any $C(f'[K])$ if the multi-set of increments of $f'[K]$ coincides with that of $f[K]$: it means that the optimizing sequence $f[K]$ is convex.

\paragraph{7. Letting $K$ go to $+\infty$ and working in functional spaces.}

We have no idea how to compute \linebreak $\max_{f[K]} A(f[K])+C(f[K])$, but any reader who is able to do this computation can skip the end of the proof. For those who stay here, our method relies on taking the limit on $K$.

We need to enrich a bit the notation: we write $f_i^{(K)}$ for the $i^{th}$ entry of what we called $f[K]$ so far, in order to make visible the dependence on $K$. We only work with convex sequences $f^{(K)}[K]$.

We add a bar, that is we write $\bar{f}^{(K)}[K]$ for a sequence that maximizes $A(f[K])+C(f[K])$. We do not assume uniqueness, but only choose one of the optimizing sequences. 

We now embed $f^{(K)}[K]$ in the set of function from $[0,K]$ to $\R^+$: we denote by $f^{(K)}$ the function on the interval $[0,K]$ obtained by linear interpolation of the sequence $f^{(K)}_0,\cdots,f^{(K)}_K$. This function ends at $K\lambda$. We introduce its normalized version
\[ F^{(K)}(t) = K^{-1}\, f_{K t}^{(K)},~~~~ t\in[0,1]\]
which ends at $\lambda$; we should have written $F(f^{(K)},t)$ instead to make the dependence on $f^{(K)}$ apparent. Instead, we write simply $F^{(K)}$ for a generic convex function made out of a generic convex function $f^{(K)}$ (still conditioned to end at $K\lambda$ and to be nonnegative), and write  $\bar{F}^{(K)}$ for the convex function associated with the chosen optimizing sequence $\bar f^{(K)}$. 
Define also the function  $t\mapsto \bar\Delta f^{(K)}(t)$ defined on $[0,K]$ that interpolates $\Delta f^{(K)}_0,\cdots,\Delta f^{(K)}_K$, and its normalized version
\[DF^{(K)}(t) = \Delta f_{K t}^{(K)},~~~~ t\in[0,1].\]
Notice that the factor $K^{-1}$ is lacking, but this is the right normalization to have $F^{(K)}(t+1/K)-F^{(K)}(t)$ well approximated by $(1/K)DF^{(K)}(t)$.

We do not have exactly  $F^{(K)}(t)=\int_0^t DF^{(K)}(u)\d u$, even if $t$ is a multiple of $1/K$, because of the interpolation of the increments, but it is easy to see that if $F^{(K)}$ converges uniformly to some convex function $F$, and the $DF^{(K)}$ are computed as the discrete increments of $F$ (that is, for $i\in\{1,\cdots,K\}$,  $DF^{(K)}(i/K)=(F^{(K)}(i/K)-F^{(K)}(i-1/K))/(1/K)$), then $\sup_t |F^{(K)}(t)-\int_0^t DF^{(K)}(u)\d u|\xrightarrow[K\to+\infty]{} 0$.

Now, consider the sequence of optimizing pairs $(\bar F^{(K)},  \Delta\bar F^{(K)})$ indexed by $K$. For each $K$, $\bar F^{(K)}$  is convex and bounded by $\lambda$, and $\Delta\bar F^{(K)}$ is non-negative, non-decreasing.

By a standard compactness argument  and the Cantor diagonalization procedure it is possible to find a sub-sequence $(j_k)$, such that the extracted sub-sequence  $(\bar F^{(j_k)}, \Delta\bar F^{(j_k)})$ has the following properties:\\
\indent -- the sequence $\bar F^{(j_k)}$ converges point-wise on $[0,1]$, and even uniformly on any compact included in $[0,1)$. The limit $F$ is possibly discontinuous at 1.\\
\indent -- the sequence $\Delta\bar F^{(j_k)}$ converges point-wise almost everywhere toward an increasing function $DF$ on $[0,1)$ (as a non-decreasing function, $DF$ may be discontinuous only on a countable set, possibly infinite at 1).

Since $F$ is convex and non-decreasing, it is also continuous in $[0,1)$, differentiable almost everywhere, and $F(u)=\int_{0}^u F'(v)dv$. By the comment above, $F'=DF$ outside a Lebesgue null set.

\paragraph{8. Passage to the limit on $A(\bar f[K])$ and $C(\bar f[K])$}
 
Consider again the formula defining  $A(\bar f[K])$ and $C(\bar f[K])$ (given in \eref{eq:AfK} and \eref{eq:BB}).
We still write under the extracted sequence $(j_k)$ that has the properties discussed in the previous point. 
The map
\[ x \mapsto (x+2)\log(x+2)-x\log(x)\] is continuous and bounded on any compact of $\R$.
Hence
\be A(\bar{f}^{(j_k)}[j_k])&=&  \int_{0}^\lambda \big(2+\Delta \bar f^{(j_k)}_{\ceil{j_kt}}\big)\log\big(2+\Delta \bar f^{(j_k)}_{\ceil{j_kt}}\big)- \Delta \bar f^{(j_k)}_{\ceil{j_k t}}\log\big(\Delta \bar\lambda^{(j_k)}_{\ceil{j_k t}}\big)dt\\
&\underset{k\to+\infty}{\longrightarrow}&\int_{0}^\lambda \big(2+\bar F'(t)\big)\log\big(2+\bar F'(t)\big)- \bar F'(t)\log\big(\bar F'(t)\big)dt.
\ee

The sum in \eref{eq:BB} involves quantities as $\psi(x)=x\log(x)$, so that the sum  is 
\ben
C(\bar f^{(j_k)}[j_k])
&=& 2 +  2\int_{0}^{j_k} \psi\l( \frac{\ceil{j}+\bar f^{(j_k)}_{\ceil{j}}}{j_k}\r)-\psi\l( \frac{1}{j_k}+\frac{\ceil{j}+\bar f^{(j_k)}_{\ceil{j}}}{j_k}\r)dj\\
\label{eq:qsdtsd}&=&  2 +  2 j_k\int_{0}^1 \psi\l( \frac{\ceil{j_kt}}{j_k}+\frac{\bar f_{\ceil{j_kt}}}{j_k}\r)-\psi\l( \frac{1}{j_k}+\frac{\ceil{j_kt}}{j_k}+\frac{\bar f^{(j_k)}_{\ceil{j_kt}}}{j_k} \r)dt\\
\label{eq:Lt}&\underset{k\to+\infty}{\longrightarrow}& 2 +  2\int_{0}^{1} \l(-1-\log(t +\bar F(t))\r) dt =  -2\int_{0}^{1} \log\l(t +\bar F(t)\r) dt.
\een
Proving this last convergence is an exercise whose main lines are given in footnote\footnote{In the footnote, we use $K$ instead of $j_k$ for readability sake. Use that  $\psi(x)-\psi(x+1/K)\sim \frac{-1}K(1+\log(x))+0(K^{-2}/x)$.
The rhs of \eref{eq:qsdtsd} rewrites
\[2+2K \l( \int_{0}^{1/K^{\alpha}} + \int_{1/K^{\alpha}}^1\r) \psi\l( \frac{\ceil{Kt}+ \bar f^{(K)}_{\ceil{Kt}}}{K}\r)-\psi\l( \frac{1}K+\frac{\ceil{Kt}+ \bar f^{(K)} _{\ceil{Kt}}}{K} \r)dt.\]
So, either $ \bar f^{(K)}_{\ceil{Kt}}$ is far from 0 when $t$ is near zero, and \eqref{eq:Lt} holds easily, or it is locally Lipschitz and equals 0 at 0. In this case, since $|x\log(x)|\leq x^{9/10}$ in a neighborhood of zero so that $|2K\int_{0}^{1/K^{\alpha}}   \psi( \frac{\ceil{Kt}+ \bar f^{(K)}_{\ceil{Kt}}}{K})-\psi( \frac{1}K+\frac{\ceil{Kt}+ \bar f^{(K)} _{\ceil{Kt}}}K) dt|\leq C (K/K^{\alpha}) (1/K^{\alpha})^{9/10}\to 0$ if $\alpha =3/5$, for example. The second integral
\[2K\int_{1/K^{\alpha}}^1  \psi\l( \frac{\ceil{Kt}+ \bar f^{(K)} _{\ceil{Kt}}}{K}\r)-\psi\l( \frac{1}K+\frac{\ceil{Kt}+ \bar f^{(K)} _{\ceil{Kt}}}{K} \r)\d t =2K\int_{1/K^{\alpha}}^1 \l(-1-\log(\frac{\ceil{Kt}+ \bar f^{(K)} _{\ceil{Kt}}}{K})\r)\l(\frac{1}K+ O(\frac{\ceil{Kt}+ \bar f^{(K)} _{\ceil{Kt}}}K)\frac1{K^2}\r) \d t\]
and this converges to the rhs of \eqref{eq:Lt} by Lebesgue dominated convergence.}.

\paragraph{9. Final optimization argument.}

Gathering now the terms of $A+C$ in a functional $\theta$, it suffices to prove that any convex function $M^\star$ such that $  M^\star\geq 0$ and $M^\star(1)=\lambda$ satisfies  $\theta(M^\star)\leq \beta_\lambda$, where $\theta$ is defined as
\ben\label{eq:dqg}\theta(M):=  -\log (2)+  \int_{0}^1  (M'(t)+2)\log(2+M'(t))-M'(t)\log(M'(t))  -2 \log(t+M(t))\d t. \een

We will use an alternative to the Euler-Lagrange optimization method, and prove a little bit more, namely, in addition to   $\theta(M^\star)\leq \beta_\lambda$, the uniqueness of $\argmax \theta$, in the set of functions we study.
We proceed to the change of variable  $\ell(t)=M(t)+t$, so that $\ell(1)=1+\lambda$, $\ell(0)=1$. We need to maximize
\[\ell \mapsto \Op(\ell):= \int_{0}^1 \bar{L}(t,\ell(t),\ell'(t))\d t\]
with 
\[\bar{L}(t,\ell(t),\ell'(t))=(\ell'(t)+1)\log(1+\ell'(t))-(\ell'(t)-1)\log(\ell'(t)-1)  -2 \log(\ell(t))\]
(or more formally, $\bar{L}(\alpha,\beta,\gamma)=(\gamma+1)\log(\gamma+1)-(\gamma-1)\log(\gamma-1)-2\log(\beta)$).
By Beltrami identity (the Euler-Lagrange special case $\partial\bar{L}/(\partial \alpha)=0$), the functions $\ell$ that maximize  $\ell \mapsto  \Op(\ell)$ satisfy, for a constant $C$,
\ben\label{eq:dqsdq} \bar{L}-\ell' \frac{\partial \bar{L}}{\partial \ell'}=C\een
which is equivalent to 
\[C+2\log(\ell)=\log((\ell'+1)(\ell'-1)).\]
Solving this standard type of ODE leads to
\[\ell(x)= s \sinh(e^{C/2}(x-c))\exp(-C/2)\]
for a second constant $c$, and a ``sign'' $s \in\{-1,1\}$. 
Since $\ell(0)=0$, $\ell(x)\geq 0$, $\ell(1)=1+\lambda$, we must have $c=0$, $s=1$. The only solution is $\ell^\star(x)=\sinh(\alpha x)/\alpha$ with $\alpha$ such that $\sinh(\alpha)/\alpha=1+\lambda$ (so that $\exp(C/2)=2\rl$, i.e. $C=2\log(2\rl)$), and we recover $\alpha=2\rl$, so that 
\beq\label{eq:Ms23} M^\star(t)=\ell^\star(t)-t=-t+\frac{\sinh(2\rl t)}{2\rl}\eq
is the maximizing function, and \eref{eq:dqsdq} holds with this function.

There remains to give an argument to prove that the function $\ell^\star$ we found is indeed a maximum for $\Op$ (it could be a singular point in which we have cancellation of the derivative \eref{eq:dqsdq} without being a maximum). Here it is: the set of functions we are working in is $\bigl\{f : f(0)=0, f(1)=1+\lambda, \textrm{ $f$ differentiable, $f$  non-decreasing }\bigl\}$. 
This set of functions is included in $E:=\bigl\{ \ell^\star+ f ~:~ f(0)=f(1)=0, \textrm{ $f$ differentiable }\bigl\}$. To prove that $\Op$ possesses indeed a local maximum at $\ell^\star$, it suffices to show that 
\ben\label{eq:dkpqd} \Op(\ell^\star)- \frac{ \Op(\ell^\star + zf) +\Op(\ell^\star - zf)}2> 0\een
for $z$ sufficiently small, and $f\neq 0$ in $E$. Since \eref{eq:dqsdq} is satisfied for $\ell = \ell^\star$, it suffices to extract the second order term in $z^2$  in \eref{eq:dkpqd} (the first order coefficient in $z$ cancels, which is a consequence of \eref{eq:dqsdq}). A few computations out of \eqref{eq:dkpqd} later, it appears that we need to prove that, for $f\neq 0$ in $E$,  we have
\ben\label{eq:qd} \int_0^1 \frac{f'(t)^2-4\rl^2f(t)^2}{\sinh(2\rl t)^2}dt>0.\een
To prove this point, we adapted a well-known proof of Wirtinger's inequality (which, in general, states that $\int_0^a y(x)^2dx\leq \frac{a^2}{4\pi}\int_0^a y'(x)^2dx$).
 
Since the denominator in \eref{eq:qd} cancels at $t=0$ only, and since $f(0)=0$, there are two cases:\\
{\it (a)} the case $f'(0)\neq 0$. In this case  the lhs of \eref{eq:qd} is $+\infty$.\\
{\it (b)} the case $f'(0)=0$. In this case, observe that
\ben\label{eq:dqdqsdrf2}
\frac{f'(t)^2}{\sinh(2\rl t)^2}- \frac{4\rl^2f(t)^2}{\sinh(2\rl t)^2}=\frac{1}{\sinh(2\rl t)^2}( f'(t)-2\rl \tanh(2\rl t) f(t))^2-\frac{\partial}{\partial t} \l(f(t)^2\frac{4\rl}{\sinh(4\rl t)} \r)
.\een
Observe that  on the r.h.s of \eqref{eq:dqdqsdrf2}, the second term integrates to 0 on $[0,1]$ since $f(0)=f(1)=0$.
The integral of the first term is positive (technically, if $f'(t)=2\rl \tanh(2\rl t) f(t)$ for all $t\in[0,1]$, we have  $f(t)=c\cosh(2\rl t)$ so that this second terms integrates to zero. But since we have already treated the case $f'(0)\neq 0$ in $(a)$, we can rule out this subcase here). 
Then, by integrating \eref{eq:dqdqsdrf2} on $[0,1]$, \eref{eq:dkpqd} holds indeed, which concludes the proof that $\ell^\star$ maximizes $\Op$.\\
Now let us rewrite $\theta(M^\star)$ as 
\begin{align*}
   \theta(M^\star)= -\log(2)+\int_0^1 M^\star{}'(t)\log\l(\frac{2+M^\star{}'(t)}{M^\star{}'(t)}\r)dt + 2\int_0^1 \log\l(\frac{2+M^\star{}'(t)}{t+M^\star(t)}\r)\d t.
\end{align*}
Using \eref{eq:Ms}, 
we obtain 
${(2+M^\star{}'(t))}/{M^\star{}'(t)} = 1/\tanh(\rl t)^2$ as well as $\frac{(2+M^\star{}'(t))}{(t+M^\star(t))}=2\rl/\tanh(\rl t)$, so that
\beq\label{eq:Ms}
    \theta(M^\star) 
    =2\log(\rl)+\log(2)-2\int_0^1 \log\l(\tanh(\rl t)\r)\cosh(2\rl t)\d t.
\eq
A primitive of $x\mapsto \log\l(\tanh(x)\r)\cosh(2x) $ is $x\mapsto \dis \frac1{2}\sinh(2x)\log(\tanh(x))-x$ so that 
\[-2\int_0^1 \log\l(\tanh(\rl t)\r)\cosh(2\rl t)dt=-2\frac{\sinh(2\rl)}{2\rl}\log(\tanh(\rl))+2\]
and then, $\theta(M^\star)= \beta_\lambda$. 
\end{proof}

\subsection{Functional convergence of $(X^{(n)},Y^{(n)})$: proof of \Cref{theo:lim}}
In order to prove \Cref{theo:lim}, we will use \Cref{theo:qdzada2} that gives us the distribution of $2-(X_i+Y_i)$ when $K[n]$ is known. To use this theorem, we first need a concentration result for the sequence $S[n]$, where as usual
$S_i=K_1+\cdots+K_i$.

\subsubsection{A limit shape theorem for ${S}[n]$  under $\bQt_{n,\fnl}$}

Recall that $M^\star$ is the function $M^\star(t)=\ell^\star(t)-t=-t+\frac{\sinh(2\rl t)}{2\rl}$ appearing in \eref{eq:Ms23}. 
\begin{theo}\label{theo:podqgh}For all $\lambda>0$, all $\eta>0$,
  \ben \Qt_{n,\floor{n\lambda}}\Big( \sup_{t\in[0,1]} \l|n^{-1}\bS_{nt}-M^\star_t\r|\geq \eta\Big)\xrightarrow[n\to+\infty]{} 0.
  \een
  that is, $n^{-1}\bS_{n.}$ converges uniformly to $M^\star$,  in probability.
\end{theo}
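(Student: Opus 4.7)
The plan is to combine the sub-sum decomposition from the proof of Lemma~\ref{lem:UB} with the uniqueness of $M^\star$ as maximizer of the functional $\theta$ established there. A preliminary reduction exploits monotonicity: since $t\mapsto \bS_{nt}/n$ is non-decreasing and $M^\star$ is continuous and non-decreasing (with $M^\star(0)=0$ and $M^\star(1)=\lambda$), for any fixed $\eta>0$, choosing $K$ large enough that the oscillation of $M^\star$ on intervals of length $1/K$ is at most $\eta/2$, the event $\{\sup_t |n^{-1}\bS_{nt}-M^\star(t)|\geq\eta\}$ is contained in the finite union $\bigcup_{i\leq K}\{\,|n^{-1}\bS_{\lfloor ni/K\rfloor}-M^\star(i/K)|\geq\eta/2\}$. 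It thus suffices to prove that the probability of this finite union tends to $0$ for every fixed $K$.

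Next, I would partition $\bQt_{n,\fnl}$ along the values of the partial sums on the grid $n_i=\lfloor in/K\rfloor$: for each admissible profile $f=(f_0=0,f_1,\dots,f_K=\fnl)$, set $\bQ^{\mathrm{sub}}_f=\prod_{j=1}^K \bQ_{\Delta n_j,\Delta f_j,\,n_{j-1}+f_{j-1}}$ in the notation of Section~2.2, so that $\bQt_{n,\fnl}=\sum_f \bQ^{\mathrm{sub}}_f$ and $\Qt_{n,\fnl}(\bS_{n_i}=f_i\text{ for all }i)=\bQ^{\mathrm{sub}}_f/\bQt_{n,\fnl}$. The computations assembled in \eref{eq:b1}, \eref{eq:AfK}, \eref{eq:b3} and \eref{eq:BB} give, for every admissible $f$,
\[
\bQ^{\mathrm{sub}}_f \;\leq\; \exp\bigl(-2n\log n + n(A(f)+C(f)-\log 2) + o_K(n)\bigr),
\]
while Theorem~\ref{theo:betala} provides the matching lower bound $\bQt_{n,\fnl}\geq \exp(-2n\log n+n(\beta_\lambda-o(1)))$.

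The heart of the argument is to show that profiles far from $M^\star$ are exponentially suppressed. To each $f$ associate the piecewise linear interpolant $F:[0,1]\to[0,\lambda]$ of $(f_i/n)_{i\leq K}$, a non-decreasing function with $F(0)=0$, $F(1)=\lambda$. By Step~8 of the proof of Lemma~\ref{lem:UB}, $A(f)+C(f)-\log 2$ approximates $\theta(F)$ as $K\to\infty$, uniformly on compact subclasses. By Step~9, $\theta$ attains its maximum $\beta_\lambda$ uniquely at $M^\star$ among convex non-decreasing $F$ with the prescribed endpoints; the swap inequality of Step~6 (sorting the increments into non-decreasing order strictly increases $\theta$) extends the uniqueness statement to the full class of admissible $F$. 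The class of such $F$ is compact in the uniform topology, and $\theta$ is upper semi-continuous there, so compactness plus uniqueness yields, for each $\eta>0$, some $\delta>0$ with $\theta(F)\leq \beta_\lambda-2\delta$ whenever $\sup_t|F(t)-M^\star(t)|\geq \eta/2$. Choosing $K$ large enough that the approximation $A+C-\log 2 \approx \theta$ is uniform within $\delta/2$ over the relevant profiles, and using that the number of profiles $f$ is polynomial in $n$ for fixed $K$, we conclude
\[
\Qt_{n,\fnl}(\text{bad event}) \;\leq\; n^{P(K)}\exp\bigl(-n\delta + o(n)\bigr)\;\xrightarrow[n\to\infty]{}\;0.
\]

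The main obstacle is the quantitative uniqueness step: verifying upper semi-continuity of $\theta$ on the full class of admissible $F$ (the slopes $F'$ may become unbounded near $t=1$ if $\lambda$ is large, requiring either a small truncation or a direct argument that the atypical contribution of steep slopes is negligible), and checking that the Wirtinger-type second variation of Step~9, combined with the swap inequality of Step~6, does deliver strict inequality $\theta(F)<\beta_\lambda$ uniformly away from $M^\star$. The rest of the argument is essentially bookkeeping built on the analytic machinery already established for the upper bound of $\bQt_{n,\fnl}$.
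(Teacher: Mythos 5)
Your proposal is correct and follows essentially the same route as the paper: reduce the uniform deviation event to a finite union of grid-point deviation events via monotonicity of $\bS$ and $M^\star$, bound each via the sub-sum decomposition of the proof of Lemma~\ref{lem:UB}, and conclude from the strict gap $\sup_{F \text{ deviating}}\theta(F)<\beta_\lambda$ (uniqueness of the maximizer) together with the polynomial count of profiles. The technical point you flag as the main obstacle --- turning uniqueness of the maximizer into a uniform strict gap over the deviating class, via compactness and upper semi-continuity of $\theta$ --- is exactly the step the paper also treats briefly (it simply asserts $m_{i,\eta}<\theta(M^\star)$), so your treatment is at the same level of rigor as the original.
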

In a nutshell, this theorem is a consequence of the proof of \Cref{lem:UB}: the optimization of the sequence $(f[K])$ leads us to a unique optimization function $M^\star$. The set of paths $S$ ``staying far away'' from this optimization sequence have a weight exponentially smaller than $\bQt_{n,\fnl}$.\par
However, this informal argument is not sufficient:  even if $M^\star$ maximizes alone the map $\theta$ defined in \eref{eq:dqg}, the set of functions $\{f:\|f-M^\star\|_\infty\geq \eta\}$ contains an infinite number of functions, so that the fact that  \Cref{lem:UB} implies \Cref{theo:podqgh} needs a proof.
\begin{proof} We will extract a finite number of events $E_0,\cdots,E_{N_\eta}$ all of which with exponentially small probabilities, such that
  \beq \l\{\sup_{t\in[0,1]} \l|n^{-1}\bS_{nt}-M_t^\star\r|\geq \eta\r\} \subset \bigcup_{i=0}^{N_\eta} E_{i}.\eq
\begin{figure}[htbp]
  \centering
    \includegraphics{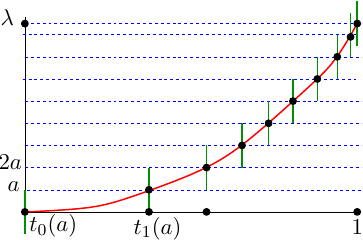}
    \captionn{\label{fig:courbe}}
    
\end{figure}
The sequence $(S_k)$ is a.s. non-decreasing, and $M^\star$ is continuous, increasing (from $[0,1]$ to $[0,\lambda]$), and then invertible.

For $a>0$ fixed, denote by $t_i(a)$ the abscissa $t$ at which $M_t^\star=ia$. Hence $t_0(a)=0<t_1(a) < \cdots <t_{\floor{\lambda/a}}(a)\leq 1$.
If $\lambda/a$ is an integer take $N(a)= \lambda/a$, otherwise, set $N(a)= \floor{\lambda/a}+1$, so that, in all cases $t_{N(a)}(a)=1$ and also, $M_{t_{i+1}}^\star(a)-M_{t_i}^\star(a)\leq a$ for all $i$.

Consider for $0\leq i \leq N(a)-1$, the vertical segments
\[I_i(a)=\{t_i(a)\} \times [M_{t_i(a)}^\star-a,M_{t_i(a)}^\star+a]=\{t_i(a)\} \times [ia-a,ia+a]\] as drawn on Fig. \ref{fig:courbe}. It is easy to see that any non-decreasing function $f$ whose graph intersect all the $I_i(a)$ must satisfy $\|f-M\|_{\infty}\leq 2a$.
The set $\{f:\|f-M^\star\|\geq \eta\}$ is then included in $\cup_{0\leq i\leq N(\eta/2)} E_i$ where
\[E_i=\l\{ f: |f(t_i(\eta/2))-M_{t_i(\eta/2)}^\star|\geq \eta/2\r\}.\]
Now, assume that we start again the optimization problem $\theta$, with the additional constraint that $f$ is in $E_i$: the proof of \Cref{lem:UB} can be done again.
Now, we maximize $\theta$ (for $\theta$ defined in \eref{eq:dqg}) on $E_i$. Since $\theta$ has a unique maximum, we then have
\beq\label{eq:tehd} m_{i,\eta}:=\max_{f\in E_i} \theta(f) < \theta(M^\star).\eq
Now, for a subdivision in $K$ sections (as done in  \Cref{lem:UB}), for $K$ large enough, the weight of the paths $n^{-1}S_{n}$ in $E_i$ (and weighted by $\prod (2(1+k_i))/((i+S_i)(i+1+S_i))$ as usual) is smaller than $\exp\l(-2n\log(n) +nm_{i,\eta}+o_K(1)n\r)$ while the number of paths considered is at most $O( (n\lambda)^K)$. Hence, we see that
\beq \label{eq:setgef}\Qt_{n,\fnl}(n^{-1}\bS_{n} \in E_i) \leq \exp\l(n \bigl(\theta(m_{i,\eta})-\theta(M^\star)\bigl)+o_K(1)n\r) O( n^K),\eq
where $o_K(1)$ is a sequence indexed by $K$ that converges to zero as $K\to+\infty$. By \eref{eq:tehd}, the rhs in \eref{eq:setgef}, for a fixed $i$, goes to zero  exponentially fast in $n$. This allows to take the union bound on all $i\in\cro{0,N(\eta/2)}$ to complete the proof.
\end{proof}

\subsubsection{Proof of \Cref{theo:lim}}

There are two main ingredients in the proof: 
\Cref{theo:qdzada2}$(c)$ that tells us the law of ``the diagonal'' $2-(X_i+Y_i)$ given the $(\bS_i)$ is simple, and \Cref{theo:podqgh} that tells us that asymptotically, $n^{-1}(\bS_{nt})$ is simple, since it is concentrated around $M^{\star}_t$.
By the Skhorokhod representation theorem, there exists a probability space $(\Omega,{\cal A},\P)$, on which there exists a sequence of random variables $\bS^{(n)}[n]=(\bS_i^{(n)},0\leq i \leq n)$, for all $n$, such that for a fixed $n$, $\bS^{(n)}[n]$ is distributed  as $\bS[n]$ under $\Qt_{n,\floor{n\lambda}}$, and  such that ${\bf s}^{(n)}$, defined by
\[ {\bf s}^{n}(t)=n^{-1}\bS_{nt}^{(n)}, 0\leq t\leq 1\]
converges to $M^\star$ almost surely, for the uniform topology in $C[0,1]$.
Recall that $V_{n+1}=A,V_n,\cdots,V_1,V_0=B$ is the convex chain we are interested in, so that it has length $n+1$.  
\begin{lem}\label{lem:jolicv}Take $\lambda>0$. On $(\Omega,{\cal A},\P)$, for all $t$ fixed in $[0,1]$,
\be
\E\bigl(2-(X_{\floor{(n+1)t}}+Y_{\floor{(n+1)t}})~|~\bS^{(n)}[n]\bigl) &\xrightarrow[n\to+\infty]{a.s.}& Q(t)= 2\;\frac{\sinh^2(\rl t)}{\sinh^2(\rl)},  \\
\Var\bigl(2-(X_{\floor{(n+1)t}}+Y_{\floor{(n+1)t}})~|~\bS^{(n)}[n]\bigl) &\xrightarrow[n\to+\infty]{a.s.}0.
\ee
As a consequence,   $2-\l(X_{\floor{(n+1)t}}+Y_{\floor{(n+1)t}}\r)\proba Q(t)$  and by symmetry,   $X_{\floor{(n+1)t}}-Y_{\floor{(n+1)t}} \proba Q(1-t)$.
\end{lem}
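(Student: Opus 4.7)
The plan is to leverage Theorem \ref{theo:qdzada2}$(c)$, which identifies, conditionally on $K[n]$ (equivalently on $\bS^{(n)}[n]$, since $k_i = S_i-S_{i-1}$), the law of $2-(X_j+Y_j)$ as twice a product of independent beta variables. Setting $j:=\floor{(n+1)t}$ and using $\E(b_{s,t})=s/(s+t)$ factor by factor gives
\[
\E\bigl(2-(X_j+Y_j)\,\big|\,\bS^{(n)}[n]\bigr) \;=\; 2\prod_{i=j}^{n}\frac{i+s_{i-1}}{i+s_i+2}.
\]
My steps are: (i) collapse this product by a single index shift; (ii) pass to the Riemann-sum limit, using the a.s.\ uniform convergence ${\bf s}^n\to M^\star$ on $[0,1]$ given by the Skorokhod construction; (iii) recognize the limit as $Q(t)$ via a one-line hyperbolic identity; and (iv) show the conditional variance vanishes, so Chebyshev delivers convergence in probability.

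The telescoping step: shifting the index in the numerator, $\prod_{i=j}^n(i+s_{i-1})=(j+s_{j-1})\prod_{\ell=j}^{n-1}(\ell+1+s_\ell)$, pairs cleanly against the denominator $\prod_{\ell=j}^{n-1}(\ell+s_\ell+2)$, leaving
\[
\prod_{i=j}^{n}\frac{i+s_{i-1}}{i+s_i+2}\;=\;\frac{j+s_{j-1}}{n+s_n+2}\prod_{\ell=j}^{n-1}\left(1-\frac{1}{\ell+s_\ell+2}\right).
\]
Since $s_n=\fnl$ and ${\bf s}^n(u)\to M^\star(u)$ uniformly a.s., the prefactor tends a.s.\ to $(t+M^\star(t))/(1+\la)$. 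The logarithm of the product is, up to an $O(1/n)$ correction, equal to $-\frac{1}{n}\sum_{\ell=j}^{n-1}(u_\ell+{\bf s}^n(u_\ell)+2/n)^{-1}$ with $u_\ell=\ell/n$, a Riemann sum whose integrand is uniformly bounded on $[t,1]$ thanks to $u+M^\star(u)\geq u\geq t>0$. It converges a.s.\ to $-\int_t^1 du/(u+M^\star(u))$, and using $u+M^\star(u)=\sinh(2\rl u)/(2\rl)$ this integral equals $\log\tanh(\rl)-\log\tanh(\rl t)$. Combining prefactor and product, and substituting $1+\la=\sinh(2\rl)/(2\rl)$,
\[
\E\bigl(2-(X_j+Y_j)\,\big|\,\bS^{(n)}[n]\bigr)\;\xrightarrow[n\to+\infty]{a.s.}\;2\cdot\frac{\sinh(2\rl t)}{\sinh(2\rl)}\cdot\frac{\tanh(\rl t)}{\tanh(\rl)}\;=\;\frac{2\sinh^2(\rl t)}{\sinh^2(\rl)}\;=\;Q(t),
\]
the last simplification using $\sinh(2x)\tanh(x)=2\sinh^2(x)$.

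For the variance, independence of the beta factors yields $\Var\bigl(\prod b_i\,\big|\,\bS^{(n)}[n]\bigr)=\prod a_i\,\bigl(\prod(1+v_i/a_i)-1\bigr)$ with $a_i=(\E b_i)^2$ and $v_i=\Var(b_i)$. A direct computation gives $v_i/a_i=(2+k_i)/\bigl((i+s_{i-1})(i+s_i+3)\bigr)\leq (2+k_i)/(nt)^2$ on the relevant range, so $\sum_{i=j}^{n}v_i/a_i\leq \bigl(2(n-j+1)+s_n-s_{j-1}\bigr)/(nt)^2=O(1/n)$ a.s. Hence $\prod(1+v_i/a_i)-1=O(1/n)$ and the conditional variance tends to $0$ a.s. Combined with the a.s.\ convergence of the conditional mean, Chebyshev's inequality applied conditionally gives $2-(X_j+Y_j)\proba Q(t)$. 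The companion statement $X_j-Y_j\proba Q(1-t)$ follows from the invariance of the uniform law on $ABC$ under $(x,y)\mapsto(2-x,y)$, which exchanges $A$ and $B$ and (since the chain is ordered by decreasing abscissa) reverses the indexing, yielding the distributional identity $X_{n+1-j}-Y_{n+1-j}\stackrel{d}{=}2-(X_j+Y_j)$; substituting $j\mapsto \floor{(n+1)(1-t)}$ gives the claim. The most delicate point in the whole argument is maintaining uniform control of the Riemann sum as $n\to\infty$; this is guaranteed by the strict positivity of $u+M^\star(u)$ on $[t,1]$ when $t>0$, while the case $t=0$ is trivial since $V_0=B=(2,0)$ deterministically.
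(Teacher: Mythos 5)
Your proof is correct and follows essentially the same route as the paper: the conditional beta-product representation from Theorem \ref{theo:qdzada2}$(c)$, the same telescoping of the product of conditional expectations into $2\frac{j+s_{j-1}}{n+s_n+2}\prod_{\ell=j}^{n-1}(1-\frac{1}{\ell+s_\ell+2})$, and the same Riemann-sum passage to the limit using the a.s.\ uniform convergence of $n^{-1}\bS_{n\cdot}$ to $M^\star$ on the Skorokhod space, ending with the same hyperbolic identities. The only cosmetic divergence is the variance step, where you bound $\Var\bigl(\prod_i b_i\bigr)$ directly via $\prod_i a_i\bigl(\prod_i(1+v_i/a_i)-1\bigr)=O(1/n)$ rather than computing the conditional second moment exactly (via $\E(b_{s,t}^2)=\E(b_{s,t})\tfrac{s+1}{s+t+1}$) and showing it tends to $Q(t)^2$ as the paper does; both arguments are valid and reach the same conclusion.
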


\begin{rem} As represented on the Fig. below, the symmetry ${\cal S}:\bma x \\y\ema\to\bma 2-x \\y\ema$  (with respect to the vertical line passing at $C$) preserves the distribution of $U[n+m]$ under $\Qt_{n,m}$. The symmetry of a given realization of the convex chain $(V_n,\cdots,V_1)$ gives a new convex chain $(V'_n,\cdots,V'_1)$ which is defined, to respect the decreasing abscissa convention by
  $V'_{n-i+1}= {\cal S}(V_i)$. 
We then observe that \[\bigl(X_{n-i+1}-Y_{n-i+1},0\leq i \leq n+1\bigl)~\eqd~ \bigl(2-(X_i+Y_i),0\leq i \leq n+1\bigl).\]
\centerline{\includegraphics{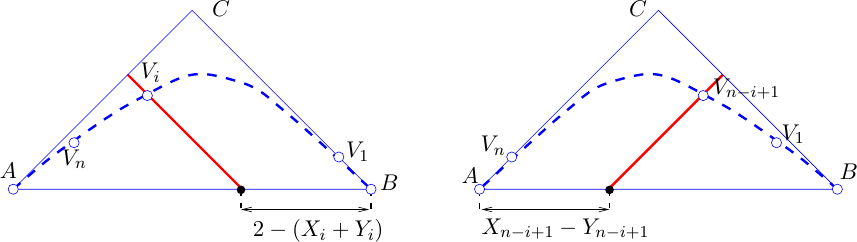}}
\end{rem}
 
\begin{proof}For $t=1$, the result is immediate, we can then suppose that $t<1$.
By \Cref{theo:qdzada2}$(c)$, and using the fact that for a $b_{s,t}\sim\beta(s,t)$ random variable, its expectation is $\E(b_{s,t})=s/(s+t)$, we get 
\ben\label{eq:superformule1}
\E\bigl(2-(X_j+Y_j)~|~\bS[n]\bigl)&=&\nonumber 2\frac{n+\bS_{n-1}}{n+2+\bS_n}\cdots \frac{j+\bS_{j-1}}{j+2+\bS_j}\\
&=&\nonumber2\frac{j+\bS_{j-1}}{n+2+\bS_n} \prod_{i=j}^{n-1} \frac{i+1+\bS_i}{i+2+\bS_i}\\
&=&2\frac{j+\bS_{j-1}}{n+2+\bS_n} \exp\l(\sum_{i=j}^{n-1}\log\l(1-\frac{1}{i+2+\bS_i}\r)\r).
\een
Then, for $t$ fixed in $[0,1]$, on  $(\Omega,{\cal A},\P),$
\[\E(2-(X_{\floor{(n+1)t}}+Y_{\floor{(n+1)t}})~|~\bS[n]) \hspace{10 cm} ~~\]
\ben\label{eq:superformule2}&=&2\frac{\floor{(n+1)t}+\bS_{\floor{(n+1)t}-1}}{n+2+\bS_n} \exp\l(\int_{\floor{(n+1)t}}^{n}\log\l(1-\frac{1}{\floor{i}+2+\bS_{\floor{i}}}\r)di\r)\\
\nonumber&=&2\frac{\frac{\floor{(n+1)t}+\bS_{\floor{(n+1)t}-1}}n}{\frac{n+2+\bS_n}n} \exp\l(\int_{\frac{\floor{(n+1)t}}{n+1}}^{1}\log\l(1-\frac{1}{\floor{(n+1)s}+2+\bS_{\floor{(n+1)s}}}\r)(n+1)ds\r)\\
\nonumber&\xrightarrow[n\to+\infty]{a.s.}& \frac{\sinh(2\rl t)}{\rl(1+\lambda)}\exp\l(-\int_{t}^1\frac{2\rl }{\sinh(2\rl s)}ds\r)
\een
Indeed, this is a consequence of \Cref{theo:podqgh} (that states $\|{\bf s}^{(n)}- M^\star\|_{\infty}\proba 0$), and of the approximation $(n+1)\log\l(1-\frac{1}{\floor{(n+1)s}+2+\bS_{\floor{(n+1)s}}}\r)=-\frac{1}{s+M^\star_{s}}+r_n(s)$ (where the rest $r_n(s)$ converging to zero) uniformly for all $s$ in any compact interval included in $(0,1]$. It remains to prove that this formula coincides with $Q(t)$. 
Using that $M_t^\star=-t+\sinh(2\rl t)/(2\rl)$, observe that
\[Q(t)=  \frac{\sinh(2\rl t)}{\rl(1+\lambda)}\exp\l(-\int_{t}^1\frac{2\rl }{\sinh(2\rl s)}ds\r)=\frac{\sinh(2\rl t)}{\rl(1+\lambda)}\frac{\tanh(\rl t)}{\tanh(\rl)}= 2\frac{\sinh^2(r_\lambda t)}{\sinh^2(r_\lambda)},\]
this last equality coming from $1+\lambda= \frac{\sinh(2\rl)}{2\rl}$.\par
Now, in order to compute the second moment of $2-(X_j+Y_j)$, we  use that  $\E(b^2_{s,t})= \frac{s(s+1)}{(s+t)(s+t+1)}=\E(b_{s,t})\frac{s+1}{s+t+1}$, so that
\ben
\E\l( (2-(X_j+Y_j))^2~|~\bS[n]\r)&=&  \E((2-(X_j+Y_j)~|~\bS[n]))\times 2\frac{j+1+\bS_{j-1}}{n+3+\bS_n} \prod_{i=j}^{n-1} \frac{i+2+\bS_i}{i+3+\bS_i}.
\een
Using the same method as below \eref{eq:superformule2} for this second product, we get, 
\[\E\l(\l(2-(X_{\floor{(n+1)t}}+Y_{\floor{(n+1)t}}\r)^2~|~\bS[n]\r)\xrightarrow[n\to+\infty]{a.s.} Q^2(t).\]
\end{proof}
Denote by $Q_n(t)=2-(X_{(n+1)t}+Y_{(n+1)t})$ and $\bar{Q}_n(1-t)=X_{(n+1)t}-Y_{(n+1)t}$, where both $X$ and $Y$ are defined by linear interpolation between $t$ of the form $k/(n+1)$ (just like we did for $X^{(n)}$ and $Y^{(n)}$). 
\begin{lem}\label{lem:qfgrte} On $(\Omega,{\cal A},\P)$, 
  $ \l\|Q_n - Q\r\|_\infty\proba 0$,   and  $ \l\|\bar{Q}_n - Q\r\|_{\infty}\proba 0$.
\end{lem}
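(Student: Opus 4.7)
The plan is to upgrade the pointwise-in-probability convergence obtained in \Cref{lem:jolicv} to a uniform one via a bracketing argument that rests entirely on monotonicity (a probabilistic version of Polya's theorem on uniform convergence of monotone functions to a continuous monotone limit). The key structural observation is that $Q_n$ is the piecewise linear interpolation of the sequence $q_i := 2-(X_i+Y_i)$ at the knots $i/(n+1)$, and this sequence is strictly increasing in $i$, starting at $q_0 = 0$ (since $V_0 = B$) and ending at $q_{n+1} = 2$ (since $V_{n+1}=A$). Geometrically $q_i$ is the distance from $B$ to the intersection of the line through $V_i$ parallel to $BC$ with the $x$-axis, a quantity that grows as $V_i$ slides from $B$ to $A$ along the convex chain (cf.\ Fig.~\ref{fig:calc2}). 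On the other hand, the candidate limit $Q(t) = 2\sinh^2(\rl t)/\sinh^2(\rl)$ is continuous and strictly increasing on $[0,1]$ from $0$ to $2$.

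Given this, I would fix $\varepsilon > 0$ and pick a subdivision $0 = t_0 < t_1 < \cdots < t_N = 1$ such that $Q(t_{j+1}) - Q(t_j) < \varepsilon/2$ for every $j$. For $t \in [t_j, t_{j+1}]$, the joint monotonicity of $Q_n$ and $Q$ yields the sandwich
\[
Q_n(t_j) - Q(t_{j+1}) \;\le\; Q_n(t) - Q(t) \;\le\; Q_n(t_{j+1}) - Q(t_j),
\]
hence $\|Q_n - Q\|_\infty \le \max_{0 \le j \le N} |Q_n(t_j) - Q(t_j)| + \varepsilon/2$. The endpoint values are deterministic ($Q_n(0) = 0 = Q(0)$ and $Q_n(1) = 2 = Q(1)$), and at each interior knot $t_j$, \Cref{lem:jolicv} provides $q_{\floor{(n+1)t_j}} \proba Q(t_j)$. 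The residual interpolation gap $|Q_n(t_j) - q_{\floor{(n+1)t_j}}| \le q_{\floor{(n+1)t_j}+1} - q_{\floor{(n+1)t_j}}$ is again controlled by monotonicity: for any $t' \in (t_j, t_{j+1})$ one has $q_{\floor{(n+1)t_j}+1} \le q_{\floor{(n+1)t'}}$ for $n$ large, and applying \Cref{lem:jolicv} at $t'$ shows this gap is asymptotically bounded in probability by $Q(t') - Q(t_j) < \varepsilon/2$. A finite union bound over the $N+1$ knots then yields $\P(\|Q_n - Q\|_\infty > \varepsilon) \to 0$.

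The second convergence $\|\bar Q_n - Q\|_\infty \proba 0$ is exactly symmetric: the sampled sequence $X_{n+1-i}-Y_{n+1-i}$ is monotone (running from $0$ at $V_{n+1}=A$ to $2$ at $V_0=B$), and the pointwise-in-probability convergence $\bar Q_n(s) \proba Q(s)$ is provided by the symmetric statement in \Cref{lem:jolicv}, which itself rests on the invariance of $\Qt_{n,\fnl}$ under the reflection $(x,y)\mapsto(2-x,y)$. The same bracketing argument then delivers the uniform convergence. The main (and essentially only) subtle point is the control of the interpolation gap above; everything else is the standard monotone-convergence principle, and the analytic work was already carried out in \Cref{lem:jolicv}.
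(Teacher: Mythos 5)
Your proof is correct and follows essentially the same route as the paper: both upgrade the pointwise convergence of \Cref{lem:jolicv} to uniform convergence via a finite subdivision and the monotone bracketing (Polya/Dini-type) argument, using that $Q_n$ interpolates an increasing sequence from $0$ to $2$ and that $Q$ is continuous and increasing, then dispatch $\bar Q_n$ by the reflection symmetry. Your explicit treatment of the interpolation gap between $Q_n(t_j)$ and $q_{\floor{(n+1)t_j}}$ is a small extra care the paper leaves implicit, but it does not change the argument.
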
 
\begin{proof}By symmetry first, it suffices to prove that $ \l\|Q_n - Q\r\|_{\infty}\proba  0$.
 By Lemma \ref{lem:jolicv}, we already know that for a fixed $t$, $\l|Q_n(t) - Q(t)\r|\proba 0$. Turning this point wise onto an uniform convergence is routine, as a well-known consequence of Dini's theorem valid under regularity and monotonicity hypothesis. Here $2-(X_{(n+1)t}+Y_{(n+1)t})$ is the distance from the projection of $(X_{(n+1)t},Y_{(n+1)t})$ on $AB$ with respect to the direction $BC$, so that $t\mapsto 2-(X_{(n+1)t}+Y_{(n+1)t})$ is non-decreasing (from 0 to 2) and continuous. Here, the point-wise limit is non-decreasing and continuous. Since it is deterministic, the argument is even simpler.   
 
The general argument (and proof of the argument) runs as follows:  assume that $(x_n)$ is a sequence of processes, where $x_n=(x_n(t),0\leq t\leq 1)$ takes its values in $C[0,1]$, and $t\mapsto x_n(t)$ is non-decreasing (a.s.). If $x_n(t)\proba x(t)$ for all $t$ where $x=(x(t),0\leq t\leq 1)$ is continuous (and then necessarily) non-decreasing, then $(x_n)$ converges to $x$ in $C[0,1]$ equipped with $\|.\|_\infty$. When $x$ is deterministic, we then have $\|x_n-x\|_{\infty}\proba 0$. To prove this, it suffices to take a subdivision $t_0=0<t_1<\cdots <t_{k-1}<t_{k}=1$, and write
  \[\sup_{t\in[0,1]} |x_n(t)-x(t)|\leq \max_{j} \{|x_n(t_{j+1})-x(t_j)|,|x_n(t_j)-x(t_{j+1})|\}\proba  \max_j | x(t_{j+1})-x(t_j)|\]
  so that it is clear that by choosing a subdivision $(t_i)$ thin enough, this can be made arbitrary small (since $x$ is continuous).
\end{proof}

  \paragraph{End of Proof of \Cref{theo:lim}.}
We have 
\[X_{(n+1)t} = -\frac{Q_n(t)}{2}+1+\frac{Q_n(1-t)}{2},~Y_{(n+1)t}= -\frac{Q_n(t)}{2}+1-\frac{Q_n(1-t)}{2}.\]
From \eref{lem:qfgrte}, we  get
$\sup_{t}\l|X_{(n+1)t}- \l( 1-\frac{Q(t)-Q(1-t)}{2} \r)\r|\cvg 0$ in probability,
as well as \linebreak$\sup_{t}\l|Y_{(n+1)t}-\l(1-\frac{Q(1-t)+Q(t)}{2} \r)\r|\cvg 0$ in probability, which is \Cref{theo:lim}.


\color{black}

\subsection{Random generation of $V[n]$ under  $\bQt_{n,m}$}

In theory, by rejection sampling method, it is possible to draw $n+m$ uniform points $U_1,\cdots,U_{n+m}$ uniformly and independently in ABC and repeat this operation until exactly $n$ of them are on the boundary of the convex hull of $\{A,B, U_1,\cdots,U_{n+m}\}$. In practice, since $\bQt_{n,m}$ is tiny even for small values of $(n,m)$, for example,
\[\bQt_{12,14}=\frac{4862354860353208414849066648349}{57539992882666702350263300179200000000000}= (8.45...) 10^{-11}\]
this method can be used only for very small values of $n$ and $m$.

In the rest of this section, we explain how, on a standard personal computer, one can simulate the convex chain $V_1,\cdots,V_n$ under $\Qt_{n,\fnl}$ for $n$ and $\fnl$ going to up to some hundreds  (if one has in hands a random generator of iid uniform random variables on $[0,1]$) within minutes/hours of computations, depending on the programming language one uses.

We present here the main ideas of the method used to produce the simulations given in    Fig. \eref{fig:my2}.

\begin{figure}[h!]
      \begin{center}\includegraphics[width=5.5cm, height=4.5cm]{./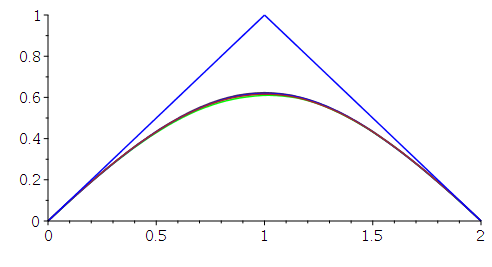}~~~~~~\includegraphics[width=5.5cm,height=4.5cm]{./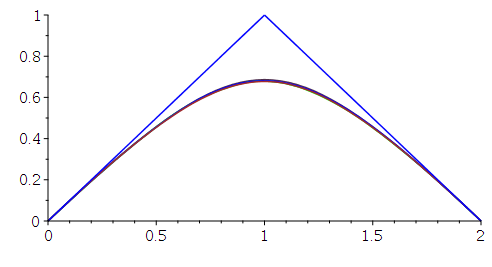}~\\
\includegraphics[width=5.5cm,height=4.5cm]{./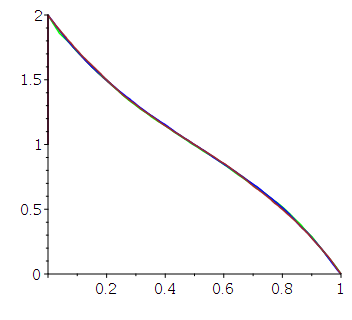}~~~~~~\includegraphics[width=5.5cm,height=4.5cm]{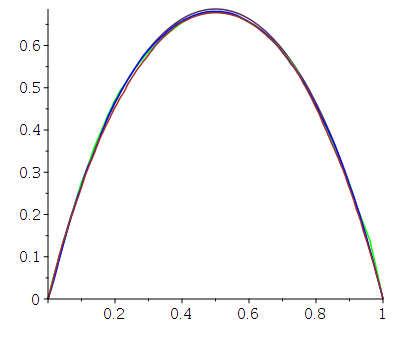}
\end{center}
    \captionn{   \label{fig:my2}Simulations: on the first picture, $\lambda=1$, $(n,m) = (50,50)$ (green), (100,100) (blue), (150,150) (brown). Mean taken over 50 simulations of trajectories, for each size. The three curves, violet, blue, brown, are hardly distinguishable. On the second picture, $\lambda=2$, $(n,m) = (50,100)$ (green), (100,200) (blue), (150,300) (brown), violet, the limiting curve.
     Still in the second case, with the same color code, drawings of $t\mapsto X(t)$, and $t\mapsto Y(t)$, and for the simulation, drawing of $((k/n, X_{k}),k=1..n)$ and $((k/n,Y_{k})),k=1..n)$ }
 
\end{figure}

\paragraph{A random generation algorithm.}
First, start by sampling exactly the random vector $K[n]$ under $\Qt_{n,\floor{n\lambda}}$.
To do so, the idea is to pre-compute all the numbers $\bQt_{n',m'}$ for $0\leq n'\leq n$ and $0\leq m'\leq m$ by doing the following computation:

\paragraph{1. Precomputation (and storage) of the $\bQt_{n,m}$}~\\
For $n'$ from 0 to $n$:  \\
\hspace*{5mm} $c_{n',0}^\triangle=2^{n'}/({n'}!({n'}+1)!)$  
\medskip

\noindent for $m'$ from 1 to $m$:\\
\hspace*{5mm} for $n'$ from 0 to $n$:\\
\hspace*{10mm}compute $c_{n',m'}^\triangle$ (using \eref{eq:Cnm})\\
\hspace*{10mm}compute $\bQt_{n',m'}$  (using \eref{eq:bQt})\\

To perform these computations for $n$ larger than some dozens, say, it is important to use a programming language/library able to work with fractions, whose numerators and denominators are very large integers (otherwise, exact simulations have to be obtained using another strategy).  Since $c_{n',m'}^\triangle$ is a priori just a sum of $m'$ terms, computing all the $c_{n',m'}^\triangle$ demand a cubic number of sums (that is  $O(\max\{n,m\}^3)$. However, exact computations with large numbers, computations of the involved binomial coefficients increase greatly the cost, and depends on the programming language one uses. A pre-computation (and storage) of the binomial coefficients involved in the computation of the $c_{n,m}^\triangle$ improves greatly the performance of the algorithm.

\paragraph{2. Simulation of $K[n]$ under $\bQt_{n,m}$.}

The distribution of  $K_n$ under $\bQt_{n,m}$ is 
\[\bQt_{n,m}(K_n=k) =   2 \bQt_{n-1,m-k}\frac{(k+1)}{(n+m+1)(n+m) \bQt_{n,m}},~~~~k\in \{0,\cdots,m\}.\]
Since the coefficients $\bQt_{n',m'}$ have been precomputed, all these probabilities can be quickly computed, and standard simulations of a discrete distribution can be used.
When $K_n$ has been generated and say $K_n=k_n$ is obtained, then simulate $K_{n-1}$ according to $\Qt_{n-1,m-k_n}$ by the same procedure, and iterate: for $K_{n-j}$, simulate a random variable with distribution $\Qt_{n-j, m-k_n-\cdots-k_{n-j+1}}$.

The cost of this simulation is negligible compared to that of point ${\bf 1.}$  

\paragraph{3. The rewinding construction.}

It remains to observe that when $K_n=k$ is known, then $V_n$ can be simulated. It suffices to construct a random generation for its coordinates $(X_n,Y_n)$ distributed as $(b(2-g),bg)$ for $b\sim \beta(k+2,n+m-k)$ and $g\sim \beta(n+m,1)$ (for two independent beta $\beta$ random variables).

In fact, to construct the points $(V_n,\cdots,V_1)$ we will need to build first the $(V_n^\star,\cdots,V_1^\star)$, where $V_i^\star$ is taken under the law described in \Cref{theo:rewind}.

When the $(V_i^\star)$ are known, in order to build the $(V_i)$ and complete the construction, we need the ``rewinding'' considerations explained in  \Cref{theo:rewind} and slightly before. There are basically two different ways of proceeding: either place $V_n$ before, compute the coordinates of $P$ as drawn in Fig. \ref{fig:KV} (which is $P=2(X_n,Y_n)/(X_n+H_n)$) and then work in the new triangle $(A',B',C)=(V_n,B,P)$ to place $V_{n-1}$ and proceed inductively, or use directly the formulas of \Cref{theo:rewind} to construct the whole picture.

\section{Deterministic geometric considerations}

\subsection{About  the optimization of $\Phi_\lambda$ in the bi-pointed case: Proof of \Cref{theo:optri}}
\label{sec:qddqs}

We divided our proof of  \Cref{theo:optri} in  6 sections, corresponding to the 6 main ideas of this proof.

The keystone here is probably the introduction of a new notion, that we call ``cupola symmetry'', a sort of super-symmetry, which will play a role also in the optimization of $\PLK$ in a general compact convex $\K$.
We will see that the elements of  $\argmax\Phi_\lambda$ are ``cupola-symmetric'', from what we deduce that they must be either a parabola or an hyperbola. Then, a more classical optimization argument within this family allows to conclude.

\subsubsection*{(1). The map $\Phi_\lambda:\CCST\to {\R^+}$ reaches its maximum.}
The proof is the same as that of \Cref{pro:hss}$(a)$ in $\CCST$ instead of ${\sf CCS}_\K$. 

\subsubsection*{(2). An optimizing concave map ${\cal C}$ must be smooth.}  
Let $F$ be a function in ${\sf Conc}(ABC)$ such that ${\cal C}_F$ is an element of $\argmax  \Phi_\lambda$.   Assume that $F$ is not smooth: since it is concave, and since its graph belong to $ABC$, $F$ is differentiable a.e., and there exists an abscissa $x$ such that  $F'(x^+)<F'(x^-)$ (somehow, a jump in the derivative).
Consider two points $z_1=(x_1,y_1)$ and $z_2=(x_2,y_2)$ on ${\cal C}_F$, with $x_1<x_2$, such that there is such a jump for the derivative either at $x_1$ or $x_2$.
Take two supporting lines $L_1$ and $L_2$ at $z_1$ and $z_2$ (we demand the slopes to be in $[-1,1]$, which is a restriction only for $x_1=0$ and $x_2=2$). Denote by $z_3=(x_3,y_3)$ the intersection of these lines. A small picture is sufficient to see that (because of their slopes) $z_3$ is inside ABC. The triangle $z_1z_2z_3$ is enveloping if $\mathcal{C}_F$ is smooth at $z_1$ and at $z_2$, but if it is not the case, then for some choices of $L_1$ and $L_2$, the triangle $z_1z_2z_3$ will contain strictly the enveloping triangle, say $z_1z_2 z'_3$. Take such a pair of (non-minimal) supporting lines $(L_1,L_2)$.

Now, the affine map $\phi_{z_1z_2z_3'\to z_1z_2z_3}$   has  determinant $>1$ .
Let us now perform some  curve surgery. Define  the curve ${\cal C}'$ as follows:\\
\indent -- outside the triangle $z_1z_2z_3$, ${\cal C}_F$ and ${\cal C'}$ coincide.\\
\indent -- inside the triangle $z_1z_2z_3$: the restriction of ${\cal C'}_{~|~{z_1z_2z_3}}$ is taken to be $\phi_{z_1z_2z_3'\to z_1z_2z_3}({\cal C}_{~|~{z_1z_2z'_3}})$. 

It is easy to see that ${\cal C}'$ is still in $\CCST$, and   moreover $\L({\cal C}')>\L({\cal C}_F)$ and $\A({\cal C}')>\A({\cal C}_F)$, so that $\Phi_\lambda({\cal C}')>\Phi_\lambda({\cal C}_F)$, which is a contradiction. Hence, an element in $\argmax \Phi_\lambda$ must be smooth.


\subsubsection*{(3).  An element in $\argmax \Phi_\lambda$ must be cupola-symmetric}

We start by proving that any element of $\argmax \Phi_\lambda$ must be symmetric with respect to $x=1$, and we will progressively deduce that it must be ``cupola-symmetric'', a notion we introduce below, which is a sort of super-symmetry, that, as we will see later on, are only satisfied by conics.

\begin{lem}\label{lem:dqsd}
  If ${\cal C}_F$ is an element of $\argmax \Phi_\la$ for some $F\in{\sf Conc}(ABC)$, then $F$ is symmetric with respect to $x=1$ (or equivalently, ${\cal C}_F={\cal S}({\cal C}_F)$ where ${\cal S}$ is the symmetry with respect to the line $x=1$).
  \end{lem}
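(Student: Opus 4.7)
The plan is to carry out a Minkowski-averaging argument analogous to that used in the proof of Proposition~\ref{pro:hss}(c). Let $\mathcal{S}$ denote the reflection with respect to the vertical line $\{x = 1\}$, and identify the curve $\mathcal{C}_F$ with the associated convex body $C_1 := \CH(\mathcal{C}_F) \in \CCST$. Set $C_2 := \mathcal{S}(C_1)$. Since $\mathcal{S}$ is an affine isometry with $|\det|=1$ that fixes the triangle $ABC$ setwise and swaps $A$ with $B$, we have $C_2 \in \CCST$ and $\Phi_\lambda(C_2) = \Phi_\lambda(C_1)$ by affine invariance of both $\A$ and $\L$.

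Next I would consider the Minkowski average $C_0 = \tfrac{1}{2}(C_1 + C_2)$. Since $ABC$ is convex, $C_0 \subset ABC$. Moreover, because $A, B$ lie in both $C_1$ and $C_2$, they lie in $C_0$, so $C_0 \in \CCST$. Applying the two standard inequalities recalled at the end of Section~\ref{sec:RW}, namely
\[
\L(C_0) \geq \tfrac{1}{2}\bigl(\L(C_1) + \L(C_2)\bigr) = \L(C_1)
\]
(from \Bara{} \cite[(3.3)]{barany1}) and the Brunn-Minkowski inequality
\[
\A(C_0)^{1/2} \geq \tfrac{1}{2}\bigl(\A(C_1)^{1/2} + \A(C_2)^{1/2}\bigr) = \A(C_1)^{1/2},
\]
one obtains $\Phi_\lambda(C_0) \geq \Phi_\lambda(C_1)$. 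Since $C_1 \in \argmax \Phi_\lambda$, equality must hold throughout. In particular, the equality case of Brunn-Minkowski forces $C_1$ and $C_2$ to be homothetic; having equal area, they must be translates: $C_2 = C_1 + v$ for some $v \in \R^2$.

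The final step is to rule out any non-zero translation vector, as already done in the proof of Proposition~\ref{pro:hss}(c). From $A,B \in C_1 \cap C_2$ one deduces $\{A,B,A-v,B-v\} \subset C_1 \subset ABC$. Using the explicit description of $ABC$ (namely $y\geq 0$, $y\leq x$, $y\leq 2-x$), the conditions $A-v \in ABC$ and $B-v \in ABC$ force $v_2 \le 0$, $v_1 \ge 0$ and $v_1 \le v_2$, hence $v = 0$. Therefore $C_1 = C_2 = \mathcal{S}(C_1)$, i.e.\ $F$ is symmetric with respect to $x=1$, completing the proof. The only mildly delicate point in this scheme is verifying that $C_0$ genuinely lies in $\CCST$ (which relies only on convexity of $ABC$ and on $A,B$ being contained in both $C_i$) and that $\Phi_\lambda$ satisfies both Brunn--Minkowski-type inequalities on the affine length and the area; both facts are already available from the cited sources.
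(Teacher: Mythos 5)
Your proof is correct and follows essentially the same route as the paper: reflect across $x=1$, take the Minkowski average, and combine the Brunn--Minkowski inequality with the concavity of the affine length, then use the equality case to force $\mathcal{C}_F$ and its reflection to be translates of one another. The only difference is that you spell out explicitly, via the coordinates of $A$ and $B$ inside $ABC$, why the translation vector must vanish — a step the paper disposes of in a footnote — which is a welcome clarification but not a different argument.
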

\begin{proof}
  Let  $\bar{F}$ be the symmetric of $F$, (that is $\bar F(x)=F(2-x)$ for $x\in[0,2]$). Define $G$ as the concave function such that
  \[{\cal C}_G= ({\cal C}_{F}+{\cal C}_{\bar F})/2,\] where we equipped $\CCST$ with the Minkowski addition (or equivalently, the Blaschke sum).
Since  $\A( ({\cal C}_{F}+{\cal C}_{\bar F})/2 )^{1/2}\geq \A( {\cal C}_{F}/2)^{1/2}+\A({\cal C}_{{\bar F}}/2)^{1/2}$ by the Brünn-Minkowski (or Kneser-Süss) inequality, with $\A( {\cal C}_{F}/2)^{1/2}=\A({\cal C}_{{\bar F}}/2)^{1/2}$,
we have
\[\A( ({\cal C}_{F}+{\cal C}_{{\bar F}})/2 )\geq ( \A( {\cal C}_{F}/2)^{1/2}+\A({\cal C}_{{\bar F}}/2)^{1/2})^2= \A( {\cal C}_{F}).\]
By Lemma 1 and Eq. (3.3) in \Bara \cite{barany1}, $\L$ is concave in $\CCST$ for the Minkowski sum, that is   $\L((S_1+S_2)/2)\geq \frac{1}{2}(\L(S_1)+\L(S_2))$.
Hence the map $G$ satisfies $\Phi_\lambda({\cal C}_G)\geq \max\{ \Phi_\lambda({\cal C}_F),\Phi_\lambda({\cal C}_{\bar F})\}$, and this inequality is strict if ${\cal C}_F$ is not equal to ${\cal C}_{\bar F}$ up to a translation\footnote{The case of equality in  Brunn Minkowski inequality is the case where ${\cal C}_F$ and ${\cal C}_{\bar F}$ are equal (since it is the only case for which the two curves ${\cal C}_F$ and ${\cal C}_{\bar F}$ are translated of each other)}. Hence if $F$ is not symmetric with respect to $x=1$, then ${\cal C}_F \notin \argmax  \Phi_\la.$
\end{proof}
 
This argument ``if ${\cal C}_F$ is not symmetric, then $\Phi_\lambda(({\cal S}({\cal C}_F)+{\cal C}_F))/2)> \Phi_\lambda({\cal C}_F)$'', or in other words, a symmetrization increases the value of $\Phi_\lambda$, can be applied in any enveloping triangle of a portion of ${\cal C}_F$ (such a triangle is always entirely included in $ABC$).

Let us state a trivial lemma that will allow us to further define the notion of ``cupola-symmetric curve''
\begin{lem} Let $abc$ be a non-flat triangle. The affine map $\phi_{abc\to bac}$ that sends $(a,b,c)$ onto $(b,a,c)$ has determinant 1, so that it preserves area and affine lengths. This map is the symmetry  with respect to the line $cm$, where $m$ is the middle of $ab$, according to the direction of $ab$. 
\end{lem}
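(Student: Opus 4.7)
The plan is to reduce the lemma to a one-line linear-algebra computation in a basis attached to the triangle. I would write
\[\phi(z)=c+M(z-c),\]
which automatically enforces $\phi(c)=c$. Since $abc$ is non-flat, the vectors $\alpha:=a-c$ and $\beta:=b-c$ form a basis of $\mathbb{R}^2$. The two remaining conditions $\phi(a)=b$ and $\phi(b)=a$ become $M\alpha=\beta$ and $M\beta=\alpha$. In the basis $\{\alpha,\beta\}$ the matrix of $M$ is thus the swap matrix $\bigl(\begin{smallmatrix}0&1\\1&0\end{smallmatrix}\bigr)$, whose determinant is $-1$; in particular $|\det M|=1$, so $\phi$ preserves Lebesgue measure, and by the transformation rule $\L(\phi(\mathcal{C}))=|\det M|^{1/3}\,\L(\mathcal{C})$ recalled from \cite{barany1}, it preserves affine lengths as well.

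For the geometric identification, I would diagonalize $M$ in that same basis. The combination $\alpha+\beta=(a-c)+(b-c)=2(m-c)$, where $m=(a+b)/2$ is the midpoint of $[ab]$, is an eigenvector of $M$ with eigenvalue $+1$; the combination $\alpha-\beta=a-b$ is an eigenvector with eigenvalue $-1$. Consequently every point of the affine line $cm$ (which passes through $c$ with direction $m-c$) is fixed by $\phi$, whereas $\phi$ reverses displacements parallel to $ab$. This is exactly the description of the oblique affine reflection with axis $cm$ acting in the direction of $ab$: a point $z$ is mapped to the unique $z'$ such that $z'-z$ is parallel to $ab$ and the midpoint of $[z,z']$ lies on $cm$.

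There is no real obstacle in this argument, which is essentially bookkeeping. The only point worth flagging is terminological: the statement ``determinant $1$'' should be read as $|\det M|=1$, because any linear map that swaps two vertices of a non-degenerate triangle and fixes the third must reverse orientation. Since both the area functional and B\'ar\'any's integral formula for affine length involve only $|\det M|$, this sign has no effect on the conclusion that $\phi$ preserves area and affine length.
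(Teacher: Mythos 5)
Your proof is correct; the paper states this lemma without proof (calling it trivial), and your computation in the basis $\{a-c,\,b-c\}$ — swap matrix, eigenvectors $a+b-2c$ and $a-b$ with eigenvalues $+1$ and $-1$ — is exactly the intended argument. Your remark that the determinant is really $-1$ (so that ``determinant $1$'' must be read as $|\det M|=1$) is well taken and is consistent with the paper's own later usage, where the analogous maps $\phi_{Ap_0M_0\to p_0AM_0}$ are explicitly described as having determinant $-1$.
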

\begin{defi}A subset $D$ of the plane is said to be $abc$-symmetric if $\phi_{abc\to bac}(D)=D$.
\end{defi}
\begin{figure}[hbtp]
    \centering
    \includegraphics[width=7cm]{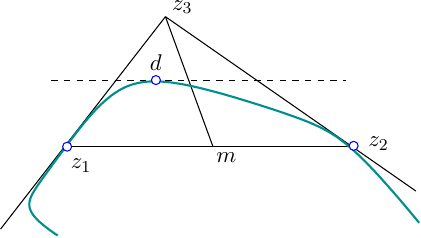}
    \captionn{This green curve ${\cal C}$ is not cupola symmetric, since for this enveloping triangle $z_1z_2z_3$, the affine map $\psi_{z_1z_2z_3\to z_2z_1z_3}$ sends the point $d$ on the dotted line (which is the line parallel to $z_1z_2$ incident to $d$), in the triangle $mz_2z_3$. Hence the restriction ${\cal C}~|_{z_1z_2z_3}$ is not preserved by $\psi_{z_1z_2z_3\to z_2z_1z_3}$.}
    \label{fig:cup}
\end{figure}
\begin{defi} A curve ${\cal C}\in \CCST$ is said to be \underbar{cupola symmetric} if for all pairs of distinct points $z_1=(x_1,y_1)$, $z_2=(x_2,y_2)$ on its upper boundary  with $0\leq x_1<x_2\leq 1$, $y_1,y_2>0$, any  supporting line $L_1$ and $L_2$ at $z_1$ and $z_2$ with slopes in $[-1,1]$, the restriction of ${\cal C}_{~|~{z_1z_2z_3}}$ to the triangle $z_1z_2z_3$ with $z_3=L_1\cap L_2$, is $z_1z_2z_3$-symmetric (that is invariant by $\phi_{z_1z_2z_3\to z_2z_1z_3}$). 
\end{defi}


It is easy to check that any hyperbola ${\cal H}_v$ for $v\geq 0$ is cupola symmetric: indeed, since these curves are smooth, supporting lines are tangents, and any triangle $z_1z_2z_3$ mentioned in the definition of ``cupola symmetric'' is an enveloping triangle. Now, images of (pieces of) hyperbolas through affine maps are (pieces of) hyperbolas, and the conclusion follows.
\begin{lem}\label{lem:sct} If  ${\cal C}$ is an element of $\argmax \Phi_\la$ then:\\
  ${\it (i)}$ ${\cal C}$ is smooth, ${\it (ii)}$ cupola-symmetric, and ${\it (iii)}$ its global enveloping triangle is $ABC$ (the smallest triangle that contains ${\cal C}$ is $ABC$).
\end{lem}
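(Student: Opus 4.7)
Statement (i) was already established in step (2) above (if ${\cal C}$ has a corner, one can choose non-minimal supporting lines at nearby points and dilate the enveloping triangle by a determinant-$>1$ affine map, strictly increasing both $\A$ and $\L$), so I focus on (ii) and (iii).

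For (ii), I plan a localized version of the Minkowski-averaging argument of Lemma~\ref{lem:dqsd}. Fix points $z_1,z_2$ and supporting lines $L_1,L_2$ as in the definition of cupola-symmetry, and set $z_3=L_1\cap L_2$, $T=z_1z_2z_3$. By (i), ${\cal C}$ is smooth, so $L_1,L_2$ are the unique tangent lines at $z_1,z_2$, and the slope restriction ensures $T\subseteq ABC$. Let $P$ be the sub-arc of ${\cal C}$ between $z_1$ and $z_2$; then $T$ is the enveloping triangle of $P$. The affine reflection $\phi=\phi_{z_1z_2z_3\to z_2z_1z_3}$ has determinant $1$, fixes $T$, and swaps $z_1\leftrightarrow z_2$, $L_1\leftrightarrow L_2$, so $\phi(P)\subseteq T$ is a concave arc from $z_1$ to $z_2$ tangent to $L_1,L_2$ at the endpoints. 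Let $D,\bar D\subseteq T$ be the convex bodies bounded by $[z_1,z_2]\cup P$ and $[z_1,z_2]\cup\phi(P)$ respectively; they have equal area and equal affine length. Their Minkowski half-sum $D_0=(D+\bar D)/2\subseteq T$ satisfies $\A(D_0)\geq\A(D)$ by Brunn--Minkowski and $\L(D_0)\geq\L(D)$ by \Bara's concavity of $\L$ (\cite[(3.3)]{barany1}). Moreover $D_0$ still contains $[z_1,z_2]$ on its boundary (since that segment is common to both summands) and its upper arc $P_0$ is tangent to $L_1,L_2$ at $z_1,z_2$, since those lines are already common tangents of $D$ and $\bar D$. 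Replacing $P$ by $P_0$ inside ${\cal C}$ produces a curve ${\cal C}'\in\CCST$ that remains $C^1$ and satisfies $\Phi_\lambda({\cal C}')\geq\Phi_\lambda({\cal C})$. Optimality of ${\cal C}$ then forces equality, hence $D=\bar D$ up to a translation; but $z_1,z_2$ are on the boundary of both, and any translation identifying these two pinned points must be trivial, so $D=\bar D$ and $\phi(P)=P$. Since $(z_1,z_2,L_1,L_2)$ were arbitrary, ${\cal C}$ is cupola-symmetric.

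For (iii), let $T'=ABC'$ be the enveloping triangle of ${\cal C}$. By (i), the tangents $L_A,L_B$ at $A,B$ exist and have slopes in $[-1,1]$, and $C'=L_A\cap L_B$, so $T'\subseteq ABC$. Assume for contradiction $T'\subsetneq ABC$, i.e.\ $C'\neq C$. The affine map $\psi=\psi_{ABC'\to ABC}$ fixes $A,B$, sends $C'\mapsto C$, and has $\det\psi=\A(ABC)/\A(T')>1$. Writing $C'=(c_1,c_2)$ and $\psi(x,y)=(x+ay,by)$ with $a=(1-c_1)/c_2$, $b=1/c_2>0$, and using that $F'(x)$ stays in the range $[-c_2/(2-c_1),\,c_2/c_1]$ bounded by the tangent slopes at $A$ and $B$, one checks that $u(x):=x+aF(x)$ is a strictly increasing bijection of $[0,2]$, so $\psi({\cal C})$ remains the graph of a smooth concave function and lies in $\CCST$. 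The affine scaling relations $\A(\psi({\cal C}))=\det\psi\cdot\A({\cal C})$ and $\L(\psi({\cal C}))=(\det\psi)^{1/3}\L({\cal C})$ then yield
\[\Phi_\lambda(\psi({\cal C}))=(\det\psi)^{1+\lambda}\,\Phi_\lambda({\cal C})>\Phi_\lambda({\cal C}),\]
contradicting ${\cal C}\in\argmax\Phi_\lambda$. Hence $C'=C$ and $T'=ABC$.

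The main obstacle is the clean execution of (ii): two elementary but essential checks. First, the replacement piece $P_0$ must be tangent to $L_1,L_2$ at $z_1,z_2$ (so that the glued curve remains $C^1$), which uses the standard fact that a common tangent line of both summands of a Minkowski sum is also a tangent at the corresponding sum point. Second, the Brunn--Minkowski equality case only yields that $D$ and $\bar D$ are translates; to upgrade this to $D=\bar D$, one invokes that $z_1,z_2$ sit on the boundary of both bodies as corner points of the chord-arc decomposition, so the translation must fix $\{z_1,z_2\}$, forcing it to be trivial.
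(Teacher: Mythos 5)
Your proposal is correct and follows essentially the same route as the paper: (i) by reference to the smoothness step, (ii) by reflecting the sub-arc in its enveloping triangle via $\phi_{z_1z_2z_3\to z_2z_1z_3}$, Minkowski-averaging, and invoking the equality case of Brunn--Minkowski together with the concavity of $\L$ (with the pinned points $z_1,z_2$ ruling out a nontrivial translation), and (iii) by dilating the enveloping triangle $ABC'$ onto $ABC$ with a determinant-$>1$ affine map. Your only additions are bookkeeping the paper leaves implicit (tangency of the averaged arc at $z_1,z_2$ so the surgered curve stays in $\CCST$), which is welcome but not a different argument.
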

\begin{proof} ${\it (i)}$ is ${\bf (2)}$. Let us pass to ${\it (iii)}$. Take the enveloping triangle of ${\cal C}$: this triangle must contain $A$ and $B$, and since the slope of the tangent line at $A$ is smaller than 1, and that the one at $B$ is larger than $-1$, its third vertex $C'$ is either $C$ or inside $\Int(ABC)$. Let us rule out this last possibility. The affine map $\phi_{ABC'\to ABC}$ has determinant strictly greater than 1, and we have $\phi_{ABC'\to ABC}({\cal C})$ is a convex set in $\CCST$, and moreover, $\Phi_\la(\phi_{ABC'\to ABC}({\cal C}))> \Phi_\la( {\cal C})$, so that ${\cal C}\notin \argmax \Phi_\la$, which is a contradiction.

  ${\it (ii).}$ Pick ${\cal C}$ in $\argmax \Phi_\la$; we know that ${\cal C}$ is smooth by {\bf (2)}. Assume that ${\cal C}$ is not cupola symmetric: there exists an enveloping triangle $z_1z_2z_3$, such that the restriction of $C={\cal C}_{~|~{z_1z_2z_3}}$ is not $z_1z_2z_3$ symmetric.
  In this case, let us perform a small curve surgery to construct $\mathcal{C}'$ out of $\mathcal{C}$ such that $\Phi_\la(\mathcal{C}')>\Phi_\la(\mathcal{C})$:
  take $C'= \phi_{z_1z_2z_3\to z_2z_1z_3}(C)$ the image of $C$ by the $z_1z_2z_3$ symmetry. This symmetry preserves the area and affine perimeter of the intersection of $C$ with $z_1z_2z_3$~:
  \beq\label{eq:rhyrgk} \l(\A( \CH(C)),\L(C )\r)=\l(\A( \CH(C')) ,\L( C' )\r).\eq
  Now, replace the piece $C$ in $z_1z_2z_3$ by 
  \[C''=(C+ \phi_{z_1z_2z_3\to z_2z_1z_3}(C))/2\] which fits
  in the triangle $z_1z_2z_3$.

  The surgery.  Build ${\cal C}'$ as follows:\\
  \indent-- outside $z_1z_2z_3$, ${\cal C}$ and ${\cal C}'$ coincide\\
  \indent -- inside $z_1z_2z_3$: remove $C$ from ${\cal C}$ and replace it by $C''$.

  It remains to compare $\Phi_\lambda({\cal C})$ and  $\Phi_\lambda({\cal C}')$.
  The common part of ${\cal C}$ and ${\cal C'}$ is ${\sf Com}=\overline{{\cal C} \setminus (z_1z_2z_3)}$. The segment $z_1z_2$ is a boundary of ${\sf Com}$, and this part has affine length 0.
  We have
  \ben
  \Phi_\lambda({\cal C}) &=&\l(\A({\sf Com})+\A(\CH(C))\r)^{\lambda}.\l(\L({\sf Com})+ \L(C)\r)^3\\
  \Phi_\lambda({\cal C}') &=&\l(\A({\sf Com})+\A(\CH(C''))\r)^{\lambda}.\l(\L({\sf Com})+ \L(C'')\r)^3
  \een
 The same argument as for the proof of \Cref{lem:dqsd} allows to conclude: by hypothesis, $C'$ is different from $C$. Since both of them are drawn in $z_1z_2z_3$ and are incident to $z_1$ and $z_2$, they are not equal up to a translation. Using \eref{eq:rhyrgk} plus $\A(\CH(C''))>\A(\CH(C))$ as well as the concavity of the affine length with respect to the Minkowski addition, we deduce that $\L(C'')\geq \L(C)$. Hence $\Phi_\lambda({\cal C}')> \Phi_\lambda({\cal C})$ and this contradicts the fact that ${\cal C}$ is in $\argmax \Phi_\la$. We have established that a curve which is not cupola symmetric is not in $\argmax \Phi_\la$.
\end{proof}

\begin{rem}\label{rem:gfq} At this point, a geometer may have concluded that cupola symmetric curves ${\cal C}$ are indeed conics. Basically, these curves are conics because when one takes an enveloping triangle $abc$ of a part $p$ of the curve, the $abc$ symmetry implies that one can cut this triangle in half (by taking the ``axis $cm$ of symmetry'', with $m$ the middle of $ab$). Each part of $p$ obtained, say $p_1$ and $p_2$, lies in one  ``half of the triangle $abc$'', on either side of the line $cm$. They have same affine perimeter and same area, because the affine map $\phi_{abc\to bac}$ has determinant $-1$ and sends $p_1$ onto $p_2$. One can then iterate, and set an enveloping triangle $a_1b_1c_1$ and $a_2b_2c_2$ on each of these two parts, and proceed as before to produce thinner parts... All these 4 parts must have the same affine length and area. We produce recursively $2^n$ parts of $p$ that all have same affine perimeters and enveloping triangles having same area. This implies that the affine curvature must be constant, which immediately allows to conclude (see Chap. 3 in Su \cite{MR724783}).  \end{rem}

However, in the interests of rigor, we provide a more elementary argument in the sequel.

\subsubsection*{(4). A binary decomposition of cupola symmetric function in $\argmax \Phi_\la$}
By \Cref{lem:sct} we already know some properties of an element ${\cal C}$ of $\argmax \Phi_\la$. We add here a binary decomposition property. Take $F \in {\sf Conc}(ABC)$ such that ${\cal C}={\cal C}_F$. The curve ${\cal C}$ is cupola symmetric, and in particular symmetric with respect to the line $x=1$.
Denote by $H:=F(1)$ the central height. Take the central point $p=(1,H)$ of this curve, and $M=(H,H)$ the point on the segment $AC$, at ordinate $H$.
\begin{figure}[h!]
\centerline{\includegraphics{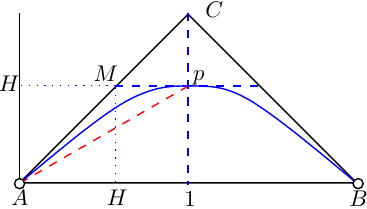}}
\caption{\label{fig:Hp} Toward the central height decomposition}
\end{figure}

\begin{lem} If ${\cal C}={\cal C}_F$ belongs to  $\argmax \Phi_\la$, then the triangle $ApM$ is the enveloping triangle of the first half of ${\cal C}$ (more precisely of $\{( x,F(x))~:~x\leq 1\}$).
\end{lem}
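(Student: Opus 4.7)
The plan is to identify the tangent lines to ${\cal C}$ at the two extremities $A$ and $p$ of the first half, and to show that these two tangents, together with the chord $Ap$, bound exactly the triangle $ApM$.

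First, I would invoke \Cref{lem:sct}: by part $(i)$, ${\cal C}$ is smooth, so tangent lines are well-defined at every point of ${\cal C}$; by part $(iii)$, the global enveloping triangle of ${\cal C}$ is $ABC$, which means the supporting line at the extremity $A$ passes through $C=(1,1)$. Smoothness turns this supporting line into the unique tangent at $A$, namely the line $y=x$ of slope $1$. Since tangency is a local property, the tangent at $A$ to the first half $\{(x,F(x)):x\in[0,1]\}$ is the same line.

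Second, I would use \Cref{lem:dqsd}: the function $F$ is symmetric with respect to $x=1$. Combined with the smoothness from \Cref{lem:sct}$(i)$, this forces $F'(1)=0$, so the tangent to ${\cal C}$ at $p=(1,H)$ is the horizontal line $y=H$. The two tangent lines $y=x$ and $y=H$ meet exactly at $M=(H,H)$.

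Third, I would verify that the first half of ${\cal C}$ is contained in $ApM$ and that $ApM$ is the intersection of all triangles with vertices $A$ and $p$ containing this half. Containment rests on three elementary inequalities on $[0,1]$: $F(x)\ge Hx$ (concavity applied to the chord from $A$ to $p$), $F(x)\le x$ (from ${\cal C}_F\subset ABC$), and $F(x)\le H$ (symmetry plus concavity put the maximum of $F$ at $x=1$). Minimality is immediate from smoothness: any triangle with vertices $A$, $p$, $c$ containing the first half must have the edges $Ac$ and $pc$ on the non-curve sides of the unique tangents at $A$ and $p$ respectively, so $c$ lies in the closed region beyond both tangents; intersecting over all such $c$ recovers exactly the sides $AM$ and $pM$, hence the triangle $ApM$.

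The main subtlety I anticipate is ensuring $ApM$ is non-degenerate, which requires $0<H<1$ strictly. If $H=0$, then $F(1)=0$ combined with symmetry $F(x)=F(2-x)$ and concavity (together with $F(0)=F(2)=0$) forces $F\equiv 0$; if $H=1$, concavity with $F(0)=0$, $F(1)=1$ and the constraint $F(x)\le x$ on $[0,1]$ forces $F(x)=x$ there, and symmetry extends this to $F(x)=\min(x,2-x)$. In either degenerate case ${\cal C}$ is polygonal, so $\L({\cal C})=0$ and $\Phi_\la({\cal C})=0$, contradicting ${\cal C}\in\argmax\Phi_\la$ (which is positive, since any smooth curve in $\CCST$ has positive affine length). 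Hence $0<H<1$ and $A$, $p$, $M$ are genuinely non-collinear, finishing the proof.
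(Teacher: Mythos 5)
Your proof is correct, but it takes a different route from the paper's at the one step that actually requires an argument. Both proofs obtain the tangent $y=x$ at $A$ from \Cref{lem:sct}$(iii)$ (global enveloping triangle $=ABC$, hence $F'(0)=1$). The paper then observes that $ApM$ fails to be enveloping only if $F'(1^-)>0$, in which case the true enveloping triangle $ApM'$ is strictly contained in $ApM$, and it rules this out by the section's recurring surgery argument: $\phi_{ApM'\to ApM}$ has determinant $>1$, so transporting the half-curve and regluing produces a competitor with strictly larger area and affine length, contradicting optimality. You instead get $F'(1)=0$ for free by combining the symmetry $F(x)=F(2-x)$ of \Cref{lem:dqsd} with the smoothness $F'(1^+)=F'(1^-)$ of \Cref{lem:sct}$(i)$, which pins the tangent at $p$ to the horizontal line $y=H$ and hence the apex to $M=(H,H)$; you then verify containment ($Hx\le F(x)\le\min(x,H)$ on $[0,1]$, all three inequalities correctly justified) and minimality by the standard fact that the enveloping triangle of a smooth arc is cut out by the chord and the two endpoint tangents. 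Your route is more elementary and avoids re-invoking optimality; the paper's variational argument is more robust in that it does not depend on having already identified the tangent direction at the apex and fits the template used throughout Section 3.1.

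One small inaccuracy: your parenthetical claim that ``any smooth curve in $\CCST$ has positive affine length'' is false as stated (the segment $AB$, or the polygon $ACB$, is an element of $\CCST$ with zero affine length). What you actually need for the degenerate cases $H\in\{0,1\}$ is only that $\max\Phi_\lambda>0$, which is witnessed by the parabola ${\cal P}$ with $\Phi_\lambda({\cal P})=8\,(2/3)^\lambda>0$; with that substitution the non-degeneracy argument is complete.
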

\begin{proof} By \Cref{lem:sct}, $ABC$ is enveloping for ${\cal C}$, so that, we already know that $F'(0)=1$. Hence, $ApM$ is either enveloping, or contains strictly the enveloping triangle  $ApM'$ (this occurs when $F'(1-)>0$). It remains to rule out this possibility. But again, by an argument already used several times, the affine map $\phi_{ApM'\to ApM}$  has determinant $>1$. Then merging $C=\{( x,F(x))~:~x\leq 1\}$ on the first half of the upper boundary and $C'=\phi_{ApM'\to ApM}(C)$ provides us with the half of a second curve ${\cal C}'$ which has largest area and largest affine perimeter than ${\cal C}$.
\end{proof}

This property holds recursively: consider $ApM$ the enveloping triangle of this first half.
Since ${\cal C}$ is cupola symmetric, we have
\beq\label{eq:dqfsfsf} \A({\cal C})= H+ 2\; \A( \CH(C)), ~~~\L({\cal C}) = 2\; \L(C).\eq

Let us rename $({\cal C},H, C,p,M)$ by $({\cal C}_0,H_0,C_0,p_0,M_0)$ and let us start a recursion. Examine this ``half curve $C_0$''. The triangle $Ap_0M_0$ can be sent on $ABC$ by $\phi_{Ap_0M_0\to ABC}$, and then the image of $C_0$ is a curve ${\cal C}_1$ in $\CCST$ (a continuous convex chain in the unit triangle $ABC$). Since the area of $A_0p_0M_0$ is $(1-H_0)H_0/2$, 
we have by updating \eref{eq:dqfsfsf} 
\beq\label{eq:qsdqd}
\A({\cal C}_0)= H_0+ 2\; \frac{(1-H_0)H_0}2\A( {\cal C}_1), ~~~\L({\cal C}) = 2\;\l(\frac{(1-H_0)H_0}2\r)^{1/3} \L({\cal C}_1).\eq
\begin{defi}\label{defi:dp} We call $(H_0,{\cal C}_1)$ the decomposition pair of ${\cal C}_0$.\end{defi}
\begin{lem}If ${\cal C}_0\in \argmax \Phi_\la$,  $H_0\geq 1/2$.
\end{lem}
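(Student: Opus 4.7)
The plan is to argue by contradiction. Suppose ${\cal C}_0={\cal C}_F\in\argmax\Phi_\lambda$ with $H_0<1/2$; I would construct a competitor ${\cal C}'_0\in\CCST$ with $\Phi_\lambda({\cal C}'_0)>\Phi_\lambda({\cal C}_0)$. The idea is to replace the enveloping triangle $Ap_0M_0$ of the left half $C_0$ by the ``canonical'' triangle $APM_P$ with $P=(1,1/2)$ and $M_P=(1/2,1/2)$ via the affine map $\phi:=\phi_{Ap_0M_0\to APM_P}$, and to glue two symmetric copies: ${\cal C}'_0:=\phi(C_0)\cup{\cal S}(\phi(C_0))$, with ${\cal S}$ the reflection across $x=1$. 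Since $APM_P$ has area $1/8$ and $Ap_0M_0$ has area $H_0(1-H_0)/2$, the determinant is $\det(\phi)=1/(4H_0(1-H_0))$, which is strictly greater than $1$ precisely when $H_0<1/2$.

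First I would check that ${\cal C}'_0\in\CCST$. The matrix of $\phi$ has first row $\bigl(1/(2(1-H_0)),\,(1-2H_0)/(2H_0(1-H_0))\bigr)$, both entries positive when $H_0<1/2$. Since $F'\in[0,1]$ on $[0,1]$, the first coordinate of $\phi(x,F(x))$ is then strictly increasing in $x$, so $\phi(C_0)$ is the graph of a concave function on $[0,1]$. The tangent at $A$ is the image of the line $AM_0$, still of slope $1$; the tangent at the new apex $P$ is the image of the horizontal segment $M_0p_0$, which is the horizontal segment $M_PP$. Thus the two halves glue $C^1$-smoothly at $P$ and the whole curve has enveloping triangle $ABC$.

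The comparison of $\Phi_\lambda$ then goes as follows. Because $\phi_{APM_P\to ABC}\circ\phi$ and $\phi_{Ap_0M_0\to ABC}$ both send $(A,p_0,M_0)$ to $(A,B,C)$, they coincide, so the ``second-generation'' curve ${\cal C}_1$ of Definition~\ref{defi:dp} is the same for ${\cal C}_0$ and for ${\cal C}'_0$. Applying the decomposition formula~\eqref{eq:qsdqd} to both,
\[\A({\cal C}_0)=H_0+H_0(1-H_0)\A({\cal C}_1),\qquad \A({\cal C}'_0)=\tfrac12+\tfrac14\A({\cal C}_1),\]
and using the identity $1/4-H_0(1-H_0)=(1/2-H_0)^2$,
\[\A({\cal C}'_0)-\A({\cal C}_0)=(1/2-H_0)\bigl(1+(1/2-H_0)\A({\cal C}_1)\bigr)>0.\]
Moreover $\L({\cal C}'_0)^3=\det(\phi)\,\L({\cal C}_0)^3=\L({\cal C}_0)^3/(4H_0(1-H_0))>\L({\cal C}_0)^3$. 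Combining these strict inequalities yields $\Phi_\lambda({\cal C}'_0)>\Phi_\lambda({\cal C}_0)$, contradicting ${\cal C}_0\in\argmax\Phi_\lambda$, and proving $H_0\geq 1/2$.

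The main obstacle will be the verification that ${\cal C}'_0$ is actually admissible in $\CCST$: that $\phi(C_0)$ is still the graph of a concave function on $[0,1]$, and that the two halves meet $C^1$ at the new apex $P$. Both points reduce to routine calculation with the explicit matrix of $\phi$ and the tangent directions at $A$ and $p_0$, and the second one is essentially built into the choice of $P$ and $M_P$ since $M_PP$ is horizontal by construction.
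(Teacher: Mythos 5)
Your proof is correct and follows essentially the paper's route: both arguments exploit the binary decomposition \eref{eq:qsdqd} by building a competitor that keeps ${\cal C}_1$ fixed and changes only the central height of the decomposition pair (the paper swaps $H_0$ for $1-H_0$, which preserves $\L$ and strictly increases $\A$; you normalize to $1/2$, which strictly increases both). One harmless slip: $\det(\phi)=1/(4H_0(1-H_0))$ exceeds $1$ for \emph{every} $H_0\neq 1/2$, not ``precisely when $H_0<1/2$'', but since you only invoke the inequality in the case $H_0<1/2$, where it does hold and where the area comparison is also strict, the argument is unaffected.
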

\begin{proof} If $H_0<1/2$, then the curve ${\cal C}'_0$ obtained by replacing $H_0$ by $1-H_0$ (that is, technically, by applying an affine map $\phi:(x,y)\to(x, y (1-H_0)/H_0)$ to $ABC$, and take ${\cal C}'_0=\phi({\cal C}_0)$  would provide the decomposition \eref{eq:qsdqd} with $(1-H_0,{\cal C}_1)$ instead of $(H_0,{\cal C}_1)$, and we would have
  \[\A({\cal C}_0')>\A({\cal C}_0) ,~~~ \L({\cal C}_0')=\L({\cal C}_0)\]
  so that $\Phi_\la({\cal C}_0')> \Phi_\la({\cal C}_0)$, a contradiction.
\end{proof}

\begin{lem}\label{lem:qdef}If ${\cal C}_0$ is cupola symmetric, has $ABC$ as enveloping triangle, is smooth and has a central height $H_0\geq 1/2$, then  so does ${\cal C}_1$.
  Moreover, if $H_0>1/2$ then the central height of ${\cal C}_1$ satisfies
  \[H_1=\frac{ 1-\sqrt{2(1-H_0)}}{2H_0-1}>1/2\]
  and if $H_0=1/2$, then $H_1=1/2$ too. 
\end{lem}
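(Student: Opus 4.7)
The plan splits into a structural part and an explicit computation. By cupola symmetry of ${\cal C}_0$ in $ABC$ (and $ABC$ being enveloping), the tangent to ${\cal C}_0$ at $A$ has slope $1$, at $B$ slope $-1$, and at $p_0=(1,H_0)$ is horizontal. The tangent lines at $A$ and $p_0$ meet at $M_0=(H_0,H_0)$, so $Ap_0M_0$ is the enveloping triangle of $C_0$. Applying $\phi=\phi_{Ap_0M_0\to ABC}$, the image ${\cal C}_1=\phi(C_0)$ is smooth, has enveloping triangle $\phi(Ap_0M_0)=ABC$, and remains cupola symmetric because cupola symmetry, being defined via enveloping sub-triangles and their associated reflections $\phi_{abc\to bac}$, transforms covariantly under affine maps.

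To compute $H_1$, set $q=\phi^{-1}((1,H_1))$; expanding $\phi^{-1}$ gives $q=\bigl((1+(2H_0-1)H_1)/2,\,H_0(1+H_1)/2\bigr)$. Let $\hat q=(2-x_q,y_q)$ be the mirror of $q$ under the $x=1$ symmetry of ${\cal C}_0$. The key is to apply cupola symmetry of ${\cal C}_0$ to the pair $(A,\hat q)$. Cupola symmetry of $C_0$ in $Ap_0M_0$ forces the tangent at $q$ to be parallel to the base $Ap_0$, i.e., of slope $H_0$; by the $x=1$ symmetry the tangent at $\hat q$ has slope $-H_0$. These two tangent lines meet at $z_3=(a,a)$ with $a=H_0(2+H_1(1-H_0))/(1+H_0)$, giving the enveloping sub-triangle $A\hat q z_3$, and the associated cupola reflection $\psi=\phi_{A\hat q z_3\to\hat q A z_3}$ preserves affine arc length along the arc $A\hat q$.

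By cupola symmetry of $C_0$ in $Ap_0M_0$ the point $q$ is the affine midpoint of $C_0$, hence sits at arc length $\L({\cal C}_0)/4$ from $A$ along ${\cal C}_0$, while $p_0$ and $\hat q$ sit at arc lengths $\L({\cal C}_0)/2$ and $3\L({\cal C}_0)/4$ respectively (by the $x=1$ symmetry). Since $\psi$ reverses the arc $A\hat q$ of total length $3\L({\cal C}_0)/4$, it sends the point at arc length $\L({\cal C}_0)/2$ to the point at arc length $\L({\cal C}_0)/4$, so $\psi(p_0)=q$. Writing $\psi(z)=\hat q+Nz$ with $N$ fixed by $N\hat q=-\hat q$ and $Nz_3=z_3-\hat q$, and decomposing $p_0=\mu\hat q+\nu z_3$, the identity $\psi(p_0)=q$ reduces after algebraic simplification to
\[(2H_0-1)\,H_1^2-2H_1+1=0.\]
The two roots are $H_1=(1\pm\sqrt{2(1-H_0)})/(2H_0-1)$; the $+$ branch exceeds $1$ and is discarded, leaving the stated formula. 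The inequality $H_1>1/2$ for $H_0>1/2$ reduces after squaring to $(2H_0-1)^2>0$, and the boundary case $H_0=1/2$ is an indeterminate form $0/0$ whose limit, via Taylor expansion of the square root, equals $1/2$.

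The main obstacle is the algebraic simplification producing the quadratic, together with the conceptual choice of the pair $(A,\hat q)$ rather than the naive $(A,p_0)$: the latter only reproduces information already contained in the hypothesis, whereas the former couples the two halves of ${\cal C}_0$ and thereby yields a nontrivial relation between $H_0$ and $H_1$.
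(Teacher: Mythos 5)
Your proof is correct and follows essentially the same route as the paper's: you identify the same key points ($q$ and $\hat q$ are the paper's $S_1$ and $S_2$, and $z_3$ is its $D$), apply the cupola reflection in the enveloping triangle $A\hat q z_3$ spanning three quarters of the curve, and use preservation of affine length to force $\psi(p_0)=q$, which yields the relation between $H_0$ and $H_1$. The only difference is presentational — your arc-length bookkeeping and the decomposition $p_0=\mu\hat q+\nu z_3$ replace the paper's explicit matrix for $g_3$ and lead to the clean quadratic $(2H_0-1)H_1^2-2H_1+1=0$, which I have checked is consistent with the stated formula.
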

\begin{rem}Notice that we do not suppose here that ${\cal C}_0$ is in $\argmax \Phi_\lambda$ in  \Cref{lem:qdef}.\end{rem}
\begin{proof}  
  The central height $H_0$ of ${\cal C}_0$ is $\geq 1/2$ by the previous lemma. 
Consider Fig. \ref{fig:zdnze}. In green a concave function $F_0$ is represented, and we suppose that ${\cal C}_0={\cal C}_{F_0}$, and $p_0=(1,F_0(1))=(1,H_0)$, $M_0=(H_0,H_0)$, $M_0'=(2-H_0,H_0)$,. 
We have placed the two points $m_1$ and $m_2$, respectively middle points of $[A,p_0]$ and $[p_0,B]$.
\begin{figure}[h!]
    \centering
    \includegraphics{./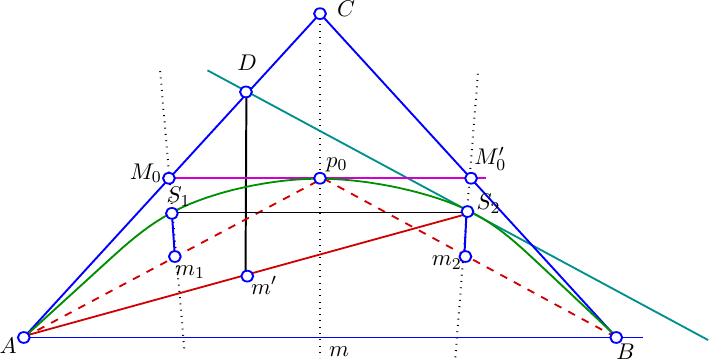}
    \captionn{ \label{fig:zdnze} Illustration of the elements of the proof of \Cref{lem:qdef}}
    
\end{figure} 
Now, 
\[S_1=\bma 1/2+H_1(H_0-1/2) \\ H_0/2+H_0H_1 /2\ema,~~~~~S_2=\bma 3/2-H_1(H_0-1/2)\\ H_0/2+H_0H_1/2\ema\] are the two (key) points that appear by taking into account that the central height of ${\cal C}_1$ is $H_1$ (hence, for example $S_1$ is the image of $(1,H_1)$ by $\phi_{ABC\to Ap_0M_0}$, and $S_2$ obtained by symmetry with respect to $Cm$). Let $m'= S_2/2$ be the middle point of $[A,S_2]$. Then the tangent at $S_2$ intersects $AC$ at $D$ (always in the segment $[A,C]$).

Since  ${\cal C}_0$ is cupola symmetric and smooth (by \Cref{lem:sct}), the three triangles $Ap_0M_0$, $Bp_0M_0'$ and $AS_2D$ are enveloping, and therefore the part of ${\cal C}_0$ lying in such a triangle $abc$ is $abc$-symmetric. 
Hence, there exists : \\
\indent--${\it (i)}$ an affine map $g_1$ with determinant -1 that sends $(A,p_0,M_0)$ onto $(p_0,A,M_0)$,\\
\indent--${\it (ii)}$ an affine map $g_2$ with determinant -1 that sends $(B,p_0,M_0')$ onto $(p_0,B,M_0')$,\\
\indent--${\it (iii)}$ an affine map $g_3$ with determinant -1 that sends $(A,S_2,D)$ onto $(S_2,A,D)$. 

Each of these affine maps has the standard form discussed before, for example $g_1=\phi_{Ap_0M_0\to p_0AM_0}$. 
We need to compute the matrix and translation vector of the affine map
$g_3: \bma x \\y \ema \mapsto \bma a_1 & a_2 \\ a_3 & a_4\ema \bma x \\y \ema  +\bma v_1 \\ v_2\ema$ (so that ${\it (iii)}$ holds).

One finds, \be {v_1}&=&{
\frac{3}{2}}-{H_1}\,{H_0}+{\frac {{H_1}}{2}}, ~~~~~
{v_2}={\frac 
  {{H_0}\, \left( 1+{H_1} \right) }{2}}  \ee
and for $\Gamma =    \left( 6\,{H_1}\,{H_0}+2\,{H_0}-2\,{H_1}-6
 \right)  \left( {H_1}\,{H_0}-{H_1}-2 \right) {H_0}$, we have 
\be {a_1}&=&\Gamma^{-1}\l( \left( -4\,{{H_0}}^{2}+3\,{H_0}-1
 \right) {{H_1}}^{2}+ \left( 10\,{H_0}-6 \right) {H_1}-{H_0}-9\r),\\
 {a_2}&=&\Gamma^{-1} \l( { \left( 2\,{H_1}\,{{H_0}}^{2}-5\,{H_1}\,{H_0}-5\,{
H_0}+{H_1}+3 \right)  \left( 2\,{H_1}\,{H_0}-{H_1}-3
 \right) }\r),\\
 a_3&=&\Gamma^{-1}\l({   \left( 1+{H_1} \right)  \left( 5\,{H_1}\,{H_0}+{H_0}-3\,{H_1}-7 \right) {H_0}}\r),\\
{a_4}&=&-a_1
\ee
 We claim that $g_3$ has the following property
 \ben g_3(p_0)=S_1 \textrm{  and }g_3(S_1)=p_0. \een
Indeed, consider the following parts $b_1,b_2$ and $b_3$ of the boundary $\partial{\cal C}_0$,~:
 \[b_1:=A\to S_1,~~~ b_2:=S_1\to p_0,~~~ b_3:=S_2\to p_0.\] 
 these three parts are image of each other by affine maps with determinant $\pm 1$:
 first $g_1(b_1)=b_2$ and $b_3$ is the image of $b_2$ by the symmetry with respect to the axis $Cm$. Therefore, we must have affine length equality:
 \[\L(b_1)=\L(b_2)=\L(b_3).\]

 Now,  since $g_3$ has determinant $-1$, it preserves the affine length (of any curve).
Then we must have $g_3(S_1)=p_0$. Indeed, first
$\L(A\to S_1)= \L(g_3(A)\to g_3(S_1))=\L(S_2\to  g_3(S_1))$ and by symmetry with respect to the line $mC$, we have also  $\L(A\to S_1)= \L(S_2\to B)$ and $\L(S_1\to p_0)=\L(S_2\to p_0)$.
We deduce from $\L(S_2\to  g_3(S_1))= \L(S_2\to p_0)$ that $g_3(S_1)=p_0$.

If $H_0\in(1/2,1]$, a short analysis shows that this is possible only if $H_1=(1 - \sqrt{-2H_0 + 2})/(2H_0 - 1)$. If $H_0=1/2$, then $H_1=1/2$ too. 
\end{proof}

\subsubsection*{(5). Iteration of the binary decomposition, and appearances of conics}

In {\bf (4)}, \Cref{lem:qdef} has been designed to be iterated! Recall \Cref{defi:dp}.
If ${\cal C}_0$ is cupola symmetric, smooth and has a central height $H_0\geq 1/2$, define successively, for $i\geq 0$, the decomposition pairs $(H_i,{\cal C}_{i+1})$ of ${\cal C}_i$. 
\begin{pro} Assume that ${\cal C}_0$ has $ABC$ as enveloping triangle, is smooth and cupola symmetric, and has central height $H_0\geq 1/2$. Then for all $i\geq 0$, ${\cal C}_{i}$ has also these properties. Moreover, if $H_0>1/2$, then for all $i$,
 \[H_{i+1}=(1 - \sqrt{2-2H_i })/(2H_i - 1)>1/2.\]
and if $H_0=1/2$, then $H_i=1/2$ for all $i\geq 0$.
\end{pro}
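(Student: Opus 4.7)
The plan is a straightforward induction on $i$, where the inductive step is supplied in its entirety by \Cref{lem:qdef}, and the only genuine work left to do is the algebraic verification that the recurrence on the central heights preserves the condition $H_i > 1/2$ strictly.

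The inductive step proceeds as follows. Suppose $\mathcal{C}_i$ is smooth, cupola symmetric, has $ABC$ as enveloping triangle, and has central height $H_i \geq 1/2$. Apply \Cref{lem:qdef} with $\mathcal{C}_i$ playing the role of $\mathcal{C}_0$: this immediately gives that $\mathcal{C}_{i+1}$ enjoys the same four properties, and in the regime $H_i > 1/2$ also yields the explicit formula $H_{i+1} = (1-\sqrt{2-2H_i})/(2H_i - 1)$. In the fixed-point regime $H_i = 1/2$, the same lemma directly gives $H_{i+1} = 1/2$, so the constant sequence is produced. One small preliminary check: smoothness of $\mathcal{C}_i$ combined with $ABC$ being its enveloping triangle forces $H_i < 1$, because $H_i = 1$ would mean $\mathcal{C}_i$ passes through $C$ and, combined with the tangent conditions at $A$ and $B$, would produce a corner at $C$. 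This ensures that $\sqrt{2-2H_i}$ is well-defined and non-negative throughout the iteration.

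The algebraic verification that $H_{i+1} > 1/2$ strictly when $H_i > 1/2$ is elementary. Setting $H = H_i \in (1/2, 1)$, one computes
\[H_{i+1} - \tfrac{1}{2} = \frac{(3 - 2H) - 2\sqrt{2 - 2H}}{2(2H - 1)}.\]
The denominator is positive, and $3 - 2H > 0$, so it suffices to verify $(3-2H)^2 > 4(2-2H)$, which rearranges to $(2H - 1)^2 > 0$, manifestly true for $H \neq 1/2$. Consequently the induction hypothesis $H_{i+1} > 1/2$ is maintained at the next step, and the recurrence can be iterated indefinitely. In particular, the sequence $(H_i)$ stays in $(1/2, 1)$ whenever $H_0 > 1/2$.

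The main obstacle is essentially absent once \Cref{lem:qdef} is available: all the geometric content — enveloping-triangle preservation, propagation of smoothness and cupola symmetry through the affine rescaling $\phi_{Ap_iM_i \to ABC}$, and the derivation of the central-height formula from the tangent analysis at $S_1$, $S_2$ and $p_i$ — has already been handled there. The proposition reduces to a clean scalar iteration on $(H_i)$, and the only remaining thing to check, the strict inequality above, is a one-line computation.
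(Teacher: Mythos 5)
Your proposal is correct and matches the paper's intent exactly: the paper gives no separate proof of this proposition, treating it as the immediate iteration of \Cref{lem:qdef} (which is explicitly "designed to be iterated"), and your induction plus the one-line algebraic check that $(3-2H)^2 > 4(2-2H)$ reduces to $(2H-1)^2>0$ is the natural way to make that iteration explicit.
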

Beyond appearance, this proposition states that the set of functions that satisfies the hypothesis is a one parameter family of functions (since the $H_i$ are all functions of $H_{0}$).

\begin{pro}Assume that ${\cal C}_0$ has $ABC$ as enveloping triangle, is smooth and cupola symmetric, and has central height $H_0$. Then 
  if $H_0=1/2$ then ${\cal C}_0$ is the parabola ${\cal P}={\cal H}_0$, and if $H_0>1/2$, then ${\cal C}_0$ is the only hyperbola ${\cal H}_v$ having central point at height $H_0$. It is characterized through the parameter $r_v$ by 
\beq\label{eq:HO} H_0= \frac{2\cosh(r_v)\sinh(r_v/2)^2}{\sinh(r_v)^2}=\frac{\cosh(r_v)}{2\cosh(r_v/2)^2}.\eq
\end{pro}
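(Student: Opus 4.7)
The strategy is to show that a smooth cupola-symmetric curve ${\cal C}_0$ with enveloping triangle $ABC$ is completely determined by its central height $H_0 \in [1/2, 1)$, and then to match it to the corresponding hyperbola or parabola. For the hyperbola ${\cal H}_v$, setting $t = 1/2$ in \eref{eq:X}--\eref{eq:Y} gives $X(1/2) = 1$ and
\[Y(1/2) = \frac{2\cosh(r_v)}{\sinh(r_v)^2}\sinh(r_v/2)^2 = \frac{\cosh(r_v)}{2\cosh(r_v/2)^2} = 1 - \frac{1}{2\cosh(r_v/2)^2},\]
using $\sinh(r_v) = 2\sinh(r_v/2)\cosh(r_v/2)$. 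The last expression is a strictly increasing bijection from $[0, \infty)$ onto $[1/2, 1)$; the limit $r_v = 0$ recovers the parabola ${\cal P}$ with central height $1/2$. By \Cref{eq:hyper}, ${\cal H}_v$ is a smooth hyperbola, and as indicated in \Cref{rem:gfq}, hyperbolas are cupola-symmetric (the affine involution $\phi_{z_1 z_2 z_3 \to z_2 z_1 z_3}$ of determinant $-1$ preserves any arc of a hyperbola); a direct tangent computation at $t=0$ and $t=1$ confirms that the enveloping triangle of ${\cal H}_v$ is $ABC$.

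I then apply the binary decomposition of \Cref{lem:qdef} recursively and symmetrically to both halves of ${\cal C}_0$. Cupola-symmetry guarantees that each sub-arc, read inside its own enveloping triangle, again satisfies the hypotheses of the lemma, so the decomposition can be iterated. At level $n$, this produces a partition of ${\cal C}_0$ into $2^n$ sub-arcs with $2^n+1$ endpoints; by the recursion in \Cref{lem:qdef}, the positions of all these endpoints depend only on the sequence $(H_0, H_1, \ldots, H_{n-1})$, hence only on $H_0$. By the area relation \eref{eq:qsdqd}, the enveloping triangle of a sub-arc at level $i+1$ has area $H_i(1-H_i)/2$ times that of its parent, a factor bounded by $1/8$ since $H_i \in [1/2, 1)$ implies $H_i(1-H_i) \le 1/4$. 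Hence the enveloping sub-triangles (and therefore the sub-arcs they contain) shrink to zero diameter as $n \to \infty$, and the subdivision endpoints become dense in ${\cal C}_0$.

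Let $v$ be the unique value such that the central height of ${\cal H}_v$ equals $H_0$ (with $v = 0$, i.e.\ the parabola, if $H_0 = 1/2$). Both ${\cal C}_0$ and ${\cal H}_v$ satisfy the hypotheses with the same central height, hence share the same sequence $(H_i)_{i\ge 0}$ and produce the same dense family of subdivision points in the plane; by continuity (a consequence of smoothness), ${\cal C}_0 = {\cal H}_v$. The main obstacle here is the density argument: one must verify not only that enveloping-triangle areas shrink, but that the subdivision points coincide at the same planar positions for two curves sharing $H_0$---this is ensured because the affine maps used at each stage of the decomposition are themselves determined by the $(H_i)$, so identical data produces identical maps and identical points.
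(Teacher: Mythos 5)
Your proposal is correct and follows essentially the same route as the paper: read off $v$ from the central height via the explicit formula for $Y(1/2)$, iterate the binary decomposition so that the $2^n+1$ subdivision points are determined by $H_0$ alone, and conclude by density of these points plus continuity. The only cosmetic difference is that you justify density through the geometric decay of the enveloping-triangle areas (be aware that small area alone does not force small diameter --- one should also use that each normalized sub-arc has central height at least $1/2$, so a sub-arc of persistent diameter would force a persistent lower bound on the area of its enveloping triangle), whereas the paper invokes the equidistribution of affine length; both arguments sit at the same level of detail.
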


\begin{proof} Since we know that all elements of the set  $\{{\cal H}_v,v\geq 0\}$ (with ${\cal P}={\cal H}_0$) satisfy the hypothesis of the proposition, and ${\cal P}$ is the only element with central height $1/2$, and also, by the explicit formula of $Y$ (recall \eref{eq:Y}), we know the central height in the hyperbola ${\cal H}_v$ in terms of $r_v$: there is a single parameter $r_v$ for which \eref{eq:HO} holds.
	
It remains then to explain why there is a single element in $\CCST$ satisfying the hypothesis of the proposition for a fixed $H_0$. The reason is that $H_0$ characterizes all the $(H_i,i\geq 0)$. In turn, all convex sets ${\cal C}_0$ (satisfying the hypothesis of the proposition) which have the same first  $(H_i, 0\leq i \leq k)$ (for some $k$), coincides at $2^k$ points. Moreover, these $2^k$ points are well scattered on each of these curves, since the affine length between consecutive points are equal. The conclusion follows by taking a limit over $k$, using that all the curves are smooth (and then continuous).
  \end{proof}

\subsubsection*{(6). Characterization of the hyperbola that maximizes $\Phi_\la$.}

It remains to prove that among all hyperbolas, a unique one maximizes $\Phi_\lambda$, being ${\cal H}_\lambda$.  To do so, for $\nu\geq0$, write
\be  \Phi_\lambda({\cal H}_\nu)= \L^{3/\lambda}({\cal H}_\nu)\A({\cal H}_\nu)= \l( \frac{2r_\nu \cosh(r_\nu)^{1/3}}{\sinh(r_\nu)}\r)^{3/\lambda} r_\nu{\frac {\cosh \left(r_\nu \right)  }{ 
\sinh \left( r_\nu\right)  ^{3}} \left( {\frac{\sinh \left( 2\,r_\nu \right)}{2r_\nu}}-1\right) }.
\ee
The derivative $\frac{\partial}{\partial r_\nu} f_\lambda[r_\nu]$ cancels only when 
\[\frac{\sinh(2r_\nu )}{2r_\nu }-1=\lambda.\]
Observe that the limit when $\nu\to+\infty$ of  $\Phi_\lambda({\cal H}_\nu)$ is zero (because the affine length goes to zero), and $\lim_{\nu\to 0} \Phi_\lambda({\cal H}_\nu)=2^{3/\lambda} (2/3)$.
Now, the Taylor expansion of $\Phi_\lambda$ for $r$ near zero gives
\[\Phi_\lambda[r]=2^{3/\lambda}\frac{2}{3}\l(1+r^2/5 + o(r^2)\r)\]
so that $\Phi_\lambda$ is increasing for $r$ near zero, and now, we can deduce that $\Phi_\lambda$ takes its maximum for ${\cal H}_\lambda$.
 
This ends the proof of \Cref{theo:optri}.

\subsection{A comment on the binary decomposition of hyperbolas }

A consequence of the proof of \Cref{theo:optri} (but it is also clear without the proof!)  is that each hyperbola  ${\cal C}_0={\cal H}_\lambda$ has a decomposition pair $(H, {\cal C}_1)$ where $H_0$ is the central height of ${\cal H}_\lambda$, and where ${\cal C}_1$ is another hyperbola ${\cal H}_{\nu}$. It is then interesting, as much as it was important in the construction of the present paper, to express clearly the link between $\lambda$ and $\nu$. As usual it is easier to rather formulate the link between $r_\lambda$ and $r_\nu$.

We have   $H_0= \frac{\cosh(\rl)}{2\cosh(\rl/2)^2}$. Therefore since 	$H_1=(1 - \sqrt{-2H_0 + 2})/(2H_0 - 1)$,
\begin{align*}
	H_1&=\frac{\cosh(\rl/2)^2-\cosh(\rl/2)\sqrt{2\cosh(\rl/2)^2-\cosh(\rl)}}{\cosh(\rl)-\cosh(\rl/2)^2} =  \frac{\cosh(\rl/2)}{2\cosh(\rl/4)^2}
\end{align*}
Hence
\begin{lem}\label{lem:qhtdfd}
   If ${\cal C}_0$ is ${\cal H}_\lambda$, with ``$r$-parameter'' being $r_\lambda$, then ${\cal C}_1$ has $r$ parameter $\rl/2$, and if we keep decomposing,  ${\cal C}_i$ has $r$-parameter $\rl/2^i$. As a consequence, ${\cal C}_n \to {\cal P}$ (in the sense that $d_H({\cal C}_n,{\cal P})\to 0$).
 \end{lem}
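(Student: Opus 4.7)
The plan is to reduce the lemma to the characterization of cupola-symmetric curves by their central height, established in Section (5) of the proof of \Cref{theo:optri}, and then argue by induction.

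First, I would verify that one step of the binary decomposition sends ${\cal H}_\lambda$ to ${\cal H}_\nu$ with $r_\nu=r_\lambda/2$. Starting from \eqref{eq:HO}, the central height of ${\cal H}_\lambda$ is $H_0=\cosh(r_\lambda)/(2\cosh(r_\lambda/2)^2)$. Plugging this into the recursion $H_1=(1-\sqrt{2-2H_0})/(2H_0-1)$ from \Cref{lem:qdef}, the identity $2\cosh(r_\lambda/2)^2-\cosh(r_\lambda)=1$ (a rewriting of the duplication formula) lets one simplify $\sqrt{2-2H_0}$ to $\cosh(r_\lambda/2)^{-1}$, and the same duplication formula applied one more time (to $\cosh(r_\lambda/2)$ via $r_\lambda/4$) produces exactly $H_1=\cosh(r_\lambda/2)/(2\cosh(r_\lambda/4)^2)$. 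By \eqref{eq:HO}, this is the central height of the hyperbola with $r$-parameter $r_\lambda/2$. This algebraic computation is the only nontrivial step and is the main (minor) obstacle; the sketch just before the lemma already records it.

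Next, by \Cref{lem:qdef}, ${\cal C}_1$ is smooth, cupola symmetric, has $ABC$ as its enveloping triangle, and has central height $\geq 1/2$. The characterization proved in Section (5) of \Cref{sec:qddqs} states that such a curve is uniquely determined by its central height, and that for any central height in $(1/2,1)$ it coincides with the unique hyperbola ${\cal H}_v$ with that central height. Since ${\cal C}_1$ and ${\cal H}_{v}$ (with $r_v=r_\lambda/2$) both satisfy the hypotheses and share the same central height, they are equal. An immediate induction on $i$ then gives ${\cal C}_i={\cal H}_{v_i}$ with $r_{v_i}=r_\lambda/2^i$.

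Finally, for the Hausdorff convergence ${\cal C}_n\to{\cal P}$, it suffices to observe that $r_{v_n}\to 0$ and that the explicit parametrisations \eqref{eq:X}--\eqref{eq:Y} depend continuously on $r$ at $r=0$. Indeed, the Taylor expansions $\sinh(x)=x+O(x^3)$ and $\cosh(x)=1+O(x^2)$ imply
\[
X^{(r)}(t)\longrightarrow 2-2t,\qquad Y^{(r)}(t)\longrightarrow 2t(1-t),
\]
uniformly on $t\in[0,1]$ as $r\to 0$, identifying the limit with the parabola ${\cal P}$ (as noted in the remark following \Cref{eq:hyper}). Uniform convergence of continuous parametrisations on a compact set entails Hausdorff convergence of the images, so $d_H({\cal C}_n,{\cal P})=d_H({\cal H}_{v_n},{\cal P})\to 0$, concluding the proof.
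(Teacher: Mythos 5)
Your proposal is correct and follows essentially the same route as the paper: the paper's (terse) proof defers the first claim to the central-height computation immediately preceding the lemma, which is exactly your verification that $H_1=\cosh(r_\lambda/2)/(2\cosh(r_\lambda/4)^2)$ via the duplication formula, combined with the uniqueness-by-central-height proposition of Section (5); and it proves the Hausdorff convergence, as you do, from the explicit parametrization $(X,Y)$ as the $r$-parameter tends to $0$. Nothing further is needed.
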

 \begin{proof}The first statement follows the discussion preceding the Lemma, and the second one can be proved using the parametrization $(X,Y)$ of the hyperbolas (whose $\lambda$ or $r$-parameter goes to zero). 
 \end{proof}
This result allows us to prove the product form of $\L({\cal H}_\lambda)$ given in \Cref{eq:Lr}. 
Eq. \eref{eq:qsdqd} yields $\L({\cal C}_i)=2 ((1-H_{i})H_{i}/2)^{1/3} \L({\cal C}_{i+1})$. 
 Denoting by $\ell(R)$ the affine length of the hyperbola with $r$-parameter $R$, and $h(R)= \frac{\cosh(R)}{2\cosh(R/2)^2}$ its central height,
we then have $\ell(R)\to 2$ when $R\to 0$, and 
\[\ell(R)=(2 (h(R)(1-h(R))/2)^{1/3}) \ell(R/2)=\prod_{k\geq 0} 2\big(h(R)(1-h(R))/2\big)^{1/3}. \]  

Since $(\cosh(x)+1)^2=4\cosh(x/2)^4$, we get 
\begin{align*}
    \ell(R)&= 2\prod_{k\geq 0} 2 \l(  \frac{\cosh(R/2^k)}{2(\cosh(R/2^k)+1)^2}\r)^{1/3}
    =\frac{2\cosh(R)^{1/3}}{\prod_{k\geq 1}\cosh(R/2^k) }.
\end{align*}
It remains to prove that $\prod_{k\geq 1}\cosh(R/2^k)={\sinh(R)}/{R}$
but this follows from the simple following observation:
\[\sinh(R)=2 \sinh(R/2) \cosh(R/2)=4\cosh(R/2) \cosh(R/4) \sinh(R/4)=2^K\sinh\l(R/2^{K}\r)\prod_{j=1}^K \cosh\l(R/2^j\r).\]
and $\sinh(x)/x\to 1$ as $x\to 0$.



\subsection{Proof of \Cref{pro:sdgr}}
\label{sec:ProofSignCons}

$\bullet$ Proof of $(i)$.   Take an hyperbola ${\cal H}_\alpha$, and $C_1$ and $C_2$ two connected subsets of this curve.
Denote by $\ell_i$ (resp. $r_i)$ the leftmost (resp. rightmost)  point of $C_i$. For a point $z=(x,y)$ in $\R^2$ denote by $\pi_1(z)=x$ the $x$-axis projection.
Up to renaming $C_1$ and $C_2$, we may assume that
$\pi_1(\ell_1)<\pi_1(\ell_2)$ and since $C_1$ and $C_2$ have enveloping triangle $T_1=\ell_1r_1c_1$ and $T_2=\ell_2r_2c_2$ (for some points $c_1$ and $c_2$) with same area, we have also
$\pi_1(r_1)<\pi_1(r_2)$.

Now, take the enveloping triangle $\ell_1r_2E$ of the part of  ${\cal H}_\alpha$ between $\ell_1$ and $r_2$ (which is also the smallest enveloping triangle containing both $C_1$ and $C_2$). Observe the action of the $\ell_1r_2E$ symmetry $\phi_{\ell_1r_2E\to r_2\ell_1E}$  over the pair $(C_1,T_1)$. Clearly, it sends $C_1$ onto a part $C'_1$ of ${\cal H}_\alpha$ with extremities $(\ell',r_2)$ that is, with same right extremity as $C_2$, and it sends the enveloping triangle $T_1$ of $C_1$ onto $T'_1$ the enveloping triangle of $C'_1$. This implies that $\A(T_1)=\A(T'_1)=\A(T_2)$ (this second equality being the hypothesis). Therefore both $C_2$ and $C_1'$ have same left right extremities, and enveloping triangle with same area: we deduce $C_1'=C_2$. This concludes the proof, since the determinant of (the matrix of) $\phi_{\ell_1r_1c_1\to r_1\ell_1c_1}$ is $-1$, so that $\L(C'_1)=\L(C_1)$, $\A(\CH(C_1))=\A(\CH(C'_1))$.

The same argument (relying on the properties of $\phi_{\ell_1r_1E\to r_1\ell_1E}$ to transport at the same times a sub-curves and its enveloping triangle) allows to prove $(i)$, $(ii)$ and $(iii)$.

$\bullet$  Proof of $(iv)$. By $(i)$, $(ii)$ and $(iii)$ if a pair of values of the type of those $(a)$, $(b)$ or $(c)$, the knowledge of the exact position of $(C_1,T_1)$ is irrelevant: we may assume that $\ell_1=A$ (the vertex of $ABC$).

Denote by $P_{s,v}$ the part of ${\cal H}_v$ in between  $A=(0,0)$ and  $(2- X_s,Y_s)$ (we take $2-X_s$ to ``have'' a parametrization starting at zero).
Denote by $T_{s,v}$ the enveloping triangle, $L(P_s)$ the affine length of the part, and 
$\A(P_{s,v})=\A({\cal H}_v \cap T_s)$.
We have, for the function $f:h\mapsto {\cosh(h)}/{\sinh(h)^3}$ defined in \eref{eq:fh}
\be
\A(T_{s,v})&=& f(r_v)/f(r_v s)\\
\A(P_{s,v})&=&\int_0^s \bar{X}'_uY_udu - \bar{X}_sY_s/2 = \frac{f(r_v)}2(\sinh(2r_vs)-2rs)\\
\L(P_{s,v}) &=& s \L_v= 2s r_v f(r_v)^{1/3}
 \ee
To compute these latter formulas, write $V= \bma \bar{X}'_s\\Y'_s\ema$ the tangent vector at $z:=\bma \bar{X}_s\\Y_s\ema$. Denote by $M$ the point on the line $y=x$ and on the line $\bma \bar X_s \\Y_s\ema - \alpha V$ (indexed $\alpha$). We then compute $\alpha$, then $M$, then the area of the triangle $zMA$ using the standard determinant formula.
 
Now, it suffices to observe that any pair taken in $(\A(T_{s,v}),\A(P_{s,v}),\L(P_{s,v}))$ allows to recover uniquely $(s,v)$, and this is a simple exercise.

\subsection{A short discussion about hyperbolas, and proof of \Cref{eq:hyper}}
\label{sec:SDH}
 
The proof of \Cref{eq:hyper} is mainly and exercise. For example, starting from $(X,Y)$ given in \Cref{theo:lim}, we can compute the area using the classical formula $\A({\cal H}_\lambda)=\int_0^1 X'(t)Y(t)dt$, and the affine perimeter using the standard formula for the curvature:
\[\kappa(t)=\frac{Y''(t)X'(t)-X''(t)Y'(t)}{(X'(t)^2+Y'(t)^2)^{3/2}}.
\]
Letting $\sigma(t)$ the length of the curve between time $0$ and $t$, we get
\[\sigma(t)=\int_0^t \sqrt{ X'(u)^2+Y'(u)^2} du\imp d\sigma(t)=\sqrt{ X'(t)^2+Y'(t)^2}dt,\]
so that 
\beq
\L_\lambda=\int_0^1 \kappa(t)^{1/3} d\sigma(t)= \int \l({Y''(t)X'(t)-X''(t)Y'(t)}\r)^{1/3} dt.\eq
Since ${Y''(t)X'(t)-X''(t)Y'(t)}$ is constant and equal to $\frac{32 \rl^3 \cosh(\rl)}{(\sinh(3\rl) - 3\sinh(\rl))}$, and since $\sinh(3\rl)-3\sinh(\rl)=4\sinh(\rl)^3$, we get the announced formula.

On the way, observe that we are working with the very special case where $Y''(t)X'(t)-X''(t)Y'(t)=\L_\la^3$ is constant under its ``natural parametrization'' coming from the limit of $(X^{(n)},Y^{(n)})$.
For a parametric function $(x,y)$, the constancy of $y''(t)x'(t)-x''(t)y'(t)$  implies that $x'y'''-x'''y'=0$ so that $(x''',y''')=-\mu(x',y')$. With the sign of $\mu$ one can then identify the curve parameterized by $(x,y)$: either $\mu=0$ and this is a parabola, or $\mu>0$, and this is an ellipse, or $\mu<0$ and this is an hyperbole, which is the case here since we find $\mu=-4\rl^2$. 

If one applies an invertible affine map to an hyperbole, seen as parametric function,  
$ T:\bma {x}\\y  \ema \to M \bma {x}\\ y \ema +V$
then for 
$\bma \bar{x}(t) \\ \bar{y}(t)\ema :=V+ M \bma  {x}(t) \\  {y}(t)\ema$,
we get $  { \bar y''(t) \bar x'(t)-  \bar x''(t) \bar y'(t)} =\det(M) \big({y''(t)x'(t)-x''(t)y'(t)}\big)$, so that under the inherited parametrization we retrieve the special property $ { \bar y''(t) \bar x'(t)-  \bar x''(t) \bar y'(t)}={y''(t)x'(t)-x''(t)y'(t)}$ constant.  

We send any reader interested by this kind of considerations to Bu Chin \cite{MR724783},  Sapiro and Tannenbaum \cite{ST}.

\begin{figure}
    \centering
    \includegraphics[height=5cm,width=8cm]{./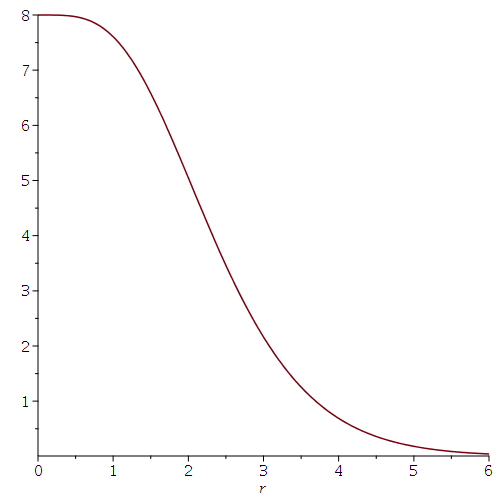}
    \captionn{The function $\L(R)^3$}
    \label{fig:my_label3}
\end{figure}

In order to get an intrinsic definition of a parametric curve ${\cal C}$ (drawn in the standard coordinate system)  one can use the affine length parametrization, which is defined intrinsically up to the choice of an orientation and an origin: this choice allows to define $(x_t,y_t)$ such that the affine length of the part $P_t$  between $(x_0,y_0)$ and $(x_t,y_t)$ is $t$. Giving the area of the enveloping triangle of the set $\{(x_s,y_s),s\in[0,t]\}$, allows to find the signature $v$ (and we must have $L(P_t)=t = \L_v (t/\L_v)$ which allows to reconstitute a parametrization $(X_s,Y_s)$ such that  $Y''_tX'_t-Y'_tX''_t=\L_v$ if needed, using the considerations of the previous section).  
 \color{black}

\section{Optimization of $\PLK$ in a general compact convex set $\K$}

\subsection{Proof of \Cref{pro:hss}$(d)$} \label{sec:qsdyjsd}
 \label{sec:qsdyjsd2}


 In the bi-pointed case, the notion of cupola symmetry is the main geometrical argument in the proof that ${\cal H}_\lambda$ maximizes $\Phi_\lambda$. This argument may actually be reformulated as follows: for a triangle $abc$ enveloping a portion $p$ of the curve ${\cal C}$, if $\phi_{abc\to bca}(p)\neq p$, then $p$ can be replaced by another curve
 $p':=(p+\phi_{abc\to bca}(p))/2$, and then $\L(p')\geq \L(p)$ as well as $\A(p')>\A(p)$.

 This argument is valid in the bi-pointed case, that is, in $\CCST$, and remains true in ${\sf CCS}_\K$ for any $\K$, with a caution: we need $p$ and $\phi_{abc\to bca}(p)$ to be both included in $\K$, and for this, a sufficient condition is that the enveloping triangle $abc$ is included in $\K$; indeed, there exists a curve $p$ with enveloping triangle $abc$ intersecting $\partial K$, and for which  $\phi_{abc\to bca}(p)$ intersects the complement of $\K$ in the plane:  the local symmetrization does not conserve the property of being a subset of $\K$ in general.

 Hence, if $C$ is in $\argmax \PLK$, this argument can be applied to any portion $c$ of a connected component of $(\partial C)\setminus \K$ to prove that this connected component is an hyperbola: indeed the restriction of $c$ inside any enveloping triangle small enough to be included in $\K$ must be an hyperbola, otherwise it can be cupola-symmetrized while increasing $\PLK$.

We present now the final argument for the proof of \Cref{pro:hss}$(d)$, which allows to symmetrize simultaneously inside two enveloping triangles -within the same component or not- of $(\partial C)\setminus \K$. These two triangles are like a cat's two ears:
\begin{theo}[The cat's ears theorem]\label{theo:TCET}
  Assume that ${\cal C}$ is an element of $\argmax \PLK$, and that $P_1$ and $P_2$ are two (possibly intersecting) connected subsets of ${\cal C}\setminus \K$ such that
   the enveloping triangles $T_1$ and $T_2$ of $P_1$ and $P_2$ (respectively) exist, have same area, and are both totally included in the interior of $\K$. \par
  In this case, we have $(\A(\CH(P_1)),\L(P_1)) = (\A(\CH(P_2)),L(P_2))$.
\end{theo}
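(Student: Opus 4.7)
My plan is to build an auxiliary competitor ${\cal C}'\in{\sf CCS}_\K$ by ``swapping'' $P_1$ and $P_2$ through the unique affine map that identifies their enveloping triangles, and then exploit the uniqueness statement \Cref{pro:hss}$(b)$ (two distinct maximizers of $\PLK$ have distinct areas) to force this swap to be the identity. The two desired equalities then fall out automatically from the fact that the identifying affine map has determinant of absolute value one.

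First I would invoke the cupola-symmetry argument that precedes \Cref{theo:TCET}: because $T_i\subset\Int(\K)$, local symmetrizations stay inside $\K$, so each $P_i$ must in fact be an arc of some hyperbola ${\cal H}_{v_i}$, meeting $\partial \K$ at two endpoints $a_i,b_i$, with apex $c_i$; the tangent to $P_i$ at $a_i$ (resp. $b_i$) is the side $a_ic_i$ (resp. $b_ic_i$) of $T_i$. If $P_1\cap P_2\neq \emptyset$, then by analytic continuation $P_1$ and $P_2$ lie on a common conic, and the conclusion is immediate from \Cref{pro:sdgr}$(i)$; henceforth I may assume $P_1$ and $P_2$ are disjoint. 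Let $\psi:=\psi_{a_2b_2c_2\to a_1b_1c_1}$ be the affine map defined in \eref{eq:psiabc}. Since $\A(T_1)=\A(T_2)$ one has $|\det\psi|=1$. Set
\[P_1':=\psi(P_2),\qquad P_2':=\psi^{-1}(P_1),\]
and define ${\cal C}'$ by replacing $P_i$ by $P_i'$ in ${\cal C}$ for $i=1,2$. Because $\psi$ sends each side of $T_2$ incident to $a_2$ or $b_2$ onto the corresponding side of $T_1$ incident to $a_1$ or $b_1$, the one-sided tangent directions of ${\cal C}'$ at the four gluing points coincide with those of ${\cal C}$; together with the inclusions $P_i'\subset T_i\subset \Int(\K)$, this makes ${\cal C}'$ a genuine element of ${\sf CCS}_\K$.

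Since $|\det\psi|=1$, both affine length and area-of-convex-hull are invariant under $\psi$, giving
\[\L(P_1')=\L(P_2),\ \A(\CH(P_1'))=\A(\CH(P_2)),\ \L(P_2')=\L(P_1),\ \A(\CH(P_2'))=\A(\CH(P_1)).\]
Summing the contributions over $T_1$ and $T_2$ and using that ${\cal C}$ and ${\cal C}'$ agree elsewhere, I obtain $\L({\cal C}')=\L({\cal C})$ and $\A({\cal C}')=\A({\cal C})$, hence $\PLK({\cal C}')=\PLK({\cal C})$, so ${\cal C}'\in\argmax \PLK$ as well. By \Cref{pro:hss}$(b)$, this equality of areas forces ${\cal C}'={\cal C}$, and in particular $P_1=P_1'=\psi(P_2)$. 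Reapplying the scaling identities to $P_1=\psi(P_2)$ with $|\det\psi|=1$ yields $\L(P_1)=\L(P_2)$ and $\A(\CH(P_1))=\A(\CH(P_2))$, which is exactly the cat's ears conclusion. The main obstacle in this scheme is the admissibility check for ${\cal C}'$, namely the simultaneous tangent matching at the contact points $a_i,b_i$ together with the set-theoretic inclusions $P_i'\subset\K$; it is precisely the hypothesis $T_i\subset \Int(\K)$ that is designed to make this admissibility automatic.
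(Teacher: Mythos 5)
Your proof is correct and follows essentially the same route as the paper: both arguments hinge on the same swap surgery, replacing $P_1$ by the image of $P_2$ under the determinant-one affine map identifying the two enveloping triangles (and vice versa), and both use $T_i\subset\Int(\K)$ to guarantee the competitor stays in ${\sf CCS}_\K$ with matching tangents at the gluing points. The one place you diverge is the concluding step: the paper argues that if ${\cal C}'\neq{\cal C}$ then the component containing $P_1$ is no longer a hyperbola, contradicting the optimality already established for maximizers, whereas you invoke \Cref{pro:hss}$(b)$ to conclude ${\cal C}'={\cal C}$ directly from $\A({\cal C}')=\A({\cal C})$ and $\PLK({\cal C}')=\PLK({\cal C})$. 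Your variant is arguably a bit cleaner, since it sidesteps having to justify why gluing a different arc with matching endpoints and endpoint tangents cannot again produce a single hyperbola; note only the small misstatement that the endpoints $a_i,b_i$ of $P_i$ lie on $\partial\K$ (they need not, since $P_i$ may be a proper sub-arc of a component of $(\partial{\cal C})\setminus\K$), which is immaterial to the argument.
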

\begin{figure}[h!]
\centerline{\includegraphics{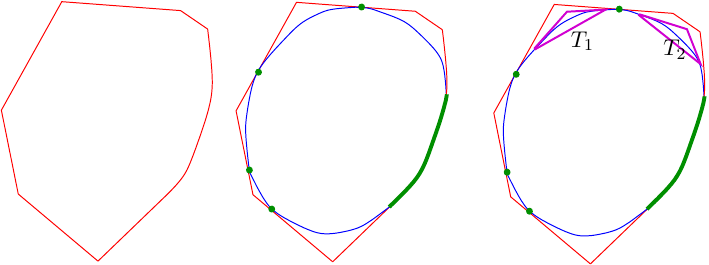}}
\caption{\label{fig:2ears} On the first picture a compact convex set of $\R^2$, on the second one in blue and green the boundary of a compact convex subset ${\cal C}$ of $\K$. The green part is $\partial{\cal C}\cap \partial\K$, while the blue part is $\partial {\cal C}\setminus \partial K$. On the third figure, two enveloping triangles of the curve that do not contain contact points. Both triangles have same area. These triangles must contain two portions $p_1$ and $p_2$ of ${\cal C}$ that have same affine perimeter, and each of these triangles must contain the same area  of ${\cal C}$. }
\end{figure}
\begin{proof} Choose ${\cal C}$ in $\argmax \PLK$. By the discussion above, we know that the curve corresponding to the connected components of $(\partial C)\setminus \K$ are hyperbolas.
  Decompose $\A ({\cal C})$ as $\A (\CH(P_1))+A (\CH(P_2))+AO$ where $AO$ is the area outside these two regions, and $\L({\cal C})=\L O+ \L(P_1)+\L(P_2)$ where again, $\L O$ is the affine length of ${\cal C}$ carried by the complementary of $P_1$ and $P_2$. One then sees that $(\A({\cal C}),\L({\cal C}))=(\A({\cal C}'),\L({\cal C}'))$ if one obtains ${\cal C}'$ by again, some simple curve surgery:\\
  -- removing the curve $P_1$ and replace it by $\phi_{T_2\to T_1}(P_2)$,\\
  --  remove $P_2$ and replace it by $\phi_{T_1\to T_2}(P_1)$.
  \par
  What has to be noticed here, is that since both affine map have determinant 1, they preserve affine perimeter and area, in the sense that $\A(\CH(\phi_{T_1\to T_2}(P_1)))=\A(P_1)$ (the same goes for $P_2$).
  
  It is easy to check that ${\cal C}'$ is a compact convex set. But now, we see that if ${\cal C}'\neq {\cal C}$, that is, if the curve surgery modified the global curve, then the connected component of $\partial C \backslash \partial K$ that contains $P_1$ is no more an hyperbola. This implies that ${\cal C}'$ is not optimal, and since $\PLK({\cal C}')= \PLK({\cal C})$ this prevents ${\cal C}$ from being optimal too. A contradiction. 
  \end{proof}
  The cat's ears theorem is somehow an infinitesimal property, in the sense that if $C$ is not signature-homogeneous (and is different from $\K$), for all $\epsilon>0$, it is possible to find two enveloping triangles $T_1$ and $T_2$, with same areas $\leq \epsilon$, included in $\Int \K$,  of some portions $C_1$ and $C_2$ of connected components of $(\partial C)\setminus \K$, that are not image of each other by an affine map with determinant $1$; as a consequence, $C$ is not in $\argmax \PLK$.

{\bf This ends the proof of \Cref{pro:hss}$(d)$}: the connected components of $(\partial C)\setminus \K$, when they exist, are hyperbolas. The proof of the cat's ears theorem implies that if one takes two enveloping triangles $t_1$ and $t_2$ of any portion of these connected components small enough to be included in $K$, and with same area, then $(\partial C)\cap t_1$ and $(\partial C)\cap t_2$ are image of each other by an affine map with determinant $1$. Therefore, these hyperbolas have same signature.

    \subsection{Optimization of $\PLK$ in the regular $\kappa$-gone: proof of  \Cref{pro:Reg}}
{        \begin{lem}\label{lem:max}
        Let $u>0$ and $x\geq 0$ be fixed. The map \[\B:v\mapsto  \L({\cal H}_v)^u (x+\A({\cal H}_v))\] reaches its maximum for a single value $v=v^\star$. Moreover $\frac{\partial}{\partial v}\B(v)$ cancels only at $v=v^\star$.
\end{lem}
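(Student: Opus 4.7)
The plan is to reparametrize by $r=r_v:=\Psi^{-1}(v)$, a smooth bijection of $[0,\infty)$ with $\Psi'(r)>0$ for $r>0$. In this variable $\B(v)=\L(r)^u(x+\A(r))$, and, since $dr/dv>0$ on $(0,\infty)$, the condition $\partial\B/\partial v=0$ on the interior is equivalent to
\[u\,\L'(r)(x+\A(r))+\L(r)\A'(r)=0.\]

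The key simplification would rest on two identities valid for all $r>0$: first, $\Psi(r)\L(r)^3=8r^2\A(r)$ (Remark~\ref{rem:qd}); and second, $3\A(r)\L'(r)+\Psi(r)\L(r)\A'(r)=0$. I would extract the second one from step~(6) of the proof of \Cref{theo:optri}, which shows that for every $r>0$, $r$ is the unique critical point of $\nu\mapsto\Phi_{\Psi(r)}({\cal H}_\nu)$, hence $\Psi(r)\A'/\A+3\L'/\L=0$. Using this identity to eliminate $\L\A'$ in the critical equation and then the first identity to eliminate $\Psi$ would recast the equation as
\[x=h(r):=\frac{3\L(r)^3}{8u\,r^2}-\A(r)=\frac{3r\cosh r}{u\sinh^3 r}-\A(r).\]

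The core step is then to show that $h$ is a strictly decreasing bijection from $(0,\infty)$ onto $(-1,+\infty)$. Writing $h'(r)=(3/u)(\L^3/r^2)'-\A'(r)$, direct differentiation of the closed-form expressions shows that both the negativity of $(\L^3/r^2)'$ and the positivity of $\A'$ reduce (with $t=2r$) to the single hyperbolic inequality
\[g(t):=t(2+\cosh t)-3\sinh t>0,\qquad t>0,\]
which I would prove by iterated differentiation, noting $g(0)=g'(0)=g''(0)=0$ and $g'''(t)=t\sinh t>0$. The main obstacle is really this inequality together with the clean packaging of the two identities that turns the implicit critical equation into the explicit equation $x=h(r)$; everything else is routine.

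Given strict monotonicity of $h$ combined with $h(0^+)=+\infty$ (from $\L\to 2$, $\A\to 2/3$) and $h(+\infty)=-1$ (from $\L\to 0$, $\A\to 1$), for every $x\ge 0$ the equation $h(r)=x$ has a unique solution $r^\star\in(0,\infty)$, yielding a unique interior critical point $v^\star:=\Psi(r^\star)$. To conclude that $v^\star$ is the maximum and that $\partial\B/\partial v$ vanishes nowhere else, I would perform a local Taylor expansion at $v=0$, using $v=\Psi(r)\sim 2r^2/3$, $\L(r)=2+O(r^4)$ and $\A(r)=2/3+2r^2/15+O(r^4)$, to obtain $\B(v)=\B(0)+(2^u/5)v+O(v^2)$, so that $\B'(0^+)=2^u/5>0$. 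Combined with $\B(v)\to 0$ at infinity, $\B$ is then strictly increasing on $[0,v^\star]$ and strictly decreasing on $[v^\star,\infty)$, which gives both conclusions of the lemma.
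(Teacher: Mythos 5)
Your proposal is correct, and its overall architecture is the paper's: pass to the variable $r=r_v$, note $\lim_{v\to\infty}\B(v)=0$ and $\lim_{v\to 0}\B(v)=2^u(x+2/3)$, and use the expansion $\B(v)=2^u(x+2/3)+(2^u/5)v+o(v)$ (equivalently $g(r)=2^u(x+2/3)+2^{1+u}r^2/15+o(r^3)$, which is exactly the paper's Taylor expansion) to see that $\B$ increases near the origin, so that the unique interior critical point is the maximum. Where you genuinely add something is at the central step: the paper merely asserts that $r\mapsto g'(r)$ vanishes at a single $r^\star$ and offers no argument, whereas you prove it. Your two identities are both correct — $\Psi(r)\L({\cal H}_v)^3=8r^2\A({\cal H}_v)$ is \Cref{rem:qd}, and $3\A\L'+\Psi\L\A'=0$ follows from the computation in step (6) of the proof of \Cref{theo:optri} (and is also a one-line direct check from the closed forms) — and since $\L'<0$ for all $r>0$ they turn the critical equation into $x=h(r)$ with $h(r)=\frac{3r\cosh r}{u\sinh^3 r}-\A(r)$. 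I verified the reduction of the monotonicity of $h$: with $t=2r$ the numerator of $\A'/\A$ is exactly $t(2+\cosh t)-3\sinh t$, while the negativity of $\bigl(r\cosh r/\sinh^3 r\bigr)'$ needs only the weaker $t(2+\cosh t)-\sinh t>0$; your iterated-differentiation proof of $g(t)=t(2+\cosh t)-3\sinh t>0$ (with $g(0)=g'(0)=g''(0)=0$ and $g'''(t)=t\sinh t>0$) covers both, and the limits $h(0^+)=+\infty$, $h(+\infty)=-1$ are right. So your writeup is not only correct but strictly more complete than the paper's proof at its only delicate point.
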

\begin{proof}
We let 
\be  g(r)= \l( \frac{2r \cosh(r)^{1/3}}{\sinh(r)}\r)^{u}\l( x+ r\frac {\cosh \left(r \right)  }{ 
\sinh \left( r\right)  ^{3}} \left( {\frac{\sinh \left( 2\,r \right)}{2r}}-1\right)\r) ,
\ee
so that $g(r_v)= B(v)$. 
We have $B'(v)=g'(r_v)r'_v$, and $r'_v$ does not cancel while $r\mapsto g'(r)$ cancels once, at some ${r}^\star$. 
Observe that $\lim_{v\to +\infty}B(v)=0$ (since $\lim_{v\to +\infty}\L({\cal H}_v)\to 0$), and $\lim_{v\to 0} B(v)=2^{u} (x+2/3)$.
Now, the Taylor expansion of $g(r)$ for $r$ near zero gives
\[g(r)=2^u(x+2/3)+{2^{1+u}r^2}/{15}+o(r^3),\]
so that $g$ is increasing for $r$ near zero, and since $v\mapsto r_v$ is increasing too, $B$ is also increasing near $0$. We can now deduce that $g$ reaches its maximum at ${r}^\star$, and further that $B$ reaches its maximum at $v^\star$ only, where $v^\star$ solves $r_{v^\star}=r^\star$. 
\end{proof}
\subsubsection*{Proof of  \Cref{pro:Reg}}
By  \Cref{pro:hss}, we know that any $C\in\argmax \PLK$ (where $\K={\sf Reg}(\kappa)$) is invariant by the symmetry $(x,y)\to (-x,y)$ and invariant by rotation of angle $2\pi/\kappa$. It suffices then to describe $C$ in a triangle $m_{i-1}m_{i}w_i$. Since between contact points, we must have hyperbola, there are two possibilities: either the hyperbola in the triangle $m_{i-1}m_{i}w_i$ intersects $[m_{i-1},w_i]$ at $m_{i-1}$, or at a point $z_i\in (m_{i-1},w_i)$. The cupola symmetrization argument in the triangle $m_{i-1}z_ip_i$ where $p_i$ is the middle point of the hyperbola,  allows one to rule out this second possibility.
The hyperbola in  $m_{i-1}m_{i}w_i$ is the image by the affine map $\phi_{ABC\to m_{i-1}m_{i}w_i}$ of an hyperbola of our family $({\cal H}_v)_v$. This affine map has determinant $b_k$ (see \eref{eq:bk}).

Now take the hyperbola ${\cal H}_v$ drawn in $ABC$, we then search for
\[\argmax\l(v\mapsto \big(a_\kappa+\kappa b_\kappa \A({\cal H}_v)\big) \l(\kappa b_\kappa^{1/3} \L({\cal H}_v\r)^{3/\lambda}\r)\]
which coincides with $\argmax\l(v\mapsto  F_\lambda(r_v)\r)$
with $F_\lambda(v)=\big(\frac{a_\kappa}{\kappa b_\kappa }+\A({\cal H}_v)\big)  \L({\cal H}_v)^{3/\lambda}$. By \Cref{lem:max}, the argmax contains a single element. 
We \underbar{guess} that $v$ is characterized by the following equation:
\beq\label{eq:dqgdqhte1} v=\Psi(r_v) = \kappa \frac{\lambda }{a_\kappa/c_{v,\kappa}+\kappa }\eq  
where \beq\label{eq:dqgdqhte2} c_{v,\kappa}=  \A({\cal H}_v)b_\kappa = r_v f(r_v)\Psi(r_v)b_\kappa  \eq
(where $f$ is given in \eref{eq:fh} and $\Psi$ in \eref{eq:Ps}) in other words, after simplifications, we claim that $v$ solves the equation given in \eref{eq:dqdu} (written in terms of $r_v$). 
Now, in order to prove the guess, by \Cref{lem:max}, it suffices to
prove that
$\frac{\partial}{\partial v} F(v)$ cancels exactly when \eref{eq:dqdu} is satisfied.
Now, taking into account that $\A({\cal H}_v)=r_v f(r_v)\Psi(r_v)$, $L(r_v)=2r_vf(r_v)^{1/3}$, and $F(v)=G(r_v)$ with
\[G(r)= \l(\frac{a_\kappa}{\kappa b_\kappa }+r f(r)\Psi(r)\r) \l( 2rf(r)^{1/3}\r)^{3/\lambda}\]
it is from here just a simple exercise to prove that
$G'(r_v)=0$ when $v$ solves \eref{eq:dqdu}. By \Cref{lem:max}, the maximum of $\Phi_\lambda^K$ is indeed reached if $r=r_\lambda$.
\begin{rem}[Where the guess comes from?]
  The idea is to find an equation which would characterize the limit shape $C$ of $\CH(U[n+\fnl])$ under $\QK_{n,\fnl}$, inside one of this small triangle $\triangle_i:=m_{i-1}m_iw_i$ (with area $b_k$). We know that the optimizing curve between contact points are hyperbola, so we need to find the parameter $v$ of ${\cal H}_v$. By symmetry, we know that each $\triangle_i$ will receive around $n/\kappa$ points of the boundary points of  $\CH(U[n+\fnl])$, and the $n\lambda$ interior points will be shared between $\CH(\{m_0,\cdots,m_{\kappa-1}\})$  (with area $a_\kappa$) and the $\kappa$ domains below the curve, that is $\Delta_i \cap C$. If one denotes by ${\alpha}$ the surface of one of this domain, we expect that around
  \beq\label{eq:eg} {\alpha} n\lambda/(a_\kappa+\kappa{\alpha})=nf\eq interior points be present in  $\Delta_i \cap C$. But our limit shape theorem says that under $\bQt_{n/\kappa, nf}$, the limit hyperbola  is ${\cal H}_{\kappa f}$, with area (in $ABC$) $\A({\cal H}_f)$, and then in $\Delta_i$, $\A({\cal H}_{\kappa f})b_\kappa$. Finally,  at the end we solve \eref{eq:eg} together with
  \[ {\alpha}=\A({\cal H}_{\kappa f})b_\kappa,\]
which is equivalent to our guess.
  \end{rem}
\color{black}

\section{Limit shape in the general case}
 
\subsection{Proof of Theorem \ref{theo:compa} }

Denote by ${\sf Area}_{n,m}$ a random variable distributed as the area $\A(\CH(U[n+m]))$ under $\QK_{n,m}$. 

\begin{lem}\label{lem:sto} For all $n\geq 1$, $m\geq 0$, 
  \beq\label{eq:qs} \E\l({\sf Area}_{n,m+1}\r) \geq \E\l({\sf Area}_{n,m}\r),\eq
  and moreover, ${\sf Area}_{n,m+1}$ is larger than ${\sf Area}_{n,m}$ for the stochastic order, which means that for all $x$, \[\P({\sf Area}_{n,m+1}\geq x)\geq \P({\sf Area}_{n,m}\geq x).\] 
  \end{lem}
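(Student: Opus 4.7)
The plan proceeds by identifying the conditional distribution of the vertex set of the convex hull under $\QK_{n,m}$, and then reducing the claim to a one-dimensional fact about size-biased random variables.

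\textbf{Step 1 (Conditional density of the vertex set).} Assume without loss of generality that $\A(\K) = 1$, and let $U_1, \ldots, U_{n+m}$ be iid uniform in $\K$. By exchangeability, conditioning on $\{\bn(n+m) = n\}$ is equivalent (for the purpose of computing the law of $\CH(U[n+m])$) to conditioning on the event $E^\star$ that $(U_1,\ldots,U_n)$ are in convex position and $U_{n+1}, \ldots, U_{n+m} \in \CH(U_1, \ldots, U_n)$. Under $\P(\cdot | E^\star)$, the conditional density of $(U_1, \ldots, U_n)$ at a convex-position tuple $v = (v_1, \ldots, v_n)$ is proportional to $A(v)^m$, where $A(v) := \A(\CH(v))$, because conditional on $(U_1, \ldots, U_n) = v$ the probability that the remaining $m$ points all fall inside $\CH(v)$ equals $A(v)^m$. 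Moreover, $\CH(U[n+m]) = \CH(v)$ almost surely under this conditioning, so ${\sf Area}_{n,m}$ has the law of $A(U_1, \ldots, U_n)$ under $\P(\cdot | E^\star)$.

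\textbf{Step 2 (Reduction to one-dimensional size-biasing).} Let $\mu$ denote the law of $A(U_1, \ldots, U_n)$ when $(U_1, \ldots, U_n)$ are iid uniform in $\K$ restricted to the convex-position event (normalised). Step 1 gives that ${\sf Area}_{n,m}$ has distribution $y^m \mu(dy)/\int y^m \mu(dy)$ on $(0, \infty)$. The lemma is therefore reduced to the following fact: if $Y$ is a positive random variable with law $\mu$ satisfying $\E[Y^m] < \infty$ for all $m$, and $Y_m$ is a random variable with law $y^m \mu(dy)/\E[Y^m]$, then $Y_{m+1}$ stochastically dominates $Y_m$.

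\textbf{Step 3 (One-dimensional size-biasing).} Stochastic dominance $Y_{m+1} \succeq_{\text{st}} Y_m$ is equivalent to
\[
\E[Y^{m+1}\1_{Y \geq x}] \cdot \E[Y^m\1_{Y < x}] \;\geq\; \E[Y^{m+1}\1_{Y < x}] \cdot \E[Y^m\1_{Y \geq x}]
\]
for every $x > 0$. This follows at once from the two trivial bounds $\E[Y^{m+1}\1_{Y \geq x}] \geq x\,\E[Y^m\1_{Y \geq x}]$ and $\E[Y^{m+1}\1_{Y < x}] \leq x\,\E[Y^m\1_{Y < x}]$. The expectation inequality \eqref{eq:qs} then follows from stochastic dominance via $\E[X] = \int_0^\infty \P(X \geq x)\,dx$ for nonnegative $X$.

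I do not anticipate any serious obstacle in this proof. The only delicate point is Step 1, where care must be taken to describe correctly the conditional density of the vertex tuple: the factor $A(v)^m$ arises precisely because $E^\star$ forces the $m$ remaining points to fall inside $\CH(v)$. Once this identification is made, the lemma reduces to a standard monotonicity of size-biasing that requires nothing more than comparing $Y^{m+1}$ to $x Y^m$ on $\{Y \geq x\}$ and $\{Y < x\}$.
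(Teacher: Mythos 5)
Your proof is correct and follows essentially the same route as the paper: both identify the law of ${\sf Area}_{n,m}$ as the $m$-fold size-biasing $a^m\mu_n(\d a)/\int b^m \mu_n(\d b)$ of the convex-position area measure, so that consecutive laws differ by a monotone likelihood ratio. The only cosmetic difference is that the paper cites the standard fact that a monotone likelihood ratio implies stochastic ordering (and gets the mean inequality separately via $\E[Y^2]>\E[Y]^2$), whereas you prove the dominance directly with the elementary bound $Y^{m+1}\gtrless xY^m$ on $\{Y\geq x\}$ and $\{Y<x\}$ — a self-contained and perfectly valid shortcut.
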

\begin{proof}
Denote by ${\sf C}^{\K}(n)$ the subset of $\K^n$ formed by the $z[n]$ that are in a convex position, and define ${\sf C}_{n,m}^\K:=\Bigl\{(z[n],w[m])~:~ z[n]\in {\sf C}^{\K}(n), w_1,\cdots,w_m \in \CH(z[n])\Bigl\}$.
	We have
	\ben
	\P( \Area_{n,m} \in da ) &=& \gamma_{n,m} \int_{z[n]\in{\sf C}^\K(n)} a^m\,\1_{\A(\CH(z[n]))\in \d a}\d z_1\cdots \d {z_n}\\
	&=&  \frac{  a^m \mu_n(\d a)}{\int_{0}^1 b^m \mu_n(\d b) },
	\een
	where $\gamma_{n,m}$ is the only constant such that $\P( \Area_{n,m} \in da )$ is a probability distribution 
	and \[\mu_n(\d a)= \int_{z[n]\in{\sf C}^\K(n)}\,\1_{\A(\CH(z[n]))\in \d a}\d z_1\cdots \d {z_n}.\]
	Hence, there exists a sequence of constants $(\nu_{n,m})$, such that
	\beq\label{eq:geht} \P( \Area_{n,m+1} \in da ) = {\nu}_{n,m} \;a\;\P( \Area_{n,m} \in da ). \eq
	By integrating the function $a\mapsto 1$ on either side of  \eref{eq:geht} we get  $1= {\nu}_{n,m} \E(\Area_{n,m} )$, and next, by integrating $a\mapsto a$, we get
	\be
	\E ( \Area_{n,m+1})&=&{\nu}_{n,m}\;\E(\Area_{n,m}^2)
	>{\nu}_{n,m}\;\E (\Area_{n,m} )^2
	\geq \E (\Area_{n,m} ),
	\ee
	which  ends the proof. The inequality for the stochastic order is implied by the fact that the quotient of densities  $a\mapsto  \P( \Area_{n,m+1} \in da )/\P( \Area_{n,m} \in da )$ is a non-decreasing function of $a$ (see e.g. \cite{MR4568490} for a proof of this fact and more). Of course this inequality for the stochastic order implies also \eref{eq:qs}.
\end{proof}


\paragraph{Proof of \Cref{theo:compa}}

For a sequence of (deterministic) compact convex subsets $({\cal C}_n)$ of $\K$, we have $d_H({\cal C}_n,\K)\to 0
\iff  \A({\cal C}_n)\to \A(\K)$. It is also simple to see that if the $({\cal C}_n)$ are now random, then
\[ d_H({\cal C}_n,\K)\proba 0 \iff \E(\A({\cal C}_n)) \to \A(\K).\]
On the one hand, we have by hypothesis $\CH(U[n+\fnl])\proba \K$ under $\QK_{n,\fnl}$, which is equivalent to $\E(\Area_{n,\fnl})\to A(\K)$. On the other hand, since $\E(\Area_{n,\fnl})\leq \E(\Area_{n,\floor{n\lambda'}})\leq \A(K)$ (by Lemma  \ref{lem:sto}), it means that $\E(\Area_{n,\floor{n\lambda'}})\to \A(\K)$ from what we deduce that $\CH(U[n+\floor{n\lambda'}])\proba \K$ under $\QK_{n,\floor{n\lambda'}}$, and ends the proof.

\begin{cor} If $\K$ is a disk  for all $\lambda\geq 0$, under $\QK_{n,\fnl}$, $\CH(U[n+\fnl]) \proba \K$.
  \end{cor}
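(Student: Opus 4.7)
The plan is simply to combine Theorem \ref{theo:compa} with the $\lambda=0$ base case provided by Bárány's classical limit shape theorem \cite{barany1} and Corollary \ref{cor:dqfqe}.

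\textbf{Step 1 (Base case $\lambda=0$).} Bárány's result (recalled in the introduction and in \Cref{sec:RW}) asserts that under $\QK_{n,0}$, the convex hull $\CH(U[n])$ converges in probability, for the Hausdorff topology, to the unique element $C^\star\in{\sf CCS}_\K$ maximizing the affine perimeter among convex subsets of $\K$. Specializing to $\K$ a disk, Corollary \ref{cor:dqfqe} gives $\argmax\Phi_0^\K=\{\K\}$, so $C^\star=\K$. Therefore
\[
d_H\bigl(\CH(U[n]),\K\bigr)\;\xrightarrow[n\to\infty]{(\mathrm{proba.})}\;0\qquad\text{under }\QK_{n,0}.
\]

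\textbf{Step 2 (Transfer via \Cref{theo:compa}).} With the hypothesis of \Cref{theo:compa} now verified for $\lambda=0$, I apply this theorem to each $\lambda'\geq 0$ (take $\lambda'=\lambda$ in the corollary's statement): for every $\lambda\geq 0$, $\CH(U[n+\fnl])\proba \K$ under $\QK_{n,\fnl}$, for the Hausdorff topology. This is exactly the claim.

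\textbf{Expected difficulty.} There is essentially no obstacle: both inputs are already in hand, and the corollary is a one-line consequence. The only point that deserves mention is that the rotational symmetry of the disk, combined with \Cref{pro:hss}(c) and (b), forces any maximizer in $\argmax\Phi_0^\K$ to be a concentric disk, and among these only $\K$ itself is included in $\K$ — this is precisely the content of Corollary \ref{cor:dqfqe}, which is why the base case reduces to the identification $C^\star=\K$.
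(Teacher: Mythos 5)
Your proof is correct and follows exactly the paper's own route: the paper's one-line proof invokes Bárány's limit shape theorem for $\QK_{n,0}$ together with \Cref{cor:dqfqe} to get the base case $C^\star=\K$, and then (implicitly, since the corollary sits directly after it) applies \Cref{theo:compa} to transfer to all $\lambda\geq 0$. Nothing to add.
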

  \begin{proof} By \Bara, under $\QK_{n,0}$, $\CH(U[n]) \proba \K$ (see also \Cref{cor:dqfqe}).
    \end{proof}

\subsection{About the evaluation of $\bQ_{n,m}^{\K}$ and a potential limit shape theorem}
\label{sec:AbC}
Take a convex compact set of $\R^2$ with area 1.
Denote by ${\sf C}_{n,0}^{\K}$ the subset of $\K^n$ formed by the $z[n]$ that are in convex position.
For $n\geq 1$, $m\geq 0$ denote by ${\sf C}_{n,m}^{\K}$ the subset of $\K^n\times\K^m$ of pairs $(z[n],w[m])$ such that $z[n]\in {\sf C}_{n,0}^{\K}$, and $w_1,\cdots,w_m \in \CH(\{z_1,\cdots,z_n\})$. Set \[c_{n,m}^{\K}=\Leb({\sf C}_{n,m}^{\K})=\int_{\K^n}  \Leb({\CH \{z_1,\cdots,z_n\}})^m  dz_1\cdots dz_n\]
so that $\bQK_{n,m}$, the probability that the number of vertices of the convex hull of $n+m$ iid uniform points in $\K$ is $n$, satisfies $\bQK_{n,m}= \binom{n+m}n c_{n,m}^{\K}$. Computing ${\sf C}_{n,0}^{\K}$ or $\bQK_{n,0}$ are then equivalent problems.

In this section, we provide some elements about \Cref{theo:conv1} and explain the obstructions that prevent us from completing the proof; we tried to be specific enough, so that an interested reader could engage with this problem from this section, without too much effort. To explain a bit this conjecture, we will need to discuss the concluding approach of B{\'a}r{\'a}ny \cite{barany2} in the case $\QK_{n,0}$, to make apparent the differences.

We will also need to discuss the link between his limit shape theorem under $\QK_{n,0}$ and the computation of  $\bQ_{n,0}^{\K}$ which  are deeply interconnected. Even if most of the ideas of the four next pages are due to B\'ar\'any, we discute them in a new light.

Let us start by a remark: 
\begin{lem}For all $\lambda\geq 0$, the sequence  $(\CH(U[n+\fnl])_{n\geq 0}$ is tight in  $CS(\K)$, equipped with the Hausdorff topology. 
\end{lem}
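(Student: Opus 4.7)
The plan is to observe that tightness here is essentially automatic, because all the random compact convex sets at stake live in a fixed compact ambient space.

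First, note that almost surely $\CH(U[n+\fnl])\subset \K$, so the law of $\CH(U[n+\fnl])$ is supported on the set
\[
CS_\K:=\{C\in CS(\R^2)~:~C\subset \K,\ C \text{ compact convex}\}.
\]
The next step is to invoke Blaschke's selection theorem: the family of non-empty compact subsets of a fixed compact set $\K\subset\R^2$, equipped with the Hausdorff distance $d_H$, is itself a compact metric space. Taking the closed subset consisting of \emph{convex} compacts (convexity is preserved under Hausdorff limits) yields that $CS_\K$ is a \emph{compact} metric space for $d_H$.

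Finally, tightness of a sequence of probability measures on a compact space is trivial: for any $\varepsilon>0$ one may take the compact set $K_\varepsilon=CS_\K$ itself, and
\[
\P\bigl(\CH(U[n+\fnl])\in CS_\K\bigr)=1\geq 1-\varepsilon
\]
for every $n$, which is exactly the definition of tightness of the family of laws $({\cal L}(\CH(U[n+\fnl])))_{n\geq 0}$ in $CS(\K)$. The main (and only) conceptual point is the appeal to Blaschke's theorem; no estimate specific to the model $\QK_{n,\fnl}$ is needed for this statement.
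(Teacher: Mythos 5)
Your proof is correct and follows essentially the same route as the paper: both arguments reduce the statement to the compactness of the space of compact (convex) subsets of $\K$ under the Hausdorff metric, the paper asserting this directly and then quoting Prokhorov's theorem, while you justify the compactness via Blaschke's selection theorem and then observe that tightness on a compact space is immediate from the definition. If anything, your version is slightly cleaner, since invoking Prokhorov is not needed for tightness itself (the paper uses it for the subsequent relative-compactness statement).
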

Indeed, the set ${\sf CCS}_\K$ (of compact subsets of $\K$) equipped with the Hausdorff distance $d_H$ is compact, and seen as a topological space $({\sf CCS}_\K,d_H)$ is complete and  separable (it is a Polish space). Hence, as a consequence of  Prokhorov theorem, see e.g. Billingsley \cite[Theo.5.1]{MR1700749}, any sequence of probability measures $(\mu_n,n\geq 0)$ on $({\sf CCS}_\K,d_H)$ is tight, and, further the sequence $(\mu_n,n\geq 0)$ is relatively compact, meaning that it contains a sub-sequence converging weakly: in our case, it implies the following general statement:
\begin{lem}
  For any sequence of non-negative integers $(a_n)$, and $\CH(U[n+a(n)])$ taken under $\QK_{n,a(n)}$, there exists a sub-sequence $(n_k)$ such that $(U[n_k+a(n_k)])$ converges in distribution for the Hausdorff topology.
\end{lem}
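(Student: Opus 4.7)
The plan is to argue that this is a direct application of Prokhorov's theorem, exactly as in the previous lemma, observing that the hypothesis $a(n) = \fnl$ played no role there. First I would record the topological facts: the set ${\sf CCS}_\K$ equipped with the Hausdorff distance $d_H$ is a compact metric space (this is Blaschke's selection theorem applied inside the compact set $\K$), and as a compact metric space it is automatically Polish. For every pair $(n,m)$ with $n\geq 1$, $m\geq 0$, the random variable $\CH(U[n+m])$ under $\QK_{n,m}$ is a measurable map from the underlying probability space into $({\sf CCS}_\K, d_H)$, so its law $\nu_{n,m}$ is a Borel probability measure on $({\sf CCS}_\K, d_H)$.

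Next I would invoke Prokhorov's theorem. On a compact Polish space, every family of Borel probability measures is tight (take the whole space itself as the compact set witnessing tightness). Prokhorov's theorem then asserts that this family is relatively compact for the topology of weak convergence. Applying this to the sequence $(\nu_{n, a(n)})_{n\geq 1}$, we extract a subsequence $(n_k)$ such that $\nu_{n_k, a(n_k)}$ converges weakly to some Borel probability measure $\nu$ on $({\sf CCS}_\K, d_H)$. This is exactly the statement that $\CH(U[n_k + a(n_k)])$ converges in distribution for the Hausdorff topology.

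There is no real obstacle: the argument uses no information about how $a(n)$ grows, nor any property of the conditioning beyond the fact that the conditional event has positive probability (so that $\QK_{n,a(n)}$ is well-defined) and that the resulting convex hull is almost surely a compact convex subset of $\K$. I would conclude with a brief remark that the argument offers no identification of the limit $\nu$ nor any uniqueness of the subsequential limit; identifying the limit is precisely the content of the conjectures \Cref{theo:conv1} and \Cref{theo:conv2}, and the obstruction lies not in compactness but in controlling the energy functional $\PLK$ along the sequence.
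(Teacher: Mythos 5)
Your proposal is correct and follows exactly the paper's argument: compactness of $({\sf CCS}_\K,d_H)$ makes it a compact Polish space, so tightness is automatic and Prokhorov's theorem yields a weakly convergent subsequence. Nothing is missing, and your closing remark that the whole difficulty lies in identifying the (a priori non-unique) subsequential limit matches the paper's subsequent discussion.
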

This result is ``trivial'' in the sense that it is a consequence of compactness and it is then valid for all sequences $(a(n))$. It allows us to concentrate on the difficult point: to prove a limit shape theorem, it is necessary and sufficient to prove the uniqueness of the accumulation point. An accumulation point is a measure on ${\sf CCS}_\K$ that can be a Dirac mass on a single convex domain but could also be a more general probability distribution on ${\sf CCS}_\K$.
Under $\QK_{n,0}$, B\'ar\'any states the convergence in distribution toward a deterministic shape, which is the only element $C^\star$ of $\argmax \Phi_0^\K$.  As usual in probability theory, when the limit law is a Dirac mass, there are additional ways to prove convergence, in particular concentration arguments. In this case, it amounts to proving that, 
\[\textrm{ for all }\eps>0,~~\QK_{n,\fnl}\l(d_H(\CH(U[n]),C^\star)\geq \eps\r)\to 0.\]

However, it is hard to precisely compute $\QK_{n,\fnl}\l(d_H(\CH(U[n]),C^\star)\geq \eps\r)$ since even the global volume of points in convex position, that is, the normalizing constant $c_{n,0}^\K$, can not be computed exactly. B\'ar\'any shows that the exact value is not important because, somehow, all volumes and probability that we need to evaluate in order to conclude, have the form either $\exp( n f(g))$ or $\exp(-2n\log(n)+nf(g))$ when $n\to+\infty$, where in this formula $g$ represents a geometric property, and $f$ a function of this geometry. These formulas holds up to a factor $\exp(o(n))$ and in general. Because all quantities have different exponential order (the coefficients of the linear terms in $n$ in the exponentials are different), at the end, we can only care about the geometry $g$ that maximizes $f(g)$, and don't care about sub-linear terms in $n$ in the exponential.


The main ideas in B\'ar\'any's proof are the following: 

First, the bi-pointed triangular model $\bQt_{n,0}$ is the source of everything, going from the limit shape theorem, passing  through  the asymptotic behavior of $\QK_{n,0}$, to  the appearance of the affine perimeter in the formula of  $\QK_{n,0}$.

To see this, take some integer $d\geq 3$,  and consider ${\cal R}_{d}$ the set of convex equiangular polygon $R$ with $d$ sides, where the $\bar \j$ th side direction is  $\exp(2i \pi  \bar \j/d)$ (in this paragraph $\bar \j$ is taken in $\Z/d\Z$ so that  $\bar \j= \bar \j+d= \bar \j-d$).

Let $R(z[n])$ be the smallest equiangular polygon $R$ with $d$ sides containing $z[n]$ (it is well defined as an intersection). Let us call this polygon, the enveloping $d$-gone of $z[n]$.

Hence for $d\geq 3$ fixed and $R\in{\cal R}_{d}$, let ${\sf C}_{n,0}^\K(R)= \l\{z[n] \in {\sf C}_{n,0}^\K~:~R(z[n])=R\r\}$ be the set of $z[n]$ having $R$ as enveloping $d$-gone. Since the ${\sf C}_{n,0}^\K(R)$ are disjoints, and since
\[{\sf C}_{n,0}^\K = \bigcup_{R\in {\cal R}_{n,d}} {\sf C}_{n,0}^\K(R)\]
we may use this decomposition.

Since we want to integrate on the points coordinates, observe that $R[z[n]]= R$, if and only if, 
	\begin{enumerate}
		\item for each $\bar \j$, the $\bar \j^{th}$ side of $R$ contains one point of the set $\{z_1,\cdots,z_n\}$ (the so-called ``contact point'' $c_{\bar\j}$),
		\item the set of triangles, corresponding to the (closure of the) connected components of $R \setminus \CH(\{c_k, 0\leq k\leq d-1\})$ contains all the $z_i$,
		\item the $z_i$ that are contained in each of these given triangles form a convex chain in there.
	\end{enumerate}

To be a bit more specific, denote by $ v_{\bar \j}$ the $j$th vertex of $R_{n,d}$, with such a labeling such that $c_{\bar \j}$ is on the segment $[v_{{\bar \j}-1},v_{\bar \j}]$. Denote by $t_{\bar \j}$ the triangle with vertices $c_{{\bar \j}}v_{\bar \j}c_{{\bar \j}+1}$. Now, let $C_{d,k}$ be the set of possibles values for $(c_0,\cdots,c_{d-1})$ that are filtered according to the cardinality $k\geq 3$ of the set $\{c_0,\cdots,c_{d-1}\}$. This number $k$ can be smaller than $d$, notably when some sides of the enveloping regular $d$-gones have zero length, but also when the contact points are vertices of this enveloping $d$-gone.

The set $C_{d,k}$ has positive Lebesgue measure in $(\R^2)^k$, and we won't need to say more on this set. Further $c_{n,0}^K$ can be written as
\ben\label{eq:cn0} c_{n,0}^\K=\sum_{k=3}^d \int_{C_{d,k}} \frac{n!}{(n-k)!}\sum_{(s_1,\cdots,s_k)~:~\sum s_i = n -k}  \binom{n-k}{s_1,\cdots,s_k}\prod_{i=1}^k   c_{s_i,0}^{\bullet,\bullet, t_i\cap \K}   dc_1,\cdots,dc_k\een
where:\\
-- $\frac{n!}{(n-k)!}$ accounts for the number of possibilities of indices $(i_1,\cdots,i_k)$ of the $z_i$ that are contact points,\\
--  $\binom{n-k}{s_1,\cdots,s_k}$ accounts for the number of ways to partition the remaining $z_i$ in the $k$ triangles,\\
--  and $c_{s_i,0}^{\bullet,\bullet, t_i\cap \K}$ is the Lebesgue measure of the set of convex chains formed by $s_i$ points in $t_i\cap \K$, and this chain must be convex with the two contact points, vertices of $t_i$.

An upper-bound $UB_{\K}^{(d)}$ is obtained if one replaces in this formula  $c_{s_i,0}^{\bullet,\bullet, t_i\cap \K}$ by $c_{s_i,0}^{\bullet,\bullet, t_i}$, and a lower bound $LB_\K^{(d)}$ is obtained by integrating only on the subset of contacts points $C'_{d,k}$, such that $t_i$ is included totally in $\K$ (or equivalently, this amounts to measure only the $z_i$ that are in convex position and having an enveloping $d$-done included in $\K$).

In both cases, the intersection $t_i\cap \K$ disappears from the considerations, and we can use the formula
\[c_{s_i,0}^{\bullet,\bullet, t_i\cap \K}=|t_i|^{s_i} c_{s_i,0}^{\bullet,\bullet, ABC}= \frac{|t_i|^{s_i}2^{s_i}}{s_i!(s_i+1)!},\] where $ABC$ is our favorite unit triangle.

 For these bounds, we will then use that the function to be summed and to be integrated in \eref{eq:cn0} is

\ben f_{c[d]}(s[k])=\frac{n!}{(n-k)!}  \binom{n-k}{s_1,\cdots,s_k}\prod_{i=1}^k   \frac{(2|t_i|^{s_i})}{s_i!(s_i+1)!}1_{\sum_{i=1}^k s_i=n-k} ,\een
in which it is implicit that the $|t_i|$ depends on the contact points $c_i$ that are here fixed (for the upper bound the integration is done on $C_{d,k}$ and for the lower bound on $C'_{d,k}$). 
 
 Now, we reach the final argument, for both the computation of $c_{n,0}^\K$ and the proof of the limit shape theorem. The connection with the limit shape theorem works as follows.

 If one takes a set of points $U[n]$ under $\QK_{n,0}$, then one can still define, for a given $d$, the enveloping regular $d$-gone of $U[n]$, and the contact points $(\bc_i,1\leq i \leq d)$ that are now random variables (and we will use the font $\bc_j,\bt_j,\bs_i$ to take into account the type of these objects).
The joint distribution of  $((\bc_j,0\leq j \leq k-1),(\bs_j,0\leq j \leq k-1))$ giving the contact points has a simple representation (and is non-zero for $3\leq k \leq d$ and $n$ large enough), and the number of elements $(\bz_j)$ in the interior of $\bt_j$ is:
\ben \label{eq:qgrgf}\QK_{n,0}( \bc_j \in dc_j,\bs_j = s_j, 0\leq j \leq k)=\frac{1_{c[d]\in C_{d,k}}}{c_{n,0}^{\K}}\frac{n!}{(n-k)!}\binom{n-k}{s_0,\cdots,s_{k-1}} \prod_{i=0}^{k-1} c^{t_j\cap K}_{s_j,0} dc_j.\een

{It turns out that understanding what the order of $c_{n,0}^\K$ is, or what the contact points $\bc[d]$ under $\bQ_{n,0}^\K$ are more likely to be, or what pairs $(\bc[d],s[d])$ are most probable, are three equivalent problems. As we will see, configurations that are most probable have a much heavier weight compared to the others (if one works up to a factor $\exp(o(n))$).}

For example, when the contact points ${c[k]}$ are fixed
\[f_{c[k]}= \max\l\{ f_{c[k]}(s[k])~:~\sum s_i=n-k\r\} =\exp(o(n)) \sum_{s:\sum s_i=n-k} f(s[k])\]
because there is only a polynomial number $O(n^k)=\exp(o(n))$ of elements $s[k]$ in the sum. Moreover, $\frac{n!}{(n-k)!}=\exp(o(n))$ too, since $k\leq d$ is bounded when we work with a fixed $d$.
Besides, using that for all $\frac{n^n}{e^{n-1}}\leq n!\leq \frac{n^{n+1}}{e^{n-1}}$, we may replace $s_i!$ by $\exp(s_i\log(s_i)-s_i)$ in $f_{c[k]}$, without losing more than an additional $\exp(o(n))$ factor, so that we can maximize, instead, the simpler function (equal up to a uniform $\exp(o(n))$ factor),
\[\tilde f_{c[k]}(s[k])= 2^n e^{-n} \exp\l(n\log n +\sum_{i=1}^k  3s_i -3s_i\log s_i  +s_i\log(|t_i|)\r)\] 
and now, proceed to the change of variable  $s_j =n \alpha_j$ such that $\sum \alpha_j=1$ to completely solve the optimization problem in $\alpha[k]$. 
We get 
\[\tilde f_{c[k]}(s[k])=2^ne^{-n} \exp\l(  3 n - 2n\log(n)+\sum -3n\alpha_j\log(\alpha_j)+n\alpha_j(\log |t_j|)\r).
\]
The value of the $\alpha[k]$ maximizing this function, can be computed using Lagrange multiplier technique: set $F((\alpha)_j,c[d])= \sum \alpha_i \log(|t_i|)-3\alpha_i\log(\alpha_i)  + y (\sum \alpha_i-1)$, cancel $\partial F/\partial x_i$, and $\partial F/\partial y$, and check that this is maximal for $\alpha_k=2|t_k|^{1/3}/S_d$ with $S_d=S_d(c[d])=2\sum |t_j|^{1/3}$ (where we have decorated this formula with a 2, because of the affine perimeter formula of a convex set being $\L(S)=2\lim \sum |T_i|^{1/3}$ for triangles constructed as the $t_i$ before). 
One then gets, 
\be
\max_s f_{c[k]}(s[d]) &=&    2^ne^{2n  - 2n\log(n)+o(n)} \exp\l(\frac{n}{S_d}\sum -6|t_k|^{1/3}\log(2|t_k|^{1/3}/S_d)+2|t_k|^{1/3}(\log |t_k|)\r)\\
&=&    2^ne^{2n  - 2n\log(n)+o(n)} \exp\l(\frac{n}{S_d}\sum 6|t_k|^{1/3}\log(S_d/2) \r)\\
&=&  2^ne^{2n  - 2n\log(n) + n\log(S_d^3)-3n\log(2)+o(n)}.
\ee
It remains to maximize in $c[k]$: the points $(c[k])$ that maximizes $f_{c[k]}$ are those maximizing $S_d(c[d])$. There is a continuity in the sense that if $c[d]$ maximizes $f_{c[k]}$, then for $\tilde{c}[k]$ close to $c[k]$, the quantity $S_d^3(\tilde{c}[k])$ is close to $S_d^3(c[k])$. This ensures that the complete integral and sum defining $c_{n,0}^\K$ has the same order (up to $\exp(o(n))$ factor), as $\max_{c[k]} f_{c[k]}$.

To end the optimization problem, it remains to prove that the upper and lower bound $UB_\K^{(d)}$ and $LB_\K^{(d)}$ coincide, and that
\ben\label{eq:riludqsd} \limsup_d \max_{c[d]} S_d(c[d])=  \max\{\L(C): C\subset \K\}.\een

For a given factor $f>0$, let $\K_f$ be the compact convex set with same center of mass as $\K$, and obtained by a dilatation of factor $1-f$ (so that $\A(\K_f)=(1-f)^2\A(\K)$. Of course
\ben\label{eq:quo} c_{n,m}^{\K_f}=(1-f)^{2(n+m)}c_{n,m}^{\K},\een
that we will use shortly, with $m=0$. It is easily seen that the lower bound $LB_\K$ we have (which amounts to integrating over $C'_{d,k}$, corresponding to the  $t_i$'s that are totally included in $\K$) satisfies
\[ c_{n,0}^{\K_f}\leq LB_\K^{(d)} \leq c_{n,0}^{\K}\]
for $d$ large enough, for $n$ large enough, since all the $z[n]$ is convex position in $\K_f$ will be enveloped by a $d$-gone totally included in $\K$ for $d$ large enough. By taking $f$ close to 1, it is apparent that the lower bound and $c_{n,0}^{\K}$ stay within an $\exp(o(n))$ factor, and this is true also for the upper bound, by the same reasoning.


The justification of \eref{eq:riludqsd} can be proved again by a compactness argument: first, by taking the $c[d]$ close to  $C^\star$, it appears that $ \limsup_d \max_{c[d]} S_d(c[d])\geq  \L(C^\star)=\max\{\L(C): C\subset \K\}$. If one finds a sub-sequence $c[d]$ (indexed by $d$) such that $ \max_{c[d]} S_d(c[d]) \leq a+\L(C^\star)$ with $a>0$, then by taking an accumulation point $\bar{C}$ of $\CH(c[d])$ along this sub-sequence (for the Hausdorff topology), then $\L(\bar C)\geq a+\L(C^\star)$, a contradiction. 

Finally, this gives that the maximum, and then the value of $\bQK_{n,0}$ satisfies
\[\bQK_{n,0}= {4^{-n}}e^{2n  - 2n\log(n) + 3n\log(\L(C^\star))+o(n)}\]
But this gives also a limit shape theorem: now we know $c_{n,0}^\K$ up to a $\exp(o(n))$ factor.  Take a convex domain $C'$ different from $C^\star$, then there exists $\epsilon>0$ such that $d_{H}(C',C^\star)\geq 2\epsilon$.  We can now evaluate the probability $\QK_{n,0}( \d_H(\CH(U[n]),C')\leq \eps)$, by computing again the Lebesgue measure of the corresponding $z[n]$, and by normalizing by $c_{n,0}^\K$. Since $\max\{\L(C):C\in B_{H}(C',\epsilon)\}<\L(C^\star)$, if one maximizes $\max f_{c[d]}$ for the $c[d]$ in $B_H(C',\epsilon)$, using the argument given above, one will find at the end a global weight
\[ 4^{-n}e^{2n  - 2n\log(n) + n \max_{C\in B_H(C',\epsilon)}\L(C)^3+o(n)}\] which is negligible in front of the total mass: it implies that $C'$ is not in the support of any accumulation point under $\QK_{n,0}$: only $C^\star$ is in the support of the limiting measure, which implies that under $\QK_{n,0}$, $\CH(U[n])\proba C^\star$ for the Hausdorff topology on ${\sf CCS}_\K$.

\paragraph{Construction of the conjectures in the case $\QK_{n,\floor{n\lambda}}$}

Let us review quickly the difference and common points between the computation of $c_{n,\floor{n\lambda}}^\K$, or the limit shape theorem under $\QK_{n,\floor{n\lambda}}$ and the case just treated, when $\lambda=0$. 
We have again the relation 
\[\bQK_{n,m}= \binom{n+m}n c_{n,m}^{\K}\]
and again we can write a sum of ``$d$-contact points'' (that are included in the $n$ first one) and write

\[c_{n,m} = \sum_{k=3}^d\int_{C_{d,k}} \binom{n}{n-k} \sum_{s[k], m[k+1]} \binom{n-k}{s_1,\cdots,s_k}\binom{m}{m_1,\cdots,m_{k+1}}|\CH(c[d])|^{m_{k+1}}\prod_{j=1}^k c_{s_i,m_i}^{\bullet,\bullet, t_i\cap \K} dc_1\cdots dc_k\]
where the $t_i=t_i(c[d])$ are defined as before, and what changes is that the $m_i$ have to be shared among the $t_i$ and also with $\CH(c[d])$.   

Again we can use the upper and lower bound strategy: for the upper bound replace  $t_i\cap \K$ by $t_i$ again, and for the lower bound, integrate on the $C_{d,k}$ for which $t_i$ is entirely inside $\K$. In both cases
\[ c_{s_i,m_i}^{\bullet,\bullet, t_i\cap \K}=|t_i|^{s_i+m_i} \bQt_{s_i,m_i}\]
and then we get to the first big difference with B\'ar\'any's case. We don't have no simple formula for $\bQt_{s_i,m_i}$, only an asymptotic formula for $\bQt_{n,\floor{n\lambda}}$. It is not difficult however to prove that if $x(n)/n\to \lambda\in [0,+\infty)$ then
\[\bQt_{n,x(n)}=\bQt_{n,\floor{n\lambda}}\exp( n o(1))=\exp(-2n\log(n)+n\beta_\lambda+no(1))\]
by adapting the proofs we presented for the asymptotics of $\bQt_{n,\floor{n\lambda}}$, however, we have not found a complete argument to prove that the optimization of 
\[F_{c[d]}:=\binom{n}{n-k} \sum_{s[k], m[k+1]} \binom{n-k}{s_1,\cdots,s_k}\binom{m}{m_1,\cdots,m_{k+1}}|\CH(c[d])|^{m_{k+1}}\prod_{j=1}^k |t_i|^{s_i+m_i} \bQt_{s_i,m_i}\]
is equivalent (up to an $\exp(o(n))$ factor) to that of
\[\bar F_{c[d]}= \max_{s[k],m[k+1]} \binom{n-k}{s_1,\cdots,s_k}\binom{m}{m_1,\cdots,m_{k+1}}|\CH(c[d])|^{m_{k+1}}\prod_{j=1}^k |t_i|^{s_i+m_i} \exp(-2s_i\log(s_i)+s_i \beta_{m_i/s_i})\]
even if it is very likely the case (again we have suppressed the factor $\binom{n}{n-k}$ for it is $\exp(o(n))$). A sort of ``uniform approximation theorem'' could suffice to complete this step, but we face another problem, which is that $m_i/s_i$ is not bounded above. It is unlikely that in a given triangle $t_i$ the ratio $m_i/n_i$ becomes very large, but in the end rare events may be those that contribute the most to the weights.

Now, take $(z[n],w[m])$ under $\QK_{n,m}$, so that $z[n]$ is the set of vertices of $\CH(\{z_1,\cdots,z_n,w_1,\cdots,w_m\})$. If $z[n]$ is fixed, then the $w_i$ are uniform and independent in $\CH(\{z_1,\cdots,z_n\})$. A small picture allows to see that when $d$ becomes large, the $t_i$ become very small, and $\CH(c[d])$ become close to $\CH(\{z_1,\cdots,z_n\})$. Moreover, the sum $\max_{c[d]} \sum_{i=1}^d |t_i|$ goes to zero with $d$, because $\max_{c[d]}\sum |t_i|^{1/3}$ is bounded. Therefore, the total number of points $w_i$ that are likely to be in the triangles becomes negligible compared to $n$, as $d\to+\infty$.

We may then conjecture that for any $\epsilon>0$, if $d$ large enough, for $(s[d],m[d])$ maximizing $f_{c[d]}$, $\max m_i/s_i\leq \epsilon$. If all of this is right, and if we can now use a series expansions of $\beta_{m_i/n_i}$ close to zero to pursue the optimization scheme, then the conjecture construction goes on as follows.

We need an expansion of $\beta_\lambda$. First, from $\sinh(2\rl)/(2\rl)=\lambda+1$, we can prove that near zero, $\rl = \sqrt{3\lambda/2} + O(\lambda^{3/2})$, from what we see that
\[\beta_\lambda = 2+\log(2)+(1+\log(2/3)+\log(1/\lambda))\lambda + o(\lambda),\]
so that, near zero, the main term is $2+\log(2)$.  

We may then conjecture that replacing $\beta_{m_i/s_i}$ by $2+\log(2)$, the order of $\bar F_{c[d]}$ would be unchanged. We may then again make a change of variable, replace $s_i$ by $n\alpha_i$ and conjecture, that the optimization is within a factor $\exp(o(n))$ the same as
\[\bar{\bar F}_{c[d]}=  e^{n\log n-n} |\CH(c[d])|^{n\lambda} \max_{s[k],m[k+1]}\binom{m}{m_1,\cdots,m_{k+1}}\prod_{i=1}^k |t_i|^{s_i} \l(\frac{|t_i|}{|\CH(c[d])|}\r)^{m_i} e^{-3s_i\log(s_i)+s_i (3+\log(2))}\]
where we have used the formula $b!\sim_{\exp} b^b/e^b$ for $n!$ and $s_i!$, and $s_i \beta_{m_i/s_i}$ replaced by $\exp(-2s_i\log(s_i)+s_i (2+\log(2)))$. The sum over $(m_i)$ of $\sum_{(m_i)}\binom{m}{m_1,\cdots,m_{k+1}}  \prod_{i=1}^k (\frac{|t_i|}{|\CH(c[d])|})^{m_i}=(1+\sum |t_i|/|\CH(c[d])|)^{m}$  and since we expect that $\sum |t_i|$ goes to 0 with $d$, this term is eventually sub-exponential in $m$. We then go on to conjecture that the optimization problem is the same as
\[\bar{\bar{\bar F}}_{c[d]}=  e^{n\log n-n+(3+\log(2))n} |\CH(c[d])|^{n\lambda} \max_{s[k] } \prod_{i=1}^k |t_i|^{s_i}  \exp(-3s_i\log(s_i))\]
where we have used the formula $b!\sim_{\exp} b^b/e^b$ for $n!$ and the $s_i$.
By Lagrange multiplier method, we find again that $s_i = \frac{2n|t_i|^{1/3}}{S_d}$ (where again $S_d=\sum 2|t_i|^{1/3}$) maximizes this quantity, which gives finally
\be
\bar{\bar{\bar F}}_{c[d]}&=&  e^{n\log n+(2+\log(2))n+o(n)} |\CH(c[d])|^{n\lambda} \exp\l(\sum_{i=1}^k \frac{2n|t_i|^{1/3}}{S_d}\log(|t_i|) - 3\frac{2n|t_i|^{1/3}}{S_d}\log\l(\frac{2n|t_i|^{1/3}}{S_d}\r)   \r)\\
&=& e^{n\log n+(2+\log(2))n+o(n)} |\CH(c[d])|^{n\lambda} \exp\l(- \sum_{i=1}^k3\frac{2n|t_i|^{1/3}}{S_d}\log\l(\frac{2n }{S_d}\r)   \r)\\
&=& e^{n\log n+(2+\log(2))n+o(n)} |\CH(c[d])|^{n\lambda} \exp\l(-  3 {n } \log\l({2n }/{S_d}\r)\r)   \\
&=& e^{-2n\log n+(2+\log(2))n+o(n)}   \exp\l(-  3 n \log(2 )+3n\log(S_d^3 |\CH(c[d])|^\lambda \r)   
\ee
We can then see that a limit shape around the convex set $C^\star$ maximizing $\PLK(C)$ appears\footnote{but to get this conclusion we would need to use also that $\argmax \PLK$ is reduced to a single element, which is clearly true for some $\K$, but we are not totally convinced that this is true for all $\K$. In any case, the following reasoning applies only in the case where $\argmax \PLK$ is reduced to a single element}, and taking the limit over $d$ to get the asymptotic behavior we get the conjecture 
\[c_{n,\fnl}^\K=\exp\l(-2n\log (n)+(2-{2}\log(2))n +n \log( \Phi_\K(C^\star))+    o(n)\r)\]
therefore since $\bQK_{n,\fnl}=\binom{n+\fnl}{n} c_{n,\fnl}^K$, we get using
\[\binom{n+\fnl}{n} = \exp(o(n)) \exp(n(\lambda+1)\log(\lambda+1)-n\lambda\log(\lambda))\]

the second conjecture

\[n^2\left(\bQK_{n,\fnl}\right)^{1/n} \cvg \frac{e^2}{4}\cdot\frac{(\lambda+1)^{\lambda+1}}{\lambda^\lambda}\cdot\Phi_\K(C^\star). \] 
Then, using the rest of the reasoning presented in the $\bQK_{n,0}$ case, we may expect that  under $\QK_{n,\fnl}$, $\CH(U[n+\fnl])\proba C^\star$ for the Hausdorff topology.

\section{Appendix}

\subsection{Proof of \Cref{theo:mon}}

Proof of $(i)$.
We will see that we only need to prove the following trivial lemma:
\begin{lem}
  If $m(n)=o(n)$, for all $\eta>0$,
  \ben \Qt_{n,m(n)}\Bigl( \sup_{t\in[0,1]} n^{-1}\bS_{nt}\geq \eta\Bigl)= 0.
  \een 
\end{lem}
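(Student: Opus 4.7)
The plan is to first observe that this lemma is essentially tautological: under $\Qt_{n,m(n)}$, the contents sequence $(K_1,\ldots,K_n)$ is by construction a composition of $m(n)$, so almost surely $\bS_n = K_1+\cdots+K_n = m(n)$. Since $k \mapsto \bS_k$ is a.s.\ non-decreasing and the interpolated version inherits this monotonicity, $\sup_{t\in[0,1]} \bS_{nt} = \bS_n = m(n)$. Thus $\sup_t n^{-1}\bS_{nt} = m(n)/n$, which is deterministic and, by hypothesis $m(n)=o(n)$, strictly less than $\eta$ for all $n$ large enough. Hence the probability is $0$ for every such $n$ (and $\to 0$ as $n\to\infty$ in all cases).

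With this concentration result in hand, the plan is to derive part $(i)$ of Theorem \ref{theo:mon} by mimicking the proof of Theorem \ref{theo:lim}, substituting the trivial bound $\bS_i/n\to 0$ (uniformly in $i\leq n$) in place of the concentration around $M^\star$. Starting from the conditional identity established for Lemma \ref{lem:jolicv} (a direct consequence of Theorem \ref{theo:qdzada2}$(c)$),
\[\E\bigl(2-(X_j+Y_j)\mid \bS[n]\bigr)=2\,\frac{j+\bS_{j-1}}{n+2+\bS_n}\exp\Bigl(\sum_{i=j}^{n-1}\log\Bigl(1-\tfrac{1}{i+2+\bS_i}\Bigr)\Bigr),\]
I would plug in $j=\lfloor(n+1)t\rfloor$ for $t\in(0,1]$, replace the sum by a Riemann integral, and use $\bS_i/n\to 0$ uniformly to get the prefactor $\to t$ and the exponential $\to \exp\bigl(-\int_t^1 ds/s\bigr)=t$. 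This yields $Q(t):=\lim_{n}\E(2-(X_{(n+1)t}+Y_{(n+1)t}))=2t^2$; the second-moment computation of Lemma \ref{lem:jolicv} adapts verbatim to give convergence in probability. By the left/right symmetry, the analogue of $X_{(n+1)t}-Y_{(n+1)t}$ converges to $Q(1-t)=2(1-t)^2$, and assembling the two marginals gives $X(t)=2-2t$ and $Y(t)=2t(1-t)$, i.e.\ the B\'ar\'any--R\"ote--Steiger--Zhang parabola.

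For part $(ii)$, the plan is different: when $m(n)/n\to+\infty$, I would invoke the stochastic monotonicity of ${\sf Area}_{n,m}$ in $m$ proved in Lemma \ref{lem:sto}, together with the fact that $\E({\sf Area}_{n,m})$ is bounded above by $\A(ABC)=1$. It is enough to show $\E({\sf Area}_{n,m(n)})\to 1$: by monotonicity it suffices to exhibit any sequence $m'(n)$ with $m'(n)\leq m(n)$ for which this convergence holds, and the natural choice is a subsequence along which one can either compare directly to the uniform (unconditioned) model---where, for $m$ very large compared to $n$, standard results (e.g.\ R\'enyi--Sulanke) show that the convex hull of $m$ uniform points fills $ABC$ and already has more than $n$ vertices with overwhelming probability---or argue by contradiction using Lemma \ref{lem:sto} to rule out any accumulation point $C\subsetneq ABC$.

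The only step requiring a little bit of care is the uniform-in-$t$ upgrade in part $(i)$: the pointwise convergence must be promoted to convergence in $C([0,1],\R)^2$. This is handled exactly as in Lemma \ref{lem:qfgrte}, using the monotonicity of $t\mapsto 2-(X_{(n+1)t}+Y_{(n+1)t})$ together with the continuity and monotonicity of the deterministic limit $t\mapsto 2t^2$.
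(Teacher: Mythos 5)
Your proof of the lemma itself is correct and is essentially the paper's own one-line argument: since $\bS$ is non-decreasing with $\bS_n=m(n)$ almost surely, $\sup_t n^{-1}\bS_{nt}=m(n)/n\to 0$, so the event is empty for $n$ large. The additional material on deducing Theorem \ref{theo:mon} from this lemma also mirrors the paper's route (adapting Lemma \ref{lem:jolicv} to get $Q(t)=2t^2$ for part $(i)$, and the stochastic monotonicity of Lemma \ref{lem:sto} for part $(ii)$), so nothing further is needed.
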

\begin{proof} $ \sup_{t\in[0,1]} n^{-1}\bS_{nt} \leq \bS_n/n=m(n)/n \to 0$. \end{proof}
Now to prove the theorem, it suffices to adapt the proof of \Cref{theo:lim} from 
\Cref{lem:jolicv}, which has to be replaced by
\begin{lem}\label{lem:jolicv2} On $(\Omega,{\cal A},\P)$, for $m(n)=o(n)$,  for all $t\in[0,1]$,
\be
\E(2-(X_{\floor{(n+1)t}}+Y_{\floor{(n+1)t}})~|~\bS^{(n)}[n]) &\xrightarrow[n\to+\infty]{a.s.}& Q(t)= 2t^2  \\
\Var(2-(X_{\floor{(n+1)t}}+Y_{\floor{(n+1)t}})~|~\bS^{(n)}[n]) &\xrightarrow[n\to+\infty]{a.s.}0.
\ee
\end{lem}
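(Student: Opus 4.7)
The plan is to reuse the exact conditional formulas \eqref{eq:superformule1} and the variance identity derived in the proof of \Cref{lem:jolicv}, and to exploit the fact that in the regime $m(n)=o(n)$, the entire sequence $\bS^{(n)}[n]$ is globally negligible compared to $n$. Indeed, since $\bS^{(n)}$ is non-decreasing and bounded above by $\bS_n^{(n)} = m(n)$, we have the deterministic a.s.\ bound $0 \leq \bS_i^{(n)} \leq m(n)$ for all $i\leq n$, and thus $\sup_{i\leq n} \bS_i^{(n)}/n \to 0$. The rest of the argument becomes elementary.

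The first step is to compute the limit of the mean. Writing $j = \floor{(n+1)t}$ in \eqref{eq:superformule1} and sandwiching $\bS_i$ between $0$ and $m(n)$, I obtain the two-sided bound
\[
\log \frac{j+1}{n+1} \;\leq\; \sum_{i=j}^{n-1}\log\!\left(1-\frac{1}{i+2+\bS_i^{(n)}}\right) \;\leq\; \log \frac{j+1+m(n)}{n+1+m(n)},
\]
both extremes of which converge to $\log t$ as $n\to\infty$, since $m(n)/n\to 0$. Combined with the elementary estimate $\frac{j+\bS_{j-1}^{(n)}}{n+2+\bS_n^{(n)}} \to t$ (again using $\bS^{(n)}/n \to 0$ uniformly), this yields
\[
\E\bigl(2-(X_{\floor{(n+1)t}}+Y_{\floor{(n+1)t}}) \mid \bS^{(n)}[n]\bigr) \;\xrightarrow[n\to+\infty]{a.s.}\; 2 t \cdot t \;=\; 2t^2 \;=\; Q(t).
\]

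The second step handles the variance. Using the second-moment formula from the proof of \Cref{lem:jolicv},
\[
\E\!\left( (2-(X_j+Y_j))^2 \mid \bS^{(n)}[n] \right) = \E\!\left( 2-(X_j+Y_j) \mid \bS^{(n)}[n] \right) \cdot 2\,\frac{j+1+\bS_{j-1}^{(n)}}{n+3+\bS_n^{(n)}} \prod_{i=j}^{n-1} \frac{i+2+\bS_i^{(n)}}{i+3+\bS_i^{(n)}},
\]
the exact same sandwich argument shows that the extra factor converges a.s.\ to $2t \cdot t = Q(t)$, so the conditional second moment tends to $Q(t)^2$ and the conditional variance tends to $0$.

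The main obstacle is only bookkeeping: one must check that the error terms coming from replacing $\bS_i^{(n)}$ by $0$ or $m(n)$ inside the logarithmic sum are uniform in $i$. This is immediate from the two-sided monotone comparison above, because the difference of the two explicit bounds is $\log\bigl((j+1+m(n))(n+1)/((j+1)(n+1+m(n)))\bigr) \to 0$ whenever $t\in(0,1]$ and $m(n)/n\to 0$; the boundary case $t=0$ is trivial since both sides vanish. Once \Cref{lem:jolicv2} is established, the end of the proof of \Cref{theo:mon}$(i)$ proceeds verbatim as in the last paragraph of the proof of \Cref{theo:lim}: one deduces the point-wise convergence in probability $Q_n(t)\proba 2t^2$, upgrades it to uniform convergence in $C([0,1],\R)$ via the monotonicity/continuity argument of \Cref{lem:qfgrte}, and finally recovers the limits $X_t=2-2t$ and $Y_t=2t(1-t)$ from the identities $X_{(n+1)t}=1-\tfrac{Q_n(t)}{2}+\tfrac{Q_n(1-t)}{2}$ and $Y_{(n+1)t}=1-\tfrac{Q_n(t)}{2}-\tfrac{Q_n(1-t)}{2}$.
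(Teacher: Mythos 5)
Your proposal is correct and follows essentially the same route as the paper, which simply reruns the proof of \Cref{lem:jolicv} using the trivial observation that $0\le \bS_i^{(n)}\le m(n)=o(n)$ surely; your explicit two-sided telescoping sandwich of the logarithmic sum is just a cleaner way of writing out that substitution of limits. Nothing is missing.
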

\begin{proof} The same proof as that of \Cref{lem:jolicv} applies except that the limits have to be changed.
Solving the system $2-X_t-Y_t=Q(t)$ and $X_t-Y_t=Q(1-t)$ we get $X_t=2-2t$ and $Y_t=2t(1-t)$, in which we recognize the parametrization of ${\cal P}$ in $ABC$.
\end{proof}

$(ii)$ The proof can be adapted simply from \Cref{lem:sto}, which is easily seen to be valid in the bi-pointed case (that is $\E({\sf Area}^{\triangle \bullet\bullet}_{n,m+1})\geq \E({\sf Area}^{\triangle \bullet\bullet}_{n,m})$). Then, we may deduce from this that for $m(n)/n\to+\infty$, for any fixed $\lambda$, for $n$ large enough, $\E({\sf Area}^{\triangle \bullet\bullet}_{n,m(n)})\geq\E({\sf Area}^{\triangle \bullet\bullet}_{n,\fnl})\to \A({\cal H}_\la)$. This implies that  $\lim\inf\E({\sf Area}^{\triangle \bullet\bullet}_{n,m(n)})\geq \sup_{\lambda}  \A({\cal H}_\la)=1$. The argument given in the proof of \Cref{theo:compa} allows to conclude.
\color{black}

\subsection{Proof of a central local limit theorem under $d^{(n)}$}

The local limit theorem we state below is needed in the proof of \Cref{lem:LB}.
We use the notation of the proof of this Lemma, in particular we take independent random variables $(\obK_i,1\leq i \leq n)$ where $d_i^{(n)}$ is defined in \eref{eq:remonte2}. 


First, let us write as an index $d^{(n)}$ the computation related to the independent random variables $\obK_1,\cdots, \obK_n$, and where $\obK_i$ is $\mu_{d_i^{(n)}}$ distributed. We set 
\be  {M}_{j} &=&\E_{d^{(n)}}(\obS_j)= \sum_{i=1}^j \E_{d^{(n)}} \l(\obK_i\r)=2 \sum_{i=1}^j  {d_i^{(n)}}~/~\l({1-d_i^{(n)}}\r),\\
B_j&=&\Var_{d^{(n)}}(\obS_j  )= 2\sum_{i=1}^j  {d_i^{(n)}}~/~{\l(1-d_i^{(n)}\r)^2}.
\ee 
\begin{lem}\label{lem:CVD}Let $\lambda>0$, we have:
    \beq \sup_{t\in[0,1]} \l| d_{\floor{nt}}^{(n)} - \tanh(\rl t)^2  \r|\to 0.\eq
    The two following convergences hold uniformly on $[0,1]$:
    \ben M_{nt}/n &\to& M_t= -t+\frac{\sinh(2\rl t)}{2\rl}  \\
    B_{nt}/n &\to& B_t  =-\frac{t}{4}+\frac{\sinh(4\rl t)}{16\rl}\een
    \end{lem}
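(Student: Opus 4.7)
The plan is to handle the three claims in order, with the first being the crux and the other two following by Riemann-sum arguments once the first is established.

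For claim (1), I would begin by noting the key identity $i + D_i^{(n)} = \frac{n}{2r_\lambda}\sinh(2r_\lambda i / n)$, which follows immediately from the definition of $D_i^{(n)}$. Combined with the explicit integral formula
\[
d_i^{(n)} = \tanh(r_\lambda)^2 \exp\!\left(-\int_{i}^{n} \Big(\tfrac{1}{\lfloor j\rfloor+D_{\lfloor j\rfloor}^{(n)}} + \tfrac{1}{\lfloor j\rfloor+D_{\lfloor j\rfloor}^{(n)}+1}\Big)\,dj\right),
\]
the change of variable $u = j/n$ and Lebesgue's dominated convergence theorem yield, for each fixed $t \in (0,1]$, that the integral converges to $\int_{t}^{1} \frac{4r_\lambda}{\sinh(2r_\lambda u)}\,du$. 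Since a primitive of $u \mapsto \frac{4r_\lambda}{\sinh(2r_\lambda u)}$ is $u\mapsto 2\log(\tanh(r_\lambda u))$, this produces the pointwise limit $d_{\lfloor nt\rfloor}^{(n)} \to \tanh(r_\lambda t)^2$ on $(0,1]$; the case $t=0$ is then handled by checking that $d_i^{(n)} \to 0$ as $i/n \to 0$, which follows from the same formula since the integral diverges.

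The upgrade from pointwise to uniform convergence relies on two monotonicity observations: the sequence $i \mapsto d_i^{(n)}$ is non-decreasing in $i$ (evident from \eqref{eq:remonte}, which writes $d_i^{(n)}$ as $d_{i+1}^{(n)}$ times a factor in $(0,1)$), and the limit function $t \mapsto \tanh(r_\lambda t)^2$ is continuous and increasing on $[0,1]$. A standard Dini-type argument (identical in spirit to the one used in the proof of \Cref{lem:qfgrte}) then promotes the pointwise convergence to uniform convergence.

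For claims (2) and (3), note that $M_{nt}/n$ and $B_{nt}/n$ are Riemann sums: $M_{nt}/n = \frac{1}{n}\sum_{i=1}^{\lfloor nt\rfloor}\frac{2d_i^{(n)}}{1-d_i^{(n)}}$ and $B_{nt}/n = \frac{1}{n}\sum_{i=1}^{\lfloor nt\rfloor}\frac{2d_i^{(n)}}{(1-d_i^{(n)})^2}$. By (1), since $\tanh(r_\lambda)^2 < 1$ so that the denominators stay bounded away from $0$, the summands converge uniformly in $i$ (reparametrized by $u = i/n$) to $\frac{2\tanh(r_\lambda u)^2}{1-\tanh(r_\lambda u)^2} = 2\sinh(r_\lambda u)^2$ and $\frac{2\tanh(r_\lambda u)^2}{(1-\tanh(r_\lambda u)^2)^2} = 2\sinh(r_\lambda u)^2\cosh(r_\lambda u)^2 = \tfrac{1}{2}\sinh(2r_\lambda u)^2$ respectively. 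Thus the Riemann sums converge uniformly in $t$ to the corresponding integrals, which one evaluates using $2\sinh^2(x) = \cosh(2x)-1$ and $\sinh(2x)^2 = (\cosh(4x)-1)/2$:
\[
\int_{0}^{t} 2\sinh(r_\lambda u)^2\,du = \frac{\sinh(2r_\lambda t)}{2r_\lambda} - t,\qquad \int_{0}^{t} \tfrac{1}{2}\sinh(2r_\lambda u)^2\,du = \frac{\sinh(4r_\lambda t)}{16 r_\lambda} - \frac{t}{4},
\]
matching exactly the claimed limits for $M_{nt}/n$ and $B_{nt}/n$.

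The main obstacle I anticipate is controlling uniformity in (1) near $t = 0$: both the integrand in the exponential formula for $d_i^{(n)}$ and the primitive $\log(\tanh(r_\lambda t))$ blow up there, so naive pointwise estimates are not uniform. The monotonicity-plus-continuity Dini argument bypasses this issue cleanly, and once (1) is in hand uniformly, claims (2) and (3) are essentially a routine Riemann-sum convergence, since their integrands are continuous on $[0,1]$ (both vanish at $u=0$).
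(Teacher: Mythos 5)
Your proof is correct, and it follows the paper's strategy for the heart of the matter: the identity $i+D_i^{(n)}=\tfrac{n}{2\rl}\sinh(2\rl i/n)$, the integral representation of $d_i^{(n)}$, dominated convergence for the pointwise limit, and the reduction of the $M$ and $B$ statements to uniform Riemann sums with the same trigonometric simplifications. The one place where you genuinely diverge is the upgrade from pointwise to uniform convergence of $d^{(n)}_{\floor{nt}}$. The paper splits $[0,1]$ into $[A,1]$ (where the integrand is uniformly controlled) and $[0,A]$, and on the latter it exploits the explicit one-sided bound $\floor{nu}/n+D^{(n)}_{\floor{nu}}/n\leq \sinh(2\rl u)/(2\rl)$ to deduce $d^{(n)}_{nt}\leq \tanh(\rl t)^2$, which kills the difference near $0$ since both terms are small there. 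You instead observe from \eref{eq:remonte} that $i\mapsto d_i^{(n)}$ is increasing and invoke the monotone-functions-to-continuous-limit (Dini/P\'olya) argument, exactly as in the proof of \Cref{lem:qfgrte}. Both routes are valid; yours is slightly slicker and avoids the $\varepsilon$--$A$ bookkeeping, while the paper's yields the extra quantitative information that $d^{(n)}_{nt}$ approaches its limit from below. Your treatment of claims (2) and (3) — uniform convergence of the summands thanks to the denominators being bounded away from zero by $1-\tanh(\rl)^2$, followed by the integrals $\int_0^t 2\sinh(\rl u)^2\d u$ and $\int_0^t \tfrac12\sinh(2\rl u)^2\d u$ — matches the paper's.
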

\begin{proof}
Take some $A\in(0,1]$. Let us prove the uniform convergence of $d_{nt}^{(n)}$ on $[A,1]$.
To prove the point-wise convergence $d_{nt}^{(n)}\to \tanh(\rl t)^2$, make first the change of variable $j=nu$, so that 
\[d_{nt}^{(n)}=\tanh(\rl)^2 \exp\l(- \int_{t}^1  \frac{1}{\floor{nu}/n+D_{\floor{nu}}^{(n)}/n}+\frac{1}{\floor{nu}/n+D_{\floor{nu}}^{(n)}/n+1/n}\,du\r),\] then observe that $ \frac{1}{\floor{nu}/n+D_{\floor{nu}}^{(n)}/n}+\frac{1}{\floor{nu}/n+D_{\floor{nu}}^{(n)}/n+1/n}\to \frac{4\rl}{\sinh(2\rl u)}$ point-wise, and for the domination, use that $D_{\floor{nu}}^{(n)}/n\geq 0$ to get the bound, valid for all $u\in[t,1]$ (and also for all $t \in[A,1]$:
\[\frac{1}{\floor{nu}/n+D_{\floor{nu}}^{(n)}/n}+\frac{1}{\floor{nu}/n+D_{\floor{nu}}^{(n)}/n+1/n}\leq C/(\floor{nu}/n)\leq C'/u\] for $u\in [t,1]$.\par
Now, to get the uniformity, it suffices to observe that
\[\max_{u\in[A,1]} \l|\frac{1}{\floor{nu}/n+D_{\floor{nu}}^{(n)}/n}+\frac{1}{\floor{nu}/n+D_{\floor{nu}}^{(n)}/n+1/n}-\frac{2}{u+D_u} \r|\to 0. \]
It remains to prove the uniform convergence on $[0,A]$ for a $A$ of our choice: take $\varepsilon>0$ small, and take $A$ such that, $\sup_{t\in[0,A]} \tanh(\rl t)^2\leq \varepsilon/2$. Since ${\floor{nu}/n+D_{\floor{nu}}^{(n)}/n}=\sinh(2\floor{nu}\rl/n)/(2\rl)\leq \sinh(2\rl u)/(2\rl)$, from what we see that $d_{nt}^{(n)}\leq \tanh(\rl t)^2$, and the conclusion follows. The two other statements are consequences:
$M_{nt}/n \to M_t=2\int_0^t \frac{\tanh(\rl u)^2}{1-\tanh(\rl u)^2}du$ and $B_{nt}\to 2\int_0^t \frac{\tanh(\rl u)^2}{(1-\tanh(\rl u)^2)^2}$ uniformly on $[0,1]$. And a simplification of these formulas provide the announced results.
\end{proof}

We are ready to state a local limit theorem for $\obS_n$ under $\P_{d^{(n)}}$:
 \begin{lem}\label{lem:clt} Let $\lambda>0$.
We have
\[\sup_N \l|\sqrt{B_n}\P_{d^{(n)}}\l(\obS_n=N\r)-\frac{1}{\sqrt{2\pi}}\exp\l(-\frac{(N-M_n)^2}{2B_n}\r)\r|\to 0,\]
so that, in particular,
\beq\P_{d^{(n)}}\l(\obS_n=M_n\r) \equ  C_{\lambda}/\sqrt{n},\eq
for a positive finite constant $C_\lambda$.
\end{lem}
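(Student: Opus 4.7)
The plan is to adapt Gnedenko's local central limit theorem for sums of independent lattice random variables to this triangular array. The probability generating function of $\mu_v$ equals $(1-v)^2/(1-vz)^2$, so the characteristic function of a $\mu_v$ variable is
\[\phi_v(t)=\frac{(1-v)^2}{(1-ve^{it})^2},\qquad |\phi_v(t)|=\frac{1}{1+\dfrac{2v(1-\cos t)}{(1-v)^2}}.\]
Since $\obS_n$ is $\Z$-valued, Fourier inversion together with the analogous representation of the Gaussian density gives, after the substitution $t=s/\sqrt{B_n}$,
\[\sqrt{B_n}\,\P_{d^{(n)}}(\obS_n=N)-\frac{1}{\sqrt{2\pi}}e^{-(N-M_n)^2/(2B_n)}=\frac{1}{2\pi}\int e^{-is(N-M_n)/\sqrt{B_n}}\bigl[\psi_n(s)\1_{|s|\leq\pi\sqrt{B_n}}-e^{-s^2/2}\bigr]\d s,\]
where $\psi_n(s)=e^{-isM_n/\sqrt{B_n}}\prod_{i=1}^n\phi_{d_i^{(n)}}(s/\sqrt{B_n})$ is the characteristic function of the centered, scaled sum. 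Since $|e^{-is(N-M_n)/\sqrt{B_n}}|=1$, the desired uniformity in $N$ reduces to showing that the $L^1$-norm of the bracket tends to $0$.

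The next step is the classical three-region decomposition of the integration domain. On $|s|\leq A$ (for fixed large $A$), I would apply the Lindeberg--Feller CLT to the triangular array $\bigl((\obK_i-\E\obK_i)/\sqrt{B_n}\bigr)_{1\leq i\leq n}$ to obtain $\psi_n(s)\to e^{-s^2/2}$ pointwise. The Lindeberg condition holds easily because \Cref{lem:CVD} confines $d_i^{(n)}$ to the compact interval $[0,\tanh(r_\lambda)^2]$, which uniformly bounds all moments of $\obK_i$, and $B_n\sim nB_1$ with $B_1>0$. Dominated convergence (via the bound of the next region) upgrades pointwise to $L^1$-convergence on $[-A,A]$. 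On $A\leq|s|\leq \delta\sqrt{B_n}$, the inequality $\log(1+x)\geq x/2$ on $[0,1]$ combined with the explicit formula for $|\phi_v|$ yields $|\psi_n(s)|\leq e^{-cs^2}$ for some $c>0$, giving an arbitrarily small tail contribution as $A\to\infty$. On $\delta\sqrt{B_n}\leq|s|\leq \pi\sqrt{B_n}$, the value $t=s/\sqrt{B_n}$ stays in $[\delta,\pi]$; \Cref{lem:CVD} provides $\alpha\in(0,1)$ and $v_0>0$ such that $d_i^{(n)}\geq v_0$ for all $i\in[\lceil\alpha n\rceil,n]$ once $n$ is large enough. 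For these indices, $|\phi_{d_i^{(n)}}(t)|\leq \rho(\delta)<1$ uniformly in $t\in[\delta,\pi]$ by the explicit formula, so $|\psi_n(s)|\leq \rho(\delta)^{(1-\alpha)n}$, which beats the prefactor $\sqrt{B_n}$ exponentially.

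Combining the three regions delivers the $L^1$-convergence, hence the uniform local CLT. For the second statement, taking $N$ equal to the nearest integer to $M_n$ makes the Gaussian factor tend to $1/\sqrt{2\pi}$, so $\sqrt{B_n}\,\P_{d^{(n)}}(\obS_n=M_n)\to 1/\sqrt{2\pi}$; combined with $B_n\sim nB_1$ (where $B_1=-1/4+\sinh(4r_\lambda)/(16r_\lambda)$ from \Cref{lem:CVD}), this yields the announced $C_\lambda/\sqrt{n}$ asymptotic with $C_\lambda=1/\sqrt{2\pi B_1}$.

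The main obstacle is the third region: $\phi_v(t)\to 1$ uniformly in $t$ as $v\to 0$, so the indices $i$ near $0$ (for which $d_i^{(n)}$ is small) cannot contribute to the geometric decay of the product. The point that saves the argument is precisely \Cref{lem:CVD}: a positive fraction of indices carry $d_i^{(n)}\geq v_0>0$, and these indices alone drive the product to $0$ exponentially fast. Once this uniform control of the characteristic function away from $t=0$ is secured, the remainder is a standard application of the Gnedenko--Prokhorov technique.
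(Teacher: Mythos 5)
Your proof is correct, but it takes a genuinely different route from the paper's. The paper does not run the Fourier argument by hand: it invokes the local central limit theorem of Davis \& McDonald for sums of independent integer-valued variables, which reduces the lemma to checking two hypotheses: (a) the ordinary CLT $(\obS_n-M_n)/\sqrt{B_n}\Rightarrow{\cal N}(0,1)$, verified via the Lindeberg condition essentially as you do (bounded $d_i^{(n)}\leq\gamma<1$ gives uniform tail control), and (b) a smoothness condition $\limsup_n B_n/Q_n<+\infty$, where $Q_n=\sum_{i=1}^n\sum_k \mu_{d_i^{(n)}}(k)\wedge\mu_{d_i^{(n)}}(k+1)$ measures the overlap of each law with its unit shift; the paper bounds this below by $\sum_{i=1}^n d_i^{(n)}$, which is of order $n$ by the uniform convergence $d_{\floor{nt}}^{(n)}\to\tanh(\rl t)^2$. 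That overlap condition plays exactly the role of your third region: it is the hypothesis that prevents the characteristic functions from staying near modulus one away from $t=0$, and your observation that a positive proportion of indices carry $d_i^{(n)}\geq v_0>0$ (hence $\prod_i|\phi_{d_i^{(n)}}(t)|\leq\rho(\delta)^{(1-\alpha)n}$ on $[\delta,\pi]$) is the direct analogue of the paper's lower bound on $Q_n$. Your version is self-contained at the cost of carrying out the three-region Gnedenko decomposition explicitly, and all three regional estimates check out (in particular the bound $-\log|\phi_v(t)|\geq v(1-\cos t)/(1-v)^2$ on the middle region does sum to $c s^2$ because $B_n=2\sum_i d_i^{(n)}/(1-d_i^{(n)})^2$); the paper's version is shorter but leans on an external theorem. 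One small point of hygiene that you handle and the paper leaves implicit: $M_n$ need not be an integer, so the final asymptotic must be read with $N$ the nearest integer to $M_n$, which your uniform statement covers.
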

\begin{proof} 
We will use the central local limit given by Davis \& McDonald \cite[Theo. 1.2]{Davis_McDonald}.  
Set $q_{i,n}= \sum_k \P_{d^{(n)}}(\obK_i = k) \wedge \P_{d^{(n)}}(\obK_i = k+1)$ and $Q_n=\sum_{i=1}^n q_{i,n}$.
We will prove that $(a)$, $(\obS_n-M_n)/\sqrt{B_n}\dd {\cal N}(0,1)$, and $(b)$, $\limsup B_n/Q_n <+\infty$. 

$(a)$ We will use the central limit theorem under the Lindeberg condition. 
Since $B_n\equ C\cdot n$, it suffices to prove that  
\[G_n:=\frac{1}{n}\sum_{k=1}^n\E_{d^{(n)}}\l(|\obK_i-\E(\obK_i)|^2\1_{|\obK_i-\E(\obK_i)|\geq \eps n}\r)\to 0.\]
Under the condition $m(n)=\floor{n \lambda}$, the sequence $\max_n \max_{1\leq i\leq n} \E_{d^{(n)}}(\obK_i) $ is bounded, so that it suffices to prove that 
\[G_n':=\frac{1}{n}\sum_{k=1}^n\P_{d^{(n)}}\l( \obK_i \geq \eps n/2\r)\to 0, ~~~ G_n'':=\frac{1}{n}\sum_{k=1}^n\E_{d^{(n)}}\l(\obK_i^2 \1_{\vert\obK_i\vert \geq \eps n /2}\r)\to 0\]
We have $\mu_v([\ell,+\infty)=(1-v)^2\sum_{k\geq \ell} (1+k)v^k = v^\ell(1+(1-v)\ell)$.
Hence, since $\sup_i|d_i^{(n)}-\tanh(\rl i/n)^2|\to 0$ and $\max\{d_i^{(n)}, 1\leq i\leq n\}\leq \gamma\to\tanh(\rl)^2<1$, gives
 \[G_n'\leq \frac{ \gamma^{\eps n}}{n}\sum_{i=1}^n (1+(1-d_i^{(n)})\eps n)=O( n\gamma^{\eps n} )\to 0.\]
To control $G_n''$, compute for $X_v\sim \mu_v$,
$\E(X_v^2 \1_{X_v\geq \ell})=\sum_{k\geq \ell} (1-v)^2 (1+k)v^k k^2$ 
so that since for $i\in\cro{1,n}$, $d_i^{(n)}\leq \gamma<1$
\[G_n'' \leq \frac1{n}\sum_{i=1}^n   \sum_{k\geq \eps n} (1+k)^3 \gamma^{k}=  \sum_{k\geq \eps n} (1+k)^3 \gamma^{k}\]
which goes to 0 as $n\to +\infty$, since $\sum_{k } (1+k)^3 \gamma^{k}$ converges.\\
$(b)$ If we have $\limsup B_n/Q_n' <+\infty$ for a sequence $(Q_n')$ such that $Q_n'\leq Q_n$, then $(b)$ holds. 
We can then lower-bound $Q_n$. 
Recall \eref{eq:muv}. The parameter $v$ belongs to $[0,1]$.

We have $\mu_v(k) \wedge \mu_v(k+1)= (1-v)^2v^k \min \{1+k,(2+k)v\}=(1-v)^2v^k\big( (1+k)v+ \min \{(1+k)(1-v),v\}\big)$, so that 
\[\sum_k \mu_v(k) \wedge \mu_v(k+1) \geq \sum_k v \mu_k(v) =v.\]
Hence, $Q_n'= \sum_{i=1}^n d_i^{(n)}$ is then a lower-bound of $Q_n$. 
By uniform convergence of $d_i^{(n)}$ to $\tanh\l(\rl \frac{i}{n}\r)^2$, when $n$ is large enough, $|Q_n'-\sum_{i=1}^n \tanh\l(\rl \frac{i}{n}\r)^2|=o(n)$. Since $\sum_{i=1}^n \tanh\l(\rl \frac{i}{n}\r)^2 \equ n\int_0^1 \tanh({\rl t})^2 dt$, showing $\limsup B_n/Q_n' <+\infty$ is easy and therefore $(b)$ holds.
\end{proof}

\bibliographystyle{abbrv}
\bibliography{biblio.bib}

\tableofcontents

\end{document}